\definecolor{refkey}{rgb}{1,0,0}
\definecolor{labelkey}{rgb}{0,0,1}
\theoremstyle{plain}
  \newtheorem{thm}{Theorem}[section]
  \newtheorem{lem}[thm]{Lemma}
  \newtheorem{cor}[thm]{Corollary}
  \newtheorem{prop}[thm]{Proposition}
  \newtheorem{conj}[thm]{Conjecture}
  \newtheorem*{obs*}{Observation}
\theoremstyle{definition}
  \newtheorem{defn}[thm]{Definition}
\theoremstyle{remark}
  \newtheorem{rem}[thm]{Remark}
\newcommand{\Z}{\mathbb{Z}}
\newcommand{\C}{\mathbb{C}}
\newcommand{\R}{\mathbb{R}}
\newcommand{\Q}{\mathbb{Q}}
\newcommand{\Vol}{\operatorname{Vol}}
\newcommand{\CS}{\operatorname{CS}}
\newcommand{\Li}{\operatorname{Li}}
\newcommand{\Hom}{\operatorname{Hom}}
\newcommand{\cs}{\operatorname{cs}}
\renewcommand{\L}{\mathcal{L}}
\newcommand{\SL}{\rm{SL}}
\newcommand{\Tr}{\operatorname{Tr}}
\newcommand{\Tor}{\operatorname{Tor}}
\newcommand{\arccosh}{\operatorname{arccosh}}
\newcommand{\pic}[2]{\raisebox{-0.5\height}{\includegraphics[scale=#1]{#2.eps}}}
\renewcommand{\Re}{\operatorname{Re}}
\renewcommand{\Im}{\operatorname{Im}}
\newcommand{\Ker}{\operatorname{Ker}}
\renewcommand{\i}{\sqrt{-1}}
\newcommand{\FE}{\mathscr{E}}
\newcommand{\oH}{\overline{H}}
\newcommand{\uH}{\underline{H}}
\newcommand{\oset}[3][0ex]{%
  \mathrel{\mathop{#3}\limits^{
    \vbox to#1{\kern-1\ex@
    \hbox{$\scriptstyle#2$}\vss}}}}
\newcommand{\uset}[3][0ex]{%
  \mathrel{\mathop{#3}\limits_{
    \vbox to#1{\kern-5\ex@
    \hbox{$\scriptstyle#2$}\vss}}}}
\newcommand{\Rpath}{\oset{\frown}{\mathbb{R}}}
\numberwithin{equation}{section}
\begin{document}
\title[The colored Jones polynomial of the figure-eight knot]
{The colored Jones polynomial of \\ the figure-eight knot \\ and an $\SL(2;\R)$-representation}
\author{Hitoshi Murakami}
\address{
Graduate School of Information Sciences,
Tohoku University,
Aramaki-aza-Aoba 6-3-09, Aoba-ku,
Sendai 980-8579, Japan}
\email{hitoshi@tohoku.ac.jp}
\date{\today}
\begin{abstract}
We study the asymptotic behavior of the $N$-dimensional colored Jones polynomial of the figure-eight knot, evaluated at $\exp\bigl((u+2p\pi\i)/N\bigr)$ as $N$ tends to infinity, where $u>\arccosh(3/2)$ is a real number and $p\ge1$ is an integer.
It turns out that it corresponds to an $\SL(2;\R)$ representation of the fundamental group of the knot complement.
Moreover, it defines the adjoint Reidemeister torsion and the Chern--Simons invariant associated with the representation.
\end{abstract}
\keywords{colored Jones polynomial, figure-eight knot, volume conjecture, Chern--Simons invariant, Reidemeister torsion, SL(2;R) representation}
\subjclass{Primary 57K10 57K14 57K16}
\thanks{The author is supported by JSPS KAKENHI Grant Numbers JP20K03601, JP22H01117, JP20K03931.}
\maketitle
\section{Introduction}\label{sec:intro}
For a link $L$ in the three-sphere $S^3$ and an integer $N\ge2$, let $J_N(L;q)\in\Z[q^{1/2},q^{-1/2}]$ be the colored Jones polynomial associated with the $N$-dimensional irreducible representation of the Lie algebra $\mathfrak{sl}(2;\C)$.
We normalize it so that $J_N(U;q)=1$ for the unknot $U$, and that $J_2(L;q)$ is (a version of) the original Jones polynomial \cite{Jones:BULAM31985} satisfying the following skein relation:
\begin{equation*}
  qJ_2\left(\raisebox{-3mm}{\includegraphics[scale=0.15]{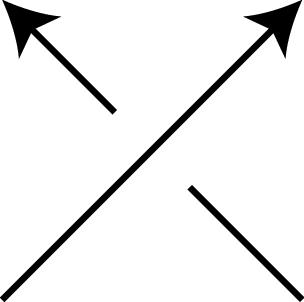}};q\right)
  -
  q^{-1}J_2\left(\raisebox{-3mm}{\includegraphics[scale=0.15]{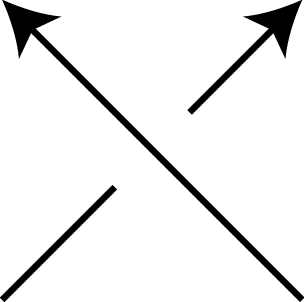}};q\right)
  =
  \left(q^{1/2}-q^{-1/2}\right)
  J_2\left(\raisebox{-3mm}{\includegraphics[scale=0.15]{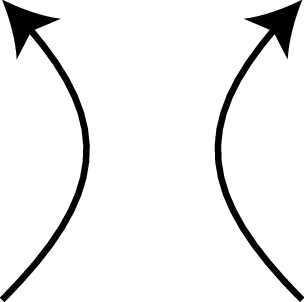}};q\right).
\end{equation*}
\par
In \cite{Kashaev:MODPLA94,Kashaev:MODPLA95}, R.~Kashaev introduced a link invariant $\langle L\rangle_N\in\C$ by using the cyclic quantum dilogarithm.
He conjectured that his invariant $\langle L\rangle_N$ grows exponentially with growth rate proportional to the hyperbolic volume of $S^3\setminus{L}$ provided that $L$ is hyperbolic, that is, the complement $S^3\setminus{L}$ has a complete hyperbolic structure with finite volume \cite{Kashaev:LETMP97}.
\par
In \cite{Murakami/Murakami:ACTAM12001}, J.~Murakami and the author proved that Kashaev's invariant coincides with the $N$-dimensional colored Jones polynomial evaluated at $q=e^{2\pi\i/N}$.
So Kashaev's conjecture turns out to be
\begin{equation}\label{eq:Kashaev}
  \lim_{N\to\infty}\frac{\log\left|J_N(L;e^{2\pi\i/N})\right|}{N}
  =
  \frac{\Vol(L)}{2\pi}
\end{equation}
for a hyperbolic link $L$, where$\Vol(L)$ is the hyperbolic volume of $S^3\setminus{L}$.
We also generalized it to any knot taking the simplicial volume (Gromov's invariant) \cite{Gromov:INSHE82} (see also \cite[Chapter~6]{Thurston:GT3M}) into account.
Here the simplicial volume $\|K\|$ for a knot in $S^3$ is known to be the sum of the hyperbolic volumes of the hyperbolic pieces in the knot complement.
\begin{conj}[{Volume Conjecture, \cite[Conjecture~5.1]{Murakami/Murakami:ACTAM12001}}]\label{conj:VC}
For any knot $K\subset S^3$, we have
\begin{equation*}
  \lim_{N\to\infty}\frac{\log\left|J_N(K;e^{2\pi\i/N})\right|}{N}
  =
  \frac{\|K\|}{2\pi}.
\end{equation*}
\end{conj}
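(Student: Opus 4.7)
The plan is to reduce the Volume Conjecture to the case of a single hyperbolic piece of the JSJ decomposition and then to handle that case by a steepest-descent analysis of a state-sum expression for $J_N(K;e^{2\pi\i/N})$.

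First, I would invoke the JSJ (torus) decomposition of the knot complement $S^3\setminus K$ into Seifert fibered and hyperbolic pieces. Since Gromov's simplicial volume is additive across incompressible tori and vanishes on Seifert fibered manifolds, $\|K\|$ equals the sum of the hyperbolic volumes of the hyperbolic pieces. Combined with the behavior of the colored Jones polynomial under connected sum, cabling, and more general satellite operations, this would reduce the conjecture to verifying the limit on each hyperbolic piece separately; torus knots would then contribute $0$ to both sides, consistent with the fact that $J_N$ grows only polynomially for them.

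Second, for a hyperbolic knot one expresses $J_N(K;q)$ as an $R$-matrix state sum whose summands are products of quantum factorials. Setting $q=e^{2\pi\i/N}$ and applying the asymptotic expansion of $(q;q)_n$ in terms of the Rogers dilogarithm, the multiple sum can be rewritten, via the Euler--Maclaurin formula, as a contour integral of the shape
\begin{equation*}
  J_N(K;e^{2\pi\i/N})\sim\int e^{\,N\,\Phi(\mathbf{z})/(2\pi\i)}\,\omega(\mathbf{z})\,d\mathbf{z},
\end{equation*}
where $\Phi$ is a potential function built from dilogarithms of shape parameters attached to an ideal triangulation of $S^3\setminus K$. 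The critical-point equations $\partial\Phi/\partial z_j=0$ should match Thurston's gluing equations modulo $2\pi\i\Z$, so a distinguished \emph{geometric} critical point $\mathbf{z}_0$ corresponds to the complete hyperbolic structure, and Milnor's formula relating the Bloch--Wigner dilogarithm to volumes of ideal tetrahedra yields $\Im\Phi(\mathbf{z}_0)=\Vol(S^3\setminus K)$, producing the predicted leading exponential rate.

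The main obstacle, as in all currently known partial approaches, is making the saddle-point step rigorous: one must justify the passage from the discrete state sum to the contour integral with quantitative error bounds, deform the contour so that it traverses $\mathbf{z}_0$ in the correct direction of steepest descent, dominate contributions from the other critical points and from the boundary of the domain of $\Phi$, and rule out any accidental cancellation in the prefactor $\omega(\mathbf{z}_0)$. This analytic control is precisely the core difficulty of the Volume Conjecture; it is exactly the issue that the present paper tackles in a refined setting, namely for the figure-eight knot at the parameter $u+2p\pi\i$ corresponding to an $\SL(2;\R)$ representation.
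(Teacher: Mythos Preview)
The statement you are attempting to prove is labeled a \emph{conjecture} in the paper, not a theorem; the paper gives no proof of it and instead lists the handful of special knots (torus knots, $4_1$, $5_2$, $6_1$--$6_3$, $7_2$--$7_7$, certain cables and Whitehead doubles) for which it has been verified by various authors. There is therefore no ``paper's own proof'' to compare against, and the Volume Conjecture remains open in general.

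Your proposal is an outline of a strategy, not a proof, and you yourself identify the fatal gap in the second step: the passage from the discrete state sum to a contour integral, the contour deformation through the geometric saddle, and the domination of all other critical points are exactly the unresolved analytic difficulties. But there is also a genuine gap in your first step that you gloss over. While $J_N$ is multiplicative under connected sum and there is a cabling formula, there is \emph{no} known reduction of $J_N(K;e^{2\pi\i/N})$ along a general JSJ torus decomposition that would let you treat the hyperbolic pieces independently. The colored Jones polynomial of a satellite is governed by a sum over colorings of the companion and pattern, and controlling the asymptotics of such a sum in terms of the asymptotics of the pieces is itself an open problem of the same order of difficulty as the conjecture. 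So even granting a black-box resolution of the saddle-point issue for hyperbolic knots, your reduction to hyperbolic pieces is not justified.
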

\begin{rem}
Kashaev's conjecture is for {\it any hyperbolic link} and the volume conjecture is for {\it any knot}, not necessarily hyperbolic.
\end{rem}
\par
As far as the author knows, the volume conjecture (for knots) is proved for
\begin{itemize}
\item
Torus knots by Kashaev and O.~Tirkkonen \cite{Kashaev/Tirkkonen:ZAPNS2000}.
Note that the simplicial volume of a torus knot is zero because the complement of a torus knot is Seifert fibered and so there is no hyperbolic pieces.
\item
Hyperbolic knot $4_1$, also known as the figure-eight knot by T.~Ekholm.
For his proof, see for example \cite[3.2]{Murakami/Yokota:2018}.
\item
Hyperbolic knot $5_2$ by T.~Ohtsuki \cite{Ohtsuki:QT2016}.
\item
Hyperbolic knots $6_1$, $6_2$, and $6_3$ by Ohtsuki and Yokota \cite{Ohtsuki/Yokota:MATPC2018}.
\item
Hyperbolic knots $7_2,7_3,\cdots,7_7$ by Ohtsuki \cite{Ohtsuki:INTJM62017}.
\item
$(2,2m+1)$-cable of the figure-eight knot by T.~Le and A.~Tran \cite{Le/Tran:JKNOT2010}.
\item
Whitehead doubles of torus knots by H.~Zheng \cite{Zheng:CHIAM22007}.
\end{itemize}
\par
Kashaev's conjecture and the volume conjecture have been generalized in several ways.
\par
Kashaev's conjecture was complexified by J.~Murakami, M.~Okamoto, T.~Takata, Yokota and the author as follows.
\begin{conj}[{Complexification of Kashaev's conjecture, \cite[Conjecture~1.2]{Murakami/Murakami/Okamoto/Takata/Yokota:EXPMA02}}]\label{conj:CVC}
For a hyperbolic link $L$, we have
\begin{equation}\label{eq:CVC}
  \lim_{N\to\infty}\frac{\log{J_N\left(L;e^{2\pi\i/N}\right)}}{N}
  =
  \frac{\Vol(L)+\i\CS^{\mathrm{SO}(3)}(L)}{2\pi},
\end{equation}
where $\CS^{\mathrm{SO}(3)}(L)$ is the $\mathrm{SO}(3)$ Chern--Simons invariant associated with the Levi-Civita connection \cite{Chern/Simons:ANNMA21974,Meyerhoff:LMSLN112}.
\end{conj}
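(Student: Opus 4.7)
The plan is to reduce the colored Jones polynomial $J_N(L;e^{2\pi\i/N})$ to a finite state sum (or integral), apply the saddle point (stationary phase) method as $N\to\infty$, and identify the resulting critical value with the complex hyperbolic volume $\Vol(L)+\i\CS^{\mathrm{SO}(3)}(L)$. This mirrors the template already carried out in the cases $4_1$, $5_2$, $6_1,6_2,6_3$, $7_2,\dots,7_7$ cited above, and the goal is to promote it from a case-by-case result to one that is valid for every hyperbolic link.

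First, I would express $J_N(L;e^{2\pi\i/N})$, via its equivalence with Kashaev's invariant proved in \cite{Murakami/Murakami:ACTAM12001}, as a multi-dimensional sum of products of quantum factorials indexed by the crossings of a diagram of $L$ (or equivalently by the ideal tetrahedra of a triangulation of $S^3\setminus L$). Rewriting each quantum factorial via the Faddeev quantum dilogarithm and invoking its well known asymptotic expansion, each summand takes the form $\exp\bigl(N\,\Phi(z)+O(\log N)\bigr)$, where $\Phi$ is a potential assembled from the classical dilogarithm $\Li_2$ together with linear and quadratic terms. The discrete sum is then replaced by an integral through Poisson summation, so that the leading term of $\log J_N(L;e^{2\pi\i/N})/N$ is governed by the critical values of $\Phi$.

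Next, I would analyze the critical points of $\Phi$. The equations $\partial\Phi/\partial z_k=0$ should reproduce, modulo $2\pi\i$, Thurston's gluing equations for the triangulation of $S^3\setminus L$, together with the meridian/longitude completeness condition. Selecting the critical point $z^{*}$ that corresponds to the geometric (discrete faithful) representation, the critical value $\Phi(z^{*})$ should equal $\bigl(\Vol(L)+\i\CS^{\mathrm{SO}(3)}(L)\bigr)/(2\pi)$ after identification via Neumann's formula relating the Bloch--Wigner dilogarithm to the extended Chern--Simons class. Together with the usual Gaussian integration around $z^{*}$ this would give \eqref{eq:CVC}.

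The main obstacle lies in three separate difficulties. First, one must make the saddle point estimate fully rigorous: the error terms in the Faddeev dilogarithm asymptotics have to be controlled uniformly in the summation region, and any non-geometric critical point whose real part could match that of $z^{*}$ must be ruled out. Second, there is a branch issue: the potential $\Phi$ is defined only modulo a lattice in $2\pi^{2}\Z$, so pinning down the $\CS^{\mathrm{SO}(3)}$ contribution (as opposed to the volume, which is automatic from the real part) requires carefully tracking the homotopy class of the deformation of contour used to pass from the original sum to the saddle point. Third, for a general hyperbolic $L$ one does not start from a canonical triangulation, and so a diagrammatic input in the spirit of Yokota's potential function is needed to guarantee that the stationary phase analysis can be carried out uniformly for any diagram; extending this last step to every hyperbolic link is precisely where the conjecture currently remains open, and it is the step I expect to be the principal obstacle.
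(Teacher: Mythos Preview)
The statement you are addressing is a \emph{conjecture}, not a theorem; the paper does not prove it and does not claim to. Conjecture~\ref{conj:CVC} is quoted in the introduction purely as background, alongside Conjectures~\ref{conj:VC}, \ref{conj:RVC}, and \ref{conj:PVC}, to motivate the specific result the paper actually establishes (Theorem~\ref{thm:main}, concerning only the figure-eight knot with $u>\kappa$ and $p\ge1$). So there is no ``paper's own proof'' to compare your proposal against.

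That said, your outline is a fair summary of the standard heuristic behind the conjecture, and you correctly identify the genuine obstructions: uniform control of the quantum dilogarithm asymptotics, exclusion of spurious saddles, the branch ambiguity for the Chern--Simons part, and the lack of a diagram-independent construction of the potential. You are also right that these are exactly why the statement remains a conjecture. What you have written is therefore not a proof but a research program; presenting it as a proof proposal is the gap. If the task is to supply a proof for this item, the honest answer is that none is expected here --- the paper itself treats it as open and instead proves a much narrower asymptotic (for $\FE$, a single real parameter $u$, and integer $p$) by carrying out precisely the saddle-point analysis you sketch, but in a setting where all three of your obstacles can be handled explicitly.
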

\begin{rem}
Note that the real part of \eqref{eq:CVC} appears in Kashaev's conjecture \eqref{eq:Kashaev}.
\end{rem}
The following conjecture refines Kashaev's conjecture \cite{Gukov:COMMP2005,Gukov/Murakami:FIC2008,Dimofte/Gukov/Lenells/Zagier:CNTP2010,Zagier:2010}.
Note that it also generalizes Conjecture~\ref{conj:CVC}.
\begin{conj}[Refinement of Kashaev's conjecture]\label{conj:RVC}
For a hyperbolic knot $K$, we have
\begin{equation*}
  J_N\left(K;e^{2\pi\i/N}\right)
  \underset{N\to\infty}{\sim}
  \tau(K)N^{3/2}
  \exp\left(\frac{N}{2\pi}\bigl(\Vol(K)+\i\CS^{\mathrm{SO}(3)}(K)\bigr)\right)
\end{equation*}
as $N\to\infty$, where $2\i\tau(K)^{-2}$ equals the homological adjoint Reidemeister torsion twisted by the holonomy representation associated with the complete hyperbolic structure.
\end{conj}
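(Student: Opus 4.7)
The plan is to follow the ``saddle point'' strategy that has succeeded in each of the individual hyperbolic-knot cases listed above, and to formulate each ingredient in sufficient generality that it can, in principle, be applied to every hyperbolic $K$.

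First, I would begin with a state-sum presentation of $J_N(K;e^{2\pi\i/N})$. For knots admitting Habiro-type cyclotomic expansions or Kashaev-type $R$-matrix formulae, the evaluation at $q=e^{2\pi\i/N}$ reduces to a finite multi-dimensional sum $\sum_{k_1,\dots,k_m}F_N(k_1,\dots,k_m)$ whose summand can, after careful use of the quantum dilogarithm asymptotics, be written as $\exp\bigl(N\,\Phi(k_1/N,\dots,k_m/N)+O(1)\bigr)$, with $\Phi$ built from $\Li_2$. Converting the sum to a contour integral by Poisson summation produces
\begin{equation*}
  J_N\bigl(K;e^{2\pi\i/N}\bigr)
  =\int_{\mathcal C}\exp\bigl(N\,\Phi(z_1,\dots,z_m)\bigr)\,\rho(z)\,dz+(\text{exponentially smaller terms}).
\end{equation*}

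Second, I would apply the method of steepest descent to this integral. The critical-point equations $\partial_{z_j}\Phi=0$ are precisely the gluing equations of an ideal triangulation of $S^3\setminus K$, and the saddle $z^\ast$ corresponding to the complete hyperbolic structure has critical value
\begin{equation*}
  \Phi(z^\ast)=\frac{\Vol(K)+\i\CS^{\mathrm{SO}(3)}(K)}{2\pi},
\end{equation*}
by Neumann--Zagier's identification of the Bloch--Wigner dilogarithm sum with the complexified volume. This accounts for the exponential factor. The Gaussian integration around $z^\ast$ then contributes $N^{-m/2}(\det\Phi''(z^\ast))^{-1/2}$, which combined with the prefactors coming from the normalization of the state sum produces the expected $N^{3/2}$, and the Hessian determinant is to be identified with $2\i\tau(K)^{-2}$ via the Porti--Dubois formula relating the $SL(2;\C)$ adjoint Reidemeister torsion to the derivatives of the gluing equations.

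The hard part, and the reason the conjecture is still open in general, lies at both ends of this program. On the one side, one must justify the passage from the discrete state sum to the contour integral and then deform the contour so that only the discrete-faithful saddle contributes; spurious saddles coming from non-geometric solutions of the gluing equations have to be shown to be dominated, which requires a global convexity or positivity statement about $\Re\Phi$ that is not known uniformly. On the other side, a topologically invariant identification of $\det\Phi''(z^\ast)$ with the adjoint Reidemeister torsion must be established independently of the chosen triangulation; so far this has been checked knot by knot, and a uniform proof would probably require an analytic reformulation via Ray--Singer torsion together with a surgery formula. Handling both issues simultaneously for an arbitrary hyperbolic $K$ is what makes Conjecture \ref{conj:RVC} substantially harder than any of its verified special cases.
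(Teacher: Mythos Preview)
The statement you were asked to prove is a \emph{conjecture}, not a theorem: the paper explicitly labels it as Conjecture~\ref{conj:RVC} and does not supply a proof. Immediately after stating it, the paper merely records that it has been verified case by case for the hyperbolic knots $4_1$, $5_2$, $6_1$--$6_3$, and $7_2$--$7_7$, and then moves on to its own (different) main theorem about $J_N(\FE;e^{(u+2p\pi\i)/N})$ with $u>\kappa$. So there is no ``paper's own proof'' to compare your attempt against.

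Your write-up is not a proof but a strategy sketch, and you yourself say as much in the final paragraph. That is the honest assessment: the saddle-point program you outline is exactly the one used in the known special cases, and the two obstacles you name (global control of $\Re\Phi$ to rule out non-geometric saddles, and a triangulation-independent identification of the Hessian with the adjoint Reidemeister torsion) are precisely the reasons the conjecture remains open. As a summary of the state of the art your text is accurate, but it should not be presented as a proof or proof proposal for the conjecture; at most it is a discussion of why the known approach does not yet yield a general argument.
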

\par
The refinement of Kashaev's conjecture is proved for hyperbolic knots with crossings less than or equal to seven ($4_1$ by J.~Andersen and S.~Hansen \cite{Andersen/Hansen:JKNOT2006}, $5_2$ by Ohtsuki \cite{Ohtsuki:QT2016}, $6_1$, $6_2$, and $6_3$ by Ohtsuki and Yokota \cite{Ohtsuki/Yokota:MATPC2018}, and $7_2,7_3,\cdots,7_7$ by Ohtsuki \cite{Ohtsuki:INTJM62017}).
\par
Notice that we can replace $J_N(K;e^{2\pi\i/N})$ with Kashaev's invariant $\langle L\rangle_N$ in Conjectures~\ref{conj:VC}, \ref{conj:CVC}, and \ref{conj:RVC}.
\par
Since the colored Jones polynomial has $q$ as a complex parameter, we can perturb $2\pi\i$ in $J_N(K;e^{2\pi\i/N})$ by adding a complex number $u$.
So we are now interesting in the asymptotic behavior of $J_N(e^{(2\pi\i+u)/N})$, or more generally  $J_N(e^{(2p\pi\i+u)/N})$ ($p\in\Z$) as $N\to\infty$.
\par
Letting $(\mu,\lambda)$ be the standard meridian/longitude pair of $K\subset S^3$, consider an irreducible representation $\rho_u\colon\pi_1(S^3\setminus{K})\to\SL(2;\C)$ parametrized by $u$ such that
\begin{equation*}
  \rho_u(\mu)
  :=
  \begin{pmatrix}
    e^{u/2}&\ast\\0&e^{-u/2}
  \end{pmatrix},
  \quad
  \rho_u(\lambda)
  :=
  \begin{pmatrix}
    -e^{-v(u)/2}&\ast\\0&-e^{-v(u)/2}
  \end{pmatrix}.
\end{equation*}
Here we choose the longitude so that the linking number between $\lambda$ and $K$ is zero.
Then one can define the homological adjoint Reidemeister torsion of $\rho_u$ associated with $\mu$, which we denote by $T_{K}(\rho_u)$, and the $\mathrm{PSL}(2;\C)$ Chern--Simons invariant of $\rho_u$ associated with $\bigl(u,v(u)\bigr)$, which we denote by $\CS_{K}(\rho_u;u,v(u))$.
If $K$ is hyperbolic and $\rho_0$ is the holonomy representation, then we have $\CS_K(\rho_0;0,0)=\i\Vol(K)-\CS^{\rm{SO}(3)}(K)$.
See for example \cite[Chapter~5]{Murakami/Yokota:2018}.
\par
The following conjecture is proposed in \cite{Murakami:JTOP2013} (see also \cite{Gukov/Murakami:FIC2008,Dimofte/Gukov:Columbia}).
\begin{conj}[Parametrization of the volume conjecture]\label{conj:PVC}
For a hyperbolic knot $K$, there exists a neighborhood $U$ of $0$ in $\C$ such that for $u\in U\setminus{\pi\i\Q}$ such that
\begin{multline}
  J_N\left(K;e^{(2\pi\i+u)/N}\right)
  \\
  \underset{N\to\infty}{\sim}
  \frac{\sqrt{-\pi}}{2\sinh(u/2)}
  T_{K}(\rho_u)^{-1/2}
  \left(\frac{N}{2\pi\i+u}\right)^{1/2}
  \exp\left(\frac{S_K(u)}{2\pi\i+u}N\right),
\end{multline}
where $T_K(\rho_u)$ is the adjoint Reidemeister torsion, and $\CS_{u,v(u)}(\rho_u):=S_K(u)-u\pi\i-uv(u)/4$ is the Chern--Simons invariant defined as above.
Moreover we have $v(u)=2\frac{d}{d\,u}S_K(u)-2\pi\i$.
\end{conj}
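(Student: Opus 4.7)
The plan is to proceed by the saddle point method applied to an integral representation of $J_N(K; e^{(2\pi\i+u)/N})$. For the figure-eight knot---and more generally for any knot for which one has an R-matrix state sum built from the quantum dilogarithm---one rewrites the colored Jones polynomial as a finite sum of ratios of $q$-Pochhammer symbols; substituting $q = e^{(2\pi\i+u)/N}$ and invoking the classical asymptotic expansion of the quantum dilogarithm converts this sum, after a Poisson-style resummation, into an integral of the schematic form
\[
  J_N\bigl(K; e^{(2\pi\i+u)/N}\bigr)
  \sim
  \frac{1}{2\pi\i}\int_{C}\psi_u(w)\exp\left(\frac{N}{2\pi\i+u}\Phi_u(w)\right)dw,
\]
where $\Phi_u$ is a potential function built from classical dilogarithms and $\psi_u$ collects the Stirling-type correction.

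The first key step is to locate the dominant critical point $w_0(u)$ of $\Phi_u$. One verifies that the equation $\partial_w\Phi_u(w)=0$ coincides with the hyperbolic gluing equation for an ideal triangulation of $S^3\setminus K$ deformed by the meridian eigenvalue $e^{u/2}$, so that the branch of critical points passing through the complete structure at $u=0$ yields precisely the family $\rho_u$. Evaluating $\Phi_u$ at $w_0(u)$ then recovers, up to the elementary term $-u\pi\i - uv(u)/4$ coming from the longitude normalization, the complex Chern--Simons invariant $\CS_K(\rho_u;u,v(u))$, which is exactly the content of the exponential factor in the conjecture. The identity $v(u)=2 S_K'(u)-2\pi\i$ is a consequence of the Neumann--Zagier potential function formalism.

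Next I would apply the Laplace method around $w_0(u)$. The Gaussian integral contributes $\sqrt{2\pi(2\pi\i+u)/\bigl(N\Phi_u''(w_0)\bigr)}$, while $\psi_u(w_0(u))$ produces an elementary factor involving $\sinh(u/2)$. Combining the Hessian $\Phi_u''(w_0(u))$ with Porti's formula expressing the adjoint Reidemeister torsion $T_K(\rho_u)$ as the Hessian of the Neumann--Zagier potential yields the $T_K(\rho_u)^{-1/2}$ prefactor and, together with $\psi_u(w_0(u))$, assembles to $\frac{\sqrt{-\pi}}{2\sinh(u/2)}T_K(\rho_u)^{-1/2}\bigl(N/(2\pi\i+u)\bigr)^{1/2}$.

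The main obstacle is the uniform analytic control of the sum. One must bound the tails away from $w_0(u)$ uniformly in $N$ and in $u$ in a neighborhood of $0$; the hypothesis $u\notin\pi\i\Q$ is used to ensure that $q=e^{(2\pi\i+u)/N}$ stays bounded away from roots of unity in a controlled manner, so that the quantum dilogarithms in the summand have no small denominators. A second delicate point is the Morse-theoretic deformation of the real summation contour into a steepest-descent path through $w_0(u)$; verifying that no competing critical point dominates, and that the geometric critical point is the one reached by the descent, is the hardest step and the principal source of technical work in Ohtsuki-type proofs for knots with few crossings.
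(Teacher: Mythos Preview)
The statement you are attempting to prove is a \emph{conjecture}, not a theorem: the paper explicitly labels it as such and attributes it to \cite{Murakami:JTOP2013}. The paper does not contain a proof of Conjecture~\ref{conj:PVC}; rather, it cites partial results for the figure-eight knot (Theorem~\ref{thm:Murakami/Yokota}, Theorem~\ref{thm:small_u}) and then proves its own main theorem (Theorem~\ref{thm:main}), which is a further special case with $u>\kappa$ real and $p\ge 1$. So there is no ``paper's own proof'' to compare against, and your proposal should be read as a heuristic sketch of why the conjecture is plausible, not as a proof.

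That said, your outline correctly identifies the architecture that is used in all known special cases, including this paper: rewrite the colored Jones sum via quantum dilogarithms, pass to an integral by a Poisson-type argument, and apply the saddle point method at a critical point of a dilogarithm potential whose critical equation is the gluing/character-variety equation. Where your sketch is too optimistic is precisely in the steps you flag as ``delicate.'' For a general hyperbolic knot one does not currently know how to (i) construct a potential $\Phi_u$ with the required analytic properties from an arbitrary state sum, (ii) prove that the geometric critical point dominates and that the real contour can be pushed to a steepest-descent contour through it, or (iii) control the tails uniformly. Even for the figure-eight knot, the bulk of this paper (Sections~\ref{sec:Xi}--\ref{sec:Poisson} and Appendix~\ref{sec:P1}) is devoted to exactly these contour-deformation and domination estimates, and they rely heavily on the explicit one-variable formula \eqref{eq:Jones}. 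Your proposal therefore names the right obstacles but does not overcome them; as it stands it is a program, not a proof, and the conjecture remains open.
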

\par
A weaker version of Conjecture~\ref{conj:PVC} for the figure-eight knot was proved by Yokota and the author \cite{Murakami/Yokota:JREIA2007}.
\begin{thm}\label{thm:Murakami/Yokota}
There exists a neighborhood $U\subset\C$ of $0$ such that if $u\in U\setminus\pi\i\Q$, then for the figure-eight knot $\FE$ we have
\begin{equation*}
  \lim_{N\to\infty}\frac{\log J_N(\FE;e^{(2\pi\i+u)/N})}{N}
  =
  S_{\FE}(u),
\end{equation*}
where we put
\begin{align}
  S_{\FE}(u)
  &:=
  \Li_2\left(e^{-u-\varphi(u)}\right)-\Li_2\left(e^{-u+\varphi(u)}\right)
  +
  u\bigl(\varphi(u)+2\pi\i\bigr),
  \label{eq:S_def}
  \\
  \varphi(u)
  &:=
  \arccosh(\cosh{u}-1/2)
  \label{eq:phi_def}
\end{align}
with $\Li_2(z)$ the dilogarithm function: $\Li_2(z):=-\int_{0}^{z}\frac{\log(1-w)}{w}\,dw$.
\end{thm}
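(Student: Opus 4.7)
The plan is to base the proof on Habiro's explicit cyclotomic expansion
\begin{equation*}
  J_N(\FE;q)=\sum_{k=0}^{N-1}\prod_{j=1}^{k}\bigl(q^{(N-j)/2}-q^{-(N-j)/2}\bigr)\bigl(q^{(N+j)/2}-q^{-(N+j)/2}\bigr),
\end{equation*}
specialise to $q=\exp(\xi/N)$ with $\xi:=2\pi\i+u$, and carry out a Laplace (saddle-point) analysis of the resulting finite sum. Using $e^{\xi/2}=-e^{u/2}$ together with the $\sinh$ product-to-sum identity, each factor collapses to $2\cosh u-2\cosh(\xi j/N)$, which factorises as $-e^{\xi j/N}\bigl(1-e^{u-\xi j/N}\bigr)\bigl(1-e^{-u-\xi j/N}\bigr)$, a form convenient for taking logarithms.

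Taking the logarithm of the $k$-th summand and summing over $j$, one obtains a quadratic-in-$k$ term $\sim\xi k^{2}/(2N)$, an $\i\pi k$ contribution from the sign $-1$, and two Riemann sums of the form $\sum_{j=1}^{k}\log\bigl(1-e^{\pm u-\xi j/N}\bigr)$. Via the antiderivative identity $\int\log(1-e^{-s})\,ds=\Li_2(e^{-s})$ together with Euler--Maclaurin error control, each such Riemann sum converges to $(N/\xi)\Li_2(\cdot)+O(\log N)$. Consequently the $k$-th summand is asymptotic to $\exp\bigl(\tfrac{N}{\xi}V_u(k/N)+O(\log N)\bigr)$ for an explicit holomorphic potential $V_u(t)$ built from a quadratic in $t$, an $\i\pi\xi t$ term, two dilogarithms in $e^{\pm u-\xi t}$, and $u$-dependent constants. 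Applying the Laplace method to $\sum_{k}\exp\bigl(\tfrac{N}{\xi}V_u(k/N)\bigr)$ then yields $\log J_N/N\to V_u(t_\ast)/\xi$ at the dominant saddle.

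The equation $V_u'(t_\ast)=0$ reduces, upon setting $w:=e^{-\xi t}$, to the quadratic $w^{2}-(2\cosh u-1)w+1=0$, whose coefficient $2\cosh u-1=2\cosh\varphi(u)$ reproduces exactly \eqref{eq:phi_def}, and the branch continuous from the discrete faithful hyperbolic holonomy at $u=0$ singles out the relevant saddle $w_\ast=e^{\pm\varphi(u)}$. Substituting back and applying the dilogarithm inversion $\Li_2(z)+\Li_2(1/z)=-\pi^{2}/6-\tfrac12\log^{2}(-z)$ rewrites the sum of two dilogarithms at the saddle as a difference; the quadratic, the $\i\pi\xi t_\ast$ term, and the constants $-\Li_2(e^{\pm u})$ then combine to give $u(\varphi(u)+2\pi\i)$, producing $\Li_2(e^{-u-\varphi(u)})-\Li_2(e^{-u+\varphi(u)})+u(\varphi(u)+2\pi\i)=S_{\FE}(u)$ of \eqref{eq:S_def}.

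The main obstacle is the rigorous justification of the saddle-point step, since the exponent is complex and the integrand has logarithmic singularities whenever $\xi j/N\in\pm u+2\pi\i\Z$. For $u$ in a sufficiently small neighbourhood $U\subset\C$ of $0$ with $u\notin\pi\i\Q$, these singularities remain uniformly bounded away from a steepest-descent contour through $t_\ast(u)$; this non-accumulation is exactly what the hypothesis $u\notin\pi\i\Q$ guarantees. The bulk of the technical work is therefore to (i) convert the finite sum to a contour integral, via Poisson summation or a direct contour deformation, (ii) push the contour onto a steepest-descent path through $t_\ast$, and (iii) bound the contributions of the competing saddle $w\mapsto w^{-1}$, the tails, and the passes near the $\log$-singular points by $o(N)$ in the logarithm, uniformly in $u\in U\setminus\pi\i\Q$.
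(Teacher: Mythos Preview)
The paper does not prove this theorem; it is quoted as a prior result of Murakami--Yokota \cite{Murakami/Yokota:JREIA2007}, so there is no in-paper proof to compare against directly. Your outline is nonetheless along the correct lines and closely parallels the machinery the present paper develops in Sections~\ref{sec:dilog}--\ref{sec:Poisson} to prove its main Theorem~\ref{thm:main} (the case $u>\kappa$, $p\ge1$): start from Habiro's sum, rewrite the summands via a dilogarithmic potential, use Poisson summation to pass to an integral, and apply the saddle-point method at a critical point governed by $\cosh(\xi t)=\cosh u-\tfrac12$, i.e.\ by $\varphi(u)$.

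The gap is that what you have written is a programme, not a proof. You explicitly defer the three hard steps (i)--(iii) to unspecified ``technical work'', and these are precisely where the content lies. The present paper's Sections~\ref{sec:Xi} and \ref{sec:Poisson} and Appendix~\ref{sec:P1} show how much labour those steps demand even in the real case $u>\kappa$: one must locate the saddle relative to carefully constructed polygonal regions, prove pathwise inequalities for $\Re G_m$ along explicit segments in order to verify the hypotheses of Propositions~\ref{prop:Poisson} and~\ref{prop:saddle}, control the competing saddle, and handle endpoint behaviour near $k/N\approx 1$ separately (Lemma~\ref{lem:g_G_p-1}). For complex $u$ near $0$ the analogous estimates, carried out in \cite{Murakami/Yokota:JREIA2007}, are of comparable delicacy, and you have not supplied any of them. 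As it stands the proposal identifies the right potential and the right saddle but does not establish the limit.
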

\par
In the following, we study the asymptotic behavior of $J_N(\FE;e^{(u+2p\pi\i)/N})$ as $N\to\infty$ for the figure-eight knot $\FE$ with $u\in\R$ and $p\in\Z$.
\par
The author proved Conjecture~\ref{conj:PVC} for the figure-eight knot $\FE$ when $|u|$ is small and $p\ne0$.
\begin{thm}\label{thm:small_u}
Let $u$ a real number with $0<u<\kappa:=\arccosh(3/2)$.
Putting $\xi:=2p\pi\i+u$ for an integer $p\ge1$, we have
\begin{multline*}
  J_N\left(\FE;e^{\xi/N}\right)
  \\
  \underset{N\to\infty}{\sim}
  J_p\left(\FE;e^{4N\pi^2/\xi}\right)
  \frac{\sqrt{-\pi}}{2\sinh(u/2)}
  T_{\FE}(u)^{-1/2}
  \left(\frac{N}{\xi}\right)^{1/2}
  \exp\left(\frac{S_{\FE}(u)}{\xi}N\right),
\end{multline*}
where we put $S_{\FE}(u)$ as in \eqref{eq:S_def}, and
\begin{equation}\label{eq:T_def}
  T_{\FE}(\rho_u)
  :=
  \frac{\sqrt{(2\cosh{u}+1)(2\cosh{u}-3)}}{2}.
\end{equation}
Moreover, $T_{\FE}(\rho_u)$ is the homological adjoint Reidemester torsion, and the Chern--Simons invariant is given by $S_{\FE}(u)$ as follows.
\begin{equation*}
  \CS_{u,v(u)}(\rho_u)
  =
  S_{\FE}(u)-u\pi\i-uv(u)/4
\end{equation*}
with
\begin{equation*}
  v(u)
  =
  2\frac{d}{d\,u}S_{\FE}(u)-2\pi\i.
\end{equation*}
\end{thm}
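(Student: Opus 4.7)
My plan is to begin from Habiro's sum formula
\[
J_N(\FE;q)
=
\sum_{k=0}^{N-1}
\prod_{j=1}^{k}
\bigl(q^{(N-j)/2}-q^{-(N-j)/2}\bigr)\bigl(q^{(N+j)/2}-q^{-(N+j)/2}\bigr),
\]
substitute $q=e^{\xi/N}$ with $\xi=u+2p\pi\i$, and rewrite each summand in exponential form as $\exp\bigl(N\,\Phi_N(k/N)/\xi\bigr)$. Using the standard dilogarithm expansion of $\log(q^{a}-q^{-a})$ for $q$ near the unit circle, $\Phi_N(x)$ converges uniformly on compact subsets (avoiding the zeros of the hyperbolic sines) to a potential whose value at its critical point will simplify to $S_\FE(u)$. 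Because $p\ge 1$, the argument $\xi x$ of the relevant dilogarithms winds around the origin as $x$ varies in $[0,1]$, so the branches of $\Li_2$ must be chosen carefully.

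Next, I would apply the Poisson summation formula to rewrite the $k$-sum as $\sum_{n\in\Z}\int e^{N(\Phi(x)-2\pi\i n x)/\xi}\,dx$, and analyze each integral by the saddle point method. The saddle point equation reduces to the A-polynomial-like relation $\cosh\varphi(u)=\cosh u-1/2$ of \eqref{eq:phi_def}, and the corresponding critical value reproduces $S_\FE(u)$ of \eqref{eq:S_def}. The Gaussian contribution produces $\sqrt{N/\xi}$ combined with $1/\sqrt{-\Phi''(x_\ast)}$, which after simplification becomes $\frac{\sqrt{-\pi}}{2\sinh(u/2)}T_\FE(\rho_u)^{-1/2}$ with $T_\FE(\rho_u)$ as in \eqref{eq:T_def}; the $2\sinh(u/2)$ denominator arises from the boundary behavior of the summand at $k=0$. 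The contributions from the $n\neq 0$ saddles then need to be reorganized, using the modular-like transformation $\xi\leftrightarrow 4N\pi^2/\xi$ for the Faddeev quantum dilogarithm, into precisely the factor $J_p(\FE;e^{4N\pi^2/\xi})$. This reorganization---showing that the $p$ auxiliary saddles resum into a Kashaev invariant at the modular-dual parameter---is, I expect, the main technical obstacle, as it requires careful contour deformation and uniform control of error terms across the poles where the dilogarithm asymptotic changes sheet.

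Finally, I would verify the geometric identifications. For the adjoint Reidemeister torsion, I would compute $T_\FE(\rho_u)$ by applying Porti's formula to the standard two-generator Wirtinger presentation of $\pi_1(S^3\setminus\FE)$ and check it matches \eqref{eq:T_def}. For the Chern--Simons side, differentiating \eqref{eq:S_def} and using the saddle point relation yields $v(u)=2\frac{d}{du}S_\FE(u)-2\pi\i$; integrating the Kirk--Klassen variation formula along the path $\{\rho_u\}$ starting from the complete hyperbolic structure then produces $\CS_{u,v(u)}(\rho_u)=S_\FE(u)-u\pi\i-uv(u)/4$, using the base-point identification $\CS_\FE(\rho_0;0,0)=\i\Vol(\FE)-\CS^{\rm{SO}(3)}(\FE)$ recalled in the introduction.
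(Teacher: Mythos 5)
Your overall strategy --- start from Habiro's formula, pass to a dilogarithm potential, apply Poisson summation and the saddle point method, and then invoke Porti's torsion formula and the Kirk--Klassen variation formula --- matches the framework used here (Theorem~\ref{thm:small_u} is cited from prior work, but the analogous Theorem~\ref{thm:main} is proved in Sections~\ref{sec:sum}--\ref{sec:Poisson} by exactly this machinery). However, there is a genuine gap in your account of where the factor $J_p\left(\FE;e^{4N\pi^2/\xi}\right)$ comes from. You propose to obtain it by resumming the $n\neq0$ Poisson contributions via a modular transformation of the quantum dilogarithm. That is not how the argument works, and it is not clear it can be made to work: because $\xi=u+2p\pi\i$ carries the integer $p$, the quantum-dilogarithm potential $f_N$ is defined only on a strip whose $\gamma$-real-part has width roughly $1$, not $p$, so a single holomorphic phase over all of $[0,1]$ is not available for a global Poisson summation. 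Instead the paper splits the $k$-sum into $p$ blocks $mN/p<k<(m+1)/p\cdot N$, shifts each block by $m/\gamma$ to bring it into the domain of $f_N$, and this shifting (together with the quasi-periodicity of the Habiro summand under $l\mapsto l+N/p$) produces explicit branch-correction prefactors $\beta_{p,m}$; see \eqref{eq:Jones_sum} and \eqref{eq:def_beta}. The identity $\sum_{m=0}^{p-1}\beta_{p,m}=J_p\left(\FE;e^{4N\pi^2/\xi}\right)$ is then an algebraic recognition of Habiro's formula at the modular-dual parameter, established \emph{before} any Poisson summation is performed. Poisson summation (Proposition~\ref{prop:Poisson}) is applied only inside each block over a single interval, and there the $n\neq0$ terms are shown to be exponentially small error terms --- they are not the source of the $J_p$ factor. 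Your plan acknowledges the winding of the $\Li_2$ argument, but the block decomposition with the $\beta_{p,m}$ bookkeeping is precisely how that winding is resolved, and without it the $J_p$ prefactor does not emerge.

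Two smaller points. First, the factor $\frac{1}{2\sinh(u/2)}$ comes packaged as $\frac{1-e^{2pN\pi\i/\gamma}}{2\sinh(u/2)}$ in the same decomposition, which is consistent with your ``$k=0$ boundary'' intuition but again lives inside the $\beta_{p,m}$ machinery. Second, for $0<u<\kappa$ the quantity $\varphi(u)$ is purely imaginary rather than real, so $(2\cosh u-3)<0$ and the sign of $G_m''(\sigma_m)$ flips compared with the $u>\kappa$ case; this is exactly where the prefactor $\sqrt{-\pi}$ (rather than the $\sqrt{\pi}$ of Theorem~\ref{thm:main}) comes from, and your saddle-point step should make this choice of branch explicit. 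The geometric identifications in your final paragraph are correct in outline and match Section~\ref{sec:CS}.
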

Note that the formula of the Reidemeister torsion is given in \cite{Porti:MAMCAU1997}.
\begin{rem}
In Theorem~\ref{thm:small_u}, we assume that $u$ and $p$ are positive.
Since $\FE$ is amphicheiral, that is, $\FE$ is equivalent to its mirror image, we have $J_N(\FE;q)=J_N(\FE;q^{-1})$.
Therefore if $u$ and $p$ are real, we have $J_N(\FE;e^{-u-2p\pi\i})=J_N(\FE;e^{u+2\pi\i})$ and $J_N(\FE;e^{-u+2p\pi\i})=J_N(\FE;e^{u-2p\pi\i})=\overline{J_N(\FE;e^{u+2p\pi\i})}$, where $\overline{z}$ is the complex conjugate of $z$.
\end{rem}
Therefore, in the following we also assume that both $u$ and $p$ are positive.
\par
When $p=0$, it is proved that $J_N(\FE;e^{u/N})$ converges \cite{Murakami:JPJGT2007} if $|u|$ is small.
In fact, if $0<u<\kappa$ we have
\begin{equation*}
  \lim_{N\to\infty}J_N(\FE;e^{u/N})
  =
  \frac{1}{\Delta(\FE;e^{u})},
\end{equation*}
where $\Delta(\FE;t)=-t+3-t^{-1}$ is the Alexander polynomial.
See \cite{Murakami:JPJGT2007} for a wider region of $u$ where the colored Jones polynomial converges.
It is known that for any knot $K$ there exists a neighborhood $U_K$ of $0$ such that $J_N(K;e^{u/N})$ converges to $1/\Delta(K;e^{u})$ if $u\in U_K$ \cite{Garoufalidis/Le:GEOTO2011}.
\par
The next theorem describes the case where $u=\kappa$ \cite{Murakami:arXiv2023}.
\begin{thm}\label{thm:kappa}
Putting $\xi:=\kappa+2p\pi\i$ for $p\ge1$, we have
\begin{multline*}
  J_N\left(\FE;e^{\xi/N}\right)
  \\
  \underset{N\to\infty}{\sim}
  J_p\left(\FE;e^{4N\pi^2/\xi}\right)
  \frac{\Gamma(1/3)e^{2\pi\i/6}}{3^{1/6}}
  \left(\frac{N}{\xi}\right)^{2/3}
  \exp\left(\frac{S_{\FE}(\kappa)}{\xi}N\right),
\end{multline*}
where $\Gamma(x)$ is the gamma function.
Note that $S_{\FE}(\kappa)=2\kappa\pi\i$ since $\varphi(\kappa)=0$ from \eqref{eq:phi_def}.
\end{thm}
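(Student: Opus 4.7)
The plan is to adapt the proof of Theorem~\ref{thm:small_u} by replacing its Gaussian saddle-point analysis with a cubic (Airy-type) analysis, because at $u=\kappa$ the two saddle points of the governing potential coalesce into a single degenerate one. First, starting from Habiro's sum formula for $J_N(\FE;q)$ and setting $q=e^{\xi/N}$, rewrite the sum via the quantum dilogarithm as
\begin{equation*}
  J_N(\FE;e^{\xi/N})
  =
  \frac{N}{\xi}\int_{C}\exp\Bigl(\frac{N}{\xi}V(z;\kappa)\Bigr)\,dz + (\text{exponentially small})
\end{equation*}
for an appropriate contour $C$ and potential $V(z;u)$ whose critical points are $z=-u\pm\varphi(u)$.

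Next, as in \cite{Murakami/Yokota:JREIA2007} and the proof of Theorem~\ref{thm:small_u}, deform $C$ and extract the factor $J_p(\FE;e^{4N\pi^2/\xi})$: for $p\ge1$ the contour crosses $p$ poles of the integrand whose residues combine, via the modular property of the quantum dilogarithm, into exactly this factor. What remains is a saddle-point integral with the potential $V(\,\cdot\,;\kappa)$.

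At $u=\kappa$ one has $\cosh\kappa=3/2$, whence $\varphi(\kappa)=0$ and the two saddles coalesce into a single degenerate point $z_0$ with $V''(z_0;\kappa)=0$ and $V'''(z_0;\kappa)\ne0$. Taylor expanding
\begin{equation*}
  V(z;\kappa)=S_{\FE}(\kappa)+\frac{V'''(z_0;\kappa)}{6}(z-z_0)^{3}+O\bigl((z-z_0)^{4}\bigr)
\end{equation*}
and deforming the contour through $z_0$ along the appropriate steepest-descent ray, the leading behaviour is governed by the cubic integral which evaluates to
\begin{equation*}
  \frac{\Gamma(1/3)}{3}\Bigl(\frac{6\,\xi}{N\,V'''(z_0;\kappa)}\Bigr)^{1/3}
\end{equation*}
times an angular factor $e^{2\pi\i/6}$ coming from the direction of that ray. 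A direct calculation of $V'''(z_0;\kappa)$ then yields the constant $3^{1/6}$; combining with the $N/\xi$ prefactor gives the announced $(N/\xi)^{2/3}$ scaling, and the saddle value $V(z_0;\kappa)=S_{\FE}(\kappa)=2\kappa\pi\i$ produces the exponential factor.

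The main obstacle is the rigorous degenerate saddle analysis. A cubic critical point has three steepest-descent rays, and one must verify that the integration contour can be homotoped to coincide with exactly the correct one, without crossing other singularities of the integrand or interfering with the residue computation that produces the $J_p$ factor. Controlling the $O((z-z_0)^{4})$ remainder uniformly in $N$ is also delicate, since near a degenerate saddle the usual Laplace-method bounds degrade from $N^{-1}$ to $N^{-1/3}$, so every sub-leading term must be bounded more carefully than in the non-degenerate case of Theorem~\ref{thm:small_u}.
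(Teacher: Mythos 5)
Your overall strategy — quantum dilogarithm representation followed by Poisson summation and a degenerate (Airy-type) saddle-point analysis — is the right idea: you have correctly identified that $\varphi(\kappa)=0$ forces $G_m''(\sigma_m)=-\xi\sqrt{(2\cosh u+1)(2\cosh u-3)}$ to vanish while $G_m'''(\sigma_m)=-2\xi^2\ne0$, which produces the $N^{2/3}$ scaling and the $\Gamma(1/3)$ constant. However, your mechanism for extracting the $J_p(\FE;e^{4N\pi^2/\xi})$ prefactor is genuinely different from what the cited proof \cite{Murakami:arXiv2023} does, and that difference hides a real issue. You propose a \emph{single} contour integral whose contour is then deformed across $p$ poles, with the residues combining into $J_p$. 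The paper instead works with the discrete sum \eqref{eq:Jones_sum}, splitting the $k$-sum into $p$ blocks indexed by $m=0,\dots,p-1$ and shifting the $m$-th block by $m/\gamma$ so that every block lands inside the strip $\{-1+\Re\gamma/(2N)<\Re z<2-\Re\gamma/(2N)\}$, which is where $T_N$ is actually defined. Each shift generates an algebraic prefactor $\beta_{p,m}$ (Equation~\eqref{eq:def_beta}), and the identity $\sum_{m=0}^{p-1}\beta_{p,m}=J_p(\FE;e^{4N\pi^2/\xi})$ produces the $J_p$ factor \emph{before} any contour deformation; all $p$ blocks then carry the same (degenerate) saddle value at $\sigma_m$. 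Your single-integral version would require an analytic continuation of $f_N$ across a strip of width $\sim p$, which the quantum dilogarithm does not directly provide — it is exactly that continuation which generates the $\beta_{p,m}$-type factors, and your sketch leaves it implicit. Beyond that, you are right that the real technical burden is a dedicated analogue of Proposition~\ref{prop:saddle} for a cubic critical point, including showing the integration contour is homotopic inside the domain to the correct two of the three steepest-descent rays (the analogue of the region analysis in Section~\ref{sec:Xi}), and accepting that the remainder estimate weakens to $O(N^{-1/3})$; these are handled in \cite{Murakami:arXiv2023} by explicit geometric control of the region where $\Re G_m<\Re F(\sigma_0)$, which your proposal acknowledges but does not carry out.
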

When $p=0$, $J_N(\FE;e^{\kappa/N})$ grows polynomially as follows \cite[Theorem~1.1]{Hikami/Murakami:COMCM2008}:
\begin{equation*}
  J_N\left(\FE;e^{\kappa/N}\right)
  \\
  \underset{N\to\infty}{\sim}
  \frac{\Gamma(1/3)}{3^{2/3}}
  \left(\frac{N}{\kappa}\right)^{2/3}.
\end{equation*}
\par
The main result in the paper is to prove a result similar to Theorems~\ref{thm:small_u} and \ref{thm:kappa} for the case $u>\kappa$.
\begin{thm}\label{thm:main}
For a positive integer $p$ and a real number $u>\kappa:=\arccosh(3/2)$, putting $\xi:=u+2p\pi\i$, we have
\begin{multline}\label{eq:main}
  J_N\left(\FE;e^{\xi/N}\right)
  \\
  \underset{N\to\infty}{\sim}
  J_p\left(\FE;e^{4N\pi^2/\xi}\right)
  \frac{\sqrt{\pi}}{2\sinh(u/2)}
  T_{\FE}(u)^{-1/2}
  \left(\frac{N}{\xi}\right)^{1/2}
  \exp\left(\frac{S_{\FE}(u)}{\xi}N\right)
\end{multline}
as $N\to\infty$, where $T_{\FE}(u)$ is the homological adjoint Reidemeister torsion, and $\CS_{u,v(u)}\bigl(\FE;\rho_u\big):=S_{\FE}(u)-\pi\i u-\frac{1}{4}uv(u)$ is the Chern--Simons invariant.
Moreover, we have
\begin{equation}\label{eq:v_main}
  v(u)
  =
  2\frac{d}{d\,u}S_{\FE}(u)-2\pi\i.
\end{equation}
\end{thm}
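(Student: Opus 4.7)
The plan is to adapt the saddle-point analysis used in the proofs of Theorem~\ref{thm:Murakami/Yokota} and Theorem~\ref{thm:small_u}, carefully tracking the changes that arise once the parameter $u$ crosses the critical value $\kappa=\arccosh(3/2)$. Above this value the saddle associated with the geometric representation moves onto the real axis, reflecting the fact that $\rho_u$ becomes conjugate into $\SL(2;\R)$ rather than being a genuinely complex $\SL(2;\C)$ representation, and this in turn forces a different contour deformation from the one used for $u<\kappa$.

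Starting from Habiro--Le's single-sum formula
\begin{equation*}
  J_N(\FE;q)
  =
  \sum_{k=0}^{N-1}\prod_{j=1}^{k}(q^{(N-j)/2}-q^{-(N-j)/2})(q^{(N+j)/2}-q^{-(N+j)/2}),
\end{equation*}
I would substitute $q=e^{\xi/N}$, rewrite the summand as $\exp\bigl(N\Phi_N(k/N)\bigr)$, and apply the classical asymptotic $\sum_{j=1}^{k}\log(1-e^{\xi j/N})\sim\tfrac{N}{\xi}\bigl(\Li_2(1)-\Li_2(e^{\xi k/N})\bigr)$ to see that the limiting potential, up to boundary corrections, is built from dilogarithms in $e^{\pm z}$ with saddle equation $\cosh z=\cosh u-\tfrac{1}{2}$. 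For $u>\kappa$ this equation has real roots $\pm\varphi(u)$, and evaluating the potential at the positive root recovers the phase $S_{\FE}(u)/\xi$ appearing in the exponential of \eqref{eq:main}.

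Next I would apply Poisson summation to the rescaled sum in order to separate off the $p\,$th Fourier mode. As in \cite{Murakami:arXiv2023} and the author's other works, the modular quasi-periodicity of Faddeev's quantum dilogarithm under $\tau\mapsto-1/\tau$ produces precisely the factor $J_p(\FE;e^{4N\pi^2/\xi})$. The remaining Gaussian integration around $z_\ast=\varphi(u)$ yields a prefactor proportional to $\sqrt{2\pi/(NV''(z_\ast))}$, and a direct computation of $V''(\varphi(u))$, combined with Porti's formula in \cite{Porti:MAMCAU1997}, identifies this with $\sqrt{\pi}\,T_{\FE}(u)^{-1/2}(N/\xi)^{1/2}/(2\sinh(u/2))$, as required. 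The Chern--Simons identification and the derivative relation \eqref{eq:v_main} then follow by differentiating $S_{\FE}(u)$ with respect to $u$ and matching with the $A$-polynomial/character variety computation carried out in \cite{Murakami:JTOP2013}.

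The main obstacle is the contour deformation. For $u<\kappa$ the saddles form a complex conjugate pair and the standard ``hyperbolic'' steepest-descent contour passes through the geometric one. As $u\uparrow\kappa$ the two saddles coalesce at $z=0$, which is exactly the mechanism behind the Airy-type $\Gamma(1/3)$ asymptotics of Theorem~\ref{thm:kappa}. For $u>\kappa$ they then re-emerge as two distinct real saddles at $\pm\varphi(u)$, and one must verify that the original contour (after appropriate shifts off the real axis to avoid the singularities of $\log(1-e^{\xi z})$) can be deformed so as to cross exactly the positive saddle $+\varphi(u)$ along its vertical steepest-descent direction, without picking up Stokes contributions from $-\varphi(u)$ or from the branch cuts of the dilogarithm. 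Establishing rigorous error bounds then requires uniform control of the quantum dilogarithm along this deformed contour, which I would obtain from Faddeev's integral representation in the manner of \cite{Murakami/Yokota:JREIA2007} and \cite{Ohtsuki:QT2016}.
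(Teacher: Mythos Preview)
Your outline captures the high-level architecture (Habiro--Le sum, quantum dilogarithm, saddle point), but there are two concrete places where your description diverges from what actually has to happen, and one of them is a genuine gap.

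First, the appearance of $J_p\bigl(\FE;e^{4N\pi^2/\xi}\bigr)$ is \emph{not} produced by Poisson summation or by modular quasi-periodicity of the Faddeev dilogarithm. In the paper the sum over $0\le k\le N-1$ is first split into $p$ blocks $m/p<k/N<(m+1)/p$ ($m=0,\dots,p-1$); rewriting each summand via $T_N$ introduces explicit prefactors
\[
  \beta_{p,m}
  =
  e^{2mpN\pi\i/\gamma}
  \prod_{j=1}^{m}
  \bigl(1-e^{-2(p-j)N\pi\i/\gamma}\bigr)
  \bigl(1-e^{-2(p+j)N\pi\i/\gamma}\bigr),
\]
and the identity $\sum_{m=0}^{p-1}\beta_{p,m}=J_p(\FE;e^{4N\pi^2/\xi})$ is then a direct instance of Habiro's formula. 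Poisson summation is used later, and for a different purpose: it converts each block-sum $\sum_{m/p<k/N<(m+1)/p}e^{Ng_{N,m}}$ into an integral $N\int e^{Ng_{N,m}}\,dz$ plus an exponentially small error, under geometric hypotheses on the existence of paths $C_{\pm}$ in regions $R_m^{\pm}$. Your ``separate off the $p$th Fourier mode'' reading does not match this mechanism.

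Second, your discussion of the saddle geometry is off in a way that hides the real work. In the variable $z$ used in the paper the saddles are $\sigma_m=(\varphi(u)+2(m+1)\pi\i)/\xi$, and the decisive dichotomy is whether $\sigma_m$ lies above or below the real axis, equivalently whether $\varphi(u)/u<(m+1)/p$ or $\ge(m+1)/p$. When $\sigma_m$ is below or on the real axis, the corresponding block is negligible (this is a separate estimate, not a contour argument). When $\sigma_m$ is above, one must (a) verify the hypotheses of the Poisson summation proposition by exhibiting paths $C_{\pm}$ from $m/p$ to $(m+1)/p$ inside the sublevel regions $R_m^{\pm}$, and (b) verify the hypotheses of the saddle point proposition by exhibiting a path through $\sigma_m$ that stays in $W_m=\{\Re G_m<\Re F(\sigma_0)\}$. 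This is the content of an entire section plus an appendix: one has to locate several auxiliary points $P_i,Q_i,R_i$, prove monotonicity of $\Re G_m$ along various line segments, and handle a case split into five regimes for $u$. Your final paragraph acknowledges the contour deformation is the obstacle but frames it as avoiding ``Stokes contributions from $-\varphi(u)$'', which is not the issue; the issue is producing the paths $C_{\pm}$ and the path through $\sigma_m$ inside explicit polygonal regions, and none of that is addressed.
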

Note that the case where $p=0$ is already proved by A.~Tran and the author:
\begin{equation*}
  J_N(\FE;e^{u/N})
  \underset{N\to\infty}{\sim}
  \frac{\sqrt{\pi}}{2\sinh(u/2)}
  T_{\FE}(u)^{-1/2}\left(\frac{N}{u}\right)^{1/2}
  \exp\left(\frac{S_{\FE}(u)}{u}N\right).
\end{equation*}
\begin{rem}
When $u>\kappa$, $\varphi(u)$ is a positive real number since $\cosh{u}>\cosh{\kappa}=3/2$.
When $0<u<\kappa$ we define $\arccosh{x}$ for $1/2<x<1$ to be $\log(x-\i\sqrt{1-x^2})$ so that $\varphi(u)$ is purely imaginary with $-\pi/3<\Im\varphi(u)<0$.
See Remark~\cite[3.2]{Murakami:JTOP2013}.
\end{rem}
\par
The paper is organized as follows.
\par
In Section\ref{sec:dilog}, we introduce a quantum dilogarithm following \cite{Murakami:arXiv2023}.
Then we obtain a summation formula of the colored Jones polynomial in terms of the quantum dilogarithm in Section~\ref{sec:sum}.
We study the defining domain of the quantum dilogarithm in detail in Section~\ref{sec:Xi}.
In Section~\ref{sec:Poisson} we replace the sum into an integral by using the Poisson summation formula, and then we use the saddle point method to obtain the asymptotic formula \eqref{eq:main} in Theorem~\ref{thm:main}.
A topological interpretation of $T_{\FE}(\rho_u)$ and $S_{\FE}(u)$ appearing in \eqref{eq:main} is described in Section~\ref{sec:CS}.
Appendix~\ref{sec:P1} is devoted to a proof of a lemma used in Section~\ref{sec:Xi}.

\section{Quantum dilogarithm}\label{sec:dilog}
In this section, we summarize some results in \cite{Murakami/Tran:Takata,Murakami:arXiv2023}.
Details are omitted.
\par
Fix a complex number $\gamma$ in the fourth quadrant.
We will introduce a quantum dilogarithm following \cite{Faddeev:LETMP1995}.
See also \cite[(3.3)]{Kashaev:LETMP97}, \cite{Ohtsuki:QT2016}.
Put
\begin{equation}\label{eq:def_TN}
  T_{N}(z)
  :=
  \frac{1}{4}\int_{\Rpath}\frac{e^{(2z-1)t}}{t\sinh(t)\sinh(\gamma t/N)}\,dt
\end{equation}
for an integer $N>|\gamma|/\pi$, where $\Rpath:=(-\infty,-1]\cup\{w\in\C\mid|w|=1,\Im{w}\ge0\}\cup[1,\infty)$ with orientation from $-\infty$ to $\infty$.
Note that $\Rpath$ avoids the poles of the integrand.
We know that the integral \eqref{eq:def_TN} converges if $-\frac{\Re\gamma}{2N}<\Re{z}<1+\frac{\Re\gamma}{2N}$ \cite[Lemma~2.1]{Murakami:arXiv2023}.
\par
We define related integrals $\int_{\Rpath}\frac{e^{(2z-1)t}}{t^{k}\sinh(t)}\,dt$ for $k=0,1,2$.
The following lemma is proved in \cite[Lemma~2.1]{Murakami/Tran:Takata}.
\par
\begin{lem}\label{lem:integrals}
If $0<\Re{z}<1$, then the integrals $\int_{\Rpath}\frac{e^{(2z-1)t}}{t^{k}\sinh(t)}\,dt$ {\rm(}$k=0,1,2${\rm)} converge.
\end{lem}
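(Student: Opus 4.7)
The plan is to split the contour $\Rpath$ into three pieces---the two real rays $R_{-}:=(-\infty,-1]$ and $R_{+}:=[1,\infty)$, and the closed upper unit semicircle $C:=\{e^{\i\theta}:0\le\theta\le\pi\}$---and verify convergence on each piece separately. The decomposition respects the orientation from $-\infty$ to $+\infty$, and the only potential obstructions are (a) the zeros of $\sinh(t)$, which sit at $t\in\pi\i\Z$, and (b) the pole of $1/t^{k}$ at $t=0$; both of these lie strictly inside $C$ or at distances $\geq\pi$ from the origin, so they are avoided by $\Rpath$.

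On the compact arc $C$ the integrand is continuous: we have $|t|=1$ there, so $1/t^{k}$ is bounded for $k=0,1,2$, and $\sinh(t)$ has no zero on $C$ (the nearest zero $\i\pi$ is at distance $\pi-1>0$). Hence the integral over $C$ is finite for every $z\in\C$, without any restriction.

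For the unbounded rays I would use the elementary estimate $|\sinh(t)|^{-1}=O(e^{-|t|})$ as $|t|\to\infty$ along $\R$, together with $|e^{(2z-1)t}|=e^{(2\Re z-1)t}$. This gives
\begin{equation*}
  \left|\frac{e^{(2z-1)t}}{t^{k}\sinh(t)}\right|
  =
  O\!\left(\frac{e^{(2\Re z-2)t}}{|t|^{k}}\right)\quad(t\to+\infty),
\end{equation*}
which is integrable over $R_{+}$ exactly when $\Re z<1$, and symmetrically
\begin{equation*}
  \left|\frac{e^{(2z-1)t}}{t^{k}\sinh(t)}\right|
  =
  O\!\left(\frac{e^{2\Re z\,t}}{|t|^{k}}\right)\quad(t\to-\infty),
\end{equation*}
which is integrable over $R_{-}$ exactly when $\Re z>0$. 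Under the hypothesis $0<\Re z<1$ both tail conditions hold simultaneously, so each of the three pieces converges absolutely and the integral is well-defined.

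I do not anticipate any serious obstacle: the argument is a routine application of the comparison test, and the strip $0<\Re z<1$ is precisely what the exponential decay of $1/\sinh(t)$ on $R_{\pm}$ demands. The only matter requiring slight care is bookkeeping---checking that the factor $1/t^{k}$ introduces no singularity on $\Rpath$ (guaranteed by $|t|\geq 1$ throughout $R_{-}\cup C\cup R_{+}$) and that the imaginary-axis zeros of $\sinh(t)$ all lie off the contour (guaranteed since $|n\pi\i|\geq\pi>1$ for $n\neq 0$ while the only zero inside $|t|\leq 1$ is $t=0$, which $C$ detours around).
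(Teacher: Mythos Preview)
Your argument is correct. The paper does not supply its own proof of this lemma---it merely cites \cite[Lemma~2.1]{Murakami/Tran:Takata}---so there is nothing to compare against directly, but the route you take (split $\Rpath$ into the compact semicircle plus the two real rays, observe continuity on the arc, and use $|\sinh t|^{-1}=O(e^{-|t|})$ on $\R$ to get exponential decay of the integrand precisely when $0<\Re z<1$) is the standard one and is exactly what one would expect the cited proof to do.
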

We introduce the following three functions for $z$ with $0<\Re{z}<1$ using the three integrals above.
\begin{align*}
  \L_0(z)
  &:=
  \int_{\Rpath}\frac{e^{(2z-1)t}}{\sinh(\pi t)}\,dt,
  \\
  \L_1(z)
  &:=
  -\frac{1}{2}
  \int_{\Rpath}\frac{e^{(2z-1)t}}{t\sinh(\pi t)}dt,
  \\
  \L_2(z)
  &:=
  \frac{\pi\i}{2}
  \int_{\Rpath}\frac{e^{(2z-1)t}}{t^2\sinh(\pi t)}dt.
\end{align*}
These functions can be written in terms of well-known functions \cite[Lemma~2.5]{Murakami/Tran:Takata}.
\begin{lem}
The functions $\L_0(z)$, $\L_1(z)$, and $\L_2(z)$ satisfy the following equalities:
\begin{equation}\label{eq:L0_L1_L2}
\begin{split}
  \L_0(z)
  &=
  \frac{-2\pi\i}{1+e^{-2\pi\i z}},
  \\
  \L_1(z)
  &=
  \begin{cases}
    \log\left(1-e^{2\pi\i z}\right)&\text{if $\Im{z}\ge0$,}
    \\[3mm]
    2\pi\i z-\pi\i+\log\left(1-e^{-2\pi\i z}\right)&\text{if $\Im{z}<0$,}
  \end{cases}
  \\[5mm]
  \L_2(z)
  &=
  \begin{cases}
    \Li_2\left(e^{2\pi\i z}\right)&\text{if $\Im{z}\ge0$,}
    \\[3mm]
    2\pi^2z^2-2\pi^2z+\frac{\pi^2}{3}-\Li_2\left(-e^{-2\pi\i z}\right)
    &\text{if $\Im{z}<0$}.
  \end{cases}
\end{split}
\end{equation}
Here the logarithm function $\log(1-w)$ is defined by the infinite series $-\sum_{n=1}^{\infty}\dfrac{w^n}{n}$ for $|w|\le1$ {\rm(}$w\ne1${\rm)}, and dilogarithm function $\Li_2(w)$ is defined by the infinite series $\sum_{n=1}^{\infty}\dfrac{w^n}{n^2}$ for $|w|\le1$.
\end{lem}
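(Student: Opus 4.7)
The plan is to evaluate each integral defining $\L_k(z)$ by closing the contour $\Rpath$ with a large semicircular arc and invoking the residue theorem. The integrand $e^{(2z-1)t}/(t^k\sinh(t))$ is meromorphic with simple poles at $t=in\pi$ for $n\in\Z\setminus\{0\}$ and a pole of order $k+1$ at $t=0$. The sign of $\Im z$ dictates the direction of closure: on an upper semicircular arc the factor $|e^{(2z-1)t}|$ carries the weight $e^{-2R\Im z\,\sin\theta}$, which decays uniformly in $\theta\in(0,\pi)$ when $\Im z\geq 0$; on a lower arc it decays when $\Im z<0$.

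I would first choose radii $R_N=(N+\tfrac12)\pi$ so that the arc $C_{R_N}$ passes between the poles of $1/\sinh(t)$, and check that the arc contribution vanishes as $N\to\infty$. This uses $|\sinh(Re^{i\theta})|^2=\sinh^2(R\cos\theta)+\sin^2(R\sin\theta)$ together with the hypothesis $0<\Re z<1$, which ensures that the real-axis growth of $|e^{(2z-1)t}|$ is strictly dominated by $|\sinh(t)|\sim\tfrac12 e^{|\Re t|}$. Because $\sinh'(in\pi)=(-1)^n$ cancels $e^{(2z-1)in\pi}=(-1)^n e^{2\pi\i nz}$, the residues take the clean form
\[
  \Res_{t=in\pi}\frac{e^{(2z-1)t}}{t^k\sinh(t)}=\frac{e^{2\pi\i nz}}{(in\pi)^k}.
\]

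For $\Im z\geq 0$ I would close in the upper half-plane, whose counterclockwise contour encloses the poles $t=in\pi$, $n\geq 1$, but \emph{not} $t=0$ (the upper detour of $\Rpath$ excludes it). Summing $2\pi\i$ times the residues and invoking $\sum_{n\geq 1}w^n/n=-\log(1-w)$ and $\sum_{n\geq 1}w^n/n^2=\Li_2(w)$ at $w=e^{2\pi\i z}$, together with the prefactors $1$, $-1/2$, $\pi\i/2$ appearing in the definitions of $\L_0,\L_1,\L_2$, reproduces the first-branch formulas in \eqref{eq:L0_L1_L2}. For $\Im z<0$ I would close in the lower half-plane; now the clockwise orientation adds an overall sign, the sum over $t=-in\pi$ yields series in $w=e^{-2\pi\i z}$, and the pole at $t=0$ is enclosed as well. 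Its contribution, read off as the coefficient of $t^k$ in the Laurent expansion of $e^{(2z-1)t}\,(t/\sinh t)$, is a polynomial of degree $k$ in $z$ which, combined with the residue sum, produces exactly the extra term $2\pi\i z-\pi\i$ of $\L_1$ and the quadratic $2\pi^2 z^2-2\pi^2 z+\pi^2/3$ of $\L_2$; the $-\Li_2(-e^{-2\pi\i z})$ form appearing in \eqref{eq:L0_L1_L2} for $\L_2$ is then obtained by applying the standard dilogarithm inversion identity to $\Li_2(e^{-2\pi\i z})$.

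The main obstacle is the bookkeeping in the $\Im z<0$ case: extracting the Laurent coefficients at $t=0$ correctly and matching the dilogarithm branches so that the polynomial part assembles into exactly the stated constants $-\pi\i$ and $\pi^2/3$. A secondary care-point is the arc estimate near $\theta=\pi/2$, where $|\sinh(Re^{i\theta})|$ is smallest; the choice $R_N=(N+\tfrac12)\pi$ keeps it bounded below by $1$ there, and the decay then comes from the hypothesis $|\Im z|>0$ via the factor $e^{-2R\Im z\sin\theta}$.
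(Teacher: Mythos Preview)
Your residue-calculus approach is correct and is the standard way to prove this lemma; the paper itself gives no proof but simply cites \cite[Lemma~2.5]{Murakami/Tran:Takata}, where essentially the same contour argument is carried out. So there is no substantive difference to discuss.

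Two small points are worth flagging. First, your arc estimate relies on the factor $e^{-2R\,\Im z\,\sin\theta}$ and hence on $|\Im z|>0$; for $k=0$ there is no extra $1/t^k$ to help, so the case $\Im z=0$ in the first branch of \eqref{eq:L0_L1_L2} is not covered directly. This is easily repaired: prove the identities for $\Im z>0$ and $\Im z<0$ separately and then invoke continuity (both sides are continuous on the strip $0<\Re z<1$).

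Second, your residue computation for $\L_2$ in the lower half-plane correctly produces
\[
  2\pi^2z^2-2\pi^2z+\tfrac{\pi^2}{3}-\Li_2\bigl(e^{-2\pi\i z}\bigr),
\]
but you then say the form $-\Li_2(-e^{-2\pi\i z})$ printed in \eqref{eq:L0_L1_L2} follows from ``the standard dilogarithm inversion identity''. It does not: no dilogarithm identity turns $\Li_2(w)$ into $\Li_2(-w)$ without altering the polynomial part, and a direct check at $z=\tfrac12$ shows that the version with $-\Li_2(-e^{-2\pi\i z})$ is inconsistent with the $\Im z\ge0$ branch (it gives $-\pi^2/3$ rather than $\Li_2(-1)=-\pi^2/12$). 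The minus sign inside the dilogarithm in the displayed formula is a typo; your residue answer $-\Li_2(e^{-2\pi\i z})$ is the correct one, and it is also what is consistent with the alternate expression for $\L_2$ in Lemma~\ref{lem:floor}. Do not try to manufacture an identity to match it.
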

If $\Im{z}<0$, we also have the following lemma \cite[Lemma~2.5]{Murakami:arXiv2023}.
\begin{lem}\label{lem:floor}
When $\Im{z}<0$ we have the following equalities.
\begin{align*}
  \L_1(z)
  &=
  \log\left(1-e^{2\pi\i z}\right)+2\lfloor\Re{z}\rfloor\pi\i,
  \\
  \L_2(z)
  &=
  \Li_2\left(e^{2\pi\i z}\right)
  -
  2\pi^2\lfloor\Re{z}\rfloor\bigl(\lfloor\Re{z}\rfloor-2z+1\bigr),
\end{align*}
where $\lfloor{x}\rfloor$ is the greatest integer that does not exceed $x$.
\end{lem}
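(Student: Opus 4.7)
The plan is to derive the lemma from the explicit formulas in the previous lemma (together with Lemma~\ref{lem:integrals}) by carefully tracking principal branches as $z$ moves through the lower half plane. For $\L_1$, the previous lemma gives $\L_1(z)=2\pi\i z-\pi\i+\log(1-e^{-2\pi\i z})$ on the strip $0<\Re{z}<1$ with $\Im{z}<0$. Since $|e^{-2\pi\i z}|=e^{2\pi\Im{z}}<1$ throughout the lower half plane, the factor $1-e^{-2\pi\i z}$ stays in the right half plane, so this expression is already analytic on all of $\{\Im{z}<0\}\setminus\Z$ and furnishes the analytic continuation of $\L_1$. To convert it to the claimed form, I factor $1-e^{2\pi\i z}=-e^{2\pi\i z}\bigl(1-e^{-2\pi\i z}\bigr)$ and take principal logarithms. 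A direct computation of the principal argument of $-e^{2\pi\i z}$ in the strip $\Re{z}\in(n,n+1)$ gives $\log(-e^{2\pi\i z})=2\pi\i z-\pi\i-2\pi\i n$, and the observation that $1-e^{-2\pi\i z}$ lies in the open disk of radius $<1$ around $1$ (hence has principal argument in $(-\pi/2,\pi/2)$) ensures that no extra $2\pi\i$ correction appears in $\log(ab)=\log a+\log b$. Substituting the resulting identity $\log(1-e^{2\pi\i z})=2\pi\i z-\pi\i-2\pi\i n+\log(1-e^{-2\pi\i z})$ into the expression for $\L_1$ yields exactly $\L_1(z)=\log(1-e^{2\pi\i z})+2\lfloor\Re{z}\rfloor\pi\i$.

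For $\L_2$ I use the differential relation $\L_2'(z)=-2\pi\i\,\L_1(z)$, which follows by differentiating under the integral sign in the definition of $\L_2$. Combined with the standard identity $\frac{d}{dz}\Li_2(e^{2\pi\i z})=-2\pi\i\log(1-e^{2\pi\i z})$ and the formula for $\L_1$ just established, this gives on each strip $\Re{z}\in(n,n+1)$
\[
  \frac{d}{dz}\bigl(\L_2(z)-\Li_2(e^{2\pi\i z})\bigr)=4\pi^2 n,
\]
so $\L_2(z)=\Li_2(e^{2\pi\i z})+4\pi^2 n z+C_n$ on that strip for some constant $C_n$. The constants $C_n$ are pinned down by requiring continuity of $\L_2$ across $\Re{z}=n$, combined with the monodromy of $\Li_2$ around $1$: as $\Re{z}$ crosses $n$ with $\Im{z}<0$, the value $e^{2\pi\i z}$ traverses the cut $[1,\infty)$ from below to above, so $\Li_2(e^{2\pi\i z})$ jumps by $2\pi\i\log(e^{2\pi\i z})=-4\pi^2(z-n)$ at the crossing. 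This yields the recursion $C_n-C_{n-1}=-4\pi^2 n$, which together with $C_0=0$ (the fundamental-strip case already covered by the previous lemma) is solved by $C_n=-2\pi^2 n(n+1)$. The rearrangement $4\pi^2 n z+C_n=-2\pi^2 n(n-2z+1)$ then produces the stated formula for $\L_2$.

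The main obstacle is the sign bookkeeping in the monodromy jump of $\Li_2$: the identity $\Li_2(w+\i 0)-\Li_2(w-\i 0)=2\pi\i\log w$ for $w>1$ uses the principal logarithm of the positive real number $w=e^{-2\pi\Im{z}}$, so at a crossing $z=n+\i y$ one must take $\log w=-2\pi y$ rather than the naive substitute $2\pi\i z$; the discrepancy between these is precisely what produces the $-2\pi\i n$ shift compatible with the $\lfloor\Re{z}\rfloor$ correction. Once this subtlety is handled, the induction on $n\ge 0$ is routine, and a symmetric argument treats $\Re{z}<0$.
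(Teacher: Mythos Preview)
Your argument is correct. The paper does not supply its own proof of this lemma here---it simply cites \cite[Lemma~2.5]{Murakami:arXiv2023}---so there is no in-paper argument to compare against.

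One small point worth tightening in your $\L_1$ step: the fact that $1-e^{-2\pi\i z}$ has principal argument in $(-\pi/2,\pi/2)$ does not by itself rule out a $2\pi\i$ correction in $\log(ab)=\log a+\log b$, since $\arg(-e^{2\pi\i z})=2\pi(x-n)-\pi$ ranges over all of $(-\pi,\pi)$. What actually forces the correction to vanish is that the two arguments always carry opposite signs: writing $\theta=2\pi(x-n)\in(0,2\pi)$, one has $\arg(-e^{2\pi\i z})=\theta-\pi$ while $\arg(1-e^{-2\pi\i z})$ has the sign of $\sin\theta$, so their sum stays in $(-\pi,\pi)$. With this observation your computation goes through exactly as written.

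For $\L_2$, your use of $\L_2'=-2\pi\i\,\L_1$, the monodromy of $\Li_2$ across $[1,\infty)$, and the recursion $C_n-C_{n-1}=-4\pi^2 n$ with $C_0=0$ is clean and correct; the closed form $C_n=-2\pi^2 n(n+1)$ indeed holds for all $n\in\Z$, so the ``symmetric argument'' for $\Re z<0$ is just the same recursion run downward.
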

We can extend both $\L_1$ and $\L_2$ to holomorphic functions in $\C\setminus\bigl((-\infty,0]\cup[1,\infty)\bigr)$ by using the right hand sides of \eqref{eq:L0_L1_L2}.
\par
As for derivatives of $\L_1(z)$ and $\L_2(z)$, we can prove the following formulas \cite[Lemma~2.9]{Murakami/Tran:Takata}:
\begin{lem}\label{lem:der_L2_L1}
We have
\begin{align*}
  \frac{d\,\L_2}{d\,z}(z)
  &=
  -2\pi\i\L_1(z),
  \\
  \dfrac{d\,\L_1}{d\,z}(z)
  &=
  -\L_0(z).
\end{align*}
\end{lem}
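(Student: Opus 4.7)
The plan is simply to differentiate under the integral sign. Formally, since $\frac{\partial}{\partial z}e^{(2z-1)t} = 2t\,e^{(2z-1)t}$, one computes
\begin{align*}
\frac{d\L_2}{d\,z}(z)
&= \frac{\pi\i}{2}\int_{\Rpath}\frac{2t\,e^{(2z-1)t}}{t^{2}\sinh(\pi t)}\,dt
 = -2\pi\i\left(-\frac{1}{2}\int_{\Rpath}\frac{e^{(2z-1)t}}{t\sinh(\pi t)}\,dt\right)
 = -2\pi\i\,\L_1(z),\\
\frac{d\L_1}{d\,z}(z)
&= -\frac{1}{2}\int_{\Rpath}\frac{2t\,e^{(2z-1)t}}{t\sinh(\pi t)}\,dt
 = -\int_{\Rpath}\frac{e^{(2z-1)t}}{\sinh(\pi t)}\,dt
 = -\L_0(z),
\end{align*}
matching both claimed identities directly from the definitions of $\L_0$ and $\L_1$. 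So the entire content of the lemma is the legitimacy of interchanging $d/dz$ with $\int_{\Rpath}$.

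To justify this interchange I would invoke dominated convergence. Fix a compact set $K\subset\{z\in\C\mid 0<\Re z<1\}$ and pick $\varepsilon>0$ with $\varepsilon<\Re z<1-\varepsilon$ for all $z\in K$. On the two unbounded rays $(-\infty,-1]\cup[1,\infty)$ of $\Rpath$, $|\sinh(\pi t)|^{-1}=O(e^{-\pi|t|})$, so the differentiated integrand is majorized in modulus by $C|t|\,e^{(2\Re z-1)t-\pi|t|}$; since $|2\Re z-1|\le 1-2\varepsilon<\pi$, this bound is $O(|t|\,e^{-c|t|})$ for some $c>0$ uniformly in $z\in K$, hence integrable. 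On the semicircular arc $|w|=1$, $\Im w\ge0$, both the undifferentiated and the differentiated integrands are continuous and uniformly bounded in $z\in K$. Thus an integrable $L^{1}$ majorant (uniform in $z\in K$) controls the difference quotients, and differentiation under the integral is legal.

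The main (in fact only) obstacle is this routine uniform-integrability bookkeeping, and it is essentially already packaged in Lemma~\ref{lem:integrals}, which asserts convergence of the very integrals that arise after differentiation. Once that is in place, the two derivative formulas follow from the one-line algebraic rewrites above, so I expect the proof in the paper to consist of exactly this calculation together with a pointer to the convergence estimates of \cite{Murakami/Tran:Takata}.
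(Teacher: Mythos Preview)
Your argument is correct: differentiating under the integral sign is legitimate on the strip $0<\Re z<1$ by exactly the dominated-convergence bound you wrote, and the algebraic rewrites are immediate from the definitions. The paper itself does not reproduce any proof here---it simply cites \cite[Lemma~2.9]{Murakami/Tran:Takata}---so there is no ``paper's proof'' to compare against beyond that pointer; your expectation in the final paragraph is essentially right.

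One remark worth making: since the paper also extends $\L_1$ and $\L_2$ holomorphically to $\C\setminus\bigl((-\infty,0]\cup[1,\infty)\bigr)$ via the explicit expressions \eqref{eq:L0_L1_L2}, an alternative route is to verify the identities directly from those formulas (e.g.\ $\frac{d}{dz}\Li_2(e^{2\pi\i z})=-2\pi\i\log(1-e^{2\pi\i z})$ for $\Im z\ge0$, and similarly in the lower half-plane), which also shows the identities hold on the full extended domain. Your integral-based proof covers only the strip, but of course the extension follows at once by analytic continuation.
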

\par
As shown in \cite[\S~2]{Murakami:arXiv2023}, we can extend the quantum dilogarithm $T_{N}$ to the following region:
\begin{equation}\label{eq:Delta01}
  \left\{z\in\C\Bigm|-1+\frac{\Re\gamma}{2N}<\Re{z}<2-\frac{\Re\gamma}{2N}\right\}
  \setminus
  \bigl(\Delta_{0}^{+}\cup\Delta_{1}^{+}\bigr),
\end{equation}
where we put
\begin{align*}
  \Delta_{0}^{+}
  &:=
  \left\{
    z\in\C\Bigm|
    \text{$-1+\frac{\Re\gamma}{2N}<\Re{z}\le0$,
    $\Im{z}\ge0$, and $\Im\left(\frac{z}{\gamma}\right)\le0$}
  \right\},
  \\
  \Delta_{1}^{-}
  &:=
  \left\{
    z\in\C\Bigm|
    \text{$1\le\Re{z}<2-\frac{\Re\gamma}{2N}$,
    $\Im{z}\le0$, and $\Im\left(\frac{z-1}{\gamma}\right)\ge0$}
  \right\}.
\end{align*}
The function $T_{N}(z)$ defined as above is holomorphic in the region \eqref{eq:Delta01}, and satisfies the following equalities:
\begin{align*}
  T_N(z)-T_N(z+1)
  &=
  \L_1\left(\frac{N}{\gamma}z+\frac{1}{2}\right),
  \\
  T_N\left(z-\frac{\gamma}{2N}\right)-T_N\left(z+\frac{\gamma}{2N}\right)
  &=
  \L_1(z).
\end{align*}
\par
It is also proved in \cite[Proposition~2.25]{Murakami:arXiv2023} that the series of functions $\left\{\frac{1}{N}T_N(z)\right\}$ converges to $\frac{1}{2\pi\i\gamma}\L_2(z)$ as $N\to\infty$.
In fact we have the following proposition.
\begin{prop}\label{prop:TN_L2}
We have
\begin{equation*}
  T_{N}(z)
  =
  \frac{N}{2\pi\i\gamma}\L_2(z)
  +
  O(N^{-1})
\end{equation*}
as $N\to\infty$ in the following region:
\begin{equation*}
  \left\{z\in\C\Bigm|-1+\nu\le\Re{z}\le2-\nu, |\Im{z}|\le M\right\}
  \setminus
  \left(\Delta^{+}_{0,\nu}\cup\Delta^{-}_{1,\nu}\right),
\end{equation*}
where $\nu$ is a sufficiently small positive number and $M$ is a positive number, and we put
\begin{align*}
  \Delta^{+}_{0,\nu}
  &:=
  \left\{
    z\in\C\Bigm|
    \text{$-1+\nu\le\Re{z}<\nu$,
    $\Im{z}>-\nu$,  and $\Im\left(\frac{z-\nu}{\gamma}\right)<0$}
  \right\},
  \\
  \Delta^{-}_{1,\nu}
  &:=
  \left\{
    z\in\C\Bigm|
    \text{$1-\nu<\Re{z}\le2-\nu$,
    $\Im{z}<\nu$, and $\Im\left(\frac{z-1+\nu}{\gamma}\right)>0$}
  \right\}.
\end{align*}
See Figure~\ref{fig:domain_TN}.
\begin{figure}[h]
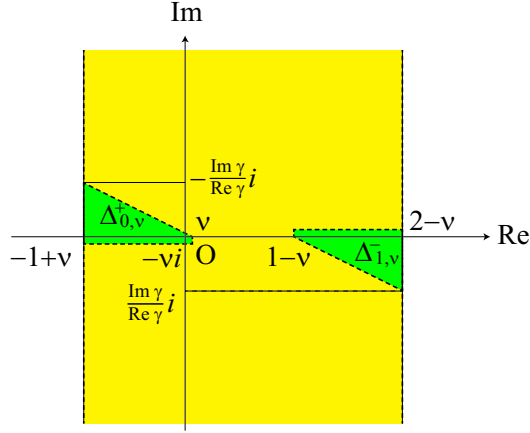

\pic{0.3}{domain_TN}
\caption{The series of functions $\left\{\frac{1}{N}T_N(z)\right\}$ converges to $\frac{1}{2\pi\i\gamma}\L_2(z)$ in the yellow region.
The green trapezoidal regions are $\Delta^{+}_{0,\nu}$ and $\Delta^{-}_{1,\nu}$.}
\label{fig:domain_TN}
\end{figure}
\end{prop}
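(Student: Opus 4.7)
The plan is to isolate the $N\to\infty$ leading behavior of $1/\sinh(\gamma t/N)$ inside the integrand of $T_N(z)$ and control the remainder uniformly in $z$. Using the Laurent expansion $\frac{1}{\sinh(x)} = \frac{1}{x} - \frac{x}{6} + O(x^3)$ at $x=0$, one writes
\begin{equation*}
\frac{1}{\sinh(\gamma t/N)} = \frac{N}{\gamma t} - \frac{\gamma t}{6N} + r_N(t),
\end{equation*}
with $r_N(t) = O(t^3/N^3)$ provided $|\gamma t/N|$ stays bounded. Substituting into the defining integral, the leading term yields (up to the prefactor $\frac{N}{2\pi\i\gamma}$) the integral representation of $\L_2(z)$, and the subsequent terms should contribute at most $O(1/N)$.

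First I would treat the ``inner'' region $\nu \le \Re{z} \le 1-\nu$, where the defining integral for $T_N$ converges absolutely. Split $\Rpath = \Rpath_{\le A} \cup \Rpath_{>A}$ with $A = A(N)$ growing slowly, say $A = \sqrt{N}$. On the bounded part $\Rpath_{\le A}$, substitute the Laurent expansion termwise: the $\frac{N}{\gamma t}$ term recovers $\frac{N}{2\pi\i\gamma}\L_2(z)$ up to an exponentially small tail, and the $\frac{\gamma t}{6N}$ and $r_N$ contributions are $O(1/N)$ and $O(1/N^2)$ respectively, using that $\int_{\Rpath} |e^{(2z-1)t}/\sinh(t)|\,dt$ and its weighted variants converge for $0<\Re{z}<1$. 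On the tail $\Rpath_{>A}$, the integrand decays like $e^{-\delta|t|}$ from $e^{(2z-1)t}/\sinh(t)$, while $|1/\sinh(\gamma t/N)|$ stays controlled at a definite distance from its poles $t = Nn\pi\i/\gamma$; this contribution is therefore superpolynomially small.

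To extend to the full region $-1+\nu \le \Re{z} \le 2-\nu$ minus the removed trapezoids $\Delta_{0,\nu}^{+}$ and $\Delta_{1,\nu}^{-}$, I would invoke the shift identity
\begin{equation*}
T_N(z) - T_N(z+1) = \L_1\!\left(\tfrac{N}{\gamma}z + \tfrac{1}{2}\right)
\end{equation*}
together with the corresponding shift identity for $\L_2$ implied by Lemma \ref{lem:floor}. For $z$ with $-1+\nu \le \Re{z} < \nu$, the shifted point $z+1$ lies in the inner region, so the proposition is already known there. It then remains to show $\L_1(Nz/\gamma + 1/2) = O(1/N)$, which reduces via \eqref{eq:L0_L1_L2} to exponential decay of $e^{2\pi\i N z/\gamma}$. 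The condition $\Im\bigl(\tfrac{z-\nu}{\gamma}\bigr) \ge 0$ (i.e.\ $z \notin \Delta_{0,\nu}^{+}$) is exactly what forces $\Im(Nz/\gamma) \to +\infty$ as $N\to\infty$. The symmetric argument, applied with $z-1$, handles $1-\nu < \Re{z} \le 2-\nu$ using the exclusion of $\Delta_{1,\nu}^{-}$.

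The main obstacle I expect is uniformity of these error estimates as $z$ approaches the slanted boundary of the excluded trapezoids, where the exponential decay of $\L_1(Nz/\gamma+1/2)$ degenerates. A secondary technical point is the ``transition'' part of the contour where $|\gamma t/N| \asymp 1$: neither the Laurent expansion nor the crude exponential tail bound is sharp there, so the cutoff $A$ must be chosen with some care, and one may need to deform $\Rpath$ slightly to pass at positive distance from every pole of $\sinh(\gamma t/N)$.
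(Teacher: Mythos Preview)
The paper does not prove this proposition; it is quoted from \cite[Proposition~2.25]{Murakami:arXiv2023} with the remark ``Details are omitted.'' So there is no in-paper argument to compare against. Your overall strategy --- Laurent-expand $1/\sinh(\gamma t/N)$, recover the $\L_2$-integral from the leading term $N/(\gamma t)$, bound the tail, and then push beyond the strip $0<\Re z<1$ via the shift identity for $T_N$ --- is the standard route and is essentially what the cited proof does.

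There is, however, a genuine gap in your extension step. You write that ``$z\notin\Delta_{0,\nu}^{+}$'' is the condition $\Im\bigl((z-\nu)/\gamma\bigr)\ge0$; but one can also leave $\Delta_{0,\nu}^{+}$ by having $\Im z\le-\nu$. In that regime $\L_2$ is \emph{not} $1$-periodic: from \eqref{eq:L0_L1_L2} (or Lemma~\ref{lem:floor}) one gets $\L_2(z)-\L_2(z+1)=-4\pi^2 z$ when $\Im z<0$. Hence, after using the known asymptotic at $z+1$, what remains to verify is not ``$\L_1(Nz/\gamma+1/2)=O(1/N)$'' but rather
\[
  \L_1\!\left(\frac{Nz}{\gamma}+\tfrac{1}{2}\right)
  =
  \frac{2\pi\i\,Nz}{\gamma}+O(N^{-1}).
\]
This \emph{is} true --- the linear part is exactly the term $2\pi\i w-\pi\i$ in the second branch of \eqref{eq:L0_L1_L2} for $\L_1$, and the residual $\log\bigl(1+e^{-2\pi\i Nz/\gamma}\bigr)$ decays exponentially --- but it requires $\Im(z/\gamma)$ to be bounded \emph{below} zero, not above, so your stated justification (``$\Im(Nz/\gamma)\to+\infty$'') is pointing in the wrong direction for this half of the region. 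A clean fix is to take the ``inner'' strip all the way to $0<\Re z<1$ (the direct integral estimate allows this), so that in the left extension one has $\Re z\le 0$; together with $\Im z\le-\nu$ this forces $\Im(z/\gamma)\le -\nu\Re\gamma/|\gamma|^2<0$ uniformly, and the matching goes through. The right side near $\Delta_{1,\nu}^{-}$ needs the symmetric case split.
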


\section{Summation formula}\label{sec:sum}
Let $\FE$ be the figure-eight knot.
Its colored Jones polynomial has the following simple formula due to K.~Habiro \cite[P.~36 (1)]{Habiro:SURIK2000} and T.~Le \cite[1.22 Example, P.~129]{Le:TOPOA2003} (see also \cite[Theorem~5.1]{Masbaum:ALGGT12003}):
\begin{equation}\label{eq:Jones}
\begin{split}
  J_N(\FE;q)
  =&
  \sum_{k=0}^{N-1}
  \prod_{l=1}^{k}
  (q^{(N-l)/2}-q^{-(N-l)/2})(q^{(N+l)/2}-q^{-(N+l)/2})
  \\
  =&
  \sum_{k=0}^{N-1}q^{-kN}
  \prod_{l=1}^{k}
  (1-q^{N-l})(1-q^{N+l}).
\end{split}
\end{equation}
Putting $\xi:=u+2p\pi\i$, we have
\begin{equation}\label{eq:fig8}
  J_N(\FE;e^{\xi/N})
  =
  \sum_{k=0}^{N-1}e^{-k\xi}
  \prod_{l=1}^{k}\left(1-e^{(N-l)\xi/N}\right)\left(1-e^{(N+l)\xi/N}\right).
\end{equation}
\par
We will replace the product in \eqref{eq:fig8} with a single function following \cite[\S~3]{Murakami:arXiv2023}.
Put $\gamma:=\frac{\xi}{2\pi\i}$.
Since $\gamma$ is in the fourth quadrant, we can use the quantum dilogarithm $T_N(z)$ introduced in Section~\ref{sec:dilog} to define a function $f_N(z)$ as follows:
\begin{equation*}
  f_N(z)
  :=
  \frac{1}{N}T_N\bigl(\gamma(1-z)-p+1\bigr)
  -
  \frac{1}{N}T_N\bigl(\gamma(1+z)-p\bigr)
  -
  uz-\frac{2p\pi\i}{\gamma}
  +
  2\pi\i,
\end{equation*}
where we add $2\pi\i$ on purpose.
The function $f_N(z)$ is defined in the region
\begin{equation*}
  \left\{
    z\in\C
    \Bigm|
    -1+\frac{p}{2N}<\Re(\gamma z)<2-\frac{p}{2N}
  \right\}
  \setminus
  \bigl(
    \underline{\nabla}_{0}^{+}
    \cup
    \underline{\nabla}_{0}^{-}
    \cup
    \overline{\nabla}_{0}^{+}
    \cup
    \overline{\nabla}_{0}^{-}
  \bigr)
\end{equation*}
with
\begin{align*}
  \underline{\nabla}_{0}^{+}
  :=&
  \left\{
    z\in\C\Bigm|
    -1+\frac{p}{2N}<\Re(\gamma z)\le0,
    \Im(\gamma z)\ge-\frac{u}{2\pi},
    \Im{z}\le-\frac{pu}{2\pi|\gamma|^2}
  \right\},
  \\
  \underline{\nabla}_{0}^{-}
  :=&
  \left\{
    z\in\C\Bigm|
    1\le\Re(\gamma z)<2-\frac{p}{2N},
    \Im(\gamma z)\le-\frac{u}{2\pi},
    \Im{z}\ge\frac{(1-p)u}{2\pi|\gamma|^2}
  \right\},
  \\
  \overline{\nabla}_{0}^{+}
  :=&
  \left\{
    z\in\C\Bigm|
    -1+\frac{p}{2N}<\Re(\gamma z)\le0,
    \Im(\gamma z)\ge\frac{u}{2\pi},
    \Im{z}\le\frac{pu}{2\pi|\gamma|^2}
  \right\},
  \\
  \overline{\nabla}_{0}^{-}
  :=&
  \left\{
    z\in\C\Bigm|
    1\le\Re(\gamma z)<2-\frac{p}{2N},
    \Im(\gamma z)\le\frac{u}{2\pi},
    \Im{z}\ge\frac{(p+1)u}{2\pi|\gamma|^2}
  \right\}.
\end{align*}
See Figure~\ref{fig:domain_fN}, where we put
\begin{align}
  K_{s}
  &:=
  \left\{z\in\C\mid\Im(\gamma z)=\frac{s}{2\pi}\right\}
  =
  \{z\in\C\mid\Re(\xi z)=-s\},
  \label{eq:K}
  \\
  L_{t}
  &:=
  \{z\in\C\mid\Re(\gamma z)=t\}
  =
  \{z\in\C\mid\Im(\xi z)=2\pi t\}.
  \label{eq:L}
\end{align}
\begin{rem}
The yellow strip in Figure~\ref{fig:domain_fN} is between $L_{-1+p/(2N)}$ and $L_{2-p/(2N)}$.
The green triangles can be described as follows.
\begin{itemize}
\item
$\overline{\nabla}^{+}_{0}$ is  surrounded by the lines $L_{-1+p/(2N)}$, $K_{u}$, and $\Im{z}=\frac{pu}{2\pi|\gamma|^2}$.
\item
$\overline{\nabla}^{-}_{0}$ is  surrounded by the lines $L_{2-p/(2N)}$, $K_{u}$, and $\Im{z}=\frac{(1+p)u}{2\pi|\gamma|^2}$.
\item
$\underline{\nabla}^{+}_{0}$ is  surrounded by the lines $L_{-1+p/(2N)}$, $K_{-u}$, and $\Im{z}=\frac{-pu}{2\pi|\gamma|^2}$.
\item
$\underline{\nabla}^{-}_{0}$ is  surrounded by the lines $L_{2-p/(2N)}$, $K_{-u}$, and $\Im{z}=\frac{(1-p)u}{2\pi|\gamma|^2}$.
\end{itemize}
\end{rem}
\begin{figure}[h]
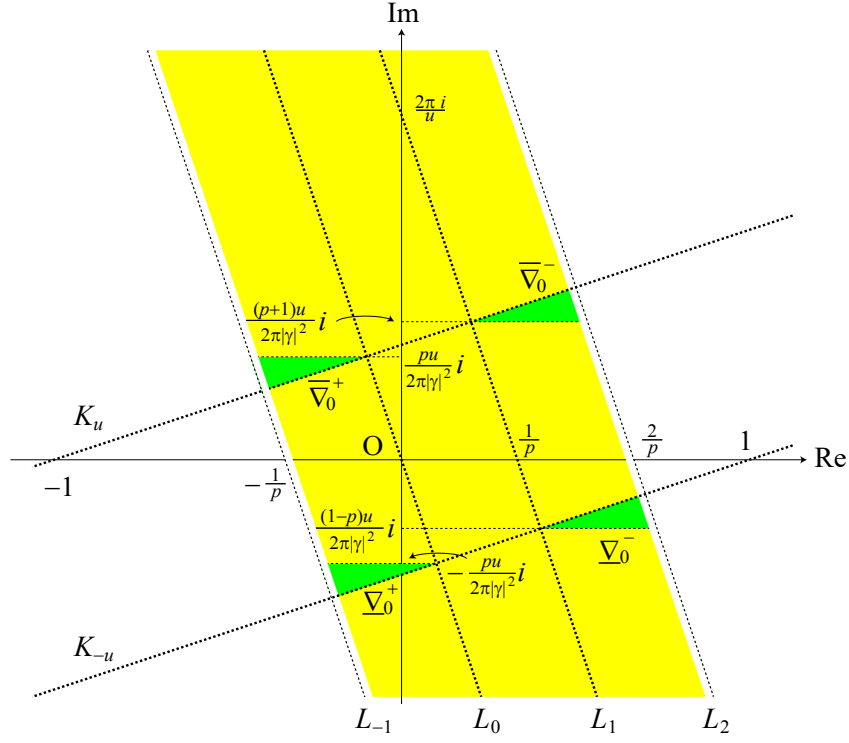

\pic{0.3}{domain_fN}
\caption{The function $f_N$ is defined in the yellow region.
The green triangles are $\underline{\nabla}_{0}^{+}$, $\underline{\nabla}_{0}^{-}$, $\overline{\nabla}_{0}^{+}$, and $\overline{\nabla}_{0}^{-}$.}
\label{fig:domain_fN}
\end{figure}
\par
In the same way as \cite[Equation~(3.2)]{Murakami:CANJM2023}, we can express \eqref{eq:fig8} in terms of $f_N(z)$ using the results in Section~\ref{sec:dilog}.
\begin{equation}\label{eq:Jones_sum}
\begin{split}
  J_N(\FE;e^{\xi/N})
  =&
  \frac{1-e^{2pN\pi\i/\gamma}}{2\sinh(u/2)}
  \\
  &\times
  \sum_{m=0}^{p-1}
  \left(
    \beta_{p,m}
    \sum_{mN/p<k<(m+1)N/p}
    \exp\left(N\times f_{N}\left(\frac{2k+1}{2N}-\frac{m}{\gamma}\right)\right)
  \right),
\end{split}
\end{equation}
where we put
\begin{equation}\label{eq:def_beta}
  \beta_{p,m}
  :=
  e^{2mpN\pi\i/\gamma}
  \prod_{j=1}^{m}
  \left(1-e^{-2(p-j)N\pi\i/\gamma}\right)
  \left(1-e^{-2(p+j)N\pi\i/\gamma}\right).
\end{equation}
\par
From Proposition~\ref{prop:TN_L2}, the series of functions $\{f_N(z)\}_{N=2,3,4,\dots}$ uniformly converges to the following function
\begin{equation}\label{eq:F_def}
  F(z)
  :=
  \frac{1}{\xi}\L_2\bigl(\gamma(1-z)-p+1\bigr)
  -
  \frac{1}{\xi}\L_2\bigl(\gamma(1+z)-p\bigr)
  -uz
  +
  \frac{4p\pi^2}{\xi}
  +2\pi\i
\end{equation}
in the region $\Theta_{0,\nu}$ defined in \eqref{eq:Theta} below.
For a proof see \cite[Lemma~3.2]{Murakami:arXiv2023}.
\par
From Lemma~\ref{lem:der_L2_L1}, the derivatives of $F(z)$ are
\begin{align}
  F'(z)
  &=
  \L_1\bigl(\gamma(1-z)-p+1\bigr)+\L_1\left(\gamma(1+z)-p\right)-u,
  \label{eq:F'}
  \\
  F''(z)
  &=
  \frac{-\xi\sinh(\xi z)}{\cosh(u)-\cosh(\xi z)}.
  \label{eq:F''}
\end{align}
To find a point where $F'(z)$ vanishes, we introduce the following function:
\begin{equation}\label{eq:phi}
\begin{split}
  \varphi(u)
  :=&
  \arccosh(\cosh{u}-1/2)
  \\
  =&
  \log
  \left(
    \cosh{u}-\frac{1}{2}+\frac{1}{2}\sqrt{(2\cosh{u}-3)(2\cosh{u}+1)}
  \right)
\end{split}
\end{equation}
for $u>\kappa:=\arccosh(3/2)$.
\begin{rem}
Note that $\varphi(u)$ is real and positive since $\cosh{u}>\cosh(\kappa)=3/2$.
Note also that in \cite[Equation~(4.3)]{Murakami:CANJM2023}, the sign in front of the square root in \eqref{eq:phi} is different.
\end{rem}
If we put
\begin{equation}\label{eq:sigma_0}
  \sigma_0
  :=
  \frac{\varphi(u)+2\pi\i}{\xi},
\end{equation}
then we have from \eqref{eq:F'}
\begin{equation*}
\begin{split}
  F'(\sigma_0)
  =&
  \L_1\left(\frac{u-\varphi(u)}{2\pi\i}\right)
  +
  \L_1\left(1+\frac{u+\varphi(u)}{2\pi\i}\right)
  -u
  \\
  =&
  \log\left(1-e^{-u+\varphi(u)}\right)+\log\left(1-e^{-u-\varphi(u)}\right)
  +u
  \\
  =&
  \log\left(e^u+e^{-u}-e^{\varphi(u)}-e^{-\varphi(u)}\right)
  =0,
\end{split}
\end{equation*}
where the second equality follows from \eqref{eq:L0_L1_L2} because $u>\varphi(u)$ from Lemma~\ref{lem:phi} (i) below.
\begin{rem}
Since $\Re(\gamma\sigma_0)=1$, $\Im(\gamma\sigma_0)=-\frac{\varphi(u)}{2\pi}$, the point $\sigma_0$ is the crossing between $L_1$ and $K_{-\varphi(u)}$ from \eqref{eq:L} and \eqref{eq:K}.
\end{rem}
Here are more properties of $\varphi(u)$ that are used later.
\begin{lem}\label{lem:phi}
The function $\varphi(u)$ {\rm(}$u>\kappa${\rm)} is real analytic and satisfies the following properties.
\begin{enumerate}
\item
$0<u-\kappa<\varphi(u)<u$,
\item
$\varphi(u)$ is strictly monotonically increasing,
\item
$u-\varphi(u)$ is strictly monotonically decreasing with $\lim_{u\to\infty}\bigl(u-\varphi(u)\bigr)=0$,
\item
$\varphi(u)/u$ is strictly monotonically increasing,
\item
$\varphi'(u)$ is strictly monotonically decreasing with $\lim_{u\to\kappa}\varphi'(u)=\infty$ and $\lim_{u\to\infty}\varphi'(u)=1$,
\item
$e^{u}-e^{\varphi(u)}$ is strictly monotonically decreasing,
\item
$\lim_{u\to\infty}\left(e^{u}-e^{\varphi(u)}\right)=1$.
\end{enumerate}
\end{lem}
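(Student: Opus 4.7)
The plan is to work from the defining identity $\cosh(\varphi(u)) = \cosh(u) - 1/2$, which gives at once real analyticity on $(\kappa,\infty)$ (since the argument of $\arccosh$ strictly exceeds $1$ there) and, by implicit differentiation, the workhorse formula
\[
\varphi'(u) = \frac{\sinh(u)}{\sinh(\varphi(u))}.
\]
All seven items will be derived from this identity together with its exponential form $e^{\varphi(u)} + e^{-\varphi(u)} = e^u + e^{-u} - 1$.

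First I would establish (i) and (ii). The upper bound $\varphi(u) < u$ is immediate from $\cosh(\varphi(u)) < \cosh(u)$ combined with positivity of both arguments and the monotonicity of $\cosh$ on $[0,\infty)$. The lower bound $u - \kappa < \varphi(u)$ is equivalent to $\cosh(u-\kappa) < \cosh(u) - 1/2$, which, using $\cosh\kappa = 3/2$, $\sinh\kappa = \sqrt{5}/2$ together with the addition formula, reduces to $(2\cosh u - 3)(\cosh u + 1) > 0$, clear for $u > \kappa$. Item (ii) is then immediate from $\varphi'(u) > 0$. For (iii), $1 - \varphi'(u) = 1 - \sinh(u)/\sinh(\varphi(u)) < 0$ since $\varphi(u) < u$; the limit is obtained by rewriting the exponential form as $e^{\varphi(u)} = e^u - 1 + e^{-u} - e^{-\varphi(u)}$ and noting that $\varphi(u) \to \infty$ (by (i)) forces $e^{\varphi(u)}/e^u \to 1$, hence $u - \varphi(u) \to 0$.

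For (iv), the inequality $(\varphi(u)/u)' > 0$ reduces to $u\sinh(u) > \varphi(u)\sinh(\varphi(u))$, which holds because $x\sinh(x)$ is strictly increasing on $(0,\infty)$ and $\varphi(u) < u$. For (v), differentiating the workhorse formula I would obtain the factorization
\[
\varphi''(u) = \frac{\bigl(\cosh\varphi(u) - \cosh u\bigr)\bigl(\cosh\varphi(u)\cosh u + 1\bigr)}{\sinh^{3}\varphi(u)},
\]
whose numerator is manifestly negative; the two asserted limits then follow from $\sinh(\varphi(u)) \to 0$ as $u \to \kappa^+$ and, using (iii) via $\sinh(\varphi(u)) = \sinh(u)\cosh(u-\varphi(u)) - \cosh(u)\sinh(u-\varphi(u))$, $\sinh(\varphi(u))/\sinh(u) \to 1$ as $u \to \infty$. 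For (vi), a short hyperbolic manipulation gives
\[
\frac{d}{du}\bigl(e^u - e^{\varphi(u)}\bigr) = -\frac{\sinh\bigl(u-\varphi(u)\bigr)}{\sinh(\varphi(u))} < 0,
\]
and (vii) follows by rewriting $e^u - e^{\varphi(u)} = 1 - e^{-u} + e^{-\varphi(u)}$ and sending $u \to \infty$.

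The only nontrivial moment is recognizing the factorization of the numerator of $\varphi''$ that makes the sign argument for (v) transparent; everything else is bookkeeping with hyperbolic identities together with the workhorse formula and the exponential form of the defining identity.
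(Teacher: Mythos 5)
Your proof is correct and follows the same overall scaffolding as the paper's: everything hangs off the defining identity $\cosh\varphi(u)=\cosh u - 1/2$ and the derivative formula $\varphi'(u)=\sinh u/\sinh\varphi(u)$. You do, however, make a few local choices that diverge from the paper and in each case are cleaner. For the lower bound in (i), the paper shows $\varphi(u)-u+\kappa$ is strictly increasing and vanishes at $u=\kappa$, which needs the derivative formula already in hand; you instead verify the inequality directly by reducing $\cosh(u-\kappa)<\cosh u-1/2$ to the manifestly positive expression $(2\cosh u-3)(\cosh u+1)>0$, which decouples (i) from any monotonicity input. For (v), the paper completes the square in $\cosh u$ on the numerator of $\varphi''$, whereas you factor it as $(\cosh\varphi(u)-\cosh u)(\cosh\varphi(u)\cosh u+1)$, with the first factor equal to $-1/2$ by the defining identity — a more transparent sign argument. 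For (vi), the paper simply asserts $e^u-\varphi'(u)e^{\varphi(u)}<0$ without comment, while you supply the clean closed form $-\sinh(u-\varphi(u))/\sinh\varphi(u)$, whose negativity is immediate from (i). These are genuine if modest improvements in legibility; the mathematics is equivalent.
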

\begin{proof}
Since $u>\kappa=\arccosh(3/2)$, it is clear from the definition \eqref{eq:phi} that $\varphi(u)$ is real analytic.
\begin{enumerate}
\item
Since we have $\cosh\varphi(u)=\cosh{u}-1/2<\cosh{u}$ and $\arccosh$ is monotonically increasing, the inequality $\varphi(u)<u$ follows.
So we have $\varphi'(u)=\frac{\sinh{u}}{\sinh\varphi(u)}>1$, which implies that the function $\varphi(u)-u+\kappa$ is strictly increasing and vanishes when $u=\kappa$.
It follows that $\varphi(u)-u+\kappa>0$ for $u>\kappa$.
\item
This follows since $\varphi'(u)=\frac{\sinh{u}}{\sinh\varphi(u)}>1$.
\item
Since $\frac{d}{d\,u}\bigl(u-\varphi(u)\bigr)=1-\frac{\sinh{u}}{\sinh\varphi(u)}<0$, we conclude that $u-\varphi(u)$ is strictly monotonically decreasing.
The second statement follows since we have
\begin{equation*}
  \lim_{u\to\infty}e^{u-\varphi(u)}
  =
  \lim_{u\to\infty}\frac{\cosh(u)}{\cosh\varphi(u)}
  =
  \lim_{u\to\infty}\frac{\cosh(u)}{\cosh(u)-1/2}
  =
  1,
\end{equation*}
where the first equality holds because $\varphi(u)\to\infty$ as $u\to\infty$ from (i).
\item
The derivative of $\varphi(u)/u$ equals $\frac{u\varphi'(u)-\varphi(u)}{u^2}$, which is greater than $\frac{\varphi'(u)-1}{u}$ from (i).
Since $\varphi'(u)>1$ from the proof of (ii), we conclude that $\varphi(u)/u$ is increasing.
\item
We show that $\varphi''(u)<0$ for $u>\kappa$.
Since $\varphi'(u)=\frac{\sinh{u}}{\sinh\varphi(u)}$ and $\cosh\varphi(u)=\cosh{u}-1/2$, we have
\begin{equation*}
\begin{split}
  \varphi''(u)
  =&
  \frac{\cosh{u}\sinh\varphi(u)-\varphi'(u)\sinh{u}\cosh\varphi(u)}{\sinh^2\varphi(u)}
  \\
  =&
  \frac{\cosh{u}\sinh^2\varphi(u)-\sinh^2{u}\cosh\varphi(u)}{\sinh^3\varphi(u)}
  \\
  =&
  \frac{\cosh{u}\bigl((\cosh{u}-1/2)^2-1\bigr)-\sinh^2{u}\bigr(\cosh{u}-1/2\bigr)}
       {\sinh^3\varphi(u)}
  \\
  =&
  \frac{-2\cosh^2{u}+\cosh{u}-2}{4\sinh^3\varphi(u)}
  \\
  =&
  \frac{-2\bigl(\cosh{u}-1/4\bigr)^2-15/8}{4\sinh^3\varphi(u)}
  <0.
\end{split}
\end{equation*}
Since $\varphi(\kappa)=0$, we have $\lim_{u\to\kappa}\varphi'(u)=\infty$.
We also have $\lim_{u\to\infty}\varphi'(u)=\lim_{u\to\infty}\frac{e^u-e^{-u}}{e^{\varphi(u)}-e^{-\varphi(u)}}=\lim_{u\to\infty}e^{u-\varphi(u)}=1$ from (iii).
\item
This follows from $\frac{d}{d\,u}\left(e^u-e^{\varphi(u)}\right)=e^u-\frac{\sinh{u}}{\sinh\varphi(u)}e^{\varphi(u)}<0$.
\item
Since $2\cosh{u}-2\cosh\varphi(u)=1$, we have $e^u-e^{\varphi(u)}+e^{-u}-e^{-\varphi(u)}=1$.
Letting $u\to\infty$, we obtain the desired limit because $\lim_{u\to\infty}e^{-\varphi(u)}=0$ from (i).
\end{enumerate}
\end{proof}
\par
The following lemma and its corollary make calculations easy.
\begin{lem}
If $z$ is between $K_{u}$ and $K_{-u}$, or between $L_{0}$ and $L_{1}$, we have
\begin{equation}\label{eq:FKL}
  F(z)
  =
  \frac{1}{\xi}\Li_2\left(e^{-\xi(1+z)}\right)
  -
  \frac{1}{\xi}\Li_2\left(e^{-\xi(1-z)}\right)
  +uz.
\end{equation}
Moreover, if $z$ is between $L_{0}$ and $L_{1}$, we also have
\begin{equation}\label{eq:FL}
  F(z)
  =
  \frac{1}{\xi}\Li_2\left(e^{\xi(1-z)}\right)
  -
  \frac{1}{\xi}\Li_2\left(e^{\xi(1+z)}\right)
  -uz+\frac{4p\pi^2}{\xi}+2\pi\i.
\end{equation}
\end{lem}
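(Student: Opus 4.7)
The plan is to prove the two formulas in sequence: first \eqref{eq:FL} by direct substitution into \eqref{eq:F_def} for $z$ between $L_{0}$ and $L_{1}$, then \eqref{eq:FKL} by applying the dilogarithm inversion formula to \eqref{eq:FL} in the overlap of the two strips, and finally extending \eqref{eq:FKL} to the full union by analytic continuation.

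For \eqref{eq:FL}, set $w_{1}:=\gamma(1-z)-p+1$ and $w_{2}:=\gamma(1+z)-p$. When $z$ lies between $L_{0}$ and $L_{1}$, both $\Re w_{1}=1-\Re(\gamma z)$ and $\Re w_{2}=\Re(\gamma z)$ lie in $[0,1]$, so $\lfloor\Re w_{i}\rfloor=0$; applying \eqref{eq:L0_L1_L2} when $\Im w_{i}\ge0$ and Lemma~\ref{lem:floor} (with vanishing floor correction) when $\Im w_{i}<0$ yields $\L_{2}(w_{i})=\Li_{2}(e^{2\pi\i w_{i}})$ in either case. Since $2\pi\i w_{1}=\xi(1-z)-2\pi\i(p-1)$ and $2\pi\i w_{2}=\xi(1+z)-2\pi\i p$, the $2\pi\i$-periodicity of the exponential gives $\L_{2}(w_{1})=\Li_{2}(e^{\xi(1-z)})$ and $\L_{2}(w_{2})=\Li_{2}(e^{\xi(1+z)})$, and substituting back into \eqref{eq:F_def} produces \eqref{eq:FL}.

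For \eqref{eq:FKL}, I work first in the overlap where $0<\Re(\gamma z)<1$ and $-u<\Re(\xi z)<u$, and apply the dilogarithm inversion identity $\Li_{2}(y)+\Li_{2}(1/y)=-\pi^{2}/6-\frac{1}{2}\log^{2}(-y)$ with $y=e^{\xi(1\pm z)}$. A direct check of imaginary parts in the overlap fixes the principal branches as $\log(-e^{\xi(1+z)})=\xi(1+z)-(2p+1)\pi\i$ and $\log(-e^{\xi(1-z)})=\xi(1-z)-(2p-1)\pi\i$. Substituting $\Li_{2}(e^{\xi(1\pm z)})$ in terms of $\Li_{2}(e^{-\xi(1\pm z)})$ plus polynomial corrections into \eqref{eq:FL}, the identities $(1+z)^{2}-(1-z)^{2}=4z$, $(2p+1)(1+z)-(2p-1)(1-z)=2+4pz$, and $(2p+1)^{2}-(2p-1)^{2}=8p$ collapse the polynomial remainder to exactly $\frac{4p\pi^{2}}{\xi}+2\pi\i-2uz$ after using $\xi=u+2p\pi\i$, which converts \eqref{eq:FL} into \eqref{eq:FKL}.

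Both sides of \eqref{eq:FKL} are holomorphic in each of the two strips. On the strip between $K_{u}$ and $K_{-u}$ one has $|e^{-\xi(1\pm z)}|<1$ strictly, so the dilogarithms on the right-hand side avoid the branch cut $[1,\infty)$, while the branch cuts of $\L_{2}(w_{1})$ and $\L_{2}(w_{2})$ in $F(z)$ lie on $K_{\pm u}$. On the strip between $L_{0}$ and $L_{1}$, the condition $\Re(\gamma z)\notin\Z$ precludes both the right-hand side dilogarithm singularities and the $\L_{2}$ branch cuts. Since \eqref{eq:FKL} holds on the nonempty open overlap, the identity principle extends it to the entire union. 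The main obstacle I anticipate is the branch-bookkeeping in the inversion step: correctly identifying the integer shift in each $\log(-e^{\xi(1\pm z)})$ and verifying that the polynomial remainder from the squared logarithms cancels precisely to $\frac{4p\pi^{2}}{\xi}+2\pi\i-2uz$ rather than some nearby quantity.
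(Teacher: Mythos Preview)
Your argument is correct. The paper itself omits the proof entirely, deferring to \cite[Lemma~3.3]{Murakami:arXiv2023}, so there is no in-paper proof to compare against; your route---establish \eqref{eq:FL} on the $L_0$--$L_1$ strip via Lemma~\ref{lem:floor} with vanishing floor term, then pass to \eqref{eq:FKL} through the dilogarithm inversion identity on the overlap and extend by the identity theorem---is a clean and natural execution, and your branch identifications $\log(-e^{\xi(1+z)})=\xi(1+z)-(2p+1)\pi\i$ and $\log(-e^{\xi(1-z)})=\xi(1-z)-(2p-1)\pi\i$ check out (indeed $A+B=2u$ and $A-B=2\xi z-2\pi\i$ give $\tfrac{1}{2\xi}(A^2-B^2)=2uz-\tfrac{2u\pi\i}{\xi}$, and $\tfrac{4p\pi^2-2u\pi\i}{\xi}=-2\pi\i$ closes the computation).

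One small remark: a slightly more direct alternative, closer to how these lemmas are often proved, is to note that on the strip between $K_u$ and $K_{-u}$ one has $\Im w_1<0$ and $\Im w_2<0$, so one may invoke the second case of \eqref{eq:L0_L1_L2} directly; however, that produces $\Li_2(-e^{-\xi(1\pm z)})$ rather than $\Li_2(e^{-\xi(1\pm z)})$, and unwinding that sign still requires a functional equation. Your choice to first pin down \eqref{eq:FL} and then invert is arguably tidier, since it isolates the polynomial bookkeeping in a single step.
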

Since the proof is similar to that of \cite[Lemma~3.3]{Murakami:arXiv2023}, we omit it.
\par
As a corollary we have
\begin{cor}\label{cor:F'}
If $z$ is between $K_{u}$ and $K_{-u}$, or between $L_{0}$ and $L_{1}$, we have
\begin{equation*}
\begin{split}
  F'(z)
  =&
  \log(1-e^{-u-\xi z})+\log(1-e^{-u+\xi z})+u
  \\
  =&
  \log\bigl(2\cosh{u}-2\cosh(\xi z)\bigr).
\end{split}
\end{equation*}
\end{cor}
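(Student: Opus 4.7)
The plan is simply to differentiate the expression for $F(z)$ given in \eqref{eq:FKL}, which by the preceding lemma is valid on both of the regions named in the corollary. The elementary identity $\Li_2'(w) = -\log(1-w)/w$ combined with the chain rule gives $\frac{d}{dz}\Li_2(e^{az}) = -a\log(1-e^{az})$ for any constant $a$, and applying this with $a = -\xi$ and $a = \xi$ to the two dilogarithms in \eqref{eq:FKL} (plus the trivial $\frac{d}{dz}(uz) = u$) yields
\begin{equation*}
  F'(z) = \log\bigl(1-e^{-\xi(1+z)}\bigr) + \log\bigl(1-e^{-\xi(1-z)}\bigr) + u.
\end{equation*}
Because $p\in\Z$, we have $e^{-\xi} = e^{-u-2p\pi\i} = e^{-u}$, so $e^{-\xi(1\pm z)} = e^{-u\mp\xi z}$, which gives the first displayed equality of the corollary. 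The same answer of course results from differentiating \eqref{eq:FL} in the $L_0$--$L_1$ strip, using the companion identity $e^{\xi}=e^{u}$.

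For the second equality, I would expand the product of the two $1-\exp$ factors:
\begin{equation*}
  \bigl(1-e^{-u-\xi z}\bigr)\bigl(1-e^{-u+\xi z}\bigr) = 1 - 2e^{-u}\cosh(\xi z) + e^{-2u} = e^{-u}\bigl(2\cosh u - 2\cosh(\xi z)\bigr),
\end{equation*}
take $\log$ of both sides, and cancel the resulting $-u$ against the additive $+u$ from the first equality to recover $\log\bigl(2\cosh u - 2\cosh(\xi z)\bigr)$.

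The only genuine thing to check, and the one place I expect any friction, is that combining the two principal logarithms into a single one introduces no $2\pi\i$ correction. In the region between $K_u$ and $K_{-u}$, the inequality $|\Re(\xi z)| < u$ forces $|e^{-u\pm\xi z}| < 1$, so each of $1 - e^{-u\pm\xi z}$ lies in the open disk of radius $1$ centered at $1$ and in particular has strictly positive real part; the principal branches add correctly. In the strip between $L_0$ and $L_1$ the identity then extends by holomorphy of $F'$ and continuity from the non-empty overlap with the previous region (e.g.\ points $z$ with $\xi z$ near $\pi\i$). So I do not foresee any substantial obstacle: the corollary is essentially a one-line differentiation of \eqref{eq:FKL}, with branch bookkeeping as the only care.
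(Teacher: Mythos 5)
Your proof is correct, and it takes essentially the route the paper intends. The first displayed equality is exactly what one gets by differentiating \eqref{eq:FKL} with the chain rule and $\Li_2'(w) = -\log(1-w)/w$, then using $e^{\pm\xi}=e^{\pm u}$ (valid because $\xi - u = 2p\pi\i$). The algebraic identity
$\bigl(1-e^{-u-\xi z}\bigr)\bigl(1-e^{-u+\xi z}\bigr) = e^{-u}\bigl(2\cosh u - 2\cosh(\xi z)\bigr)$
is right, and you correctly identify the branch bookkeeping in $\log(AB)=\log A+\log B$ as the only delicate point. Your argument in the $K$-strip — that $|e^{-u\pm\xi z}|<1$ forces $1-e^{-u\pm\xi z}$ into the right half-plane, so the principal arguments each lie in $(-\pi/2,\pi/2)$ and sum inside $(-\pi,\pi)$ — is sound, and since $e^{-u}>0$, splitting off the $-u$ from $\log\bigl(e^{-u}(\cdots)\bigr)$ costs nothing further.

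One small supplement is worth adding for the $L_0$--$L_1$ extension. The identity-theorem step needs both sides to be holomorphic on that strip, and in particular one should check that $\log\bigl(2\cosh u - 2\cosh(\xi z)\bigr)$ is single-valued there, i.e.\ that $2\cosh u - 2\cosh(\xi z)$ never lands on $(-\infty,0]$. Writing $\xi z = s+t\i$ with $0<t<2\pi$ on this strip, $\Im\bigl(2\cosh u - 2\cosh(\xi z)\bigr) = -2\sinh s\sin t$ vanishes only when $s=0$ or $t=\pi$; in the first case the quantity is $2\cosh u - 2\cos t \geq 2\cosh u - 2 > 0$, and in the second it is $2\cosh u + 2\cosh s > 0$. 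So the branch cut is never hit, and your identity-theorem argument goes through. The same check (using that $0<\Im(\xi z)<2\pi$) shows $1-e^{-u\pm\xi z}$ avoids $(-\infty,0]$ on the $L$-strip as well, so the first equality also extends verbatim. The paper's own derivation (visible in the computation of $F'(\sigma_0)$) works from the $\L_1$-form \eqref{eq:F'} and the case-split formulas \eqref{eq:L0_L1_L2}; your direct differentiation of \eqref{eq:FKL} is a marginally cleaner bookkeeping of the same content.
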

The second equality follows from the same reason as \cite[Equation~(4.2)]{Murakami:CANJM2023}.
\par
Now, we put
\begin{equation*}
  g_{N,m}(z)
  :=
  f_N(z-m/\gamma),
\end{equation*}
so that we have
\begin{multline}\label{eq:sum_f_g}
  \sum_{m/p<k/N<(m+1)/p}
  \exp\left(N\times f_N\left(\frac{2k+1}{2N}-\frac{m}{\gamma}\right)\right)
  \\
  =
  \sum_{m/p<k/N<(m+1)/p}
  \exp\left(N\times g_{N,m}\left(\frac{2k+1}{2N}\right)\right).
\end{multline}
For each $m$ ($0\le m\le p-1$), the series of functions $\{g_{N,m}(z)\}_{N=2,3,\dots}$ uniformly converges to
\begin{equation*}
  G_m(z)
  :=
  F(z-m/\gamma)
\end{equation*}
in the region
\begin{multline}\label{eq:Theta}
  \Theta_{m,\nu}
  :=
  \left\{
    z\in\C\Bigm|
    m-1+\nu\le\Re(\gamma z)\le m+2-\nu,
    |\Im(\gamma z)|\le M-\frac{u}{2\pi}
  \right\}
  \\
  \setminus
  \left(
    \underline{\nabla}_{m,\nu}^{+}
    \cup
    \underline{\nabla}_{m,\nu}^{-}
    \cup
    \overline{\nabla}_{m,\nu}^{+}
    \cup
    \overline{\nabla}_{m,\nu}^{-}
  \right),
\end{multline}
where $M>u/(2\pi)$, and we put
\begin{equation}\label{eq:nabla}
\begin{split}
  \underline{\nabla}_{m,\nu}^{+}
  :=&
  \Bigl\{
    z\in\C\Bigm|
    m-1+\nu\le\Re(\gamma z)<m+\nu,
    \Im(\gamma z)>-\frac{u}{2\pi}-\nu,
  \\
  &\phantom{\Bigl\{z\in\C\Bigm|m-1+\nu<\Re(\gamma z)<m+2-\nu}\quad
    \Im{z}<\frac{(m-p+\nu)u}{2\pi|\gamma|^2}
  \Bigr\},
  \\
  \underline{\nabla}_{m,\nu}^{-}
  :=&
  \Bigl\{
    z\in\C\Bigm|
    m+1-\nu<\Re(\gamma z)\le m+2-\nu,
    \Im(\gamma z)<-\frac{u}{2\pi}+\nu,
  \\
  &\phantom{\Bigl\{z\in\C\Bigm|m-1+\nu<\Re(\gamma z)<m+2-\nu}\quad
    \Im{z}>\frac{(1-p+m-\nu)u}{2\pi|\gamma|^2}
  \Bigr\},
  \\
  \overline{\nabla}_{m,\nu}^{+}
  :=&
  \Bigl\{
    z\in\C\Bigm|
    m-1+\nu\le\Re(\gamma z)<m+\nu,
    \Im(\gamma z)>\frac{u}{2\pi}-\nu,
  \\
  &\phantom{\Bigl\{z\in\C\Bigm|m-1+\nu<\Re(\gamma z)<m+2-\nu}\quad
    \Im{z}<\frac{(p+m+\nu)u}{2\pi|\gamma|^2}
  \Bigr\},
  \\
  \overline{\nabla}_{m,\nu}^{-}
  :=&
  \Bigl\{
    z\in\C\Bigm|
    m+1-\nu<\Re(\gamma z)\le m+2-\nu,
    \Im(\gamma z)<\frac{u}{2\pi}+\nu,
  \\
  &\phantom{\Bigl\{z\in\C\Bigm|m-1+\nu<\Re(\gamma z)<m+2-\nu}\quad
    \Im{z}>\frac{(p+m+1-\nu)u}{2\pi|\gamma|^2}
  \Bigr\}.
\end{split}
\end{equation}
See Figure~\ref{fig:domain_g}, where we put
\begin{equation}\label{eq:I}
  I_{r}
  :=
  \{z\in\C\mid\Re{z}=r/p\}.
\end{equation}
\begin{figure}[h]
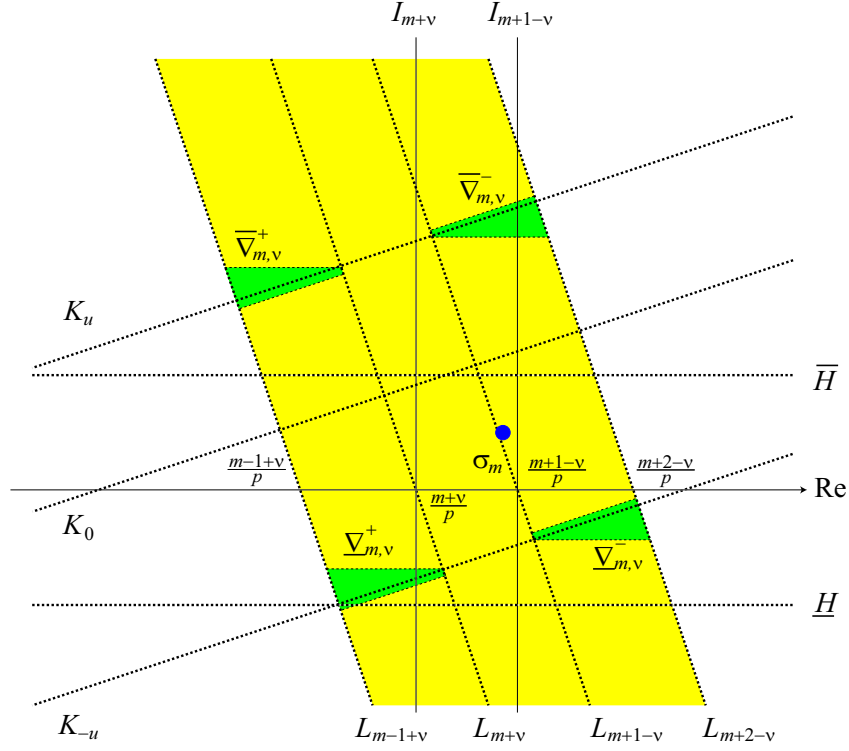

\pic{0.3}{domain_gN}
\caption{The series of functions $\{g_{N,m}(z)\}_{N=2,3,\dots}$ converges to $G_m(z)$ in the yellow region $\Theta_{m,\nu}$.}
\label{fig:domain_g}
\end{figure}
\begin{rem}
The trapezoidal regions defined above can be described as follow.
\begin{itemize}
\item
$\overline{\nabla}_{m,\nu}^{+}$ is surrounded by the lines $L_{m-1+\nu}$, $L_{m+\nu}$, $K_{u-2\pi\nu}$, and $\Im{z}=\frac{(p+m+\nu)u}{2\pi|\gamma|^2}$,
\item
$\overline{\nabla}_{m,\nu}^{-}$ is surrounded by the lines $L_{m+2-\nu}$, $L_{m+1-\nu}$, $K_{u+2\pi\nu}$, and $\Im{z}=\frac{(1+p+m-\nu)u}{2\pi|\gamma|^2}$,
\item
$\underline{\nabla}_{m,\nu}^{+}$ is surrounded by the lines $L_{m-1+\nu}$, $L_{m+\nu}$, $K_{-u-2\pi\nu}$, and $\Im{z}=\frac{(m-p+\nu)u}{2\pi|\gamma|^2}$,
\item
$\underline{\nabla}_{m,\nu}^{-}$ is surrounded by the lines $L_{m+2-\nu}$, $L_{m+1-\nu}$, $K_{-u+2\pi\nu}$, and $\Im{z}=\frac{(1-p+m-\nu)u}{2\pi|\gamma|^2}$.
\end{itemize}
\end{rem}
\par
We define
\begin{equation}\label{eq:sigma_m}
  \sigma_m
  :=
  \sigma_0+\frac{2m\pi\i}{\xi}
  =
  \frac{\varphi(u)+2(m+1)\pi\i}{\xi}.
\end{equation}
Then, we have the following lemma.
\begin{lem}\label{lem:sigma}
The point $\sigma_m$ is the intersection of the two lines $L_{m+1}$ and $K_{-\varphi(u)}$.
Moreover, it is below {\rm(}above, on, respectively{\rm)} the real axis if and only if $\varphi(u)/u>(m+1)/p$ {\rm(}$\varphi(u)/u<(m+1)/p$, $\varphi(u)/u=(m+1)/p$, respectively{\rm)}.
\end{lem}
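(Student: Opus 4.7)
The plan is a direct computation from the definitions. For the first claim, I would simply compute $\xi\sigma_m$ using the definition \eqref{eq:sigma_m}: one has $\xi\sigma_m = \varphi(u) + 2(m+1)\pi\i$, so $\Re(\xi\sigma_m) = \varphi(u)$ and $\Im(\xi\sigma_m) = 2(m+1)\pi$. Comparing with the characterizations in \eqref{eq:K} and \eqref{eq:L}, namely $K_s = \{z \mid \Re(\xi z) = -s\}$ and $L_t = \{z \mid \Im(\xi z) = 2\pi t\}$, these two conditions say precisely that $\sigma_m \in K_{-\varphi(u)} \cap L_{m+1}$. Since these two lines meet at a single point (as the lines $K_s$ and $L_t$ are translates of the images of the real and imaginary axes under multiplication by $1/\xi$, which is an $\R$-linear isomorphism), this is the intersection.

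For the second claim, I would rationalize $\sigma_m$ by multiplying numerator and denominator by $\overline{\xi} = u - 2p\pi\i$:
\begin{equation*}
  \sigma_m
  =
  \frac{\bigl(\varphi(u) + 2(m+1)\pi\i\bigr)(u - 2p\pi\i)}{u^2 + 4p^2\pi^2}.
\end{equation*}
Reading off the imaginary part gives
\begin{equation*}
  \Im\sigma_m
  =
  \frac{2(m+1)\pi u - 2p\pi\,\varphi(u)}{u^2 + 4p^2\pi^2}
  =
  \frac{2\pi p u}{u^2 + 4p^2\pi^2}\left(\frac{m+1}{p} - \frac{\varphi(u)}{u}\right).
\end{equation*}
Since the prefactor $2\pi p u/(u^2 + 4p^2\pi^2)$ is strictly positive, the sign of $\Im\sigma_m$ equals the sign of $(m+1)/p - \varphi(u)/u$. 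This yields the three-way equivalence in the statement: $\sigma_m$ lies below, on, or above the real axis according as $\varphi(u)/u$ is greater than, equal to, or less than $(m+1)/p$.

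Since the entire argument is a one-line computation followed by a sign check, there is no substantial obstacle; the only thing to be careful about is matching the conventions in \eqref{eq:K} and \eqref{eq:L} (in particular the minus sign in $K_{-\varphi(u)}$).
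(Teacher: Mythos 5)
Your proof is correct and follows the same computation as the paper: verify $\xi\sigma_m=\varphi(u)+2(m+1)\pi\i$ to identify the intersection point via \eqref{eq:K} and \eqref{eq:L}, then compute $\Im\sigma_m$ (equivalently $|\xi|^2\Im\sigma_m=2\pi\bigl((m+1)u-p\varphi(u)\bigr)$) and read off the sign. The only difference is that you spell out the rationalization step explicitly, which the paper leaves implicit.
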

\begin{proof}
The first statement follows from \eqref{eq:K} and \eqref{eq:L} since $\xi\sigma_m=\varphi(u)+2(m+1)\pi\i$.
The second one also follows since $|\xi|^2\Im\sigma_m=\bigl((m+1)u-p\varphi(u)\bigr)\times2\pi$.
\end{proof}
\begin{rem}\label{rem:sigma}
Note that if $m=p-1$, then the point $\sigma_{p-1}$ is always above the real axis because $\varphi(u)/u<1=(m+1)/p$ from Lemma~\ref{lem:phi} (i).
\end{rem}
For the case where $m<p-1$, we have the following lemma.
\begin{lem}\label{lem:upsilon}
If $m<p-1$, there uniquely exists a real number $\upsilon_{p,m}>\kappa$ such that $\varphi(u)/u>(m+1)/p$ {\rm(}$\varphi(u)/u<(m+1)/p$, $\varphi(u)/u=(m+1)/p$, respectively{\rm)} if and only if $u>\upsilon_{p,m}$ {\rm(}$u<\upsilon_{p,m}$, $u=\upsilon_{p,m}$, respectively{\rm)}.
\par
Moreover, we have the inequality $\upsilon_{p,m}<\sqrt{p}$.
\end{lem}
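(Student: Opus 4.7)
The plan is to exploit the monotonicity properties of $\varphi(u)/u$ and $u-\varphi(u)$ from Lemma~\ref{lem:phi}, together with the defining identity $\cosh u - \cosh \varphi(u) = 1/2$.

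For existence and uniqueness, I would first show that $\varphi(u)/u$ restricted to $u \in (\kappa,\infty)$ is a strictly increasing continuous bijection onto $(0,1)$. Strict monotonicity is Lemma~\ref{lem:phi}(iv). The boundary limits are $\lim_{u \to \kappa^+} \varphi(u)/u = 0$ (since $\varphi(\kappa) = 0$) and $\lim_{u \to \infty} \varphi(u)/u = 1$, the latter following from Lemma~\ref{lem:phi}(iii) because $u - \varphi(u) \to 0$. When $m < p-1$ the target value $(m+1)/p$ lies in $(0,1)$, so the intermediate value theorem produces a unique $\upsilon_{p,m} \in (\kappa, \infty)$ with $\varphi(\upsilon_{p,m})/\upsilon_{p,m} = (m+1)/p$. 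The three ``iff'' statements are then immediate from strict monotonicity.

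For the bound $\upsilon_{p,m} < \sqrt{p}$, monotonicity reduces the claim to $\varphi(\sqrt{p})/\sqrt{p} > (m+1)/p$. Since $m \le p-2$ gives $(m+1)/p \le (p-1)/p$, it suffices to prove
\[
\sqrt{p} - \varphi(\sqrt{p}) < 1/\sqrt{p}.
\]
Setting $\delta_u := u - \varphi(u)$, the identity $2\cosh u - 2\cosh\varphi(u) = 1$ rewrites, via the product-to-sum formula, as
\[
\sinh\!\bigl(u - \delta_u/2\bigr)\,\sinh\!\bigl(\delta_u/2\bigr) = 1/4.
\]
For fixed $u > 0$ the function $h(\delta) := \sinh(u - \delta/2)\sinh(\delta/2)$ has derivative $\tfrac{1}{2}\sinh(u - \delta)$, which is positive on $(0,u)$, so $h$ is strictly increasing there. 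Therefore, at $u = \sqrt{p}$, the inequality $\delta_u < 1/\sqrt{p}$ holds if and only if $h(1/\sqrt{p}) > 1/4$. Using $\sinh(x) > x$ for $x > 0$, valid since $p \ge 2$ ensures both $\sqrt{p} - 1/(2\sqrt{p})$ and $1/(2\sqrt{p})$ are positive,
\[
h\!\bigl(1/\sqrt{p}\bigr) > \bigl(\sqrt{p} - 1/(2\sqrt{p})\bigr)\cdot \bigl(1/(2\sqrt{p})\bigr) = \tfrac{1}{2} - \tfrac{1}{4p} \ge \tfrac{3}{8} > \tfrac{1}{4},
\]
which finishes the proof.

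The only nonobvious step is converting the relation $\cosh u - \cosh\varphi(u) = 1/2$ into the product form involving $\sinh((u+\varphi(u))/2)\sinh((u-\varphi(u))/2)$; once this is done, the small quantity $\delta_u = u - \varphi(u)$ is isolated and the crude estimate $\sinh(x) > x$ is enough to close the argument. Everything else is bookkeeping using Lemma~\ref{lem:phi} and the intermediate value theorem.
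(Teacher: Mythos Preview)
Your proof is correct. The argument differs from the paper's in packaging but not in substance, and in one place is cleaner.

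For existence and uniqueness, the paper works instead with the auxiliary function $u\mapsto\cosh u-\cosh\bigl((m+1)u/p\bigr)$, showing it is strictly increasing from $0$ to $\infty$ and hence hits $1/2$ exactly once; this gives $\upsilon_{p,m}$ via the defining relation $\cosh\varphi(u)=\cosh u-1/2$. Your route, invoking Lemma~\ref{lem:phi}(iv) directly to say that $\varphi(u)/u$ is a continuous increasing bijection $(\kappa,\infty)\to(0,1)$, is shorter and makes better use of what has already been established.

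For the bound $\upsilon_{p,m}<\sqrt{p}$, both arguments reduce to the same inequality
\[
\sinh\!\Bigl(\sqrt{p}-\tfrac{1}{2\sqrt{p}}\Bigr)\sinh\!\Bigl(\tfrac{1}{2\sqrt{p}}\Bigr)>\tfrac14,
\]
reached via the product formula $\cosh A-\cosh B=2\sinh\frac{A+B}{2}\sinh\frac{A-B}{2}$. The paper closes with the numerical estimate $2\sinh(1/2)^2=0.5430\ldots>1/2$ (relying on Mathematica), whereas your elementary bound $\sinh x>x$ gives $\tfrac12-\tfrac{1}{4p}\ge\tfrac38>\tfrac14$ without any computation. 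Your version is self-contained and arguably preferable.
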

\begin{proof}
For a constant $0<a<1$, we have
\begin{equation*}
  \frac{d}{d\,u}\bigl(\cosh{u}-\cosh(au)\bigr)
  =
  \sinh{u}-a\sinh(au)>0.
\end{equation*}
Putting $a:=(m+1)/p$, we conclude that the function $\cosh{u}-\cosh\bigl((m+1)u/p\bigr)$ is increasing, equals $0$ when $u=0$, and goes to infinity when $u\to\infty$ because $\cosh{u}\sim\frac{1}{2}e^{u}$ as $u\to\infty$.
So there exists a unique positive number $\upsilon_{p,m}$ such that $\cosh(\upsilon_{p,m})-\cosh\left(\frac{m+1}{p}\upsilon_{p,m}\right)=\frac{1}{2}$, that is, $\varphi(\upsilon_{p,m})=\frac{m+1}{p}\upsilon_{p,m}$.
\par
We can also see that $\cosh{u}-\cosh\left(\frac{m+1}{p}u\right)$ is greater than $1/2$ (less than $1/2$, respectively) if and only if $u>\upsilon_{p,m}$ ($u<\upsilon_{p,m}$, respectively).
This shows that $\varphi(u)/u>(m+1)/p$ ($\varphi(u)/u<(m+1)/p$, respectively) if and only if $u>\upsilon_{p,m}$ ($u<\upsilon_{p,m}$, respectively).
Note that $\upsilon_{p,m}>\kappa$ because $\cosh(\kappa)-\cosh\left(\frac{m+1}{p}\kappa\right)<\frac{3}{2}-1=\frac{1}{2}$.
\par
To show that $\upsilon_{p,m}<\sqrt{p}$, we only need to prove that $\cosh(\sqrt{p})-\cosh\left(\frac{m+1}{\sqrt{p}}\right)>\frac{1}{2}$.
Since $m+1\le p-1$, we have $\cosh(\sqrt{p})-\cosh\left(\frac{m+1}{\sqrt{p}}\right)\ge\cosh(\sqrt{p})-\cosh(\sqrt{p}-1/\sqrt{p})=2\sinh\left(\sqrt{p}-\frac{1}{2\sqrt{p}}\right)\sinh\left(\frac{1}{2\sqrt{p}}\right)>2\sinh(1/2)^2=0.543081\ldots>1/2$, where we use Mathematica \cite{Mathematica} to evaluate $2\sinh(1/2)^2$.
\par
The proof is complete.
\end{proof}
\par
If $z$ is between $K_{u}$ and $K_{-u}$, or between $L_{m}$ and $L_{m+1}$, we have
\begin{equation}\label{eq:Gm}
\begin{split}
  G_m(z)
  &=
  \frac{1}{\xi}\Li_2\left(e^{-\xi(1+z)}\right)
  -
  \frac{1}{\xi}\Li_2\left(e^{-\xi(1-z)}\right)
  +
  u\left(z-\frac{m}{\gamma}\right),
  \\
  G_m'(z)
  &=
  \log(2\cosh{u}-2\cosh(\xi z)),
  \\
  G_m''(z)
  &=
  \frac{-\xi\sinh(\xi z)}{\cosh{u}-\cosh(\xi z)}
\end{split}
\end{equation}
from \eqref{eq:FKL}, Corollary~\ref{cor:F'}, and \eqref{eq:F''}.
So we have
\begin{equation}\label{eq:Gmcosh}
\begin{split}
  G_m(\sigma_m)
  =&
  F(\sigma_0),
  \\
  G_m'(\sigma_m)
  =&
  0,
  \\
  G_m''(\sigma_m)
  =&
  -2\xi\sqrt{\cosh^2\varphi(u)-1}
  =
  -\xi\sqrt{(2\cosh{u}+1)(2\cosh{u}-3)}.
\end{split}
\end{equation}
From these equalities, we see that $G_m(z)$ is of the form
\begin{equation}\label{eq:Gm_saddle}
  G_m(z)
  =
  F(\sigma_0)
  +
  a_2(z-\sigma_m)^2
  +
  a_3(z-\sigma_m)^3
  +
  a_4(z-\sigma_m)^4
  +\dots
\end{equation}
with $a_2:=-\frac{\xi}{2}\sqrt{(2\cosh{u}+1)(2\cosh{u}-3)}$.
\par
Now, we will prove that if $\sigma_m$ is below or on the real axis, then we can ignore the sum \eqref{eq:sum_f_g}.
In fact we can prove the following proposition.
\begin{prop}\label{prop:sigma_below}
If $\varphi(u)/u\ge(m+1)/p$, then the sum $\sum_{m/p<k/N<(m+1)/p}\exp\left(N g_{N,m}\left(\frac{2k+1}{2N}\right)\right)$ is of order $O\left(N e^{N\bigl(\Re{F(\sigma_0)}-\varepsilon\bigr)}\right)$ as $N\to\infty$ for some $\varepsilon>0$.
\end{prop}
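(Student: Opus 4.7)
The plan is to dominate the sum by (number of terms) $\times$ (maximum absolute summand), then to show this maximum is at most $\exp\bigl(N(\Re F(\sigma_0)-\varepsilon)\bigr)$. The range of summation contains at most $N/p+1$ integers, so
\begin{equation*}
  \Bigl|\sum_{m/p<k/N<(m+1)/p}
  \exp\bigl(N\,g_{N,m}((2k+1)/(2N))\bigr)\Bigr|
  \le
  \Bigl(\tfrac{N}{p}+1\Bigr)\,
  \exp\Bigl(N\,\max_{x}\Re g_{N,m}(x)\Bigr),
\end{equation*}
where the maximum is over the relevant real points. By Proposition~\ref{prop:TN_L2}, $g_{N,m}(z)=G_m(z)+O(1/N)$ uniformly on compact subsets of $\Theta_{m,\nu}$, which contains the closed real segment $[m/p,(m+1)/p]$ once $\nu>0$ is sufficiently small. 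Thus the problem reduces to producing $\varepsilon>0$ with
\begin{equation*}
  \max_{x\in[m/p,(m+1)/p]}\Re G_m(x)\le\Re F(\sigma_0)-2\varepsilon,
\end{equation*}
the $O(1/N)$ error then being absorbed for $N$ large.

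A short calculation from $\sigma_m=(\varphi(u)+2(m+1)\pi\i)/\xi$ shows that, in addition to $\Im\sigma_m\le 0$ recorded in Lemma~\ref{lem:sigma}, the hypothesis $\varphi(u)/u\ge(m+1)/p$ is equivalent to $\Re\sigma_m\ge(m+1)/p$. Hence $\sigma_m$ lies on or to the upper right of the closed interval $[m/p,(m+1)/p]\subset\C$, and is strictly separated from the open real segment when the hypothesis is strict.

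To compare $\Re G_m$ on the segment with $\Re F(\sigma_0)$, I would exploit the endpoint simplification $\cosh(\xi k/p)=\cosh(uk/p)$ (because $e^{2k\pi\i}=1$). Setting $\alpha=ku/p$ and applying \eqref{eq:Gm}, $\Re G_m(k/p)$ is expressible in terms of the real-valued function
\begin{equation*}
  D(\alpha):=\Li_2(e^{-u-\alpha})-\Li_2(e^{-u+\alpha})+u\alpha,
\end{equation*}
while $\Re F(\sigma_0)$ corresponds to $D(\varphi(u))$ plus the same additive constants. Differentiating, $D'(\alpha)=\log\bigl(1-2e^{-u}\cosh\alpha+e^{-2u}\bigr)+u$, which vanishes exactly when $\cosh\alpha=\cosh u-1/2$, i.e., at $\alpha=\varphi(u)$; moreover $D''(\varphi(u))=-2\sinh\varphi(u)<0$, so $\varphi(u)$ is the unique critical point of $D$ on $(0,\infty)$ and is a strict global maximum. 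Since the hypothesis implies $ku/p\le\varphi(u)$ for $k=m,m+1$, this gives $D(ku/p)<D(\varphi(u))$ strictly (under the strict hypothesis), yielding the endpoint inequality with a quantitative gap. For interior critical points $x^*\in(m/p,(m+1)/p)$ of $\Re G_m$, Corollary~\ref{cor:F'} gives $|2\cosh u-2\cosh(\xi x^*)|=1$, and I would use the shape of the curve $x\mapsto 2\cosh u-2\cosh(\xi x)$ in $\C$, together with a contour-integration comparison $\Re G_m(x)-\Re F(\sigma_0)=\Re\int_{\sigma_m}^{x}G_m'(\zeta)\,d\zeta$ along a suitable path, to show either that no such $x^*$ exists in the segment or that the critical value is strictly below $\Re F(\sigma_0)$.

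\textbf{Main obstacle.} The hardest technical step is the interior critical-point analysis: the locus $|2\cosh u-2\cosh(\xi x)|=1$ in $\C$ is a transcendental real-analytic curve, and ruling out or controlling its intersection with the real segment, together with the associated $\Re G_m$-values, requires a careful case study in $(u,p,m)$. A secondary subtlety is the limiting case $\varphi(u)/u=(m+1)/p$, where $\sigma_m$ coincides with the endpoint $(m+1)/p$ on the real axis: the quadratic expansion of $G_m$ there yields $\sim\sqrt N$ summation indices within distance $1/\sqrt N$ each contributing $\sim e^{N\Re F(\sigma_0)}$, so the simple majorisation can fail and a finer local analysis (or an implicit strictness hypothesis on $\varphi(u)/u>(m+1)/p$) is required to keep $\varepsilon>0$.
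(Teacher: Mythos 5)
Your reduction of the sum to a uniform bound $\max_{x\in[m/p,(m+1)/p]}\Re G_m(x)\le\Re F(\sigma_0)-2\varepsilon$, and your endpoint analysis via $D(\alpha)=\Li_2(e^{-u-\alpha})-\Li_2(e^{-u+\alpha})+u\alpha$, are both correct in spirit and closely parallel the paper. (A small sign slip: under the hypothesis $\Re\sigma_m\ge(m+1)/p$ \emph{and} $\Im\sigma_m\le0$, so $\sigma_m$ is to the lower right of the segment, not the upper right.) However, the step you flag as the ``main obstacle'' — ruling out interior critical points of $x\mapsto\Re G_m(x)$ on $(m/p,(m+1)/p)$ — is a genuine gap, and the contour-integration idea you sketch for it would still need to be carried out. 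The paper bypasses this difficulty entirely with Lemma~\ref{lem:G_der_x}: writing $\xi z=s+ti$, it shows that
\begin{equation*}
  \tfrac{1}{4}\bigl|2\cosh u-2\cosh(\xi z)\bigr|^2
  =\bigl(\cosh u-\cosh s\cos t\bigr)^2+\sinh^2 s\,\sin^2 t
  =:h(\cosh s)
\end{equation*}
satisfies $h(S)>1/4$ whenever $U=\cosh u>3/2$ and $1\le S<U-1/2$, by an elementary quadratic case analysis. Consequently $\partial_x\Re G_m=\log\bigl|2\cosh u-2\cosh(\xi z)\bigr|>0$ throughout the strip $\{|\Re(\xi z)|<\varphi(u)\}$, so $\Re G_m$ is \emph{strictly monotone} on the whole segment $[m/p,(m+1)/p]$ (Lemma~\ref{lem:saddle_below}): there are no interior critical points to control, and the maximum is attained at the right endpoint, which is then bounded by $\Re F(\sigma_0)$ via Lemma~\ref{lem:sigma_m+1}(i). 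This global monotonicity argument is what replaces both halves of your plan (endpoint comparison plus interior-critical-point study) in one stroke, and without it your proof is incomplete.

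Your secondary concern about the boundary case $\varphi(u)/u=(m+1)/p$ is well taken: there $\sigma_m=(m+1)/p$ lies on the real axis, $\Re G_m(x)\to\Re F(\sigma_0)$ as $x\to(m+1)/p^-$ at a quadratic rate, and the lattice point nearest $(m+1)/p$ then contributes $\sim e^{N\Re F(\sigma_0)}$ rather than $e^{N(\Re F(\sigma_0)-\varepsilon)}$. Note that the paper's own proof cites Lemma~\ref{lem:sigma_m+1}(i), whose hypothesis is that $\sigma_m$ lies \emph{strictly} below the real axis, so the borderline case is not addressed there either; the estimate as stated really requires the strict inequality $\varphi(u)/u>(m+1)/p$, or else the offending $m$-term must be folded into the dominant sum rather than discarded.
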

Note that $\Re{F(\sigma_0)}>0$ from Lemma~\ref{lem:F_sigma0} below.
\par
To prove Proposition~\ref{prop:sigma_below}, we prepare several lemmas.
\begin{lem}\label{lem:G_der_x}
If $z$ is between $K_{\varphi(u)}$ and $K_{-\varphi(u)}$, then $\Re G_{m}(z)$ is monotonically increasing with respect to $\Re{z}$.
\end{lem}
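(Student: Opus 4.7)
The plan is to reduce the claim to a sign computation for $\Re G_m'(z)$ on the strip and then verify it via an explicit factorization. Since $\varphi(u)<u$ by Lemma~\ref{lem:phi}(i), the closed strip between $K_{\varphi(u)}$ and $K_{-\varphi(u)}$ is contained in the region between $K_{u}$ and $K_{-u}$, so the formula $G_m'(z)=\log\bigl(2\cosh u-2\cosh(\xi z)\bigr)$ from \eqref{eq:Gm} applies. Because $G_m$ is holomorphic, $\partial\Re G_m/\partial\Re z=\Re G_m'(z)=\log|2\cosh u-2\cosh(\xi z)|$, so it suffices to prove $|2\cosh u-2\cosh(\xi z)|\ge 1$ on the strip with equality only on a discrete subset.

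I would then use the factorization $2\cosh u-2\cosh w=4\sinh\tfrac{u+w}{2}\sinh\tfrac{u-w}{2}$ together with the identity $|\sinh(x+iy)|^2=\sinh^2 x+\sin^2 y$. Writing $\xi z=a+bi$ (so that $|a|\le\varphi(u)$), this gives
\begin{equation*}
|2\cosh u-2\cosh(\xi z)|^2=16\Bigl(\sinh^2\tfrac{u+a}{2}+\sin^2\tfrac{b}{2}\Bigr)\Bigl(\sinh^2\tfrac{u-a}{2}+\sin^2\tfrac{b}{2}\Bigr).
\end{equation*}
Expanding via $\sinh\tfrac{u+a}{2}\sinh\tfrac{u-a}{2}=\tfrac{1}{2}(\cosh u-\cosh a)$ and $\sinh^2\tfrac{u+a}{2}+\sinh^2\tfrac{u-a}{2}=\cosh u\cosh a-1$, the right-hand side simplifies to
\begin{equation*}
4(\cosh u-\cosh a)^2+16\sin^2\tfrac{b}{2}(\cosh u\cosh a-1)+16\sin^4\tfrac{b}{2}.
\end{equation*}

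For $|a|\le\varphi(u)$, the defining relation $\cosh\varphi(u)=\cosh u-\tfrac{1}{2}$ gives $\cosh u-\cosh a\ge\tfrac{1}{2}$, whence $4(\cosh u-\cosh a)^2\ge 1$. The remaining two summands are non-negative because $\cosh u\cosh a\ge\cosh u>1$. Hence $|2\cosh u-2\cosh(\xi z)|\ge 1$ throughout the strip, with equality only at the isolated points where $a=\pm\varphi(u)$ and $b\in 2\pi\Z$. Consequently $\Re G_m'(z)\ge 0$ with strict inequality except on a discrete set, which is enough to conclude that $\Re G_m$ is strictly increasing in $\Re z$ along every horizontal line. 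The whole argument is essentially trigonometric; the only mild subtlety is checking that strict monotonicity survives the isolated boundary points where $\Re G_m'$ vanishes, which is automatic once $\Re G_m'\ge 0$ holds everywhere with equality only on a measure-zero set.
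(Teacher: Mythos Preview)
Your proof is correct, and it is slicker than the paper's argument. Both proofs start the same way: use \eqref{eq:Gm} (available because $\varphi(u)<u$ puts the strip between $K_{\varphi(u)}$ and $K_{-\varphi(u)}$ inside the strip between $K_u$ and $K_{-u}$) to reduce to showing $|2\cosh u-2\cosh(\xi z)|^2\ge 1$. At this point the paper writes $\xi z=s+ti$ and expands directly as $4(\cosh u-\cosh s\cos t)^2+4\sinh^2 s\sin^2 t$; since $\cos t$ can be negative, this expression has no obvious lower bound, and the paper handles it by substituting $S=\cosh s$, $T=\cos t$, $U=\cosh u$ and doing a three-case analysis on the position of the vertex $UT$ of the resulting quadratic in $S$.

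Your half-angle factorization $2\cosh u-2\cosh w=4\sinh\frac{u+w}{2}\sinh\frac{u-w}{2}$, combined with $|\sinh(x+iy)|^2=\sinh^2 x+\sin^2 y$, rewrites the same quantity as $4(\cosh u-\cosh a)^2+16\sin^2\frac{b}{2}(\cosh u\cosh a-1)+16\sin^4\frac{b}{2}$. The point is that this decomposition isolates $\cosh u-\cosh a$ rather than $\cosh u-\cosh a\cos b$, so the constraint $|a|\le\varphi(u)$ immediately gives $\cosh u-\cosh a\ge\tfrac12$ and the first summand is $\ge 1$ with the other two manifestly non-negative. No case split is needed. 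The paper's route is more hands-on; yours buys brevity and makes the equality cases transparent. Your final remark about strict monotonicity surviving the isolated zeros of $\Re G_m'$ on the boundary is fine, though note that the paper simply reads ``between'' as the open strip, where the strict inequality is immediate.
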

\begin{proof}
We will show that $\frac{\partial}{\partial\,x}\Re{G_m(z)}>0$, where we put $x:=\Re{z}$.
From \eqref{eq:Gm}, we have
\begin{equation*}
  \frac{\partial}{\partial\,x}\Re{G_m(z)}
  =
  \log\bigl|2\cosh{u}-2\cosh(\xi z)\bigr|.
\end{equation*}
Writing $\xi z=s+t\i$ with $s,t\in\R$, we have
\begin{equation*}
  \bigl|2\cosh{u}-2\cosh(\xi z)\bigr|^2
  =
  4\bigl(\cosh{u}-\cosh(s)\cos(t)\bigr)^2
  +
  4\sinh^2(s)\sin^2(t).
\end{equation*}
\par
We see that $\cosh{u}>3/2$ since $u>\kappa$, and $1\le\cosh{s}<\cosh{u}-1/2$ since $s=\Re(\xi z)$ is between $-\varphi(u)$ and $\varphi(u)$.
Putting $S:=\cosh{s}$, $T:=\cos{t}$, $U:=\cosh{u}$, we define
\begin{equation*}
\begin{split}
  h(S)
  :=&
  \bigl(U-ST\bigr)^2
  +
  (S^2-1)(1-T^2)
  \\
  =&
  (S-UT)^2-(U^2-1)(T^2-1),
\end{split}
\end{equation*}
and regard it as a quadratic function of $S$ fixing $U>3/2$ and $|T|\le1$.
We will show that $h(S)>1/4$ when $1\le S<U-1/2$ in a usual way.
\begin{enumerate}
\item
When $UT\le 1$:
We see that $h(S)\ge h(1)=(U-T)^2>1/4$ since $U>3/2$ and $T\le1$.
\par
\item
When $1<UT\le U-1/2$:
The minimum of $h(S)$ takes place at $S=UT$.
Now, we have
\begin{equation*}
\begin{split}
  h(UT)
  =&
  U^2+T^2-U^2T^2-1
  \\
  &\text{(since $UT\le U-1/2$)}
  \\
  \ge&
  U^2+T^2-(U-1/2)^2-1
  \\
  =&
  T^2+U-\frac{5}{4}
  >\frac{1}{4}.
\end{split}
\end{equation*}
\item
When $U-1/2<UT$:
In this case we see that $h(S)>h(U-1/2)$.
We calculate
\begin{equation*}
\begin{split}
  h(U-1/2)
  =&
  2(1-T)U^2-(1-T)U+T^2-3/4
  \\
  =&
  2(1-T)(U-1/4)^2+T^2+T/8-7/8
  \\
  &\text{(since $U>3/2$)}
  \\
  >&
  2(1-T)(5/4)^2+T^2+T/8-7/8
  \\
  =&
  (T-3/2)^2
  \ge1/4.
\end{split}
\end{equation*}
\end{enumerate}
Therefore we conclude that $h(S)>1/4$, which shows that $\frac{\partial}{\partial\,x}\Re G_m(z)>0$, completing the proof.
\end{proof}
Using Lemma~\ref{lem:G_der_x}, we can prove the following lemma.
\begin{lem}\label{lem:saddle_below}
Suppose that $\varphi(u)/u\ge(m+1)/p$, that is, $\sigma_m$ is below or on the real axis.
Then $\Re G_m(x)<\Re G_m\bigl((m+1)/p\bigr)$ for $x\in\R$ with $m/p\le x<(m+1)/p$.
\end{lem}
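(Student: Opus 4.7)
The plan is to invoke Lemma~\ref{lem:G_der_x} directly, after verifying that the hypothesis $\varphi(u)/u \ge (m+1)/p$ forces the real interval $[m/p,(m+1)/p)$ to lie inside the horizontal strip between $K_{-\varphi(u)}$ and $K_{\varphi(u)}$, where the monotonicity conclusion of that lemma applies.

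First I would unwind the definition \eqref{eq:K}: for a real number $x$, one has $\Re(\xi x) = ux$, so the condition that $x$ lies between $K_{-\varphi(u)}$ and $K_{\varphi(u)}$ amounts to $-\varphi(u) \le ux \le \varphi(u)$, i.e.\ $|x| \le \varphi(u)/u$. For $x$ in the interval $[m/p,(m+1)/p)$, we have $0 \le x < (m+1)/p$, and the standing hypothesis gives $(m+1)/p \le \varphi(u)/u$. Combining these yields $0 \le x \le \varphi(u)/u$, so indeed $x$ lies in the strip.

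Next I would apply Lemma~\ref{lem:G_der_x} to conclude that $\Re G_m(x)$ is (strictly) monotonically increasing in $\Re z = x$ throughout this interval. A strictly increasing continuous function on $[m/p,(m+1)/p)$ is bounded above by its limiting value at $(m+1)/p$, giving
\begin{equation*}
  \Re G_m(x) < \Re G_m\bigl((m+1)/p\bigr)
\end{equation*}
for every $x \in [m/p,(m+1)/p)$, which is exactly what is claimed.

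There is really no substantial obstacle here: the proof is an immediate reduction to Lemma~\ref{lem:G_der_x}, with the only content being the verification that the geometric hypothesis ``$\sigma_m$ is below or on the real axis'' places the relevant real segment inside the strip $-\varphi(u) \le \Re(\xi z) \le \varphi(u)$ on which the monotonicity was established. The translation between $\Im \sigma_m \le 0$ and $\varphi(u)/u \ge (m+1)/p$ was already recorded in Lemma~\ref{lem:sigma}, so nothing further is needed.
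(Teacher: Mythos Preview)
Your proposal is correct and follows essentially the same approach as the paper's own proof: reduce to Lemma~\ref{lem:G_der_x} by checking that the real segment $[m/p,(m+1)/p)$ lies between $K_{\varphi(u)}$ and $K_{-\varphi(u)}$, which amounts to the elementary inequality $ux<\varphi(u)$ under the hypothesis $\varphi(u)/u\ge(m+1)/p$.
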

\begin{proof}
From Lemma~\ref{lem:G_der_x} it is enough to show that the segment $[m/p,(m+1)/p]$ in the real axis is between $K_{\varphi(u)}$ and $K_{-\varphi(u)}$.
If $x\in\R$ satisfies $m/p\le x<(m+1)/p$, then $|\Re(\xi x)|=ux<\varphi(u)$ since $\varphi(u)/u\ge(m+1)/p$, proving that $x$ is between $K_{-\varphi(u)}$ and $K_{\varphi(u)}$ from \eqref{eq:K}.
\end{proof}
Define the region $W_m$ as
\begin{equation}\label{eq:Wm}
  W_m
  :=
  \{z\in\Theta_{m,\nu}\mid\Re G_m(z)<\Re F(\sigma_0)\}.
\end{equation}
\begin{lem}\label{lem:sigma_m+1}
Let $z$ be a point on $L_{m+1}$ between $K_{u}$ and $K_{-u}$.
\begin{enumerate}
\item
If $\sigma_m$ is below the real axis, then $z\in W_m$ if $z$ is between $\sigma_m$ and $(m+1)/p$, including $(m+1)/p$ but excluding $\sigma_m$.
\item
If $\sigma_m$ is above the real axis, then $z\in W_m$ if $z$ is between $K_{\varphi(u)}$ and $(m+1)/p$, unless $z=\sigma_m$.
\end{enumerate}
\end{lem}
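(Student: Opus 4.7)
The plan is to parametrize the relevant arc of $L_{m+1}$ by the real variable $s := \Re(\xi z) \in (-u, u)$, which identifies that arc with the set of $z$ lying between $K_{u}$ and $K_{-u}$, and then to reduce both assertions to the monotonicity of $s \mapsto \Re G_{m}(z(s))$. Under this parametrization $z(s) = (s + 2(m+1)\pi\i)/\xi$, and by \eqref{eq:K} and \eqref{eq:L} the three distinguished points have explicit locations: $\sigma_{m}$ corresponds to $s = \varphi(u)$, the real point $(m+1)/p$ to $s = (m+1)u/p$, and the intersection of $L_{m+1}$ with $K_{\varphi(u)}$ to $s = -\varphi(u)$.

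On this arc the formulas of \eqref{eq:Gm} apply, and since $\cos(2(m+1)\pi)=1$ one has $\cosh(\xi z(s)) = \cosh s$. Combining this with Corollary~\ref{cor:F'}, with $dz/ds = 1/\xi$, and with $\Re(1/\xi) = u/|\xi|^{2} > 0$, I obtain
\[
  \frac{d}{ds}\Re G_{m}(z(s)) = \frac{u}{|\xi|^{2}}\log\bigl(2\cosh u - 2\cosh s\bigr).
\]
Because $\cosh \varphi(u) = \cosh u - 1/2$, this derivative is strictly positive on $(-\varphi(u), \varphi(u))$ and strictly negative on $(\varphi(u), u) \cup (-u, -\varphi(u))$. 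Together with $\Re G_{m}(\sigma_{m}) = \Re F(\sigma_{0})$ from \eqref{eq:Gmcosh}, this yields the key monotonicity statement: $\Re G_{m}(z(s))$ is strictly increasing on $[-\varphi(u), \varphi(u)]$ and strictly decreasing on $[\varphi(u), u)$, so on any subinterval of $(-u, u)$ that contains $s = \varphi(u)$ its unique maximum is attained at $\sigma_{m}$ with value $\Re F(\sigma_{0})$.

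The two cases are then immediate. In case (i), Lemma~\ref{lem:sigma} translates the hypothesis into $\varphi(u) > (m+1)u/p > 0$, so the segment from $(m+1)/p$ to $\sigma_{m}$ corresponds to $s \in [(m+1)u/p,\, \varphi(u)]$, a subinterval of $(0, \varphi(u)]$ on which $\Re G_{m}(z(s))$ is strictly increasing; omitting the endpoint $s = \varphi(u)$ gives $\Re G_{m}(z) < \Re F(\sigma_{0})$ as required. In case (ii), the hypothesis becomes $\varphi(u) < (m+1)u/p$, so the segment from $K_{\varphi(u)}$ to $(m+1)/p$ corresponds to $s \in [-\varphi(u),\, (m+1)u/p]$, a closed interval on which $s = \varphi(u)$ is the unique maximizer of $\Re G_{m}(z(s))$; deleting that point again yields $\Re G_{m}(z) < \Re F(\sigma_{0})$.

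The only real bookkeeping issue I anticipate is checking that the geometric segments named in the statement really do correspond to the closed $s$-intervals I claim, and verifying that the formula \eqref{eq:Gm} is valid throughout these segments; the latter is automatic because $L_{m+1}$ lies in the overlap of the two regions where \eqref{eq:Gm} is asserted, and the segments stay strictly inside $\{|s|<u\}$ (with the single possible boundary value $s=u$ when $m=p-1$, where $G_{m}$ remains finite by $\Li_{2}(1)=\pi^{2}/6$). Once the derivative formula and its sign are in place, the remaining argument is mechanical.
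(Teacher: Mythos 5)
Your proof is correct and is essentially the paper's own argument, differing only in the choice of parameter: you parametrize the arc of $L_{m+1}$ by $s = \Re(\xi z)$, while the paper uses $(m+1)/p + \overline{\xi}t$, and the two are related by the affine change $s = (m+1)u/p + |\xi|^{2}t$, yielding the same derivative formula and sign chart. The small bookkeeping points you flag (validity of \eqref{eq:Gm} along the segment, the boundary case $s=u$ when $m=p-1$) are handled correctly, and the conclusion follows from the monotonicity exactly as you describe.
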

\begin{proof}
A point on $L_{m+1}$ between $K_u$ and $K_{-u}$ is parametrized as $(m+1)/p+\overline{\xi}t$ with $\frac{-(p+m+1)u}{p|\xi|^2}\le t\le\frac{(p-m-1)u}{p|\xi|^2}$.
\par
Note the following:
\begin{itemize}
\item
$t=\frac{-(p+m+1)}{p|\xi|^2}$, $\frac{-p\varphi(u)-(m+1)u}{p|\xi|^2}$, $\frac{p\varphi(u)-(m+1)u}{p|\xi|^2}$, and $\frac{(p-m-1)u}{p|\xi|^2}$ correspond to the points on $K_{u}$, $K_{\varphi(u)}$, $K_{-\varphi(u)}$, and $K_{-u}$, respectively,
\item
$t=\frac{p\varphi(u)-(m+1)u}{p|\xi|^2}$ corresponds to $\sigma_m$,
\item
$t=0$ corresponds to the point $(m+1)/p$.
\end{itemize}
\par
From \eqref{eq:Gm}, we have
\begin{equation*}
\begin{split}
  &\frac{d}{d\,t}
  \Re G_m((m+1)/p+\overline{\xi}t)
  \\
  =&
  \Re
  \left(
    \overline{\xi}
    \log\left(2\cosh{u}-2\cosh\left(\frac{(m+1)\xi}{p}+|\xi|^2t\right)\right)
  \right)
  \\
  =&
  u
  \log\left(2\cosh{u}-2\cosh\left(\frac{(m+1)u}{p}+|\xi|^2t\right)\right),
\end{split}
\end{equation*}
which is positive if and only if $\cosh{u}-\cosh\bigl((m+1)u/p+|\xi|^2t\bigr)>1/2$, that is, $\frac{-p\varphi(u)-(m+1)u}{p|\xi|^2}<t<\frac{p\varphi(u)-(m+1)u}{p|\xi|^2}$.
\begin{enumerate}
\item
If $\sigma_m$ is below  the real axis, we have $\varphi(u)/u>(m+1)/p$ from Lemma~\ref{lem:sigma}.
If $z$ is between $\sigma_m$ and $(m+1)/p$, then $0\le t\le\frac{p\varphi(u)-(m+1)u}{p|\xi|^2}$.
So we conclude that $\Re G_m((m+1)/p+\overline{\xi}t)$ is strictly increasing with respect to $t$ (see Table~\ref{table:ReG_m_below}), proving $\Re G_m(z)<\Re F(\sigma_0)$ unless $z=\sigma_m$.
\begin{table}[h]
{\tiny
\begin{tabular}{|c||c|c|c|c|c|c|c|c|c|c|}
  \hline
  $t$&
  $\frac{-(p+m+1)u}{p|\xi|^2}$&&
  $\frac{-p\varphi(u)-(m+1)u}{p|\xi|^2}$&&
  $0$&&
  $\frac{p\varphi(u)-(m+1)u}{p|\xi|^2}$&&
  $\frac{(p-m-1)u}{p|\xi|^2}$ \\
  \hline
  $\frac{d\,\Re{G_m}}{d\,t}$&&$-$&$0$&$+$&$+$&$+$&$0$&$-$& \\
  \hline
  $\Re{G_m}$&
  &$\searrow$&&$\nearrow$&$\nearrow$&$\nearrow$&$\Re{F(\sigma_0)}$&$\searrow$& \\
  \hline
  location&on $K_u$&&on $K_{\varphi(u)}$&&$\frac{m+1}{p}$&&$\sigma_m$&&on $K_{-u}$ \\
  \hline
\end{tabular}}
\caption{The derivative sign chart of $\Re{G_m}\bigl((m+1)/p-\overline{\xi}t\bigr)$ when $p\varphi(u)-(m+1)u>0$.}
\label{table:ReG_m_below}
\end{table}
\item
If $\sigma_m$ is above the real axis, we have $\varphi(u)/u<(m+1)/p$ from Lemma~\ref{lem:sigma} again.
We have the following table (Table~\ref{table:ReG_m_above}) from the argument above.
Note that in this case $\frac{p\varphi(u)-(m+1)u}{p|\xi|^2}<0$.
If $z$ is between $K_{\varphi(u)}$ and $(m+1)/p$, then we have $\frac{-p\varphi(u)-(m+1)u/p}{|\xi|^2}\le t\le0$.
From Table~\ref{table:ReG_m_above}, we see that $\Re G_m((m+1)/p+\overline{\xi}t)$ is increasing (decreasing, respectively) with respect to $t$ when $\frac{-p\varphi(u)-(m+1)u}{p|\xi|^2}\le t<\frac{p\varphi(u)-(m+1)u}{p|\xi|^2}$ ($\frac{p\varphi(u)-(m+1)u}{p|\xi|^2}<t\le0$, respectively), proving that $\Re G_m(z)<\Re F(\sigma_0)$ if $z$ is between $K_{\varphi(u)}$ and $(m+1)/p$, unless $z=\sigma_m$.
\begin{table}[h]
{\tiny
\begin{tabular}{|c||c|c|c|c|c|c|c|c|c|c|}
  \hline
  $t$&
  $\frac{-(p+m+1)u}{p|\xi|^2}$&&
  $\frac{-p\varphi(u)-(m+1)u}{p|\xi|^2}$&&
  $\frac{p\varphi(u)-(m+1)u}{p|\xi|^2}$&&
  $0$&&
  $\frac{(p-m-1)u}{p|\xi|^2}$ \\
  \hline
  $\frac{d\,\Re{G_m}}{d\,t}$&&$-$&$0$&$+$&$0$&$-$&$-$&$-$& \\
  \hline
  $\Re{G_m}$&
  &$\searrow$&&$\nearrow$&$\Re{F(\sigma_0)}$&$\searrow$&$\searrow$&$\searrow$& \\
  \hline
  location&on $K_u$&&on $K_{\varphi(u)}$&&$\sigma_m$&&$\frac{m+1}{p}$&&on $K_{-u}$ \\
  \hline
\end{tabular}}
\caption{The derivative sign chart of $\Re{G_m}\bigl((m+1)/p-\overline{\xi}t\bigr)$ when $p\varphi(u)-(m+1)u<0$.}
\label{table:ReG_m_above}
\end{table}
\end{enumerate}
\end{proof}
\begin{proof}[Proof of Proposition~\ref{prop:sigma_below}]
First note that $m$ is less than $p-1$ from Remark~\ref{rem:sigma}.
\par
From Lemmas~\ref{lem:saddle_below} and \ref{lem:sigma_m+1} (i), there exists $\varepsilon>0$ such that
\begin{equation}\label{eq:sigma_below_proof}
  \Re G_m(x)<\Re F(\sigma_0)-2\varepsilon
\end{equation}
if $m/p<x<(m+1)/p$.
Since the series of functions $\{g_{N,m}(z)\}_{N=2,3,\dots}$ uniformly converges to $G_m(z)$, we have $\left|g_{N,m}(z)-G_{m}(z)\right|<\varepsilon$ if $N$ is sufficiently large.
So we have
\begin{equation*}
\begin{split}
  &\left|
    \sum_{m/p<k/N<(m+1)/p}
    \exp
    \left(N\times g_{N,m}\left(\frac{2k+1}{2N}\right)\right)
  \right|
  \\
  \le&
  \sum_{m/p<k/N<(m+1)/p}
  \exp
  \left(N\times\Re g_{N,m}\left(\frac{2k+1}{2N}\right)\right)
  \\
  <&
  \sum_{m/p<k/N<(m+1)/p}
  \exp\left(N\left(\Re G_m\left(\frac{2k+1}{2N}\right)+\varepsilon\right)\right).
\end{split}
\end{equation*}
Note that if $\frac{m}{p}<\frac{k}{N}<\frac{m+1}{p}$, then $\frac{m}{p}<\frac{2k+1}{2N}$ but $\frac{2k+1}{2N}$ may not be less than $\frac{m+1}{p}$.
\par
If $\frac{m}{p}<\frac{k}{N}<\frac{2k+1}{2N}<\frac{m+1}{p}$, then from \eqref{eq:sigma_below_proof} we have $\Re G_m\left(\frac{2k+1}{2N}\right)+\varepsilon<\Re F(\sigma_0)-\varepsilon$.
Therefore we have
\begin{equation*}
  \left|
    \sum_{m/p<k/N<(m+1)/p}
    \exp
    \left(Ng_{N,m}\left(\frac{2k+1}{2N}\right)\right)
  \right|
  <
  \frac{N}{p}
  e^{N\bigl(\Re F(\sigma_0)-\varepsilon\bigr)},
\end{equation*}
proving the proposition.
\par
Suppose that $k'$ satisfies $\frac{k'}{N}<\frac{m+1}{p}\le\frac{2k'+1}{2N}$.
Notice that there exists at most one such $k'$ because $\frac{k'+1}{N}>\frac{2k'+1}{2N}$.
Moreover, we may assume that $\Re G_m\left(\frac{2k'+1}{2N}\right)<\Re F(\sigma_0)+\varepsilon$ for sufficiently large $N$.
So we have
\begin{equation*}
\begin{split}
  &\left|
    \sum_{m/p<k/N<(m+1)/p}
    \exp
    \left(N\times g_{N,m}\left(\frac{2k+1}{2N}\right)\right)
  \right|
  \\
  <&
  \frac{N}{p}e^{N\bigl(\Re F(\sigma_0)-\varepsilon\bigr)}
  +
  e^{\Re F(\sigma_0)+\varepsilon},
\end{split}
\end{equation*}
which is of order $O\left(Ne^{N\left(\Re F(\sigma_0)-\varepsilon\right)}\right)$ because $\Re F(\sigma_0)>0$ from Lemma~\ref{lem:F_sigma0} below.
\par
This completes the proof.
\end{proof}
Before proving the following lemma, note that $F(\sigma_0)=S_{\FE}(u)/\xi$ from \eqref{eq:S_def} and \eqref{eq:FKL}.
\begin{lem}\label{lem:F_sigma0}
If $p\ge1$ and $u>\kappa$, then we have $\Re{F(\sigma_0)}>0$.
\end{lem}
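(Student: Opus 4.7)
The plan is to rewrite $\Re F(\sigma_0)$ in a form whose sign is controlled by a real one-variable function of $u$, and then to establish positivity of that function by monotonicity. First I would use \eqref{eq:FKL} at $z=\sigma_0$ together with the identities $\xi\sigma_0=\varphi(u)+2\pi\i$ and $e^{-\xi}=e^{-u}$ to get the closed form $F(\sigma_0)=S_{\FE}(u)/\xi$. Since $u>\kappa$, Lemma~\ref{lem:phi}(i) gives $\varphi(u)\in\R$ with $0<e^{-u\pm\varphi(u)}<1$, so the two $\Li_2$-terms in \eqref{eq:S_def} are real. Setting
$$A(u):=\Li_2\bigl(e^{-u-\varphi(u)}\bigr)-\Li_2\bigl(e^{-u+\varphi(u)}\bigr)+u\varphi(u),$$
one has $S_{\FE}(u)=A(u)+2\pi\i u$, and rationalizing by $\overline{\xi}$ yields
$$\Re F(\sigma_0)=\frac{u\bigl(A(u)+4p\pi^2\bigr)}{|\xi|^2}.$$
The claim thus reduces to $A(u)+4p\pi^2>0$, and since $p\ge 1$ it suffices to prove the stronger bound $A(u)>0$ for $u>\kappa$.

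For this I would check the boundary value $A(\kappa)=0$, which is immediate from $\varphi(\kappa)=0$, and then show that $A$ is strictly increasing on $(\kappa,\infty)$. Differentiating with $\Li_2'(z)=-\log(1-z)/z$ and using the key algebraic identity
$$\bigl(1-e^{-u-\varphi(u)}\bigr)\bigl(1-e^{-u+\varphi(u)}\bigr)=1-e^{-u}(2\cosh u-1)+e^{-2u}=e^{-u},$$
which follows from $2\cosh\varphi(u)=2\cosh u-1$ (that is, from \eqref{eq:phi}), the contributions involving $\varphi'(u)$ collapse and one obtains
$$A'(u)=\log\frac{1-e^{-u-\varphi(u)}}{1-e^{-u+\varphi(u)}}+\varphi(u).$$
Both summands are strictly positive: because $e^{-u-\varphi(u)}<e^{-u+\varphi(u)}$ in $(0,1)$ the argument of the log exceeds $1$, and $\varphi(u)>0$ by Lemma~\ref{lem:phi}(i). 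Hence $A'(u)>0$, so $A(u)>0$ for all $u>\kappa$, and therefore $\Re F(\sigma_0)>0$.

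The only step with any subtlety is the algebraic collapse that produces the compact form of $A'(u)$. Once the identity $(1-e^{-u-\varphi(u)})(1-e^{-u+\varphi(u)})=e^{-u}$ is recognized as a consequence of the defining equation $\cosh\varphi(u)=\cosh u-1/2$, the sign analysis is immediate and I do not anticipate any further obstacle in carrying out this plan.
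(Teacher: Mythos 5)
Your proof is correct and follows essentially the same route as the paper: reduce to positivity of $L_p(u)=A(u)+4p\pi^2$, compute the derivative, use the identity $\bigl(1-e^{-u-\varphi(u)}\bigr)\bigl(1-e^{-u+\varphi(u)}\bigr)=e^{-u}$ (equivalently $2\cosh u-2\cosh\varphi(u)=1$) to collapse the $\varphi'(u)$-terms, and argue monotonicity from $A(\kappa)=0$. The only cosmetic difference is that the paper simplifies $A'(u)$ one step further to $\log\bigl(2\cosh(u+\varphi(u))-2\bigr)$ before concluding positivity, whereas you stop at the intermediate two-term form and check each summand's sign directly.
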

\begin{proof}
As noted above, we have
\begin{equation*}
\begin{split}
  \Re{F(\sigma_0)}
  =&
  \Re\left(\frac{S_{\FE}(u)}{\xi}\right)
  \\
  =&
  \frac{u}{|\xi|^2}
  \left(
    \Li_2\left(e^{-u-\varphi(u)}\right)
    -
    \Li_2\left(e^{-u+\varphi(u)}\right)
    +u\varphi(u)
    +
    4p\pi^2
  \right).
\end{split}
\end{equation*}
We will show that
\begin{equation}\label{eq:Lp}
  L_{p}(u)
  :=
  \Li_2\left(e^{-u-\varphi(u)}\right)
  -
  \Li_2\left(e^{-u+\varphi(u)}\right)
  +u\varphi(u)+4p\pi^2
\end{equation}
is positive.
\par
The derivative becomes
\begin{equation}\label{eq:Lp_der}
\begin{split}
  &\frac{d}{d\,u}L_{p}(u)
  \\
  =&
  \bigl(1+\varphi'(u)\bigr)
  \log\left(1-e^{-u-\varphi(u)}\right)
  +
  \bigl(-1+\varphi'(u)\bigr)
  \log\left(1-e^{-u+\varphi(u)}\right)
  \\
  &+\varphi(u)+u\varphi'(u)
  \\
  =&
  \log\left(\frac{1-e^{-u-\varphi(u)}}{1-e^{-u+\varphi(u)}}\right)
  +
  \varphi'(u)
  \left(\log\bigl(2\cosh{u}-2\cosh\varphi(u)\bigr)-u\right)
  +
  \varphi(u)+u\varphi'(u)
  \\
  &\text{(since $\cosh\varphi(u)=\cosh{u}-1/2$)}
  \\
  =&
  \log\left(\frac{1-e^{-u-\varphi(u)}}{1-e^{-u+\varphi(u)}}\right)
  +
  \varphi(u)
  \\
  =&
  \log\left(
  \frac{\left(e^{\varphi(u)}-e^{-u}\right)\left(1-e^{u+\varphi(u)}\right)}
       {\left(1-e^{-u+\varphi(u)}\right)\left(1-e^{u+\varphi(u)}\right)}
  \right)
  \\
  =&
  \log\left(
  \frac{2e^{\varphi(u)}-e^{-u}-e^{u+2\varphi(u)}}
       {e^{\varphi(u)}\left(e^{\varphi(u)}+e^{-\varphi(u)}-e^{u}-e^{-u}\right)}
  \right)
  \\
  =&
  \log\Bigl(2\cosh\bigl(u+\varphi(u)\bigr)-2\Bigr)
  >0.
\end{split}
\end{equation}
\par
Thus, the function $L_p(u)$ is monotonically increasing.
Since $L_p(\kappa)=4p\pi^2>0$, we conclude that $L_p(u)>0$ for $u>\kappa$, proving the lemma.
\end{proof}

\section{The region $\Xi_{m,\mu}$}\label{sec:Xi}
In this section, we introduce and study some properties of the region $\Xi_{m,\mu}\subset\Theta_{m,\nu}$.
See \eqref{eq:Theta} for the definition of $\Theta_{m,\nu}$.
Throughout this section, we assume that $\varphi(u)/u<(m+1)/p$, that is, the point $\sigma_m$ is above the real axis.
\par
Put
\begin{align*}
  \oH
  &:=
  \left\{z\in\C\Bigm|\Im{z}=2\Im\sigma_{m}\right\},
  \\
  \uH
  &:=
  \left\{z\in\C\Bigm|\Im{z}=-2\Im\sigma_{m}\right\}.
\end{align*}
Recall that $2\Im\sigma_{m}=\frac{4\pi\bigl((m+1)u-p\varphi(u)\bigr)}{|\xi|^2}>0$.
\begin{lem}\label{lem:H_L_m+1/2}
The line $L_{m+1/2}$ intersects with $I_{m+1}$ below the line $\uH$.
See \eqref{eq:L} and \eqref{eq:I} for the definitions of $L_t$ and $I_r$, respectively.
\end{lem}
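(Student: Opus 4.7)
The plan is to compute the intersection of $L_{m+1/2}$ and $I_{m+1}$ explicitly, rewrite the condition ``below $\uH$'' as a scalar inequality, and then reduce it to a simple bound via Lemma~\ref{lem:phi}~(i) and AM--GM.

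First I would parametrize $I_{m+1}$ as $z = (m+1)/p + \i y$ with $y \in \R$. From \eqref{eq:L} the line $L_{m+1/2}$ is cut out by $\Im(\xi z) = (2m+1)\pi$. Writing $\xi = u + 2p\pi\i$, we get $\Im(\xi z) = uy + 2(m+1)\pi$, so the unique solution is $y = -\pi/u$ and the intersection is the point $z_\ast := (m+1)/p - \i\pi/u$.

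Next, since $\uH = \{\Im z = -2\Im\sigma_m\}$ with
\begin{equation*}
  2\Im\sigma_m = \frac{4\pi\bigl((m+1)u - p\varphi(u)\bigr)}{|\xi|^2},
\end{equation*}
the condition that $z_\ast$ lies strictly below $\uH$ is $-\pi/u < -2\Im\sigma_m$. Multiplying through by $u|\xi|^2 > 0$, this is equivalent to
\begin{equation*}
  |\xi|^2 > 4u\bigl((m+1)u - p\varphi(u)\bigr).
\end{equation*}

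To finish I would bound the right-hand side. By Lemma~\ref{lem:phi}~(i) we have $\varphi(u) > u - \kappa$, hence, using $m+1 \le p$,
\begin{equation*}
  (m+1)u - p\varphi(u) < (m+1-p)u + p\kappa \le p\kappa,
\end{equation*}
so the right-hand side is strictly less than $4up\kappa$. On the other hand, $|\xi|^2 = u^2 + 4p^2\pi^2 \ge 4up\pi$ by AM--GM, and since $\pi > \kappa = \arccosh(3/2)$ we have $4up\pi > 4up\kappa$, completing the proof.

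The argument is essentially a one-line estimate once the correct inequality is set up; the only thing to watch is that the strict inequality $\varphi(u) > u - \kappa$ from Lemma~\ref{lem:phi}~(i) is used (rather than $\varphi(u) \ge u - \kappa$) so that the bound remains strict in the boundary case $m = p-1$, where the slack in $m+1-p \le 0$ vanishes.
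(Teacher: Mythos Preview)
Your proof is correct and follows essentially the same route as the paper: both compute the intersection point as $(m+1)/p-\i\pi/u$, reduce to the inequality $|\xi|^2>4u\bigl((m+1)u-p\varphi(u)\bigr)$, and use $u-\varphi(u)<\kappa$ (you cite Lemma~\ref{lem:phi}(i), the paper uses the equivalent consequence of (iii)). The only cosmetic difference is the final step --- you finish with AM--GM and $\pi>\kappa$, whereas the paper completes the square in $p$.
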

\begin{proof}
The imaginary part of $L_{m+1/2}\cap I_{m+1}$ is $-\pi/u$.
Now, we calculate
\begin{equation*}
\begin{split}
  -\frac{\pi}{u}+2\Im\sigma_m
  =&
  -\frac{\pi}{u}+\frac{4\pi\bigl((m+1)u-p\varphi(u)\bigr)}{|\xi|^2}
  \\
  &\text{(since $m+1\le p$)}
  \\
  \le&
  -\frac{\pi}{u}+\frac{4p\pi\bigl(u-\varphi(u)\bigr)}{|\xi|^2}
  \\
  =&
  \frac{\pi}{u|\xi|^2}\left(4pu\bigl(u-\varphi(u)\bigr)-u^2-4p^2\pi^2\right)
  \\
  &\text{(since $u-\varphi(u)$ is decreasing from Lemma~\ref{lem:phi} (iii))}
  \\
  <&
  \frac{\pi}{\kappa|\xi|^2}\left(4p\kappa^2-\kappa^2-4p^2\pi^2\right)
  \\
  =&
  \frac{\pi}{\kappa|\xi|^2}
  \left(
    -\left(2\pi p-\frac{\kappa^2}{\pi}\right)^2+\frac{\kappa^4}{\pi^2}-\kappa^2
  \right)
  \\
  <0.
\end{split}
\end{equation*}
Therefore we conclude that the intersection between $L_{m+1/2}$ and $I_{m+1}$ is below $\uH$.
\end{proof}
For a small number $\mu>0$, let $J$ be the line connecting the point $(m-\mu)/p$ and the intersection point $L_{m+1/2}\cap\uH$, which is described as
\begin{equation}\label{eq:J}
  J
  :=
  \left\{
    z\in\C\Bigm|
    \Re(\gamma z)
    +
    \frac{\mu+1/2}{2\Im\sigma_m}\Im{z}
    =
    m-\mu
  \right\}.
\end{equation}
From Lemma~\ref{lem:H_L_m+1/2}, we see that the lines $\oH$, $\uH$, $I_{m-\mu}$, $I_{m+1+\mu}$, and $J$ form a pentagon.
\begin{defn}
Let $\mu>0$ be a small number.
\par
If $m\le p-2$, let $\Xi_{m,\mu}$ be the open, pentagonal region surrounded by $\oH$, $\uH$, $I_{m-\mu}$, $I_{m+1+\mu}$, and $J$.
\par
If $m=p-1$, let $\Xi_{m,\mu}$ be the open, pentagonal region surrounded by $\oH$, $\uH$, $I_{m-\mu}$, $I_{m+1+\mu}$, and $J$, minus $\underline{\nabla}_{p-1,\nu}^{-}$.
Note that $\underline{\nabla}_{p-1,\nu}^{-}$ contains the point $1$ from the definition \eqref{eq:nabla}.
\end{defn}
\begin{lem}
We can choose $\mu>0$ so that $\Xi_{m,\mu}$ is contained in $\Theta_{m,\nu}$, provided that $\nu>0$ is sufficiently small and $M>0$ is sufficiently large.
\end{lem}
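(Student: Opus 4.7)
The plan is to verify each defining constraint of $\Theta_{m,\nu}$ on points of $\Xi_{m,\mu}$, choosing $\mu>0$ first, then $\nu>0$ sufficiently small in terms of $\mu$, and finally $M>0$ sufficiently large. Since $\Xi_{m,\mu}$ is bounded, the constraint $|\Im(\gamma z)|\le M-u/(2\pi)$ is automatic for $M$ large. For the strip bound $m-1+\nu\le\Re(\gamma z)\le m+2-\nu$, I would compute the extrema of $\Re(\gamma z)$ on $\Xi_{m,\mu}$: the minimum is $m-\mu$, attained along $I_{m-\mu}\cup J$, and the maximum is $m+1+\mu+(u/\pi)\Im\sigma_m$, attained at the vertex $I_{m+1+\mu}\cap\oH$.

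The key inequality is $(u/\pi)\Im\sigma_m<1$. Substituting $\Im\sigma_m=2\pi\bigl((m+1)u-p\varphi(u)\bigr)/|\xi|^2$ reduces this to $(2m+1)u^2<2pu\varphi(u)+4p^2\pi^2$. Using the lower bound $\varphi(u)>u-\kappa$ from Lemma~\ref{lem:phi}~(i), it suffices to verify $(2p-2m-1)u^2-2pu\kappa+4p^2\pi^2>0$, which holds because this quadratic in $u$ has positive leading coefficient $2p-2m-1\ge 1$ and negative discriminant $4p^2\bigl(\kappa^2-4\pi^2(2p-2m-1)\bigr)$ (noting $\kappa<\pi<2\pi$). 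Choosing $\mu+\nu<1-(u/\pi)\Im\sigma_m$ then gives the strip bound.

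The heart of the proof is showing that $\Xi_{m,\mu}$ misses each of the four trapezoidal cutouts $\overline{\nabla}^{\pm}_{m,\nu}$ and $\underline{\nabla}^{\pm}_{m,\nu}$. The slant line $J$ was inserted specifically to exclude $\underline{\nabla}^{+}_{m,\nu}$: from \eqref{eq:J} I would verify that $J$ lies strictly to the right of the corner $L_{m+\nu}\cap\bigl\{\Im z=(m-p+\nu)u/(2\pi|\gamma|^2)\bigr\}$ for $\nu$ small. For $\overline{\nabla}^{+}_{m,\nu}$, the three defining lines $L_{m+\nu}$, $K_{u-2\pi\nu}$, and the horizontal $\Im z=(p+m+\nu)u/(2\pi|\gamma|^2)$ become concurrent at $L_{m}\cap K_{u}$ as $\nu\to 0$; tracing the pentagon's left edge $I_{m-\mu}$ through the overlapping strip $\Re(\gamma z)\in[m-\mu,m+\nu]$, one shows this edge crosses $K_{u-2\pi\nu}$ and the horizontal threshold with a positive separation gap that exceeds $\nu$ once $\nu$ is taken small enough in terms of $\mu$ and $|\gamma|$. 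Along the portion of the edge below the threshold one has $\Im(\gamma z)\le u/(2\pi)-\nu$, and along the portion above one has $\Im z\ge(p+m+\nu)u/(2\pi|\gamma|^2)$, so the pentagon is excluded from $\overline{\nabla}^{+}_{m,\nu}$ via one constraint or the other. A symmetric argument on the right edge $I_{m+1+\mu}$ handles $\overline{\nabla}^{-}_{m,\nu}$ and $\underline{\nabla}^{-}_{m,\nu}$ for $m\le p-2$; the remaining case $\underline{\nabla}^{-}_{p-1,\nu}$ does meet the pentagon at $z=1$, which is exactly why it is explicitly subtracted when $m=p-1$. The main obstacle will be producing the explicit estimates on the crossing gap so that $\nu$ can genuinely be made small enough, uniformly in the way required; here the hypothesis that $\sigma_m$ is above the real axis and Lemma~\ref{lem:H_L_m+1/2}'s bound $2\Im\sigma_m<\pi/u$ are the essential geometric inputs.
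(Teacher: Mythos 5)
Your overall plan — verify the strip bound, then show each of the four trapezoidal cutouts misses the pentagon — matches the paper's proof, and your argument for the strip bound is in fact somewhat cleaner than the paper's. The key reduction $(u/\pi)\Im\sigma_m<1$, i.e.\ $(2m+1)u^2<2pu\varphi(u)+4p^2\pi^2$, is exactly the paper's inequality \eqref{eq:tr}, but whereas the paper bounds the numerator $u^2+4\pi^2p^2-2pu(u-\varphi(u))$ by treating $p$ as a real parameter and evaluating at the extremal $p$, you substitute the bound $\varphi(u)>u-\kappa$ from Lemma~\ref{lem:phi}~(i) and observe the resulting quadratic in $u$ has negative discriminant since $\kappa^2<4\pi^2(2p-2m-1)$. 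This is correct and more transparent.

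However, there is a genuine gap in the exclusion of the four trapezoids. You assert that $\overline{\nabla}^{-}_{m,\nu}$ is handled by ``a symmetric argument on the right edge $I_{m+1+\mu}$.'' That cannot work: the trapezoid $\overline{\nabla}^{-}_{m,\nu}$ is \emph{not} confined to $\Re z\ge(m+1+\mu)/p$. Its bottom-left corner, lying on $L_{m+1-\nu}\cap\{\Im z=\frac{(p+m+1-\nu)u}{2\pi|\gamma|^2}\}$, has real part $\frac{4(m+1-\nu)p\pi^2-u^2}{|\xi|^2}$, which one checks is strictly \emph{less} than $(m+1+\mu)/p$; moreover the trapezoid extends indefinitely upward along $L_{m+1-\nu}$ with $\Re z$ decreasing. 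What actually saves the day — and this is what the paper does — is that the entire trapezoid lies \emph{above} the top edge $\oH$: its lower threshold $\Im z=\frac{(p+m+1-\nu)u}{2\pi|\gamma|^2}$ exceeds $2\Im\sigma_m$ as long as $p-(m+1)+\frac{2p\varphi(u)}{u}>\nu$, which holds for small $\nu$ since $m\le p-1$. So the four trapezoids are \emph{not} excluded in mirror pairs: $\underline{\nabla}^{+}$ by the slant $J$, $\overline{\nabla}^{+}$ by the left edge $I_{m-\mu}$, $\underline{\nabla}^{-}$ by the right edge $I_{m+1+\mu}$, and $\overline{\nabla}^{-}$ by the top edge $\oH$. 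Relatedly, your argument for $\overline{\nabla}^{+}_{m,\nu}$ (tracing the left edge through a near-concurrent triple of lines) is substantially more involved than needed: the paper simply computes the bottom-right corner of that trapezoid and shows its real part is less than $(m-\mu)/p$, so the whole trapezoid sits to the left of $I_{m-\mu}$. You should replace the symmetric-edge heuristic with the correct edge assignment and supply the explicit corner estimates, as the paper does in \eqref{eq:^+}--\eqref{eq:_-}.
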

\begin{proof}
See \eqref{eq:Theta} for the definition of $\Theta_{m,\nu}$.
\par
It is enough to show the following.
\begin{enumerate}
\item
$\Xi_{m,\mu}$ is below $L_{m+2-\nu}$,
\item
$\Xi_{m,\mu}$ is above $L_{m-1+\nu}$, and
\item
$\Xi_{m,\mu}\cap\overline{\nabla}^{\pm}_{m,\nu}=\Xi_{m,\mu}\cap\underline{\nabla}^{\pm}_{m,\nu}=\emptyset$.
\end{enumerate}
To prove (i), we will show that the crossing between $\oH$ and $I_{m+1+\mu}$ is below $L_{m+2-\nu}$.
Since its coordinate is $(m+1+\mu)/p+2\Im\sigma_m\i$, we need to show the following inequality:
\begin{equation*}
  \Re\left(\gamma\bigl((m+1+\mu)/p+2\Im\sigma_m\i\bigr)\right)<m+2-\nu
\end{equation*}
from \eqref{eq:L}.
Since we calculate
\begin{equation*}
  \Re\left(\gamma\bigl((m+1+\mu)/p+2\Im\sigma_m\i\bigr)\right)
  =
  m+1+\mu+\frac{2u\bigl((m+1)u-p\varphi(u)\bigr)}{|\xi|^2},
\end{equation*}
we need to prove
\begin{equation}\label{eq:tr}
  1-\frac{2u\bigl((m+1)u-p\varphi(u)\bigr)}{|\xi|^2}>\mu+\nu.
\end{equation}
\par
Since $m\le p-1$, we have
\begin{equation*}
  1-\frac{2u\bigl((m+1)u-p\varphi(u)\bigr)}{|\xi|^2}
  \ge
  1-\frac{2pu\bigl(u-\varphi(u)\bigr)}{|\xi|^2}
  =
  \frac{u^2+4\pi^2p^2-2pu\bigl(u-\varphi(u)\bigr)}{|\xi|^2}.
\end{equation*}
The numerator equals
\begin{equation*}
  \left(2\pi p-\frac{u\bigl(u-\varphi(u)\bigr)}{2\pi}\right)^2
  +
  \frac{u^2}{4\pi^2}\left(4\pi^2-\bigl(u-\varphi(u)\bigr)^2\right),
\end{equation*}
which takes its maximum $\frac{u^2}{4\pi^2}\left(4\pi^2-\bigl(u-\varphi(u)\bigr)^2\right)$ at $p=\frac{u\bigl(u-\varphi(u)\bigr)}{4\pi^2}$, if we regard $p$ as a positive real parameter.
Now, since $\frac{u^2}{4\pi^2}\left(4\pi^2-\bigl(u-\varphi(u)\bigr)^2\right)$ is increasing with respect to $u$ from Lemma~\ref{lem:phi} (iii), it is greater than $\frac{\kappa^2}{4\pi^2}(4\pi^2-\kappa^2)=0.9045\ldots$.
\par
Therefore the left hand side of \eqref{eq:tr} is positive, and so we can choose $\mu>0$ so that \eqref{eq:tr} holds provided that $\nu>0$ is sufficiently small.
\par
Since the point $m/p$ is clearly above $L_{m-1+\nu}$ if $\nu$ is small, and the line $L_{m-1+\nu}$ is steeper than the line $J$, we conclude that $\Xi_{m,\mu}$ is above $L_{m-1+\nu}$, proving (ii).
\par
It remains to show (iii).
\begin{itemize}
\item
$\overline{\nabla}_{m,\nu}^{+}$:
The real part of the bottom right corner of $\overline{\nabla}_{m,\nu}^{+}$ is $\frac{4(m+\nu)p\pi^2-u^2+2\pi u\nu}{|\xi|^2}$, which is less than $(m-\mu)/p$ if
\begin{equation}\label{eq:^+}
  2p\pi(u+2p\pi)\nu+|\xi|^2\mu
  <
  (p+m)u^2.
\end{equation}
So the trapezoid $\overline{\nabla}_{m,\nu}^{+}$ is to the left of $I_{m-\mu}$ if \eqref{eq:^+} holds.
\item
$\overline{\nabla}_{m,\nu}^{-}$:
The imaginary part of the bottom side of $\overline{\nabla}_{m,\nu}^{-}$ is $\frac{(p+m+1-\nu)u}{2\pi|\gamma|^2}=\frac{2\pi(p+m+1-\nu)u}{|\xi|^2}$, which is greater than $2\Im\sigma_m$ if $(p+m+1-\nu)u>2\bigl((m+1)u-p\varphi(u)\bigr)$.
So if
\begin{equation}\label{eq:^-}
  p-(m+1)+\frac{2p\varphi(u)}{u}>\nu,
\end{equation}
then $\overline{\nabla}_{m,\nu}^{-}$ is above $\oH$.
\item
$\underline{\nabla}_{m,\nu}^{+}$:
Let $z_0\in\C$ be the coordinate of the top right corner of the trapezoid $\underline{\nabla}_{m,\nu}^{+}$.
It is enough to show that $z_0$ is below $J$ since $L_{m+\nu}$ is steeper than $J$.
Since $\Im{z_0}=\frac{2\pi(m-p+\nu)u}{|\xi|^2}$ and $\Re(\gamma z_0)=m+\nu$, $z_0$ is below $J$ if
\begin{equation*}
  m+\nu
  +
  \frac{(\mu+1/2)|\xi|^2}{4\pi\bigl((m+1)u-p\varphi(u)\bigr)}
  \times\frac{2\pi(m-p+\nu)u}{|\xi|^2}
  <
  m-\mu
\end{equation*}
from \eqref{eq:J}.
So we conclude that $\underline{\nabla}_{m,\nu}^{+}\cap\Xi_{m,\mu}=\emptyset$ if
\begin{equation}\label{eq:_+}
  \mu\nu+\left(3m+2-p-\frac{2p\varphi(u)}{u}\right)\mu
  +
  \left(2m+\frac{5}{2}-\frac{2p\varphi(u)}{u}\right)\nu
  <\frac{p-m}{2}.
\end{equation}
Note that $2m+5/2-2p\varphi(u)/u>0$ because we assume that $\varphi(u)/u<(m+1)/p$.
Note also that $3m+2-p-2p\varphi(u)/u$ can be negative.
\item
$\underline{\nabla}_{m,\nu}^{-}$:
The real part of the top left corner of the trapezoid $\underline{\nabla}_{m,\nu}^{-}$ is $\frac{4(m+1-\nu)p\pi^2+u(u-2\pi\nu)}{|\xi|^2}$, which is greater than $\frac{m+1+\mu}{p}$ if
\begin{equation}\label{eq:_-}
  |\xi|^2\mu+2p\pi(2p\pi+u)\nu
  <
  (p-m-1)u^2.
\end{equation}
\end{itemize}
From \eqref{eq:^+}--\eqref{eq:_-}, we conclude that if $m<p-1$ and
\begin{multline*}
  \nu
  <
  \min
  \left\{
    \frac{(p+m)u^2}{2p\pi(u+2p\pi)},
    p-m-1+\frac{2p\varphi(u)}{u},
  \right.
  \\
  \left.
    \frac{(p-m)u}{(4m+5)u-4p\varphi(u)},
    \frac{(p-m-1)u^2}{2p\pi(u+2p\pi)}
  \right\},
\end{multline*}
then we can choose $\mu>0$ so that $\Xi_{m,\mu}$ is included in $\Theta_{m,\nu}$.
\par
Next, we consider the case where $m=p-1$.
\par
From the definition and the argument above, we see that $\Xi_{p-1,\mu}$ is between $L_{m+2-\nu}$ and $L_{m-1+\nu}$, and that it avoids  $\overline{\nabla}_{m,\nu}^{\pm}$ and $\underline{\nabla}_{m,\nu}^{+}$.
Moreover, we exclude $\underline{\nabla}_{m,\nu}^{-}$ from $\Xi_{p-1,\mu}$ by the definition.
So we conclude that $\Xi_{p-1,\mu}\subset\Theta_{p-1,\nu}$.
\end{proof}
\par
We name the following intersection points.
We also give their coordinates.
\begin{alignat*}{2}
  I_{m}\cap L_{m}:&\quad P_0&=&\frac{m}{p},
  \\
  \uH\cap L_{m+1/2}:&\quad P_1&=&\frac{2m+1}{2p}+\frac{u\Im\sigma_m}{p\pi}-2\Im\sigma_m\i,
  \\
  \uH\cap I_{m+1}:&\quad P_2&=&\frac{m+1}{p}-2\Im\sigma_m\i,
  \\
  I_{m+1}\cap L_{m+1}:&\quad P_3&=&\frac{m+1}{p},
  \\
  \oH\cap I_{m+1}:&\quad P_4&=&\frac{m+1}{p}+2\Im\sigma_m\i,
  \\
  \oH\cap L_{m+1}:&\quad P_5&=&\frac{m+1}{p}-\frac{u\Im\sigma_m}{p\pi}+2\Im\sigma_m\i,
  \\
  \oH\cap L_{m+1/2}:&\quad P_6&=&\frac{2m+1}{2p}-\frac{u\Im\sigma_m}{p\pi}+2\Im\sigma_m\i,
  \\
  \oH\cap I_{m}:&\quad P_7&=&\frac{m}{p}+2\Im\sigma_m\i,
  \\
  K_{0}\cap L_{m+1}:&\quad Q_1&=&\frac{2(m+1)\pi\i}{\xi},
  \\
  K_{0}\cap L_{m+1/2}:&\quad Q_2&=&\frac{(2m+1)\pi\i}{\xi},
  \\
  K_{0}\cap\oH:&\quad R_1&=&\frac{4p\pi}{u}\Im\sigma_m+2\Im\sigma_m\i,
  \\
  K_{0}\cap I_{m}:&\quad R_2&=&\frac{m}{p}+\frac{mu}{2p^2\pi}\i.
\end{alignat*}
See Figure~\ref{fig:Xi}.
\begin{figure}[h]
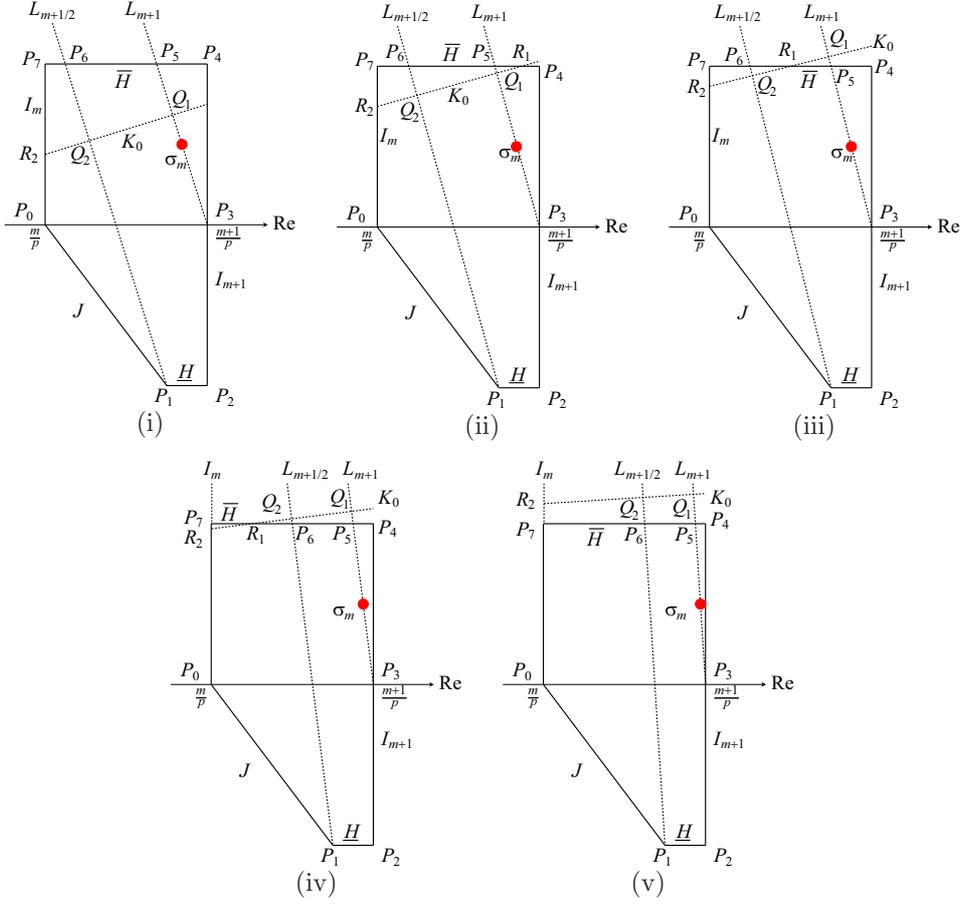

\begin{minipage}{39mm}
\begin{center}\pic{0.21}{Xi1}\\(i)\end{center}
\end{minipage}
\quad
\begin{minipage}{39mm}
\begin{center}\pic{0.21}{Xi2}\\(ii)\end{center}
\end{minipage}
\quad
\begin{minipage}{39mm}
\begin{center}\pic{0.21}{Xi3}\\(iii)\end{center}
\end{minipage}
\par
\begin{minipage}{39mm}
\begin{center}\pic{0.21}{Xi4}\\(iv)\end{center}
\end{minipage}
\quad
\begin{minipage}{39mm}
\begin{center}\pic{0.21}{Xi5}\\(v)\end{center}
\end{minipage}
\caption{The region $\Xi_{m,\mu}$.
The numbers (i)--(v) correspond to the items in Corollary~\ref{cor:QR}.
Note that in (i) and (v), the point $R_1$ is out of range.
Note that if $m=p-1$, we need to remove a small neighborhood $\underline{\nabla}_{p-1,\mu}^{-}$ of $P_3$.}
\label{fig:Xi}
\end{figure}
The five points $P_0,P_1,P_2,P_4,P_7$ form a pentagon, and $P_3,P_5,P_6$ are on its boundary, as shown there because the following lemma holds.
\begin{lem}
The line $K_0$ intersects with $I_m$ above or on the real axis.
Moreover, the lines $L_{m+1}$ and $L_{m+1/2}$ intersect with $\oH$ between $I_m$ and $I_{m+1}$.
\end{lem}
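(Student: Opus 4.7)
The plan is to read the three intersection coordinates directly off the list of labelled points given just above the statement and then verify the required inequalities. The only inputs I will need are those coordinates, the identity $\Im\sigma_m=\frac{2\pi\bigl((m+1)u-p\varphi(u)\bigr)}{|\xi|^{2}}$, and the standing assumption of this section that $\sigma_m$ lies above the real axis, so $\Im\sigma_m>0$.

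First I would dispose of the claim about $K_0\cap I_m$ by observing that the listed coordinate $R_2=\frac{m}{p}+\frac{mu}{2p^{2}\pi}\i$ has nonnegative imaginary part (with equality exactly when $m=0$); so $R_2$ lies on or above the real axis.

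Next I would compare the real parts $\Re P_5=\frac{m+1}{p}-\frac{u\Im\sigma_m}{p\pi}$ and $\Re P_6=\frac{2m+1}{2p}-\frac{u\Im\sigma_m}{p\pi}$ with the endpoints $\frac{m}{p}$ and $\frac{m+1}{p}$. Both upper bounds $\Re P_5<\frac{m+1}{p}$ and $\Re P_6<\frac{m+1}{p}$ are immediate: the first because $\Im\sigma_m>0$, the second because already $\frac{2m+1}{2p}<\frac{m+1}{p}$. The two substantive inequalities are $\Re P_5>\frac{m}{p}$ and $\Re P_6>\frac{m}{p}$, which reduce to $u\Im\sigma_m<\pi$ and $u\Im\sigma_m<\pi/2$ respectively.

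The only real content of the lemma, then, is the stronger bound $u\Im\sigma_m<\pi/2$. Substituting the formula for $\Im\sigma_m$ converts this into $4u\bigl((m+1)u-p\varphi(u)\bigr)<u^{2}+4p^{2}\pi^{2}$. Since $m+1\le p$ and $\varphi(u)>0$, the left side is bounded above by $4pu\bigl(u-\varphi(u)\bigr)$, and by AM--GM the right side is bounded below by $4pu\pi$, so it will suffice to show $u-\varphi(u)<\pi$. This follows from Lemma~\ref{lem:phi}~(iii): $u-\varphi(u)$ is strictly decreasing in $u$ and equals $\kappa$ at $u=\kappa$ (because $\varphi(\kappa)=0$), while $\kappa=\arccosh(3/2)<1<\pi$. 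The only mild subtlety is that this combines two non-sharp bounds, but both are strict (equality in AM--GM requires $u=2p\pi$, and $u-\varphi(u)<\kappa$ is strict throughout $u>\kappa$), so the final inequality is strict as needed.
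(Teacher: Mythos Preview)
Your proof is correct. The first assertion and the upper bounds for $\Re P_5$ and $\Re P_6$ are handled exactly as in the paper; both arguments then reduce the substantive lower bound to the same inequality, namely showing that
\[
4pu\bigl(u-\varphi(u)\bigr) < u^{2}+4p^{2}\pi^{2}
\]
(after replacing $m+1$ by $p$). The paper argues that the right-hand side minus the left is increasing in $u$ by Lemma~\ref{lem:phi}~(iii) and then evaluates at $u=\kappa$; you instead bound the right side below by $4pu\pi$ via AM--GM and reduce to $u-\varphi(u)<\pi$. Your route is slightly cleaner, since the monotonicity the paper invokes is not entirely immediate from Lemma~\ref{lem:phi}~(iii) alone (the product $u\bigl(u-\varphi(u)\bigr)$ is not obviously monotone), whereas your bound $u-\varphi(u)<\kappa<\pi$ follows directly. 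Both approaches rest on the same two inputs: $m+1\le p$ and the decrease of $u-\varphi(u)$.
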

\begin{proof}
Since the imaginary part of the intersection point between $K_0$ and $I_m$ is $\frac{mu}{2p^2\pi}\ge0$, the first assertion follows.
\par
The real part of the point $L_{m+1/2}\cap\oH$ is $\frac{2m+1}{2p}-\frac{u\Im\sigma_m}{p\pi}$.
Now, we have
\begin{equation*}
  \frac{2m+1}{2p}-\frac{u\Im\sigma_m}{p\pi}
  -
  \frac{m}{p}
  =
  \frac{4p^2\pi^2-(4m+3)u^2+4pu\varphi(u)}{2p|\xi|^2},
\end{equation*}
and its numerator is greater than or equal to $4p^2\pi^2+u^2-4pu\bigl(u-\varphi(u)\bigr)$ since $m\le p-1$.
This is greater than $4p^2\pi^2+\kappa^2-4p\kappa^2>0$ since it is increasing with respect to $u$ from Lemma~\ref{lem:phi} (iii).
\par
So we conclude that $L_{m+1/2}$ intersects with $\oH$ at the right of $I_{m}$.
Since the intersection point is clearly to the left of $I_{m+1}$, $L_{m+1/2}$ intersects with $\oH$ between $I_{m}$ and $I_{m+1}$.
\par
Since the line $L_{m+1}$ is to the right of the line $L_{m+1/2}$ and to the left of the point $P_4$, it also intersects with $\oH$ between $I_{m}$ and $I_{m+1}$.
\end{proof}
The points $Q_1$, $Q_2$, $R_1$, and $R_2$ may not be in $\Xi_{m,\mu}$, as shown in Lemmas~\ref{lem:Q} and \ref{lem:R} below.
\begin{lem}\label{lem:Q}
The points $Q_1$ and $Q_2$ are between the lines $I_{m}$ and $I_{m+1}$.
Moreover, there exist $u_1,u_2\in\R$ with $\kappa<u_1<u_2<\upsilon_{p,m}$ such that $Q_1$ {\rm(}$Q_2$, respectively{\rm)} is below $\oH$ if and only if $u<u_1$ {\rm(}$u<u_2$, respectively{\rm)}.
\end{lem}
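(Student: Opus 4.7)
The plan is to write $Q_1$ and $Q_2$ explicitly by multiplying through by $\overline\xi/|\xi|^2$, obtaining
\[
  Q_1 = \frac{4p(m+1)\pi^2 + 2(m+1)\pi u\i}{|\xi|^2},
  \qquad
  Q_2 = \frac{2p(2m+1)\pi^2 + (2m+1)\pi u\i}{|\xi|^2},
\]
where $|\xi|^2 = u^2 + 4p^2\pi^2$. Every comparison in the lemma will then clear denominators to either a trivial inequality between constants or a comparison of $\varphi(u)/u$ with an explicit rational number in $(0,1)$, at which point the strict monotonicity of $\varphi(u)/u$ from Lemma~\ref{lem:phi}(iv) closes the argument.

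First I would verify the location $m/p < \Re Q_j < (m+1)/p$ for $j=1,2$ by direct computation. The two upper inequalities are automatic: for $Q_1$ it reduces to $0 < (m+1)u^2$, and for $Q_2$ to $-2p^2\pi^2 < (m+1)u^2$. The two lower inequalities reduce to $mu^2 < 4p^2\pi^2$ and $mu^2 < 2p^2\pi^2$ respectively. Under the standing hypothesis $\varphi(u)/u < (m+1)/p$ of this section, Lemma~\ref{lem:upsilon} gives $u < \upsilon_{p,m} < \sqrt{p}$, so $u^2 < p$, and both bounds follow at once from $m \le p-1$.

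For the ``below $\oH$'' part, I would compute $2\Im\sigma_m = 4\pi\bigl((m+1)u - p\varphi(u)\bigr)/|\xi|^2$ and clear denominators. The inequality $\Im Q_1 < 2\Im\sigma_m$ becomes $\varphi(u)/u < (m+1)/(2p)$, and $\Im Q_2 < 2\Im\sigma_m$ becomes $\varphi(u)/u < (2m+3)/(4p)$. By Lemma~\ref{lem:phi}, parts (i), (iii), and (iv), the function $\varphi(u)/u$ extends continuously to $[\kappa,\infty)$ with value $0$ at $u = \kappa$, limit $1$ at infinity, and is strictly monotonically increasing on $(\kappa,\infty)$; since both target values lie in $(0,1)$ (using $m \le p-1$), there exist unique $u_1, u_2 > \kappa$ with $\varphi(u_1)/u_1 = (m+1)/(2p)$ and $\varphi(u_2)/u_2 = (2m+3)/(4p)$, and monotonicity converts each $\Im$-inequality into the stated iff-statement. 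The numerical chain
\[
  \frac{m+1}{2p} = \frac{2m+2}{4p} < \frac{2m+3}{4p} < \frac{4m+4}{4p} = \frac{m+1}{p},
\]
combined with one more appeal to monotonicity, yields $\kappa < u_1 < u_2 < \upsilon_{p,m}$. There is no serious obstacle; the proof is a handful of rational-arithmetic reductions followed by a single use of the monotonicity of $\varphi(u)/u$.
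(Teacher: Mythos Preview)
Your proposal is correct and follows essentially the same approach as the paper's proof: both compute the explicit real and imaginary parts of $Q_1,Q_2$, use the bound $u<\upsilon_{p,m}<\sqrt{p}$ from Lemma~\ref{lem:upsilon} to place them between $I_m$ and $I_{m+1}$, reduce the ``below $\oH$'' conditions to $\varphi(u)/u<(m+1)/(2p)$ and $\varphi(u)/u<(2m+3)/(4p)$, and then invoke the strict monotonicity of $\varphi(u)/u$ from Lemma~\ref{lem:phi}(iv) to obtain unique $u_1,u_2$ with the required ordering. Your write-up is in fact slightly more complete than the paper's, which only explicitly verifies $u_1<u_2$ and leaves $\kappa<u_1$ and $u_2<\upsilon_{p,m}$ implicit.
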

\begin{proof}
It is clear that both $Q_1$ and $Q_2$ are to the left of $I_{m+1}$.
\par
The coordinate of $Q_2$ is $\frac{(2m+1)\pi}{|\xi|^2}(2p\pi+u\i)$.
Since we assume that $\varphi(u)/u<m/p$, we have $u<\upsilon_{p,m}<\sqrt{p}$ from Lemma~\ref{lem:upsilon}.
So we have
\begin{equation*}
  \frac{2(2m+1)p\pi^2}{|\xi|^2}
  -
  \frac{m}{p}
  =
  \frac{2p^2\pi^2-mu^2}{p|\xi|^2}
  >
  \frac{2p\pi^2-m}{|\xi|^2}
  >0,
\end{equation*}
proving that $Q_2$ is to the right of $I_{m}$.
Since $Q_1$ is to the right of $Q_2$, the point $Q_1$ is also to the right of $I_m$.
\par
The point $Q_2$ is below $\oH$ if and only if
\begin{equation*}
  \frac{4\pi\bigl((m+1)u-p\varphi(u)\bigr)}{|\xi|^2}
  -
  \frac{(2m+1)u\pi}{|\xi|^2}
  =
  \frac{\pi}{|\xi|^2}\bigl((2m+3)u-4p\varphi(u)\bigr)
  >0.
\end{equation*}
Since the function $\varphi(u)/u$ is continuous and strictly increasing for $u>\kappa$ from Lemma~\ref{lem:phi} (iv), there exists a unique real number $u_2$ satisfying $\varphi(u_2)/u_2=\frac{2m+3}{4p}$.
Therefore we conclude that $Q_2$ is below $\oH$ if and only if $u<u_2$.
\par
Since the coordinate of $Q_1$ is $\frac{2(m+1)\pi}{|\xi|^2}(2p\pi+u\i)$, the point $Q_1$ is below $\oH$ if and only if
\begin{equation*}
  \frac{4\pi\bigl((m+1)u-p\varphi(u)\bigr)}{|\xi|^2}
  -
  \frac{2(m+1)u\pi}{|\xi|^2}
  =
  \frac{2\pi}{|\xi|^2}\bigl((m+1)u-2p\varphi(u)\bigr)
  >0.
\end{equation*}
In a similar way as above, we conclude that $Q_1$ is below $\oH$ if and only if $u<u_1$, where $u_1$ is the unique real number such that $\varphi(u_1)/u_1=\frac{m+1}{2p}$.
\par
Since $\varphi(u)/u$ is increasing, we conclude that $u_1<u_2$.
\end{proof}
As for the points $R_1$ and $R_2$, we have the following lemma.
\begin{lem}\label{lem:R}
There exist real numbers $u_L$ and $u_R$ with $\kappa<u_R<u_L<\upsilon_{p,m}$ such that
\begin{itemize}
\item
if $\kappa<u<u_R$, then $R_1$ is to the right of $P_4$, and $R_2$ is below $\oH$,
\item
if $u_R<u<u_L$, then $R_1$ is between $P_7$ and $P_4$, and $R_2$ is below $\oH$,
\item
if $u_L<u<\upsilon_{p,m}$, then $R_1$ is to the left of $P_7$, and $R_2$ is above $\oH$.
\end{itemize}
\end{lem}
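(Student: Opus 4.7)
The plan is to translate each of the three geometric conditions into a strict inequality for a simple function of $u$, observe an algebraic coincidence that forces two of the transitions to occur simultaneously, and then verify monotonicity and endpoint signs to produce unique transition points $u_R<u_L$.

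First I would substitute the coordinates of $R_1$, $R_2$, $P_4$, $P_7$ and $2\Im\sigma_m=\frac{4\pi\bigl((m+1)u-p\varphi(u)\bigr)}{|\xi|^2}$, clear denominators (using $|\xi|^2=u^2+4p^2\pi^2$), and rewrite the three conditions as follows. ``$R_1$ is to the right of $P_4$'' (i.e.\ $\Re R_1>(m+1)/p$) becomes
\begin{equation*}
  h_R(u) := 4p^2\pi^2\bigl[(m+1)u - 2p\varphi(u)\bigr] - (m+1)u^3 > 0,
\end{equation*}
``$R_1$ is to the left of $P_7$'' (i.e.\ $\Re R_1<m/p$) becomes
\begin{equation*}
  h_L(u) := 4p^2\pi^2\bigl[(m+2)u - 2p\varphi(u)\bigr] - mu^3 < 0,
\end{equation*}
and a short direct calculation shows that ``$R_2$ is below $\oH$'' (i.e.\ $\Im R_2<2\Im\sigma_m$) reduces to \emph{exactly} $h_L(u)>0$. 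This algebraic coincidence reflects the geometric fact that $R_1$, $R_2$, $P_7$ are the three pairwise intersections of the lines $K_0$, $\oH$, $I_m$, so they collapse to a single point precisely when $K_0$ passes through $P_7$; hence $R_1$ reaching $P_7$ and $R_2$ reaching $\oH$ must occur at the same value of $u$.

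Next I would show that $h_R$ and $h_L$ are strictly decreasing on $(\kappa,\upsilon_{p,m})$. The derivative
\begin{equation*}
  h_R'(u) = 4p^2\pi^2\bigl[(m+1) - 2p\varphi'(u)\bigr] - 3(m+1)u^2
\end{equation*}
is strictly negative because $\varphi'(u)>1$ by Lemma~\ref{lem:phi}~(v) and $m+1\le p-1<2p$; the analogous estimate handles $h_L'$. At the endpoints, $h_R(\kappa)=(m+1)\kappa\bigl(4p^2\pi^2-\kappa^2\bigr)>0$ and, using $\varphi(\upsilon_{p,m})=(m+1)\upsilon_{p,m}/p$, $h_R(\upsilon_{p,m})=-(m+1)\upsilon_{p,m}|\xi|^2<0$; similarly $h_L(\kappa)>0$ and $h_L(\upsilon_{p,m})=-m\upsilon_{p,m}|\xi|^2<0$. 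By strict monotonicity each function has a unique zero in $(\kappa,\upsilon_{p,m})$, which I take as the definitions of $u_R$ and $u_L$. Evaluating at $u=u_R$ gives $\Re R_1=(m+1)/p>m/p$, hence $h_L(u_R)>0$ and therefore $u_R<u_L$. Reading the signs of $h_R$ and $h_L$ off the three subintervals $(\kappa,u_R)$, $(u_R,u_L)$, $(u_L,\upsilon_{p,m})$ yields exactly the three cases of the lemma, with the coincidence above ensuring that the transition of $R_2$ across $\oH$ takes place at the same $u_L$ as the transition of $R_1$ across $P_7$.

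The main obstacle is the monotonicity step: the endpoint sign computations are routine, but establishing $h_R'<0$ and $h_L'<0$ uniformly on $(\kappa,\upsilon_{p,m})$ requires the sharp bound $\varphi'(u)>1$ from Lemma~\ref{lem:phi}~(v) combined with the standing assumption $m\le p-2$ (which guarantees $\upsilon_{p,m}$ is defined and $m+2\le p$). The borderline case $m=p-1$ is excluded by Remark~\ref{rem:sigma}, so this causes no difficulty.
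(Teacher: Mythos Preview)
Your proof is correct and takes essentially the same route as the paper: both show that $\Re R_1$ (the paper works with $r(u):=\Re R_1$ directly, you with its cleared-denominator versions $h_R,h_L$) is strictly decreasing on $(\kappa,\upsilon_{p,m})$---the paper via the product-of-positive-decreasing-factors argument using Lemma~\ref{lem:phi}(iii), you via the equivalent bound $\varphi'(u)>1$ from Lemma~\ref{lem:phi}(v)---and then invoke the intermediate value theorem; your geometric explanation of the $R_2$ coincidence through the concurrency of $K_0$, $\oH$, $I_m$ is exactly the paper's one-line remark that ``if $R_1$ is to the left (right) of $I_m$, then $R_2$ is above (below) $P_7$''. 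One minor boundary point, shared with the paper's own argument: when $m=0$ your endpoint value is $h_L(\upsilon_{p,0})=0$ rather than strictly negative, so $u_L=\upsilon_{p,0}$ and the third interval is empty.
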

\begin{proof}
The real part of the coordinate of $R_1$ is $\frac{4p\pi}{u}\Im\sigma_m$.
So the point $R_1$ is to the left of $P_7$, between $P_4$ and $P_7$, or $P_4$ if and only if $r(u)<m/p$, $m/p<r(u)<(m+1)/p$, or $(m+1)/p<r(u)$, respectively, where we put $r(u):=\frac{4p\pi}{u}\Im\sigma_m$.
Since the function $r(u)=\frac{8p\pi^2\bigl((m+1)u-p\varphi(u)\bigr)}{u(u^2+4p^2\pi^2)}=\frac{8p\pi^2}{u(u^2+4p^2\pi^2)}\Bigl(p\bigl(u-\varphi(u)\bigr)-(p-m-1)u)\Bigr)$ is strictly decreasing from Lemma~\ref{lem:phi} (iii), it is between $r(\upsilon_{p,m})=0$ and $r(\kappa)=\frac{8(m+1)p\pi^2}{\kappa^2+4p^2\pi^2}$.
Since we have
\begin{equation*}
  r(\kappa)-\frac{m+1}{p}
  =
  \frac{m+1}{p(\kappa^2+4p^2\pi^2)}(4p^2\pi^2-\kappa^2)
  >0
\end{equation*}
and $r(u)$ is continuous, there exist $u_R$ and $u_L$ with $\kappa<u_R<u_L<\upsilon_{p,m}$ such that $r(u_L)=m/p$ and $r(u_R)=(m+1)/p$.
Therefore if $u<u_R$, then $R_1$ is to the right of $P_4$, if $u_R<u<u_L$, then $R_1$ is between $P_4$ and $P_7$, and if $u_L<u$, then $R_1$ is to the left of $P_7$.
\par
If $R_1$ is to the left (right, respectively) of $I_m$, then $R_2$ is above (below, respectively) $P_7$.
So the statements about $R_2$ follow.
\end{proof}
Since we have
\begin{equation*}
  r(u_R)-r(u_1)
  =
  \frac{m+1}{p}-\frac{4(m+1)p\pi^2}{|\xi|^2}
  =
  \frac{(m+1)u^2}{p|\xi|^2}
  >0,
\end{equation*}
and
\begin{equation*}
  r(u_L)-r(u_2)
  =
  \frac{m}{p}-\frac{2(2m+1)p\pi^2}{|\xi|^2}
  =
  \frac{1}{p|\xi|^2}(mu^2-2p^2\pi^2)
  <
  \frac{1}{|\xi|^2}(m-2p\pi^2)
  <0,
\end{equation*}
where we use $u<\sqrt{p}$ from Lemma~\ref{lem:upsilon}, we have the inequalities $\kappa<u_R<u_1<u_2<u_L<\upsilon_{p,m}$.
To summarize, we have the following corollary.
\begin{cor}\label{cor:QR}
The points $Q_1$, $Q_2$, $R_1$ and $R_2$ are located as follows.
\quad\par
\begin{enumerate}
\item
If $\kappa<u<u_R$, then $Q_1$, $Q_2$, and $R_2$ are below $\oH$.
Moreover $R_1$ is to the right of $P_4$.
\item
If $u_R<u<u_1$, then $Q_1$, $Q_2$, and $R_2$ are below $\oH$.
Moreover $R_1$ is between $P_5$ and $P_4$.
\item
If $u_1<u<u_2$, then $Q_1$ is above $\oH$, and $Q_2$ and $R_2$ are below $\oH$.
Moreover $R_1$ is between $P_6$ and $P_5$.
\item
If $u_2<u<u_L$, then $Q_1$ and $Q_2$ are above $\oH$, and $R_2$ is below $\oH$.
Moreover $R_1$ is between $P_7$ and $P_6$.
\item
If $u_L<u<\upsilon_{p,m}$, then $Q_1$, $Q_2$, and $R_2$ are above $\oH$.
Moreover $R_1$ is to the left of $P_7$.
\end{enumerate}
See Figure~\ref{fig:Xi}.
\end{cor}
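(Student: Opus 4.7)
The plan is to assemble the corollary directly from Lemma~\ref{lem:Q}, Lemma~\ref{lem:R}, and the chain of inequalities $\kappa<u_R<u_1<u_2<u_L<\upsilon_{p,m}$ that was just established in the two computations preceding the corollary statement. The key observation is that $R_1$ lies on $\oH$ while $P_4, P_5, P_6, P_7$ all share imaginary part $2\Im\sigma_m$, so the location of $R_1$ among these four points is determined by a single real coordinate, namely $r(u):=\frac{4p\pi}{u}\Im\sigma_m$. Lemma~\ref{lem:R} already tells me that $r$ is strictly decreasing, and that $r(u_R)=(m+1)/p$ (crossing at $P_4$) and $r(u_L)=m/p$ (crossing at $P_7$); so I only need to identify the intermediate crossings through $P_5$ and $P_6$.

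For the crossing at $P_5$, I would solve $r(u)=\frac{m+1}{p}-\frac{u\Im\sigma_m}{p\pi}$. Clearing denominators and using $|\xi|^2=u^2+4p^2\pi^2$ reduces this to $\Im\sigma_m=\frac{(m+1)u\pi}{|\xi|^2}$, which, after substituting $\Im\sigma_m=\frac{2\pi((m+1)u-p\varphi(u))}{|\xi|^2}$, becomes $\varphi(u)/u=(m+1)/(2p)$ --- precisely the equation defining $u_1$ in Lemma~\ref{lem:Q}. An analogous computation at $P_6$ yields $\varphi(u)/u=(2m+3)/(4p)$, the definition of $u_2$. Thus $R_1$ crosses $P_5$ exactly when $u=u_1$ and $P_6$ exactly when $u=u_2$, so as $u$ increases the point $R_1$ moves monotonically leftwards through the sequence $P_4, P_5, P_6, P_7$ at the four values $u_R, u_1, u_2, u_L$.

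For $R_2$, I would use the geometric remark already noted in the proof of Lemma~\ref{lem:R}: $R_2=K_0\cap I_m$ lies above (resp.\ below) $\oH$ if and only if $R_1=K_0\cap\oH$ lies to the left (resp.\ right) of $I_m$, i.e.\ of $P_7$. By Lemma~\ref{lem:R} this is equivalent to $u>u_L$ (resp.\ $u<u_L$). Combined with the characterization of $Q_1, Q_2$ relative to $\oH$ given by Lemma~\ref{lem:Q} --- $Q_j$ is below $\oH$ iff $u<u_j$ for $j=1,2$ --- the five cases (i)--(v) of the corollary are simply read off by breaking the interval $(\kappa,\upsilon_{p,m})$ at the four points $u_R<u_1<u_2<u_L$.

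The main obstacle is essentially bookkeeping: one must keep careful track of which of the four critical values $u_R, u_1, u_2, u_L$ controls each of the four geometric transitions ($R_1$ past $P_4, P_5, P_6, P_7$, and $Q_1, Q_2$ past $\oH$, and $R_2$ past $\oH$), and verify that they match up as claimed. The genuine computation is the identification of the $P_5$- and $P_6$-crossings with $u_1$ and $u_2$, but since these reduce by elementary algebra to the very equations defining $u_1, u_2$, there is no real difficulty --- only the need to keep the signs and indices straight, which is best done by displaying the five configurations in Figure~\ref{fig:Xi}.
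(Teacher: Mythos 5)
Your proposal is correct and follows essentially the same route as the paper: the corollary is read off from Lemmas~\ref{lem:Q} and \ref{lem:R} once the chain $\kappa<u_R<u_1<u_2<u_L<\upsilon_{p,m}$ is in hand, and the paper establishes those inequalities by computing $r(u_R)-r(u_1)>0$ and $r(u_L)-r(u_2)<0$ directly, using the monotonicity of $r$.

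What you add, and what the paper leaves implicit, is the geometric identification that the algebraic quantities $r(u_1)$ and $r(u_2)$ are precisely $\Re P_5(u_1)$ and $\Re P_6(u_2)$ (because $Q_1=R_1=P_5$ when $Q_1\in\oH$, and $Q_2=R_1=P_6$ when $Q_2\in\oH$, as $L_{m+1}$, $L_{m+1/2}$, $K_0$, $\oH$ become concurrent at those moments). This lets you derive the whole ordering $u_R<u_1<u_2<u_L$ at once from the left-to-right order $\Re P_7<\Re P_6<\Re P_5<\Re P_4$ along $\oH$ together with the strict monotonicity of $r$, rather than checking the two nontrivial inequalities by hand. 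It is a cleaner way to see why the transition values for $Q_1$, $Q_2$ interleave as they do with those for $R_1$, but the underlying computation is the same as the paper's.
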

We use \eqref{eq:Gm} to prove the following lemma.
\begin{lem}\label{lem:G_der_y}
Write $z=x+y\i$ with $x,y\in\R$.
If $z$ is between $L_{m}$ and $L_{m+1}$, or between $K_{u}$ and $K_{-u}$, then we have
\begin{itemize}
\item
$\frac{\partial}{\partial\,y}\Re{G_m(x+y\i)}>0$ if and only if $\Im(\gamma z)>0$ and $k-1/2<\Re(\gamma z)<k$ for some integer $k$, or $\Im(\gamma z)<0$ and $l<\Re(\gamma z)<l+1/2$ for some integer $l$,
\item
$\frac{\partial}{\partial\,y}\Re{G_m(x+y\i)}<0$ if and only if $\Im(\gamma z)<0$ and $k-1/2<\Re(\gamma z)<k$ for some integer $k$, or $\Im(\gamma z)>0$ and $l<\Re(\gamma z)<l+1/2$ for some integer $l$.
\end{itemize}
\end{lem}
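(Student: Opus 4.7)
The plan is to exploit the Cauchy--Riemann equations together with the explicit formula for $G_m'(z)$ given in \eqref{eq:Gm}. Since $G_m$ is holomorphic on the relevant region, writing $G_m(z)=U(x,y)+\i V(x,y)$ one has
\begin{equation*}
  \frac{\partial}{\partial\,y}\Re{G_m(x+y\i)}
  =\frac{\partial U}{\partial\,y}
  =-\Im G_m'(z).
\end{equation*}
So the question reduces to determining the sign of $\Im\log\bigl(2\cosh{u}-2\cosh(\xi z)\bigr)$, where we take the principal branch (which is the branch selected by Corollary~\ref{cor:F'}, valid precisely in the region where $z$ lies between $K_u$ and $K_{-u}$, or between $L_m$ and $L_{m+1}$).

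The key computation is to expand $\cosh(\xi z)$ using $\xi z=s+t\i$ with $s,t\in\R$. Since $\gamma=\xi/(2\pi\i)$ we have $s=-2\pi\Im(\gamma z)$ and $t=2\pi\Re(\gamma z)$, and
\begin{equation*}
  \Im\bigl(2\cosh{u}-2\cosh(\xi z)\bigr)
  =-2\sinh(s)\sin(t)
  =2\sinh\bigl(2\pi\Im(\gamma z)\bigr)\sin\bigl(2\pi\Re(\gamma z)\bigr).
\end{equation*}
Next I would verify that in the region under consideration the quantity $w:=2\cosh{u}-2\cosh(\xi z)$ does not lie on the negative real axis (so that $\arg{w}$ and $\Im{w}$ have the same sign under the principal branch); this follows because the vanishing of $\Im{w}$ and $\Re{w}<0$ simultaneously would force both factors above to vanish and $\cosh(s)\cos(t)>\cosh{u}$, which one can rule out in the strip between $L_m$ and $L_{m+1}$ (equivalently between $K_u$ and $K_{-u}$) using $\cosh{u}>3/2$.

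Given that, $\partial_y\Re G_m(z)>0$ is equivalent to $\Im{w}<0$, i.e.\ $\sinh\bigl(2\pi\Im(\gamma z)\bigr)$ and $\sin\bigl(2\pi\Re(\gamma z)\bigr)$ have opposite signs. Since $\sinh(2\pi a)$ has the sign of $a$ and $\sin(2\pi b)>0$ iff $b\in(l,l+1/2)$ for some integer $l$ while $\sin(2\pi b)<0$ iff $b\in(k-1/2,k)$ for some integer $k$, this is exactly the first bullet of the lemma. The sign reversal for $\partial_y\Re G_m(z)<0$ is identical. The main technical obstacle is the branch-of-log check in the paragraph above; everything else is a direct trigonometric bookkeeping of the two cases.
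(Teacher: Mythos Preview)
Your proposal is correct and follows essentially the same route as the paper: compute $\partial_y\Re G_m(z)=-\Im G_m'(z)=-\arg\bigl(2\cosh u-2\cosh(\xi z)\bigr)$ via \eqref{eq:Gm}, expand $\Im\bigl(2\cosh u-2\cosh(\xi z)\bigr)=-2\sinh(\Re\xi z)\sin(\Im\xi z)$, and translate the sign conditions using $\Re(\gamma z)=\Im(\xi z)/(2\pi)$ and $\Im(\gamma z)=-\Re(\xi z)/(2\pi)$. The only difference is that you explicitly raise the branch-cut issue (whether $w=2\cosh u-2\cosh(\xi z)$ can lie on the negative real axis), which the paper passes over silently; your sketch of that check is slightly misstated (vanishing of $\Im w$ forces only \emph{one} of the two factors to vanish, not both), but the conclusion is correct and easy to verify case by case in the given strips.
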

The following proof is similar to those of \cite[Proposition~1.5]{Murakami:CANJM2023} and \cite[Lemma~4.5]{Murakami:arXiv2023}
\begin{proof}
From \eqref{eq:Gm}, we have
\begin{equation*}
  \frac{\partial\,\Re G_m(z)}{\partial\,y}
  =
  -\arg\Bigl(2\cosh{u}-2\cosh\bigl(\xi z\bigr)\Bigr).
\end{equation*}
Since we have
\begin{equation*}
  \Im\Bigl(2\cosh{u}-2\cosh\bigl(\xi z\bigr)\Bigr)
  =
  -2\sinh\bigl(\Re(\xi z)\bigr)\sin\bigl(\Im(\xi z)\bigr),
\end{equation*}
we see that $\partial\,\Re G_m(z)/\partial\,y$ is positive (negative, respectively) if and only if $\sinh\bigl(\Re(\xi z)\bigr)\sin\bigl(\Im(\xi z)\bigr)$ is positive (negative, respectively).
So we have
\begin{itemize}
\item
$\partial\,\Re G_m(z)/\partial\,y>0$ if and only if $\Re(\xi z)>0$ and $2k\pi<\Im(\xi z)<(2k+1)\pi$ for some integer $k$, or $\Re(\xi z)<0$ and $(2l-1)\pi<\Im(\xi z)<2l\pi$ for some integer $l$, and
\item
$\partial\,\Re G_m(z)/\partial\,y<0$ if and only if $\Re(\xi z)<0$ and $2k\pi<\Im(\xi z)<(2k+1)\pi$ for some integer $k$, or $\Re(\xi z)>0$ and $(2l-1)\pi<\Im(\xi z)<2l\pi$ for some integer $l$.
\end{itemize}
Since $\Re(\gamma z)=\frac{1}{2\pi}\Im(\xi z)$ and $\Im(\gamma z)=\frac{-1}{2\pi}\Re(\xi z)$, we have the conclusion.
\end{proof}
\begin{rem}\label{rem:G_der_y}
Since $-\pi<\arg{w}\le\pi$ for any complex number $w$, we can see that $\partial\Re{G_m(z)}/\partial\,y\le\pi$ when $\partial\Re{G_m(z)}/\partial\,y>0$, and that $\partial\Re{G_m(z)}/\partial\,y>-\pi$ when $\partial\Re{G_m(z)}/\partial\,y<0$.
\end{rem}
From \eqref{eq:K} and \eqref{eq:L}, we have the following corollary.
\begin{cor}\label{cor:G_der_y}
The function $\Re G_m(z)$ is strictly monotonically increasing with resect to $\Im{z}$ if
\begin{itemize}
\item
$z$ is above $K_0$ and between $L_{m+1/2}$ and $L_{m+1}$, or
\item
$z$ is below $K_{0}$ and between $L_{m}$ and $L_{m+1/2}$.
\end{itemize}
It is strictly monotonically decreasing with resect to $\Im{z}$ if
\begin{itemize}
\item
$z$ is above $K_{0}$ and between $L_{m}$ and $L_{m+1/2}$, or
\item
$z$ is below $K_{0}$ and between $L_{m+1/2}$ and $L_{m+1}$.
\end{itemize}
In particular, it is strictly monotonically increasing with respect to $\Im{z}$ in the quadrilateral $P_0P_1Q_2R_2$.
\end{cor}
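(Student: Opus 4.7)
The plan is to reformulate the conditions of Lemma~\ref{lem:G_der_y} in the geometric language of the lines $K_0$, $L_m$, $L_{m+1/2}$, and $L_{m+1}$. By the definitions \eqref{eq:K} and \eqref{eq:L}, the locus $K_0$ is $\{\Im(\gamma z)=0\}$, so ``above $K_0$'' matches $\Im(\gamma z)>0$, and the strip between $L_m$ and $L_{m+1/2}$ matches $m<\Re(\gamma z)<m+1/2$ (similarly for the strip between $L_{m+1/2}$ and $L_{m+1}$). Each of the four geometric cases in the corollary then corresponds exactly to one scenario of Lemma~\ref{lem:G_der_y}: for instance, ``above $K_0$ and between $L_{m+1/2}$ and $L_{m+1}$'' matches the first bullet for $\partial\Re G_m/\partial y>0$ with $k=m+1$, while ``below $K_0$ and between $L_m$ and $L_{m+1/2}$'' matches the second bullet with $l=m$; the two decreasing cases are symmetric.

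For the ``in particular'' claim I would verify that the closed quadrilateral $P_0P_1Q_2R_2$ lies entirely in the region where the sub-case ``$z$ below $K_0$, between $L_m$ and $L_{m+1/2}$'' applies, that is, in $\{\Im(\gamma z)\le0\}\cap\{m\le\Re(\gamma z)\le m+1/2\}$. This region is a convex intersection of closed half-planes, so it suffices to check the four vertices. The vertices $P_0$ and $P_1$ lie below $K_0$ (since $K_0$ passes through the origin with positive slope $u/(2p\pi)$ and $P_0=m/p>0$, while $P_1$ has negative imaginary part $-2\Im\sigma_m$), and $Q_2$, $R_2$ lie on $K_0$ by definition. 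Furthermore $P_0\in L_m$ and $P_1,Q_2\in L_{m+1/2}$, so the only remaining point to examine is $R_2$: a short computation gives $\Re(\gamma R_2)=m+mu^2/(4p^2\pi^2)$, and the estimate $u<\sqrt{p}$ from Lemma~\ref{lem:upsilon} together with $m\le p-1$ immediately yields $\Re(\gamma R_2)<m+1/2$.

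There is no real obstacle; the corollary is essentially a bookkeeping exercise matching the analytic conditions of Lemma~\ref{lem:G_der_y} to the geometric positions of $K_0$ and the $L_t$, followed by a one-line vertex check resting on $u<\sqrt{p}$. The mild subtlety is that $\partial\Re G_m/\partial y$ vanishes on $K_0$, so strict monotonicity in the closed quadrilateral must be understood as strict monotonicity of $\Re G_m$ as a function of $\Im z$ at fixed $\Re z$; this follows at once from the strict positivity of $\partial\Re G_m/\partial y$ on the interior together with continuity.
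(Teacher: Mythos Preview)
Your translation of Lemma~\ref{lem:G_der_y} into the geometric language of $K_0$ and the $L_t$ is correct and is exactly what the paper indicates by simply citing \eqref{eq:K} and \eqref{eq:L}; the four bullet points are a direct restatement with $k=m+1$ and $l=m$.

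For the ``in particular'' clause you supply far more detail than the paper, which leaves it to the reader and to Figure~\ref{fig:Xi}. Your convexity-plus-vertex argument is sound in principle, but there is a genuine gap in your appeal to $u<\sqrt{p}$: Lemma~\ref{lem:upsilon} only defines $\upsilon_{p,m}$ and proves $\upsilon_{p,m}<\sqrt{p}$ when $m<p-1$. When $m=p-1$ the standing hypothesis $\varphi(u)/u<(m+1)/p=1$ is automatic (Lemma~\ref{lem:phi}\,(i)) and places no upper bound on $u$, so $u<\sqrt{p}$ is not available. And the bound really fails: for $m=p-1$, $p\ge2$ and $u$ large one gets
\[
  \Re(\gamma R_2)=m+\frac{mu^2}{4p^2\pi^2}>m+\tfrac{1}{2},
\]
so $R_2$ sits to the right of $L_{m+1/2}$; in that regime the sides $\overline{P_1Q_2}\subset L_{m+1/2}$ and $\overline{R_2P_0}\subset I_m$ actually cross (at the point $(m/p,\pi/u)$), and the quadrilateral is no longer contained in the region $\{\Im(\gamma z)\le0,\ m\le\Re(\gamma z)\le m+1/2\}$ you describe. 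You should either restrict the ``in particular'' to $m<p-1$ (which is the setting of Corollary~\ref{cor:QR} and of the figures, and is where the bound $u<\upsilon_{p,m}<\sqrt{p}$ is legitimately available), or supply a separate argument for $m=p-1$.
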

Now, we want to know how the region $W_m$ looks like.
Recall \eqref{eq:Wm} for the definition of $W_m$.
\par
First of all, from Lemma~\ref{lem:sigma_m+1}, the line segment $Q_1P_3\subset W_m$ except for $\sigma_m$, if $\sigma_m$ is above the real axis.
We can find more line segments in $W_m$.
\begin{lem}\label{lem:R2Q1}
The line segment $\overline{R_2Q_1}$ is in $W_m$.
\end{lem}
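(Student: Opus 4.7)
The plan is to exploit the fact that both endpoints lie on the line $K_0$, so the segment $\overline{R_2Q_1}$ is contained in $K_0$. Parametrizing $z(s):=R_2+s(Q_1-R_2)$ for $s\in[0,1]$, I will establish two things: (a) $\Re G_m$ is strictly increasing along the segment, so its maximum is attained at $Q_1$, and (b) $\Re G_m(Q_1)<\Re F(\sigma_0)$, which together put the entire segment into $W_m$.

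For (a), on $K_0$ we have $\xi z=it$ for some real $t$, and since $K_0$ lies between $K_u$ and $K_{-u}$, the formula
\begin{equation*}
  G_m'(z)=\log\bigl(2\cosh u-2\cosh(\xi z)\bigr)
\end{equation*}
from \eqref{eq:Gm} applies. Substituting $\xi z=it$, the argument of the logarithm becomes $2\cosh u-2\cos t\ge 2\cosh u-2>1$ because $u>\kappa$, so $G_m'(z)$ is a positive real number throughout the segment. Meanwhile, a direct computation gives $\Re(Q_1-R_2)=(4p^2\pi^2-mu^2)/(p|\xi|^2)>0$, using $u<\upsilon_{p,m}<\sqrt{p}$ from Lemma~\ref{lem:upsilon}. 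The chain rule then yields
\begin{equation*}
  \frac{d}{ds}\Re G_m(z(s))=G_m'(z(s))\,\Re(Q_1-R_2)>0.
\end{equation*}

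For (b), I would invoke Lemma~\ref{lem:sigma_m+1}~(ii), which requires $Q_1$ to lie on $L_{m+1}$ between $K_{\varphi(u)}$ and $(m+1)/p$, between $K_u$ and $K_{-u}$, and different from $\sigma_m$. All three are immediate from $\Re(\xi Q_1)=0$ together with the inequalities $-\varphi(u)<0<(m+1)u/p<u$ and $\Re(\xi\sigma_m)=\varphi(u)>0$. Before concluding I would also (routinely) verify that the whole segment sits in $\Theta_{m,\nu}$: on $K_0$ we have $\Im(\gamma z)=0$ identically, while each of the four trapezoids $\overline{\nabla}^{\pm}_{m,\nu}$, $\underline{\nabla}^{\pm}_{m,\nu}$ requires either $|\Im(\gamma z)|$ or $|\Im z|$ to exceed a positive quantity, so the segment avoids all of them for $\nu>0$ sufficiently small; and $\Re(\gamma z)$ stays in $(m,m+1]$, well inside the strip $[m-1+\nu,m+2-\nu]$.

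The only genuine subtlety I foresee is confirming that the principal branch of $\log$ makes $G_m'(z)$ actually real-valued on the segment (rather than merely real modulo $2\pi\i$). This is guaranteed because the argument $2\cosh u-2\cos t$ stays positive and bounded away from $0$, so $G_m'(z)$ remains continuously equal to $\log\bigl(2\cosh u-2\cos t\bigr)>0$ with no branch jump, and the sign of the derivative of $\Re G_m$ along the segment is unambiguous.
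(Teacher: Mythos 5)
Your proof is correct and follows essentially the same approach as the paper: both show that $G_m'$ is a positive real number everywhere on $K_0$ (since $2\cosh u-2\cos t\ge 2\cosh u-2>1$), deduce that $\Re G_m$ is strictly increasing toward $Q_1$ along the segment, and then invoke Lemma~\ref{lem:sigma_m+1}~(ii) at the endpoint $Q_1$. The paper parametrizes $K_0$ as $t/\gamma$ and computes $\frac{d}{dt}\Re G_m(t/\gamma)=\frac{4p\pi^2}{|\xi|^2}\log\bigl(2\cosh u-2\cos(2\pi t)\bigr)$; your linear parametrization from $R_2$ to $Q_1$ is an equivalent computation, and your extra routine checks (that $\Re(Q_1-R_2)>0$, that the segment lies in $\Theta_{m,\nu}$, and that $Q_1$ satisfies the hypotheses of Lemma~\ref{lem:sigma_m+1}~(ii)) make explicit what the paper leaves implicit.
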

\begin{proof}
A point on $K_{0}$ is parametrized as $t/\gamma$ ($t\in\R$) oriented from left to right.
From \eqref{eq:Gm}, we have
\begin{equation*}
\begin{split}
  \frac{d}{d\,t}\Re G_m(t/\gamma)
  &=
  \Re
  \left(
    \frac{2\pi\i}{\xi}
    \log\bigl(2\cosh{u}-2\cosh(2\pi\i t)\bigr)
  \right)
  \\
  &=
  \frac{4p\pi^2}{|\xi|^2}\log\bigl(2\cosh{u}-2\cos(2\pi t)\bigr)
  >0.
\end{split}
\end{equation*}
Therefore we have $\Re G_m(z)<\Re G_m(Q_1)$ if $z$ is on $K_0$ and to the left of $Q_1$.
Now, from Lemma~\ref{lem:sigma_m+1} (ii), we know that $\Re G_m(Q_1)<\Re F(\sigma_0)$, proving the lemma.
\end{proof}
In particular, we have $R_2\in W_m$.
Since $\Re{G_m(x+y\i)}$ is increasing with respect to $y$ on the line segment $\overline{R_2P_0}$ from Corollary~\ref{cor:G_der_y}, we have the following corollary.
\begin{cor}\label{cor:R2P0}
The line segment $\overline{R_2P_0}$ is in $W_m$.
\end{cor}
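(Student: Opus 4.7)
The plan is to deduce the corollary as an immediate consequence of Lemma~\ref{lem:R2Q1} combined with the vertical monotonicity supplied by Corollary~\ref{cor:G_der_y}. The heuristic is that Lemma~\ref{lem:R2Q1} already places the top endpoint $R_2$ in $W_m$, and moving vertically downward from $R_2$ to $P_0$ only decreases $\Re G_m$, so the whole segment stays in $W_m$.

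Concretely, first I would observe that $R_2 = \tfrac{m}{p} + \tfrac{mu}{2p^2\pi}\i$ and $P_0 = \tfrac{m}{p}$ share the same real part, so $\overline{R_2P_0}$ is the vertical segment of length $\tfrac{mu}{2p^2\pi}$ on the line $I_m$ that forms one side of the quadrilateral $P_0P_1Q_2R_2$. A short check using $\gamma = p - \tfrac{u}{2\pi}\i$ shows that every point $z(y) = \tfrac{m}{p}+y\i$ with $0 \le y \le \tfrac{mu}{2p^2\pi}$ satisfies $\Im(\gamma z)\le 0$ and $m \le \Re(\gamma z) < m+\tfrac{1}{2}$ (the latter thanks to the bound $u<\sqrt{p}$ from Lemma~\ref{lem:upsilon}), so $z(y)$ lies in the region where Corollary~\ref{cor:G_der_y} guarantees that $\Re G_m$ is strictly monotonically increasing in $\Im z$. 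Hence $\Re G_m(z(y)) \le \Re G_m(R_2)$ along the whole segment, and since Lemma~\ref{lem:R2Q1} gives $\Re G_m(R_2) < \Re F(\sigma_0)$, chaining the two inequalities yields $\Re G_m(z) < \Re F(\sigma_0)$ everywhere on $\overline{R_2P_0}$.

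The only remaining bookkeeping is to check $\overline{R_2P_0} \subset \Theta_{m,\nu}$, which is where $W_m$ is defined; this is immediate from the coordinates of the two endpoints (the real part is constantly $m/p$ and the imaginary part stays between $0$ and $\Im R_2$) once $\nu>0$ is taken small enough that $P_0\in\Theta_{m,\nu}$, avoiding the four excluded triangles described in \eqref{eq:nabla}. I do not expect any genuine obstacle: the substantive analytic content is entirely absorbed into Lemma~\ref{lem:R2Q1} (for the height $R_2$) and Corollary~\ref{cor:G_der_y} (for the vertical monotonicity), and the present corollary is just their straightforward combination along one edge of the quadrilateral $P_0P_1Q_2R_2$.
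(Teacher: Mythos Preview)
Your proposal is correct and follows exactly the paper's one-line argument: deduce $R_2\in W_m$ from Lemma~\ref{lem:R2Q1}, then use the monotonicity of $\Re G_m$ in $\Im z$ along $\overline{R_2P_0}$ furnished by Corollary~\ref{cor:G_der_y} to propagate the inequality down to $P_0$. One minor caveat in your explicit verification: the bound $u<\sqrt{p}$ from Lemma~\ref{lem:upsilon} is stated there only for $m<p-1$, so for $m=p-1$ it is cleaner to appeal directly to the ``in particular'' clause of Corollary~\ref{cor:G_der_y} about the quadrilateral $P_0P_1Q_2R_2$, exactly as the paper does, rather than rechecking the position relative to $L_{m+1/2}$ by hand.
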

We can also prove the following lemma.
Since its proof is lengthy and technical, we postpone it to Appendix~\ref{sec:P1}.
\begin{lem}\label{lem:P1}
The point $P_1$ is in $W_m$.
\end{lem}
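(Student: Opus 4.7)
The plan is to compute $\Re G_m(P_1)$ directly by integrating $G_m'$ along $L_{m+1/2}$ starting from $Q_2$, and then to compare the result with $\Re F(\sigma_0)$. Parametrize $L_{m+1/2}$ by $r\in\R$ via $\xi z=r+(2m+1)\pi\i$, so that $Q_2$ corresponds to $r=0$, and solving $\Im z=-2\Im\sigma_m$ shows that $P_1$ corresponds to
\begin{equation*}
  s_1:=\frac{(6m+5)u-4p\varphi(u)}{2p},
\end{equation*}
which is positive because the standing assumption $\varphi(u)/u<(m+1)/p$ yields $4p\varphi(u)<4(m+1)u<(6m+5)u$. Since $\cosh(\xi z)=\cosh\bigl(r+(2m+1)\pi\i\bigr)=-\cosh r$ on $L_{m+1/2}$, formula \eqref{eq:Gm} gives $G_m'(z)=\log(2\cosh u+2\cosh r)$, which is real along the line. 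A direct computation at $Q_2$ shows $e^{-\xi(1\pm Q_2)}=-e^{-\xi}$, so the two dilogarithm terms in \eqref{eq:Gm} cancel, leaving $G_m(Q_2)=u(Q_2-m/\gamma)=u\pi\i/\xi$ with real part $2up\pi^2/|\xi|^2$.

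Integrating $\frac{d}{dr}G_m(z)=\frac{1}{\xi}\log(2\cosh u+2\cosh r)$ from $r=0$ to $r=s_1$ and taking real parts yields
\begin{equation*}
  \Re G_m(P_1)=\frac{u}{|\xi|^2}\left(2p\pi^2+\int_0^{s_1}\log\bigl(2\cosh u+2\cosh r\bigr)\,dr\right).
\end{equation*}
Combined with $\Re F(\sigma_0)=\frac{u}{|\xi|^2}L_p(u)$ from the proof of Lemma~\ref{lem:F_sigma0}, the desired inequality $\Re G_m(P_1)<\Re F(\sigma_0)$ reduces to the scalar inequality
\begin{equation*}
  \int_0^{s_1}\log\bigl(2\cosh u+2\cosh r\bigr)\,dr
  <
  \Li_2\bigl(e^{-u-\varphi(u)}\bigr)-\Li_2\bigl(e^{-u+\varphi(u)}\bigr)+u\varphi(u)+2p\pi^2.
\end{equation*}

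The main obstacle is establishing this last inequality. My plan is to view both sides as functions of $u\in(\kappa,\upsilon_{p,m})$ for fixed $p$ and $m$ and to compare their derivatives. By \eqref{eq:Lp_der} the derivative of the right-hand side is $\log\bigl(2\cosh(u+\varphi(u))-2\bigr)$, while differentiating the left-hand side by the Leibniz rule together with $\varphi'(u)=\sinh u/\sinh\varphi(u)$ from the proof of Lemma~\ref{lem:phi} produces the explicit expression
\begin{equation*}
  s_1'(u)\log\bigl(2\cosh u+2\cosh s_1\bigr)+\int_0^{s_1}\frac{2\sinh u}{2\cosh u+2\cosh r}\,dr.
\end{equation*}
Showing that the difference of these two derivatives has the correct sign is likely to require a case split following Corollary~\ref{cor:QR} and the bound $u<\upsilon_{p,m}<\sqrt p$ from Lemma~\ref{lem:upsilon}, which ensures that the buffer $2p\pi^2$ on the right absorbs the integral when $u$ is close to $\upsilon_{p,m}$. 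The base case $u=\kappa$, where $\varphi(\kappa)=0$ and $s_1=(6m+5)\kappa/(2p)$, reduces the right-hand side to exactly $2p\pi^2$ and requires a direct estimate of $\int_0^{(6m+5)\kappa/(2p)}\log(3+2\cosh r)\,dr$; propagating the inequality from this base case by monotonicity is what I expect to be the most delicate step.
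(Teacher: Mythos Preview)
Your reduction is correct and, in spirit, very close to the paper's: the paper also computes $\Re G_m(P_1)-\Re F(\sigma_0)$ explicitly (in dilogarithm form rather than as an integral along $L_{m+1/2}$) and reduces Lemma~\ref{lem:P1} to a scalar inequality in $u$, $p$, and $m$. Your integral representation is arguably cleaner, but it lands on essentially the same inequality.

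The genuine gap is that you stop exactly where the work begins. Everything after ``The main obstacle is establishing this last inequality'' is a plan, not a proof, and the plan is optimistic in ways that would bite you if you tried to carry it out. First, the derivative comparison you sketch is not monotone in a usable direction: since $\varphi'(u)\to\infty$ as $u\downarrow\kappa$, one has $s_1'(u)=\bigl((6m+5)-4p\varphi'(u)\bigr)/(2p)\to-\infty$, so near $\kappa$ the left-hand side is \emph{decreasing} and the simple ``base case at $\kappa$ plus monotonicity'' scheme does not apply without further subdivision. Second, the buffer argument using $u<\upsilon_{p,m}<\sqrt{p}$ is only available when $m\le p-2$; for $m=p-1$ the standing hypothesis $\varphi(u)/u<1$ imposes no upper bound on $u$, and one must control the inequality for all $u>\kappa$, where both sides grow without bound. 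The paper's Appendix~\ref{sec:P1} handles precisely these difficulties by first replacing $m$ by $p-1$ to get a two-variable function $b(p,u)$, then splitting into the regimes $u\le p$ and $u>p$ (so that the unbounded case $m=p-1$ is covered), and inside each regime performing several further case distinctions on $\varphi'(u)$, together with a number of explicit numerical checks. That appendix is the longest technical section of the paper; your proposal does not yet contain the ideas needed to replace it.
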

The following lemma follows from Lemma~\ref{lem:P1}.
\begin{lem}\label{lem:Q2P1}
The line segment $\overline{Q_2P_1}$ is in $W_m$.
\end{lem}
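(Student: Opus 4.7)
The plan is to observe that on the line $L_{m+1/2}$ the function $\Re G_m$ is strictly monotonically increasing in a natural parameter; since Lemma~\ref{lem:P1} already gives $P_1\in W_m$, this monotonicity together with that single inequality forces the whole segment into $W_m$.

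First I would exploit the simplification available on $L_{m+1/2}$. On this line $\Im(\xi z)=(2m+1)\pi$, so $\cosh(\xi z)=-\cosh\bigl(\Re(\xi z)\bigr)$ is real and strictly negative. Substituting into the formula $G_m'(z)=\log\bigl(2\cosh u-2\cosh(\xi z)\bigr)$ from \eqref{eq:Gm} (valid since $L_{m+1/2}$ lies between $L_m$ and $L_{m+1}$) shows that $G_m'$ restricted to $L_{m+1/2}$ is the real positive function $\log\bigl(2\cosh u+2\cosh(\Re(\xi z))\bigr)$, bounded below by $\log 4>0$. Parametrizing $L_{m+1/2}$ as $z(s):=Q_2+s\overline{\xi}$ gives $\xi z(s)=(2m+1)\pi\i+s|\xi|^2$, and hence
\[
  \frac{d}{d\,s}\Re G_m\bigl(z(s)\bigr)
  =\Re(\overline{\xi})\,G_m'\bigl(z(s)\bigr)
  =u\,G_m'\bigl(z(s)\bigr)>0,
\]
so $\Re G_m(z(s))$ is strictly monotonically increasing in $s$.

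Next I would identify which endpoint has the larger value of $s$. At $Q_2$ we have $s=0$, and a short computation using the explicit coordinate of $P_1$ gives $\Re(\xi P_1)=\frac{(6m+5)u}{2p}-2\varphi(u)$; this is positive because the standing assumption $\varphi(u)/u<(m+1)/p$ implies $4p\varphi(u)<(4m+4)u<(6m+5)u$. So moving from $Q_2$ to $P_1$ corresponds to $s$ increasing from $0$ to a positive value, whence $\Re G_m(z)\le\Re G_m(P_1)$ on the whole segment. Combined with Lemma~\ref{lem:P1}, which gives $\Re G_m(P_1)<\Re F(\sigma_0)$, this yields the strict inequality $\Re G_m(z)<\Re F(\sigma_0)$ for every $z\in\overline{Q_2P_1}$.

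Finally I would verify that the segment lies in $\Theta_{m,\nu}$: every point on $L_{m+1/2}$ satisfies $\Re(\gamma z)=m+1/2$, which sits comfortably inside $[m-1+\nu,m+2-\nu]$ and avoids all four defect trapezoids $\overline{\nabla}_{m,\nu}^{\pm},\underline{\nabla}_{m,\nu}^{\pm}$ for small $\nu$, while the $|\Im(\gamma z)|\le M-u/(2\pi)$ bound is automatic for $M$ large. I do not anticipate any real obstacle: the whole proof hinges on the clean fact that $\cosh(\xi z)$ collapses to a negative real number on $L_{m+1/2}$, which yields monotonicity for free and reduces Lemma~\ref{lem:Q2P1} to Lemma~\ref{lem:P1} plus one derivative computation.
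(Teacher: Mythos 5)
Your proof is correct and takes essentially the same route as the paper: parametrize $\overline{Q_2P_1}$ along $L_{m+1/2}$ in the direction $\overline{\xi}$, observe that $G_m'(z)=\log(2\cosh u-2\cosh(\xi z))$ becomes the real positive quantity $\log(2\cosh u+2\cosh(\Re(\xi z)))$ there, conclude $\Re G_m$ is strictly increasing toward $P_1$, and invoke Lemma~\ref{lem:P1}. The only cosmetic difference is that the paper bases its parametrization at $(m+1/2)/p$ on the real axis rather than at $Q_2$, and your explicit check that $\Re(\xi P_1)>0$ under the standing assumption $\varphi(u)/u<(m+1)/p$ and that the segment lies in $\Theta_{m,\nu}$ makes the argument a touch more self-contained.
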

\begin{proof}
In a similar way to the proof of Lemma~\ref{lem:sigma_m+1}, a point on the line segment $\overline{Q_2P_1}$ is parametrized as $\ell(t):=(m+1/2)/p+\overline{\xi}t$ with
\begin{equation*}
  -\frac{(m+1/2)u}{p|\xi|^2}\le t\le\frac{\Im\sigma_m}{p\pi}.
\end{equation*}
Note that $\ell(t)$ is oriented downward.
Since $\overline{Q_2P_1}$ is between $L_m$ and $L_{m+1}$, we can use \eqref{eq:Gm}.
We have
\begin{equation*}
\begin{split}
  \frac{d}{d\,t}\Re G_m\bigl(\ell(t)\bigr)
  =&
  \Re
  \left(
    \overline{\xi}
    \log
    \left(
      2\cosh{u}-2\cosh\left(\frac{(m+1/2)}{p}\xi+|\xi|^2t\right)
    \right)
  \right)
  \\
  =&
  u\log
  \left(
    2\cosh{u}+2\cosh\left(\frac{(m+1/2)u}{p}+|\xi|^2t\right)
  \right)
  \\
  &\text{(since $t\ge-\frac{(m+1/2)u}{p|\xi|^2}$)}
  \\
  >&
  u\log(2\cosh{u}+2)
  >0.
\end{split}
\end{equation*}
\par
So we see that $\frac{d}{d\,t}\Re G_m\bigl(\ell(t)\bigr)$ is increasing.
Since $P_1\in W_m$ from Lemma~\ref{lem:P1}, we conclude that $\overline{Q_2P_1}\subset W_m$.
\end{proof}
Figure~\ref{fig:Xi_W} summarizes  Lemma~\ref{lem:sigma_m+1} (ii), Lemma~\ref{lem:R2Q1}, Corollary~\ref{cor:R2P0}, and Lemma~\ref{lem:Q2P1}.
\begin{figure}[h]
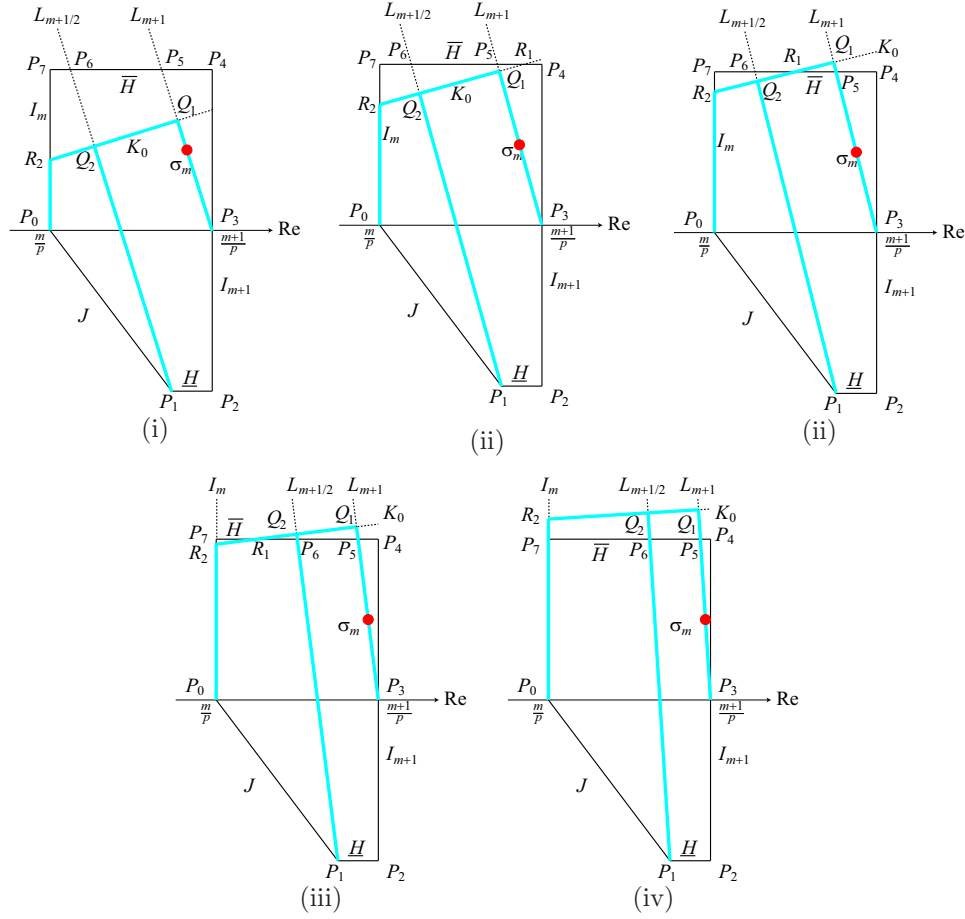

\begin{minipage}{39mm}
\begin{center}\pic{0.21}{Xi1_W}\\(i)\end{center}
\end{minipage}
\quad
\begin{minipage}{39mm}
\begin{center}\pic{0.21}{Xi2_W}\\(ii)\end{center}
\end{minipage}
\quad
\begin{minipage}{39mm}
\begin{center}\pic{0.21}{Xi3_W}\\(ii)\end{center}
\end{minipage}
\par
\begin{minipage}{39mm}
\begin{center}\pic{0.21}{Xi4_W}\\(iii)\end{center}
\end{minipage}
\quad
\begin{minipage}{39mm}
\begin{center}\pic{0.21}{Xi5_W}\\(iv)\end{center}
\end{minipage}
\caption{The cyan lines indicate the segments included in $W_m$.}
\label{fig:Xi_W}
\end{figure}
\par
Using Figure~\ref{fig:Xi_W}, we can prove the following proposition.
\begin{prop}\label{prop:P0P3}
For $0\le m<p$, there exists a path connecting $P_0$ and $\sigma_m$ in $W_m\cap\Xi_{m,\mu}$ except for $\sigma_m$.
\par
Moreover, if $m<p-1$, then there exists a path connecting $P_3$ and $\sigma_m$ in $W_m\cap\Xi_{m,\mu}$ except for $\sigma_m$.
If $m=p-1$, then we can choose $\delta>0$ small enough so that there exists a path connecting $1-\delta$ and $\sigma_{p-1}$ in $W_{p-1}\cap\Xi_{p-1,\mu}$ except for $\sigma_{p-1}$.
\end{prop}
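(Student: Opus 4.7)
The plan is to assemble the two paths by concatenating sub-segments of the line segments already shown to lie in $W_m$ by Lemmas~\ref{lem:sigma_m+1}, \ref{lem:R2Q1}, \ref{lem:P1}, \ref{lem:Q2P1} and Corollary~\ref{cor:R2P0}, organising the construction around the five configurations of $Q_1, Q_2, R_1, R_2$ relative to $\oH$ listed in Corollary~\ref{cor:QR} and drawn in Figure~\ref{fig:Xi_W}. Throughout I would exploit the openness of $W_m$, the preimage of the open half-plane $\{\Re w<\Re F(\sigma_0)\}$ under the continuous function $G_m$: any cyan segment can be thickened slightly into an open tube of points still satisfying $\Re G_m<\Re F(\sigma_0)$, and such perturbations will be used whenever a cyan endpoint lies on $\partial\Xi_{m,\mu}$ or outside it.

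I would first dispatch the shorter $P_3$-to-$\sigma_m$ path. When $m<p-1$, the sub-segment of $L_{m+1}$ joining $P_3=(m+1)/p$ to $\sigma_m$ lies in $W_m$ away from $\sigma_m$ by Lemma~\ref{lem:sigma_m+1}(ii), and a direct coordinate computation using the closed form of $\sigma_m$ together with the bound $u<\sqrt{p}$ from Lemma~\ref{lem:upsilon} confirms that this segment stays strictly between $I_{m-\mu}$ and $I_{m+1+\mu}$ and strictly between $\uH$ and $\oH$, so it lies in $\Xi_{m,\mu}$. When $m=p-1$ the point $P_3=1$ is excluded from $\Xi_{p-1,\mu}$ by the trapezoid $\underline{\nabla}_{p-1,\nu}^{-}$, so I would choose $\delta>0$ just large enough (in terms of $\nu$) that $1-\delta$ lies outside $\underline{\nabla}_{p-1,\nu}^{-}$ but small enough that the short straight segment from $1-\delta$ up to a nearby point of $L_p\cap\Xi_{p-1,\mu}$ immediately above $P_3$ lies in $W_{p-1}$ by continuity of $G_{p-1}$; thereafter I follow $L_p$ up to $\sigma_{p-1}$ exactly as in the generic case.

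For the $P_0$-to-$\sigma_m$ path the canonical template is
\[
  P_0\longrightarrow R_2\longrightarrow Q_1\longrightarrow\sigma_m,
\]
made of $\overline{P_0R_2}\subset I_m$ (Corollary~\ref{cor:R2P0}), $\overline{R_2Q_1}\subset K_0$ (Lemma~\ref{lem:R2Q1}), and the portion of $L_{m+1}$ from $Q_1$ down through $\sigma_m$ (Lemma~\ref{lem:sigma_m+1}(ii)). In configurations (i) and (ii) of Corollary~\ref{cor:QR} every intermediate vertex lies strictly below $\oH$ and the entire template is already contained in $W_m\cap\Xi_{m,\mu}$. In configurations (iii) and (iv), $Q_1$ and possibly $Q_2$ migrate above $\oH$, and I would replace the upper portion of the template by the detour $R_2\to Q_2\to P_1\to(\text{near }\sigma_m)$, using $\overline{R_2Q_2}\subset\overline{R_2Q_1}$ and the segment $\overline{Q_2P_1}$ (Lemma~\ref{lem:Q2P1}), and closing with a short polygonal arc from a point of $\overline{Q_2P_1}$ to a point of the $L_{m+1}$-segment in $W_m$. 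The monotonicity statements Lemma~\ref{lem:G_der_x} and Corollary~\ref{cor:G_der_y} keep $\Re G_m<\Re F(\sigma_0)$ along this connector, while openness of $W_m$ legitimises the perturbations needed to stay inside $\Xi_{m,\mu}$.

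The hardest step will be configuration (v), in which even $R_2$ sits above $\oH$, so that no cyan segment on $I_m$, $K_0$ or $L_{m+1/2}$ is fully contained in $\Xi_{m,\mu}$. Here the whole cyan skeleton must be pushed into the open interior of the pentagon; the plan is to extract quantitative strict-inequality margins from the proofs of Lemmas~\ref{lem:R2Q1} and \ref{lem:Q2P1} and Corollary~\ref{cor:R2P0} and to combine them with the monotonicity statements Lemma~\ref{lem:G_der_x} and Corollary~\ref{cor:G_der_y} to certify that a piecewise-linear path running just below $\oH$ from $P_0$ across to a point of $L_{m+1}$ below $\sigma_m$ still satisfies $\Re G_m<\Re F(\sigma_0)$ along its entire length. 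Once configuration (v) is handled, the five cases fit together into a uniform construction and the proof closes.
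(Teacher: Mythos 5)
Your overall strategy — assemble the path from the cyan segments of Lemmas~\ref{lem:sigma_m+1}, \ref{lem:R2Q1}, \ref{lem:Q2P1}, Corollary~\ref{cor:R2P0}, and treat the five configurations of Corollary~\ref{cor:QR} separately — is the right one, and your template $P_0\to R_2\to Q_1\to\sigma_m$ for cases (i)--(ii) matches the paper. The $P_3$-to-$\sigma_m$ leg is also handled correctly, and your comment on the $m=p-1$ modification (shrinking to $1-\delta$ to dodge $\underline{\nabla}_{p-1,\nu}^{-}$) is fine.

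The gap is in your fix for cases (iii)--(v), and it is not a small one. When $Q_1$ (and in case (iv) also $Q_2$) floats above $\oH$, you propose the detour $R_2\to Q_2\to P_1\to(\text{near }\sigma_m)$ and call the last leg a ``short polygonal arc.'' But $P_1=\uH\cap L_{m+1/2}$ lies on the \emph{bottom} edge of $\Xi_{m,\mu}$ at $\Im z=-2\Im\sigma_m<0$, while $\sigma_m$ sits at $\Im z=\Im\sigma_m>0$ on $L_{m+1}$, so the connector you need spans almost the whole pentagon vertically; it is not short, and neither Lemma~\ref{lem:G_der_x} nor Corollary~\ref{cor:G_der_y} certifies $\Re G_m<\Re F(\sigma_0)$ along it (indeed Corollary~\ref{cor:G_der_y} shows $\Re G_m$ is \emph{increasing} in $\Im z$ above $K_0$ between $L_{m+1/2}$ and $L_{m+1}$, which works against you there). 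Worse, in case (iv) the point $Q_2$ is itself above $\oH$, so $\overline{R_2Q_2}$ already leaves $\Xi_{m,\mu}$ and your detour never gets off the ground. For case (v) you offer only a plan, not an argument.

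What you are missing is the key observation the paper makes at exactly this point: the horizontal segment $\overline{R_1P_5}$ (and, in case (v), $\overline{P_7P_5}$) \emph{along} $\oH$ lies in $W_m$ because $\Re G_m$ is increasing in $\Re z$ there by Lemma~\ref{lem:G_der_x} and $P_5\in W_m$ by Lemma~\ref{lem:sigma_m+1}(ii); after a small downward push it also lies in $\Xi_{m,\mu}$. This $\oH$-connector is a \emph{new} cyan segment produced inside the proof of the proposition, not one of the ones you listed, and it is what carries the path from the left edge over to $L_{m+1}$ when $K_0$ exits the pentagon. With it, the paths are simply $\overline{P_0R_2R_1P_5\sigma_m}$ in cases (iii)--(iv) and $\overline{P_0P_7P_5\sigma_m}$ in case (v) (where $\overline{P_0P_7}\subset\overline{P_0R_2}$ because $R_2$ is then above $\oH$), and no quantitative margin estimates or unspecified arcs are needed.
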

\begin{proof}
We will show that there is a path connecting $P_0$ and $\sigma_m$ in $W_m$ except for $\sigma_m$, considering the cases (i)--(v) in Corollary~\ref{cor:QR} separately.
See Figures~\ref{fig:Xi} and \ref{fig:Xi_W}.
\begin{itemize}
\item[(i) and (ii):]
From Figure~\ref{fig:Xi_W} (i) and (ii), we can see that the polygonal chain $\overline{P_0R_2Q_1\sigma_m}$ is in $W_m\cap\Xi_{m,\mu}$ except for $\sigma_m$.
\item[(iii):]
Similarly, from Figure~\ref{fig:Xi_W} (iii), we can see that the polygonal chain $\overline{P_0R_2Q_1\sigma_m}$ is in $W_m$ except for $\sigma_m$.
\par
Since $\Re G_m(x+y\i)$ is increasing with respect to $x$ on the segment $\overline{R_1P_5}$ from Lemma~\ref{lem:G_der_x}, the segment $\overline{R_1P_5}$ is also in $W_m$, proving that $\overline{P_0R_2R_1P_5\sigma_m}\subset W_m\cap\Xi_{m,\mu}$ except for $\sigma_m$.
Precisely speaking, we need to push the segment $\overline{R_1P_5}$ slightly downward into $\Xi_{m,\mu}$.
\item[(iv):]
By the same reason as above, the polygonal chain $\overline{P_0R_2R_1P_5\sigma_m}$ is a desired one.
\item[(v):]
The polygonal chain $P_0P_7P_5\sigma_m$ is a desired one, by the same reason as above.
\end{itemize}
\par
The polygonal chain $\overline{P_3\sigma_m}$ is in $W_m$ except for $\sigma_m$ from Lemma~\ref{lem:sigma_m+1} (ii).
\par
If $m<p-1$, then the line segment $\overline{P_3\sigma_m}$ is in $W_m\cap\Xi_{m,\mu}$ from Figure~\ref{fig:Xi_W}.
\par
If $m=p-1$, we choose $\delta$ small enough so that the line segment connecting $\sigma_m$ and $1-\delta$ is in $W_{p-1}\cap\Xi_{p-1,\mu}$.
\end{proof}
\section{Poisson summation formula and the saddle point method}\label{sec:Poisson}
In this section, we apply the Poisson summation formula to the right hand side of \eqref{eq:sum_f_g}, and replace it with an integral.
Then we apply the saddle point method to obtain an asymptotic formula of the integral.
\par
We can prove the following proposition \cite[Proposition~4.1]{Murakami:arXiv2023} by using the Poisson summation formula, following \cite[Proposition~4.2]{Ohtsuki:QT2016}.
A proof is omitted.
\begin{prop}\label{prop:Poisson}
Let $[a,b]\in\R$ be a closed interval, and $\{\psi_N(z)\}_{N=2,3,4,\dots}$ be a series of holomorphic functions defined in a domain $D\subset\C$ containing $[a,b]$.
We assume the following:
\begin{enumerate}
\item
The series of functions $\{\psi_N(z)\}$ uniformly converges to a holomorphic function $\psi(z)$ in $D$,
\item
$\Re\psi(a)<0$ and $\Re\psi(b)<0$.
\item
There exist paths $C_{+}$ and $C_{-}$ such that $C_{\pm}\subset R_{\pm}$ and that $C_{\pm}$ is homotopic to $[a,b]$ in $D$ with $a$ and $b$ fixed, where we put
\begin{align*}
  R_{+}
  &:=
  \{z\in D\mid\Im{z}\ge0,\Re\psi(z)<2\pi\Im{z}\},
  \\
  R_{-}
  &:=
  \{z\in D\mid\Im{z}\le0,\Re\psi(z)<-2\pi\Im{z}\}.
\end{align*}
\end{enumerate}
Then there exists $\varepsilon>0$ independent of $N$ such that
\begin{equation*}
  \frac{1}{N}\sum_{a\le k/N\le b}e^{N\psi_N(k/N)}
  =
  \int_{a}^{b}e^{N\psi_N(z)}\,dz+O(e^{-\varepsilon N})
\end{equation*}
as $N\to\infty$.
\end{prop}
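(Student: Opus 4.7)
The plan is to derive the claim from the classical Poisson summation formula, with the contour deformations in (iii) providing the decay. Write $F_N(x) := \chi_{[a,b]}(x)\,e^{N\psi_N(x)}$ (or a smoothed version of the indicator, to avoid boundary singularities when applying Poisson summation). Then $\frac{1}{N}\sum_{a\le k/N\le b}e^{N\psi_N(k/N)} = \frac{1}{N}\sum_{k\in\Z}F_N(k/N)$. Applying Poisson summation and scaling variables, this rewrites as
\begin{equation*}
  \sum_{n\in\Z}\int_{a}^{b}e^{N\psi_N(z)-2\pi\i n N z}\,dz
  + (\text{endpoint corrections from the cutoff}).
\end{equation*}
The $n=0$ term is exactly the integral $\int_a^b e^{N\psi_N(z)}\,dz$ appearing in the conclusion, so the task reduces to showing that every other term, together with the cutoff corrections, is $O(e^{-\varepsilon N})$ uniformly.

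For $n\ne 0$ I would deform the contour of integration from $[a,b]$ to $C_+$ if $n<0$ and to $C_-$ if $n>0$; assumption (iii) makes this possible while keeping the endpoints $a,b$ fixed, and the integrand is holomorphic in $D$ for $N$ large (using uniform convergence $\psi_N\to\psi$ in assumption (i)). On $C_+$ the magnitude of the integrand is $\exp\bigl(N\Re\psi_N(z)+2\pi n N\Im z\bigr)$; since $\Re\psi(z)<2\pi\Im z$ on the compact set $C_+\subset R_+$, there exists $\eta>0$ with $\Re\psi(z)\le 2\pi\Im z - \eta$, hence for $n\le-1$
\begin{equation*}
  \Re\psi_N(z)+2\pi n\Im z \le 2\pi(1+n)\Im z - \eta + o(1) \le -\eta + o(1),
\end{equation*}
since $\Im z\ge 0$ on $C_+$ and $1+n\le 0$. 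The analogous estimate on $C_-$ handles $n\ge 1$. Thus each $n\ne 0$ term is $O(e^{-\eta N/2})$.

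To make the sum over $n$ converge, I would use the further decay $2\pi(1+n)\Im z$ on the portion of $C_+$ that strictly lies in the upper half-plane. Picking $C_+$ so that $\Im z\ge h>0$ outside arbitrarily small neighborhoods of the endpoints $a,b$, the main body of the contour contributes $O(e^{-(\eta+2\pi(|n|-1)h)N/2})$, giving geometric decay in $|n|$. Near the endpoints, where $\Im z$ may vanish, one either uses the standard integration-by-parts trick in $n$ (available once the cutoff has been smoothed at the cost of controllable error) or observes that since $\Re\psi(a),\Re\psi(b)<0$ by (ii), the endpoint corrections from the smoothed cutoff are themselves $O(e^{-\varepsilon' N})$ uniformly. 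Summing yields a total error $O(e^{-\varepsilon N})$ with $\varepsilon=\min(\eta/2,\varepsilon')$.

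The main obstacle is the non-smoothness at the endpoints $a$ and $b$: a naive application of Poisson summation to the characteristic function of $[a,b]$ produces boundary terms that decay only like $1/n$, which is not summable and not exponentially small. The key technical point is therefore the interplay between the smoothing/cutoff procedure and hypothesis (ii): condition $\Re\psi(a),\Re\psi(b)<0$ is precisely what makes the endpoint contributions vanish exponentially, while (iii) ensures the bulk of the $n\ne 0$ terms can be deformed into the exponentially decaying region. Once this is arranged, the bound $O(e^{-\varepsilon N})$ follows uniformly in $N$.
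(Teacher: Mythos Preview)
The paper does not give its own proof of this proposition; it cites \cite[Proposition~4.1]{Murakami:arXiv2023} and \cite[Proposition~4.2]{Ohtsuki:QT2016} and states that the proof is omitted. Your outline---Poisson summation, then for each $n\ne 0$ deform the contour to $C_{+}$ (when $n<0$) or $C_{-}$ (when $n>0$) and use the defining inequalities of $R_{\pm}$ together with assumption (ii) to kill the endpoint contributions---is exactly the method in those references, and your sign computation is correct. One small technical difference worth noting: in Ohtsuki's treatment the summability-in-$n$ issue you flag is bypassed by working directly with the kernel $1/(1-e^{\pm 2\pi\i N z})$ on the deformed contours (where it is uniformly bounded once $\Im z$ is bounded away from $0$) rather than expanding it as a geometric series, so no term-by-term summation over $n$ is needed; your smoothing/integration-by-parts alternative also works but is slightly heavier.
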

Now, we would like to apply Proposition~\ref{prop:Poisson} to $\psi_N(z):=g_{N,m}(z)-g_{N,m}(\sigma_m)$.
\par
Defining $R_m^{\pm}$ as
\begin{align*}
  R_{m}^{+}
  &:=
  \{z\in\Xi_{m,\mu}\mid\Im{z}\ge0,\Re G_m(z)<\Re F(\sigma_0)+2\pi\Im{z}\},
  \\
  R_{m}^{-}
  &:=
  \{z\in\Xi_{m,\mu}\mid\Im{z}\le0,\Re G_m(z)<\Re F(\sigma_0)-2\pi\Im{z}\},
\end{align*}
we have the following lemma.
\begin{lem}\label{lem:Poisson}
Assume that $\varphi(u)/u<(m+1)/p$.
\par
If $m<p-1$, then the following hold:
\begin{enumerate}
\item
the points $m/p$ and $(m+1)/p$ are in $W_m$,
\item
there exists a path $C_{+}\subset R_{m}^{+}$ connecting $m/p$ and $(m+1)/p$,
\item
there exists a path $C_{-}\subset R_{m}^{-}$ connecting $m/p$ and $(m+1)/p$.
\end{enumerate}
\par
If $m=p-1$, there exists $\delta>0$ such that the following hold:
\begin{enumerate}
\setcounter{enumi}{3}
\item
the points $1-1/p$ and $1-\delta$ are in $W_{p-1}$,
\item
there exists a path $C_{+}\subset R_{p-1}^{+}$ connecting $1-1/p$ and $1-\delta$,
\item
there exists a path $C_{-}\subset R_{p-1}^{-}$ connecting $1-1/p$ and $1-\delta$.
\end{enumerate}
\end{lem}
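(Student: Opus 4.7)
Items (i) and (iv) follow immediately from results already at hand. The point $P_0 = m/p$ lies in $W_m$ by Corollary~\ref{cor:R2P0} (with the degenerate case $m = 0$ handled directly by $G_0(0) = 0 < \Re F(\sigma_0)$ via Lemma~\ref{lem:F_sigma0}), while $P_3 = (m+1)/p$ lies in $W_m$ by the $t=0$ row of the table in the proof of Lemma~\ref{lem:sigma_m+1}(ii). When $m = p-1$ the point $1-\delta$ belongs to $W_{p-1}$ for small $\delta>0$ by the final sentence of the proof of Proposition~\ref{prop:P0P3}.

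The construction of the paths $C_\pm$ rests on two elementary observations. First, $W_m \cap \{\Im z \geq 0\} \subset R_m^+$ and $W_m \cap \{\Im z \leq 0\} \subset R_m^-$, because the defining inequality $\Re G_m(z) < \Re F(\sigma_0) \pm 2\pi\Im z$ of $R_m^\pm$ is weaker than $\Re G_m(z) < \Re F(\sigma_0)$ whenever $\pm\Im z \geq 0$. Second, the saddle itself lies in $R_m^+$: by \eqref{eq:Gmcosh}, $\Re G_m(\sigma_m) = \Re F(\sigma_0)$, while the standing assumption $\varphi(u)/u < (m+1)/p$ gives $\Im\sigma_m > 0$, so $\Re G_m(\sigma_m) < \Re F(\sigma_0) + 2\pi\Im\sigma_m$.

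For the $C_+$ path in (ii) and (v), I concatenate the two polygonal paths produced by Proposition~\ref{prop:P0P3}: the one from $P_0$ to $\sigma_m$ and the one from $\sigma_m$ to $P_3$ (or to $1-\delta$). A direct check of the coordinates listed just before Figure~\ref{fig:Xi} shows that each intermediate vertex $R_2, Q_1, R_1, P_5, P_7$ has nonnegative imaginary part, so the concatenated chain lies in the closed upper half plane, in $W_m$ everywhere except at $\sigma_m$, and hence in $R_m^+$ by the two observations above.

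The main obstacle is (iii) and (vi), the path $C_-$: the real-axis segment $[P_0, P_3]$ generally leaves $W_m$, because the assumption $\varphi(u)/u < (m+1)/p$ forces part of the segment outside the strip $|\Re(\xi z)| < \varphi(u)$ where Corollary~\ref{cor:F'} delivers a usable estimate. My plan is to dip the path into the lower half plane and exploit the slack $-2\pi\Im z > 0$ in the definition of $R_m^-$. Concretely, from each of $P_0$ and $P_3$ I travel downward inside $\Xi_{m,\mu}$ (descending along a segment parallel to the slanted edge $J$ on the left because of the pentagonal cut-off, and vertically on the right) down to the line $\uH$, and then traverse horizontally along $\uH$. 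On each descending leg, Remark~\ref{rem:G_der_y} gives $\partial_y \Re G_m > -\pi$, so $\partial_y\bigl(\Re G_m(z) + 2\pi\Im z\bigr) > \pi > 0$; hence $\Re G_m(z) + 2\pi\Im z$ is strictly monotone as $\Im z$ decreases from $0$, starting below $\Re F(\sigma_0)$ at the real-axis endpoint and remaining so throughout the descent. The genuinely delicate step is the horizontal leg along $\uH$, where one must establish $\Re G_m(z) < \Re F(\sigma_0) + 4\pi\Im\sigma_m$ for $\Im z = -2\Im\sigma_m$ and the relevant range of $\Re z$; this I would verify by direct estimation using \eqref{eq:Gm}, exploiting that the allowance $4\pi\Im\sigma_m = \frac{8\pi^2((m+1)u - p\varphi(u))}{|\xi|^2}$ is exactly of the right size to absorb the overshoot of $\Re G_m$ above $\Re F(\sigma_0)$, with the size estimate bootstrapped from the monotonicity properties of $\varphi(u)$ in Lemma~\ref{lem:phi} and, if needed, a case analysis following the five configurations of Corollary~\ref{cor:QR}.
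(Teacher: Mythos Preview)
Your treatment of (i), (ii), (iv), (v) matches the paper and is correct. The problems are in (iii) and (vi), where two gaps appear.

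First, on the left descending leg you travel ``parallel to $J$'' and invoke $\partial_y\Re G_m>-\pi$. But that bound controls only the vertical derivative; along a slanted segment the change in $\Re G_m(z)+2\pi\Im z$ picks up an $x$-contribution $(\partial_x\Re G_m)\,dx$, which you have not estimated and which can have either sign. The paper avoids this entirely: it takes $C_-=\overline{P_0P_1P_2P_3}$ and shows the leg $\overline{P_0P_1}\subset W_m$ outright (not merely $\subset R_m^-$), by noting that $\overline{R_2Q_2P_1}\subset W_m$ (Lemmas~\ref{lem:R2Q1} and \ref{lem:Q2P1}) and that $\Re G_m$ is increasing in $y$ throughout the quadrilateral $P_0P_1Q_2R_2$ (Corollary~\ref{cor:G_der_y}); hence every point of $\overline{P_0P_1}$ lies vertically below a point of $\overline{R_2Q_2P_1}$ and inherits membership in $W_m$.

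Second, for the horizontal leg along $\uH$ you only sketch a plan (``direct estimation \dots\ bootstrapped from Lemma~\ref{lem:phi}''). Direct estimation there is genuinely hard --- witness Appendix~\ref{sec:P1}, which spends pages just on the single point $P_1$. The paper's device is much cleaner and recycles the $C_+$ you already built. For each $x$ with $x-2\Im\sigma_m\,\i\in\overline{P_1P_2}$, pick $y_0\in[0,2\Im\sigma_m]$ so that $x+y_0\i\in C_+$; then $r(x,y_0):=\Re G_m(x+y_0\i)-\Re F(\sigma_0)+2\pi y_0<2\pi y_0$ because $C_+\subset W_m$. Now integrate the same bound $\partial_y r>\pi$ \emph{vertically} from $y_0$ down to $-2\Im\sigma_m$:
\[
  r(x,-2\Im\sigma_m)
  <2\pi y_0+\pi\bigl(-2\Im\sigma_m-y_0\bigr)
  =\pi y_0-2\pi\Im\sigma_m\le 0,
\]
so $\overline{P_1P_2}\subset R_m^-$; the same vertical integration from $P_3$ handles $\overline{P_2P_3}$. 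The allowance $4\pi\Im\sigma_m$ you identified as ``exactly the right size'' is thus realized automatically by dropping from height $\le 2\Im\sigma_m$ (where $C_+$ lives) to $-2\Im\sigma_m$, with no further estimation needed.
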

\begin{proof}
First, we assume that $m<p-1$.
\begin{enumerate}
\item
From Lemma~\ref{lem:sigma_m+1} (ii), we know that $(m+1)/p\in W_m$.
From Proposition~\ref{prop:P0P3}, $m/p$ is also in $W_m$.
\item
From Proposition~\ref{prop:P0P3}, there exists a path $C_{+}$ in $W_m$ connecting $m/p$ and $(m+1)/p$.
Clearly it is in $R_m^{+}$.
\item
Let $C_{-}$ be the polygonal chain $\overline{P_0P_1P_2P_3}$.
We will show that $C_{-}\subset R_{m}^{-}$.
(Precisely speaking we need to push it slightly inside $\Xi_{m,\mu}$)
\par
We first show that the segments $\overline{P_1P_2}$ and $\overline{P_2P_3}$ are in $R_{m}^{-}$.
\par
Putting $r(x,y):=\Re G_m(x+y\i)-\Re F(\sigma_0)+2\pi y$ for real variables $x$ and $y$, we will prove that $r(x,y)<0$ if $x+y\i$ is on $\overline{P_1P_2}$ or $\overline{P_2P_3}$.
Since these line segments are between $L_{m+1/2}$ and $L_{m+1}$ and below $K_0$, we see that $\partial\,\Re G_m(x+y\i)/\partial\,y<0$ from Corollary~\ref{cor:G_der_y}.
So we have $\pi<\frac{\partial}{\partial\,y}r(x,y)<2\pi$ from Remark~\ref{rem:G_der_y}.
\begin{itemize}
\item
$\overline{P_1P_2}$:
For any point $x-2\Im\sigma_m\i\in\overline{P_1P_2}$, choose $0\le y_0\le2\Im\sigma_m$ so that $(x,y_0)$ is on the polygonal chain $C_{+}$ above.
Since $C_{+}\subset W_m$, we conclude that $r(x,y_0)<2\pi y_0$.
So we have
\begin{equation*}
\begin{split}
  r(x,-2\Im\sigma_m)
  =&
  \int_{y_0}^{-2\Im\sigma_m}\frac{\partial}{\partial\,y}r(x,s)\,ds
  +
  r(x,y_0)
  \\
  <&
  (-2\Im\sigma_m-y_0)\times\pi+2\pi y_0
  \\
  =&
  -2\pi\Im\sigma_m+\pi y_0
  \le
  0,
\end{split}
\end{equation*}
where we use $\partial\,r(x,y)/\partial\,y>\pi$ at the first inequality.
\item
$\overline{P_2P_3}$:
Since $\frac{\partial}{\partial\,y}r\bigl((m+1)/p,y\bigr)>\pi$ and $r\bigl((m+1)/p,0\bigr)<0$ as above, we have
\begin{equation*}
\begin{split}
  &r\bigl((m+1)/p,y\bigr)
  \\
  =&
  \int_{0}^{y}\frac{\partial}{\partial\,y}r\bigl((m+1)/p,s\bigr)\,ds
  +
  r\bigl((m+1)/p,0\bigr)
  <0
\end{split}
\end{equation*}
if $y<0$.
\end{itemize}
Next we show that the line segment $\overline{P_0P_1}$ is in $W_m$.
\par
From Lemmas~\ref{lem:R2Q1} and \ref{lem:Q2P1} (see also Figure~\ref{fig:Xi_W}), we know that the polygonal chain $\overline{R_2Q_2P_1}\subset W_m$.
Since $\Re G_m(x+y\i)$ is monotonically increasing in the quadrilateral $P_0P_1Q_2R_2$ from Corollary~\ref{cor:G_der_y}, we conclude that the segment $P_0P_1$ is in $W_m$.
\end{enumerate}
\par
As for the case $m=p-1$, we just move the line segment $\overline{P_2P_3}$ to the left so that it avoids $\underline{\nabla}_{p-1,\nu}^{-}$.
\end{proof}
Therefore, when $m<p-1$, if we put $\psi_N(z):=g_{N,m}(z)-g_{N,m}(\sigma_m)$, $\psi(z):=G_m(z)-G_m(\sigma_m)=G_m(z)-F(\sigma_0)$, $a:=m/p$, $b:=(m+1)/p$, and $D:=\Xi_{m,\mu}$, then they satisfy the assumptions of Proposition~\ref{prop:Poisson}.
So there exists $\varepsilon>0$ such that
\begin{multline}\label{eq:Poisson}
  \frac{1}{N}e^{-Ng_{N,m}(\sigma_m)}
  \sum_{m/p\le k/N\le(m+1)/p}e^{Ng_{N,m}(k/N)}
  \\
  =
  e^{-Ng_{N,m}(\sigma_m)}
  \int_{m/p}^{(m+1)/p}e^{Ng_{N,m}(z)}\,dz
  +
  O(e^{-\varepsilon N})
\end{multline}
as $N\to\infty$.
\par
Similarly, when $m=p-1$, there exist $\varepsilon>0$ and $\delta>0$ such that
\begin{multline}\label{eq:Poisson_p-1}
  \frac{1}{N}
  e^{-Ng_{N,p-1}(\sigma_{p-1})}
  \sum_{1-1/p\le k/N\le1-\delta}e^{Ng_{N,p-1}(k/N)}
  \\
  =
  e^{-Ng_{N,p-1}(\sigma_{p-1})}
  \int_{1-1/p}^{1-\delta}e^{Ng_{N,p-1}(z)}\,dz
  +
  O(e^{-\varepsilon N})
\end{multline}
as $N\to\infty$.
\par
Next, we use the saddle point method to obtain asymptotic formulas of the integrals appearing in the right hand sides of \eqref{eq:Poisson} and \eqref{eq:Poisson_p-1}.
The following proposition can be obtained in a similar way to \cite[Proposition~5.2]{Murakami:arXiv2023}, and so a proof is omitted.
See also \cite[Proposition~3.2 and Remark~3.3]{Ohtsuki:QT2016}.
\begin{prop}\label{prop:saddle}
Let $\eta(z)$ be a holomorphic function in $D\ni0$ with $\eta(0)=\eta'(0)=0$ and $\eta''(0)\ne0$, and put $V:=\{z\in D\mid\Re\eta(z)<0\}$.
Let $a$ and $b$ be two points in $V$, and $C$ be a path in $D$ from $a$ to $b$.
We also assume
\begin{itemize}
\item
that there exists a neighborhood $\hat{D}\subset D$ of $0$ so that $V\cap\hat{D}$ has two connected components, and
\item
that there exists a path $\hat{C}\subset V\cup\{0\}$ connecting $a$ and $b$ via $0$ that is homotopic to $C$ in $D$ keeping $a$ and $b$ fixed, such that $\bigl(\hat{C}\cap\hat{D}\bigr)\setminus\{0\}$ splits into two connected components that are in distinct components of $V$.
\end{itemize}
\par
Let $\{h_N(z)\}_{N=1,2,3,\dots}$ be a series of holomophic functions in $D$ that uniformly converges to a holomorphic function $h(z)$ with $h(0)\ne0$.
We also assume that $|h_N(z)|$ is bounded irrelevant to $N$.
Then we have
\begin{equation*}
  \int_{C}h_N(z)e^{N\eta(z)}\,dz
  =
  \frac{h(0)\sqrt{2\pi}}{\sqrt{-\eta''(0)}\sqrt{N}}
  \bigl(1+O(N^{-1})\bigr),
\end{equation*}
where the sign of $\sqrt{-\eta''(0)}$ is chosen so that $\Re(b\sqrt{-\eta''(0)})>0$
\end{prop}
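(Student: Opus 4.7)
The plan is to follow the classical method of steepest descent in three phases: deform the contour to one passing through the saddle point $0$, localize to a small neighborhood of $0$, and reduce the localized integral to a Gaussian by a holomorphic Morse-lemma change of coordinate. Because $h_N(z)e^{N\eta(z)}$ is holomorphic on $D$, the homotopy hypothesis together with Cauchy's theorem immediately replaces $\int_{C}$ by $\int_{\hat C}$. Then, since $\Re\eta(z)<0$ on $\hat C\setminus\{0\}$ and this is a compact set outside any fixed ball $B_\delta(0)$, uniform boundedness of $\{h_N\}$ gives $\int_{\hat C\setminus B_\delta(0)}h_N(z)e^{N\eta(z)}\,dz=O(e^{-cN})$ for some $c=c(\delta)>0$. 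So it suffices to analyze the integral over $\hat C\cap B_\delta(0)$.

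Next, by the holomorphic Morse lemma at $0$, for $\delta$ small there is a biholomorphism $w=w(z)$ with $w(0)=0$, $w'(0)^2=-\eta''(0)/2$, and $\eta(z)=-w(z)^2$. The assumption that $(\hat C\cap\hat D)\setminus\{0\}$ has two connected components lying in distinct components of $V\cap\hat D$ translates under $w$ into: the image contour $w(\hat C\cap B_\delta(0))$ crosses $w=0$ from one connected component of $\{\Re(w^2)>0\}$ to the other. Together with the sign convention $\Re(b\sqrt{-\eta''(0)})>0$ (which fixes the orientation by specifying the sign of $w'(0)$), this image contour is homotopic inside $\{\Re(w^2)\ge 0\}$ to a straight real segment $[-\epsilon,\epsilon]$ through $0$ with the correct orientation. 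A second application of Cauchy's theorem moves the integration to that real segment.

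On the segment, I Taylor-expand $h_N(z(w))\,z'(w)=h(0)z'(0)+r_N(w)$, where $r_N$ collects all contributions vanishing at $w=0$ or coming from $h_N-h$. Using $z'(0)=\sqrt{-2/\eta''(0)}$, the main term is the Gaussian integral
\begin{equation*}
  h(0)z'(0)\int_{-\epsilon}^{\epsilon}e^{-Nw^2}\,dw
  =\frac{h(0)\sqrt{2\pi}}{\sqrt{-\eta''(0)}\sqrt{N}}
  +O\!\left(e^{-\epsilon^2 N}\right),
\end{equation*}
after extending to $(-\infty,\infty)$ with exponentially small cost. The remainder $\int r_N(w)e^{-Nw^2}\,dw$ is estimated by Gaussian moments: odd powers of $w$ integrate to zero, the next even power contributes a factor $N^{-1}$ relative to the leading term, and the piece coming from $h_N-h$ is treated by the same Gaussian estimate. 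Packaging everything gives the stated $(1+O(N^{-1}))$ correction.

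The main obstacle is obtaining the explicit rate $O(N^{-1})$ for the correction: mere uniform convergence $h_N\to h$ only yields a $o(1)$ relative error. In the intended application, however, the $h_N$ arise from the quantum dilogarithm, and Proposition~\ref{prop:TN_L2} supplies a genuine $O(N^{-1})$ bound for $\frac{1}{N}T_N(z)-\frac{1}{2\pi\i\gamma}\L_2(z)$, uniform on compact subsets of the relevant region. The careful step is to verify that this $O(N^{-1})$ estimate survives along the deformed contour $\hat C$ and through the local biholomorphism $w=w(z)$ without degrading, so that it actually feeds into the Gaussian remainder as an $O(N^{-1})$ contribution rather than $o(1)$. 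The contour deformation in the $w$-plane itself, and the verification that it stays inside the image of $\hat D$, is routine once the Morse coordinate is set up.
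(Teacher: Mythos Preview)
Your outline is the standard steepest-descent argument and is essentially what the cited references contain; note that the paper itself omits the proof of this proposition entirely, deferring to \cite[Proposition~5.2]{Murakami:arXiv2023} and \cite[Proposition~3.2]{Ohtsuki:QT2016}, so there is no in-paper proof to compare against.

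Your identification of the one delicate point is exactly right: the hypotheses as literally stated (mere uniform convergence of $h_N$ to $h$ plus uniform boundedness) only yield a relative error of $o(1)$, not $O(N^{-1})$. The $O(N^{-1})$ coming from the higher Taylor coefficients of $h(z(w))z'(w)$ via Gaussian moments is automatic, but the contribution from $h_N-h$ needs an actual rate. As you observe, in the application one has $h_N(z)=\exp\bigl(N(g_{N,m}-G_m)\bigr)$, and Proposition~\ref{prop:TN_L2} gives $\frac{1}{N}T_N(z)=\frac{1}{2\pi\i\gamma}\L_2(z)+O(N^{-1})$ uniformly on compacta, hence $g_{N,m}-G_m=O(N^{-2})$ and $h_N-1=O(N^{-1})$. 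This is the input that makes the stated $O(N^{-1})$ honest, and it is uniform on the compact set $\hat C$, so it survives the Morse change of variable without loss. Your remark that this verification is ``the careful step'' is accurate; the rest (Cauchy deformation, localization, Morse lemma, Gaussian tails) is routine exactly as you describe.
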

When $m<p-1$, we put $\eta(z):=G_m(z+\sigma_m)-G_m(\sigma_m)=G_m(z+\sigma_m)-F(\sigma_0)$, $h_N(z):=e^{N\bigl(g_{N,m}(z+\sigma_m)-G_m(z+\sigma_m)\bigr)}$, $h(z):=1$, $a:=m/p-\sigma_m$, $b:=(m+1)/p-\sigma_m$, $D:=\{z\mid z+\sigma_m\in\Xi_{m,\mu}\}$, and $V:=\{z\mid z+\sigma_m\in W_m\}$.
We also let $C$ be the line segment from $a$ to $b$, $\hat{C}$ be the polygonal chain described in Proposition~\ref{prop:P0P3} (shifted by $-\sigma_m$ so that it passes through $0$), and $\hat{D}\subset D$ be a small neighborhood of $0$.
Then they satisfy the assumptions of Proposition~\ref{prop:saddle} from \eqref{eq:Gmcosh} and Lemma~\ref{lem:Poisson}.
\par
Therefore we have
\begin{multline*}
  \int_{m/p-\sigma_m}^{(m+1)/p-\sigma_m}
  e^{N\bigl(g_{N,m}(z+\sigma_m)-F(\sigma_0)\bigr)}\,dz
  \\
  =
  \frac{\sqrt{2\pi}}{\bigl((2\cosh{u}+1)(2\cosh{u}-3)\bigr)^{1/4}\sqrt{\xi N}}
  \left(1+O(N^{-1})\right)
\end{multline*}
from \eqref{eq:Gmcosh} again.
Here we put $\sqrt{-\eta''(0)}:=\xi^{1/2}\bigl((2\cosh{u}+1)(2\cosh{u}-3)\bigr)^{1/4}$ because
\begin{equation*}
\begin{split}
  \Re\left(b\xi^{1/2}\right)
  =&
  \Re\left(\left(\frac{m+1}{p}-\sigma_m\right)\xi^{1/2}\right)
  \\
  =&
  \Re
  \left(
    \xi^{-1/2}
    \left(
      \frac{m+1}{p}\xi-\varphi(u)-2(m+1)\pi\i
    \right)
  \right)
  \\
  =&
  \left(
    \frac{m+1}{p}u-\varphi(u)
  \right)
  \Re\xi^{-1/2}>0
\end{split}
\end{equation*}
since $\varphi(u)/u<(m+1)/p$.
Putting $w:=z+\sigma_m$, we have
\begin{multline}\label{eq:saddle}
  \int_{m/p}^{(m+p)/p}e^{Ng_{N,m}(w)}\,dw
  \\
  =
  \frac{\sqrt{2\pi}}{\bigl((2\cosh{u}+1)(2\cosh{u}-3)\bigr)^{1/4}\sqrt{\xi N}}
  e^{NF(\sigma_0)}
  \left(1+O(N^{-1})\right)
\end{multline}
when $m<p-1$.
\par
When $m=p-1$, we similarly have
\begin{multline}\label{eq:saddle_p-1}
  \int_{1-1/p}^{1-\delta}e^{Ng_{N,p-1}(w)}\,dw
  \\
  =
  \frac{\sqrt{2\pi}}{\bigl((2\cosh{u}+1)(2\cosh{u}-3)\bigr)^{1/4}\sqrt{\xi N}}
  e^{NF(\sigma_0)}
  \left(1+O(N^{-1})\right).
\end{multline}
\par
From \eqref{eq:Poisson} and \eqref{eq:saddle} we have
\begin{equation*}
\begin{split}
  &\sum_{m/p\le k/N\le(m+1)/p}e^{Ng_{N,m}(k/N)}
  \\
  =&
  N\int_{m/p}^{(m+1)/p}e^{Ng_{N,m}(z)}\,dz
  +
  Ne^{Ng_{N,m}(\sigma_m)}\times O(e^{-\varepsilon N})
  \\
  =&
  \frac{\sqrt{2\pi}}{\bigl((2\cosh{u}+1)(2\cosh{u}-3)\bigr)^{1/4}}\sqrt{\frac{N}{\xi}}
  e^{NF(\sigma_0)}
  \left(1+O(N^{-1})\right)
  \\
  &+
  Ne^{Ng_{N,m}(\sigma_m)}\times O(e^{-\varepsilon N})
  \\
  =&
  \frac{\sqrt{2\pi}}{\bigl((2\cosh{u}+1)(2\cosh{u}-3)\bigr)^{1/4}}\sqrt{\frac{N}{\xi}}
  e^{NF(\sigma_0)}
  \left(1+O(N^{-1/2})\right)
\end{split}
\end{equation*}
for $0\le m<p$.
The last equality follows because
\begin{equation*}
\begin{split}
  &Ne^{Ng_{N,m}(\sigma_m)}
  \\
  =&
  \frac{\sqrt{2\pi}}{\bigl((2\cosh{u}+1)(2\cosh{u}-3)\bigr)^{1/4}}\sqrt{\frac{N}{\xi}}
  e^{NF(\sigma_0)}
  \\
  &\times
  \left(
    \frac{\bigl((2\cosh{u}+1)(2\cosh{u}-3)\bigr)^{1/4}}{\sqrt{2\pi}}\sqrt{\frac{\xi}{N}}
    e^{N\bigl(g_{N,m}(\sigma_m)-F(\sigma_0)\bigr)}
  \right)
\end{split}
\end{equation*}
and the expression in the big parentheses is of order $O(N^{-1/2})$ since the series $\{g_{N,m}(\sigma_m)\}_{N=2,3,\dots}$ converges to $G_m(\sigma_m)=F(\sigma_0)$,.
\par
Similarly, when $m=p-1$, from \eqref{eq:Poisson_p-1} and \eqref{eq:saddle_p-1} we have
\begin{equation*}
\begin{split}
  &\sum_{1-1/p\le k/N\le1-\delta}e^{Ng_{N,p-1}(k/N)}
  \\
  =&
  \frac{\sqrt{2\pi}}{\bigl((2\cosh{u}+1)(2\cosh{u}-3)\bigr)^{1/4}}\sqrt{\frac{N}{\xi}}
  e^{NF(\sigma_0)}
  \left(1+O(N^{-1/2})\right).
\end{split}
\end{equation*}
\par
From Lemma~\ref{lem:g_G_p-1} below, the sum $\sum_{1-\delta<k/N<1}e^{Ng_{N,p-1}(k/N)}$ is of order $O\left(N e^{N\bigl(\Re F(\sigma_0)-\varepsilon\bigr)}\right)$ if $\delta<\tilde{\delta}$.
So we have
\begin{equation}\label{eq:sum_p-1}
\begin{split}
  &\sum_{1-1/p\le k/N<1}e^{Ng_{N,p-1}(k/N)}
  \\
  =&
  \frac{\sqrt{2\pi}}{\bigl((2\cosh{u}+1)(2\cosh{u}-3)\bigr)^{1/4}}\sqrt{\frac{N}{\xi}}
  e^{NF(\sigma_0)}
  \left(1+O(N^{-1/2})\right).
\end{split}
\end{equation}
\par
Recalling that $g_{N,m}(z)=f_N(z-m/\gamma)$, from \eqref{eq:Jones_sum} and Proposition~\ref{prop:sigma_below}, we have
\begin{equation*}
\begin{split}
  &
  \frac{2\sinh(u/2)}{1-e^{2pN\pi\i/\gamma}}J_N\left(\FE;e^{\xi/N}\right)
  \\
  =&
  \left(\sum_{m\le p\varphi(u)/u-1}+\sum_{m>p\varphi(u)/u-1}\right)
  \left(
    \beta_{p,m}
    \sum_{m/p<k/N<(m+1)/p}
    e^{Ng_{N,m}\left(\frac{2k+1}{2N}\right)}
  \right)
  \\
  =&
  \left(\sum_{m>p\varphi(u)/u-1}\beta_{p,m}\right)
  O\left(Ne^{\left(\Re F(\sigma_0)-\varepsilon\right)}\right)
  \\
  &+
  \left(\sum_{m>p\varphi(u)/u-1}\beta_{p,m}\right)
  \frac{\sqrt{2\pi}}{\bigl((2\cosh{u}+1)(2\cosh{u}-3)\bigr)^{1/4}}
  \\
  &\quad\times
  \sqrt{\frac{N}{\xi}}
  e^{NF(\sigma_0)}
  \left(1+O(N^{-1/2})\right),
  \\
  =&
  \left(\sum_{m>p\varphi(u)/u-1}\beta_{p,m}\right)
  \sqrt{\pi}T_{\FE}(\rho_u)^{-1/2}
  \sqrt{\frac{N}{\xi}}
  e^{\frac{N}{\xi}S_{\FE}(u)}
  \left(1+O(N^{-1/2})\right),
\end{split}
\end{equation*}
where $T_{\FE}(\rho_u)$ is defined in \eqref{eq:T_def}, and from \eqref{eq:FKL} $\xi F(\sigma_0)$ coincides with $S_{\FE}(u)$ defined in \eqref{eq:S_def}.
Since we have from \eqref{eq:def_beta}
\begin{equation*}
  \beta_{p,m}
  =
  e^{-4mpN\pi^2/\xi}
  \prod_{j=1}^{m}
  \left(1-e^{4(p-j)N\pi^2/\xi}\right)
  \left(1-e^{4(p+j)N\pi^2/\xi}\right),
\end{equation*}
we see that
\begin{equation*}
   \sum_{m=0}^{p-1}\beta_{p,m}
   =
   J_p\left(\FE;e^{4N\pi^2/\xi}\right)
\end{equation*}
from \eqref{eq:fig8}.
Since $\left(1-e^{4(p-j)N\pi^2/\xi}\right)\left(1-e^{4(p+j)N\pi^2/\xi}\right)\underset{N\to\infty}{\sim}e^{8pN\pi^2/\xi}$, we have $\beta_{p,m}\underset{N\to\infty}{\sim}e^{4mpN\pi^2/\xi}$.
Therefore, for any $0\le m_0\le p-1$ the sum $\sum_{m=m_0}^{p-1}\beta_{p,m}$ is dominated by the term $\beta_{p,p-1}\underset{N\to\infty}{\sim}e^{4p(p-1)N\pi^2/\xi}$, and we conclude that
\begin{equation*}
   \sum_{m>p\varphi(u)/u-1}\beta_{p,m}
   \underset{N\to\infty}{\sim}
   e^{4p(p-1)N\pi^2/\xi}
   \underset{N\to\infty}{\sim}
   J_p\left(\FE;e^{4N\pi^2/\xi}\right)
\end{equation*}
because $p-1>p\varphi(u)/u-1$.
\par
Therefore we finally have
\begin{equation*}
\begin{split}
  &J_N(\FE;e^{\xi/N})
  \\
  =&
  J_p\left(\FE;e^{4N\pi^2/\xi}\right)
  \frac{1-e^{2pN\pi\i/\gamma}}{2\sinh(u/2)}
  \sqrt{\pi}T_{\FE}(\rho_u)^{-1/2}
  \sqrt{\frac{N}{\xi}}
  e^{\frac{N}{\xi}S_{\FE}(u)}
  \left(1+O(N^{-1/2})\right).
\end{split}
\end{equation*}
Now, \eqref{eq:main} in Theorem~\ref{thm:main} follows since $e^{2pN\pi\i/\gamma}=e^{-4p\pi^2N/\xi}\to0$ as $N\to\infty$.
\par
The following lemma was used to prove \eqref{eq:sum_p-1}.
\begin{lem}\label{lem:g_G_p-1}
For any $\varepsilon>0$, there exists $\tilde{\delta}>0$ such that
\begin{equation*}
  \Re g_{N,p-1}\left(\frac{2k+1}{2N}\right)
  <
  \Re F(\sigma_0)-\varepsilon
\end{equation*}
for sufficiently large $N$, if $1-1/p<1-\tilde{\delta}<k/N<1$.
\end{lem}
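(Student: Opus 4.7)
The plan is to show that $\Re G_{p-1}$ stays strictly below $\Re F(\sigma_0)$ on a left neighborhood of $x=1$, and then transfer the bound to $g_{N,p-1}$ via uniform convergence. First, for real $x \in \bigl((p-1)/p,\,1\bigr)$, formula \eqref{eq:Gm} simplifies, using $\xi(p-1)/\gamma = 2\pi\i(p-1)$ so that the corresponding exponentials are trivial, to
\[
G_{p-1}(x) = \frac{1}{\xi}\Li_2\bigl(e^{-\xi(1+x)}\bigr) - \frac{1}{\xi}\Li_2\bigl(e^{-\xi(1-x)}\bigr) + u\Bigl(x - \frac{p-1}{\gamma}\Bigr).
\]
As $x \to 1^{-}$, the dilogarithm arguments tend to $e^{-2u}$ and $1$ respectively, so $G_{p-1}$ extends continuously to the closed interval $[(p-1)/p,\,1]$. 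By Corollary~\ref{cor:F'}, $\frac{d}{dx}\Re G_{p-1}(x) = \log\bigl|2\cosh u - 2\cosh(\xi x)\bigr|$; the factorization $2\cosh\xi - 2\cosh(\xi x) = -4\sinh\bigl((1+x)\xi/2\bigr)\sinh\bigl((x-1)\xi/2\bigr)$ combined with $\cosh\xi = \cosh u$ shows this modulus is $\sim 2(1-x)|\xi|\sinh u$ as $x \to 1^{-}$, so the derivative tends to $-\infty$ and $\Re G_{p-1}$ is strictly decreasing on some interval $(1-\delta_{\ast}, 1)$.

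Next, Proposition~\ref{prop:P0P3} applied to $m=p-1$ furnishes a $\delta_0 \in (0,\delta_{\ast})$ with $1-\delta_0 \in W_{p-1}$, so $\Re G_{p-1}(1-\delta_0) < \Re F(\sigma_0)$. Setting $\eta := \Re F(\sigma_0) - \Re G_{p-1}(1-\delta_0) > 0$, the monotonicity above yields $\Re G_{p-1}(x) \le \Re F(\sigma_0) - \eta$ for every $x \in [1-\delta_0,\,1]$. Hence, given any $\varepsilon < \eta/2$, we obtain $\Re G_{p-1}(x) < \Re F(\sigma_0) - 2\varepsilon$ on the entire interval.

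Finally, the uniform convergence $g_{N,p-1}(z) \to G_{p-1}(z)$ of Proposition~\ref{prop:TN_L2} on compact subsets of $\Theta_{p-1,\nu}$ transfers the bound, yielding $\Re g_{N,p-1}\bigl((2k+1)/(2N)\bigr) < \Re F(\sigma_0) - \varepsilon$ for all sufficiently large $N$ whenever $(2k+1)/(2N)$ lies in the compact subinterval $[1-\delta_0,\,1-\nu/p]$ of that domain. Setting $\tilde\delta := \delta_0$ handles the bulk of the range specified in the lemma.

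The main obstacle is the narrow tail $(2k+1)/(2N) \in (1-\nu/p,\,1)$, where the argument falls inside the excluded trapezoid $\underline{\nabla}^{-}_{p-1,\nu}$ and the dilogarithm approximation of Proposition~\ref{prop:TN_L2} is unavailable. I would address this by returning to the identification \eqref{eq:Jones_sum}: there $\exp\bigl(N g_{N,p-1}((2k+1)/(2N))\bigr)$ equals the $k$-th term $\prod_{l=1}^{k}\bigl(2\cosh\xi - 2\cosh(l\xi/N)\bigr)$ of $J_N$, divided by $\beta_{p,p-1}$ up to a bounded factor. The factors with $l$ close to $N$ have modulus $\sim 2(N-l)|\xi|\sinh u/N$, producing enough extra exponential decay that, after comparing with $|\beta_{p,p-1}| \sim \exp\bigl(4\pi^2p(p-1)Nu/|\xi|^2\bigr)$ and the explicit form of $\Re F(\sigma_0)$, the required bound follows on this tail as well.
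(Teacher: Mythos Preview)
Your overall architecture matches the paper's: bound $\Re G_{p-1}$ strictly below $\Re F(\sigma_0)$ on a left neighborhood of $1$, transfer via uniform convergence to $g_{N,p-1}$ on the part of that neighborhood lying inside $\Theta_{p-1,\nu}$, and then deal separately with the thin tail near $x=1$ where the convergence is unavailable. Your treatment of the first two stages is fine (though the right endpoint for uniform convergence should be $1-\min\{\nu/p,\,2\nu\pi/u\}$ rather than $1-\nu/p$; for large $u$ this matters).

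The genuine gap is in the tail. You correctly diagnose the obstacle but only sketch a plan, and that sketch is not straightforward to complete. The phrase ``producing enough extra exponential decay'' is the problem: the few factors with $l$ very close to $N$ contribute only $O(\log N)$ to $N\Re g_{N,p-1}$, not exponential decay, and to get the needed bound by direct estimation you would have to compare the full product $|h_N(k)|$ against $|\beta_{p,p-1}|\,e^{N\Re F(\sigma_0)}$ explicitly --- effectively re-deriving the limit $g_{N,p-1}\to G_{p-1}$ by hand in a region where Proposition~\ref{prop:TN_L2} does not apply.

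The paper bypasses this entirely with a monotonicity trick you should absorb. One first observes that there is a $\delta_2>0$ such that each factor $\bigl|4\sinh\!\bigl(\tfrac{\xi}{2}(1-l/N)\bigr)\sinh\!\bigl(\tfrac{\xi}{2}(1+l/N)\bigr)\bigr|$ is less than $1$ whenever $1-\delta_2<l/N<1$; hence $|h_N(k)|$, and therefore $\Re g_{N,p-1}\bigl((2k+1)/(2N)\bigr)$, is \emph{decreasing} in $k$ on that range. Now choose $\tilde\delta=\min\{\delta_1,\delta_2\}$: for any $k$ in the tail pick some $k'<k$ with $(2k'+1)/(2N)$ lying in the region where uniform convergence has already given the bound, and conclude
\[
\Re g_{N,p-1}\!\left(\tfrac{2k+1}{2N}\right)
\;\le\;
\Re g_{N,p-1}\!\left(\tfrac{2k'+1}{2N}\right)
\;<\;
\Re F(\sigma_0)-\varepsilon.
\]
This avoids any explicit comparison with $\beta_{p,p-1}$ or $\Re F(\sigma_0)$ on the tail.

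One minor remark: your argument (like the paper's) actually proves the statement only for sufficiently small $\varepsilon$; the ``for any $\varepsilon>0$'' in the lemma is harmless because it is only invoked for the small $\varepsilon$ coming from the Poisson step, but you should be aware of this.
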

\begin{proof}
The proof is similar to that of \cite[Lemma~6.1]{Murakami:CANJM2023}.
\par
Recall that $F(\sigma_0)=G_{p-1}(\sigma_{p-1})$ and that $\sigma_{p-1}$ is always above the real axis from Remark~\ref{rem:sigma}.
\par
Since $\Re G_{p-1}(1)<\Re G_{p-1}(\sigma_{p-1})$ from Lemma~\ref{lem:sigma_m+1}, there exists $\delta_1>0$ such that $\Re G_{p-1}\left(\frac{2k+1}{2N}\right)<\Re G_{p-1}(\sigma_{p-1})-2\varepsilon$ if $1-1/p\le1-\delta_1<\frac{2k+1}{2N}<1$.
We assume that $\nu$ is small enough so that $\min\{\nu/p,2\nu\pi/u\}<\delta_1<(2-\nu)/p$.
\par
Since $\underline{\nabla}^{-}_{p-1,\nu}\cap\R=\bigl\{x\in\R\mid 1-\min\{\nu/p,2\nu\pi/u\}<x\le1+(1-\nu)/p\bigr\}$ from \eqref{eq:nabla}, the series of functions $\{g_{N,p-1}(x)\}_{N=2,3,\dots}$ converges to $G_{p-1}(x)$ if $1-(2-\nu)/p\le x\le1-\min\{\nu/p,2\nu\pi/u\}$ from \eqref{eq:Theta}.
Therefore if $N$ is sufficiently large and $1-(2-\nu)/p\le\frac{2k+1}{2N}\le1-\min\{\nu/p,2\nu\pi/u\}$, then $\left|\Re g_{N,p-1}\left(\frac{2k+1}{2N}\right)-\Re G_{p-1}\left(\frac{2k+1}{2N}\right)\right|<\varepsilon$.
Since $\min\{\nu/p,2\nu\pi/u\}<\delta_1<(2-\nu)/p$, we have
\begin{equation}\label{eq:delta}
\begin{split}
  \Re g_{N,p-1}\left(\frac{2k+1}{2N}\right)
  <&
  \Re G_{p-1}\left(\frac{2k+1}{2N}\right)+\varepsilon
  \\
  <&
  \Re G_{p-1}(\sigma_{p-1})-\varepsilon
\end{split}
\end{equation}
if $1-\delta_1<\frac{2k+1}{2N}<1-\min\{\nu/p,2\nu\pi/u\}$.
\par
Since the function $\sinh\left(\frac{\xi}{2}(1-x)\right)\sinh\left(\frac{\xi}{2}(1+x)\right)$ of $x\in\R$ vanishes when $x=1$, we see that there exists $0<\delta_2<1/p$ such that its absolute value is less than $1/4$ if $1-\delta_2<x<1$.
This means that if we define $h_N(k):=\prod_{l=1}^{k}\left(4\sinh\left(\frac{\xi}{2}(1-l/N)\right)\sinh\left(\frac{\xi}{2}(1+l/N)\right)\right)$, then $\left|h_N(k)\right|$ is decreasing with respect to $k$ when $1-\delta_2<k/N<1$.
From \cite[(8.3)]{Murakami:CANJM2023}, we have
\begin{equation*}
  h_N(k)
  =
  \frac{1-e^{-4pN\pi^2/\xi}}{2\sinh(u/2)}
  \beta_{p,p-1}
  \exp\left(Ng_{N,p-1}\left(\frac{2k+1}{2N}\right)\right)
\end{equation*}
for $1-1/p<k/N<1$.
It follows that
\begin{equation*}
  \Re g_{N,p-1}\left(\frac{2k+1}{2N}\right)
  =
  \frac{1}{N}
  \log
  \left|
    \frac{2\sinh(u/2)h_N(k)}{\left(1-e^{-4pN\pi^2/\xi}\right)\beta_{p,p-1}}
  \right|
\end{equation*}
is also decreasing with respect to $k$ whenever $1-\delta_2<k/N<1$.
\par
If we put $\tilde{\delta}:=\min\{\delta_1,\delta_2\}$, then for $k$ with $1-\tilde{\delta}<k/N<1$, we can choose $k'<k$ such that $1-\delta_1<\frac{2k'+1}{2N}<1-\min\{\nu/p,2\nu\pi/u\}$ if $N$ is sufficiently large.
So from \eqref{eq:delta} we finally conclude that
\begin{equation*}
  \Re g_{N,p-1}\left(\frac{2k+1}{2N}\right)
  <
  \Re g_{N,p-1}\left(\frac{2k'+1}{2N}\right)
  <
  \Re G_{p-1}(\sigma_{p-1})-\varepsilon,
\end{equation*}
where the first inequality follows since $\Re g_{N,p-1}\left(\frac{2k+1}{N}\right)$ is decreasing.
\end{proof}
\section{The Chern--Simons invariant and the Reidemeister torsion}\label{sec:CS}
In this section, we study $\SL(2;\R)$ representations of the fundamental group of $S^3\setminus\FE$, and the associated Chern--Simons invariants and the Reidemeister torsions.
We also describe their correspondence to the asymptotic behavior of $J_N\left(\FE;e^{(u+2p\pi\i)/N}\right)$ as $N\to\infty$ with $u>\kappa$.
\subsection{Representation}\label{subsec:representation}
Let $x$ and $y$ be elements in $\pi_1(S^3\setminus\FE)$ indicated in Figure~\ref{fig:fig8}.
\begin{figure}[h]
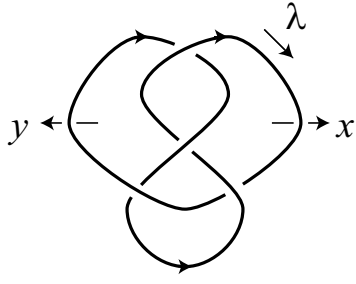

\pic{0.3}{fig8}
\caption{Generators of $\pi_1(S^3\setminus\FE)$}
\label{fig:fig8}
\end{figure}
For $u>\kappa=\arccosh(3/2)$, let $\rho_u\colon\pi_1(S^3\setminus\FE)\to\SL(2;\R)$ be the representation defined as follows:
\begin{equation*}
  \rho_u(x)
  :=
  \begin{pmatrix}e^{u/2}&1\\0&e^{-u/2}\end{pmatrix},
  \quad
  \rho_u(y)
  :=
  \begin{pmatrix}e^{u/2}&0\\d(u)&e^{-u/2}\end{pmatrix},
\end{equation*}
where
\begin{equation*}
  d(u)
  :=
  \frac{3}{2}-\cosh{u}-\frac{1}{2}\sqrt{(2\cosh{u}-3)(2\cosh{u}+1)}.
\end{equation*}
Since $d(u)$ is real, $\rho_u$ is indeed an $\SL(2;\R)$ representation.
\begin{rem}
In \cite[(5.5)]{Murakami/Yokota:2018}, we define $d_{+}(u)$ as
\begin{equation*}
  d_{+}(u)
  :=
  \frac{3}{2}-\cosh{u}+\frac{1}{2}\sqrt{(2\cosh{u}-3)(2\cosh{u}+1)},
\end{equation*}
and use it instead of $d(u)$ to define $\rho_u$ for $0<u<\kappa$.
Here the branch of the square root in the definition of $d_{+}(u)$ is chosen so that $\sqrt{x}:=\i\sqrt{-x}$ for a negative real number $x$.
Therefore, precisely speaking, $d_{+}(u)$ should be defined as
\begin{equation*}
  \tilde{d}_{+}(u)
  :=
  \frac{3}{2}-\cosh{u}+\frac{1}{2}\i\sqrt{(3-2\cosh{u})(2\cosh{u}+1)}
\end{equation*}
to make it analytic around $0$.
Note that $\tilde{d}_{+}(u)$ has a branch cut along $(-\infty,-\kappa]\cup[\kappa,\infty)$.
When $u>\kappa$, if we choose the branch so that $\sqrt{(3-2\cosh{u})(2\cosh{u}+1)}$ has the positive imaginary part, then $\tilde{d}_{+}(u)$ coincides with $d(u)$.
\end{rem}
\par
The preferred longitude $\lambda$ also indicated in Figure~\ref{fig:fig8} is mapped to
\begin{equation*}
  \rho_u(\lambda)
  =
  \begin{pmatrix}-e^{v(u)/2}&\ast\\0&-e^{-v(u)/2}\end{pmatrix}
\end{equation*}
with
\begin{equation*}
\begin{split}
  &v(u)
  \\
  :=&
  2\log\left(\cosh(2u)-\cosh{u}-1+\sinh{u}\sqrt{(2\cosh{u}-3)(2\cosh{u}+1)}\right)+2\pi\i
  \\
  =&
  2\log\left(2\cosh\bigl(u+\varphi(u)\bigr)-2\right)+2\pi\i.
\end{split}
\end{equation*}
Since the argument of $\log$ is increasing and equals $1$ when $u=\kappa$, we see that $\Im{v(u)}=2\pi$ and $\Re{v(u)}>0$.
Since $S_{\FE}(u)=L_p(u)-4p\pi^2+2u\pi\i$ from \eqref{eq:S_def} and \eqref{eq:Lp}, we have from \eqref{eq:Lp_der}
\begin{equation*}
  \frac{d}{d\,u}S_{\FE}(u)
  =
  2\cosh\bigl(u+\varphi(u)\bigr)-2+2\pi\i.
\end{equation*}
So we conclude that
\begin{equation}\label{eq:v_S}
  v(u)
  =
  2\frac{d}{d\,u}S_{\FE}(u)-2\pi\i,
\end{equation}
which proves \eqref{eq:v_main} in Theorem~\ref{thm:main}.
\par
In \cite{Hodgson/PhD}, C.~Hodgson studies Dehn surgery space of the figure-eight knot.
He uses the parameter
\begin{equation*}
  \alpha
  :=
  \Tr(xy^{-1})
  =
  2-d(u)
  =
  2\cosh{u}-e^{\varphi(u)}
  =
  e^{u}-e^{\varphi(u)}+e^{-u}.
\end{equation*}
Since $e^{u}-e^{\varphi(u)}$ is monotonically decreasing for $u>\kappa$, and approaches to $1$ as $u\to\infty$ from Lemma~\ref{lem:phi} (vi) and (vii), we have $1<\alpha<2$.
So our case corresponds to the line segment connecting $\alpha=1$ and $\alpha=2$ in the figure in \cite[Page~148]{Hodgson/PhD}.
See also \cite[P.~48]{Thurston:GT3M} and \cite[P.~103]{Cooper/Hodgson/Kerckhoff:2000}.
\subsection{Chern--Simons invariant}
For a knot $K\subset S^3$, put $M:=S^3\setminus{N(K)}$, where $N(K)$ is the open tubular neighborhood of $K$.
Note that the boundary $\partial{M}$ of $M$ is a torus.
Then, following \cite[P.~543]{Kirk/Klassen:COMMP1993}, the $\mathrm{PSL}(2;\C)$ Chern--Simons function $\cs_{K}$ is defined as a map from the $\mathrm{PSL}(2;\C)$ character variety $X(M)$ of $M$ to $E(M):=\left[\Hom\bigl(\pi_1(\partial{M}),\C\bigr)\times\C^{\ast}\right]/\sim$, where $\sim$ is the equivalence relation generated by
\begin{equation*}
\begin{split}
  (\alpha,\beta;z)
  \sim
  (\alpha+1/2,\beta;ze^{-4\pi\i\beta})
  \sim
  (\alpha,\beta+1/2;ze^{4\pi\i\alpha})
  \sim
  (-\alpha,-\beta;z).
\end{split}
\end{equation*}
Here we fix a basis $(\mu^{\ast},\lambda^{\ast})$ of $\Hom\bigl(\pi_1(\partial{M}),\C\bigr)\cong\C^2$ with $(\mu,\lambda)$ the standard meridian/longitude pair.
We denote by $[\alpha,\beta;z]$ the equivalence class of $(\alpha,\beta;z)\in\Hom\bigl(\pi_1(\partial{M}),\C\bigr)\times\C^{\ast}$.
Let $\rho\colon\pi_1(M)\to\SL(2;\C)$ be a representation sending the meridian $\mu$ to $\begin{pmatrix}e^{m/2}&\ast\\0&e^{-m/2}\end{pmatrix}$ and the preferred longitude $\lambda$ to $\begin{pmatrix}e^{l/2}&\ast\\0&e^{-l/2}\end{pmatrix}$.
Then, the Chern--Simons invariant $\CS_{K}(\rho;m,l)\in\C\pmod{\pi^2\Z}$ is defined by the following equation:
\begin{equation*}
  \cs_{K}([\rho])
  =
  \left[
    \frac{m}{4\pi\i},\frac{l}{4\pi\i};\exp\left(\frac{2}{\pi\i}\CS_{K}(\rho;m,l)\right)
  \right],
\end{equation*}
where $[\rho]\in X(M)$ denotes the equivalence class.
\par
In \cite[Theorem~1.5]{Murakami/Tran:Takata}, we proved that $\CS_{\FE}\bigl(\rho_{u};u,\eta\bigr)=\check{S}_{\FE}(u)-u\eta/4$, where $\check{S}_{\FE}(u):=S_{\FE}(u)-2\pi\i u$ and $\eta=v(u)-2\pi\i$.
Since the Chern--Simons function $\cs_{\FE}([\rho_u])$ is given by
\begin{equation*}
\begin{split}
  &
  \left[
    \frac{u}{4\pi\i},
    \frac{\eta}{4\pi\i};
    \exp\left(\frac{2}{\pi\i}\CS_{\FE}(\rho_u;u,\eta)\right)
  \right]
  \\
  =&
  \left[
    \frac{u}{4\pi\i},
    \frac{v(u)-2\pi\i}{4\pi\i};
    \exp\left(\frac{2}{\pi\i}\CS_{\FE}(\rho_u;u,\eta)\right)
  \right]
  \\
  =&
  \left[
    \frac{u}{4\pi\i},
    \frac{v(u)}{4\pi\i};
    \exp\left(\frac{2}{\pi\i}\CS_{\FE}(\rho_u;u,\eta)\right)
    \times
    \exp\left(4\pi\i\times\frac{u}{4\pi\i}\right)
  \right]
  \\
  =&
  \left[
    \frac{u}{4\pi\i},
    \frac{v(u)}{4\pi\i};
    \exp
    \left(
      \frac{2}{\pi\i}
      \left(
        \CS_{\FE}(\rho_u;u,\eta)
        +
        \frac{1}{2}\pi\i u
      \right)
    \right)
  \right],
\end{split}
\end{equation*}
we have
\begin{equation}\label{eq:CS_S}
\begin{split}
  &\CS_{\FE}\bigl(\rho_u;u,v(u)\bigr)
  \\
  =&
  \CS_{\FE}(\rho_u;u,\eta)+\frac{1}{2}\pi\i u
  \\
  =&
  S_{\FE}(u)-2\pi\i u-\frac{1}{4}u\bigl(v(u)-2\pi\i\bigr)+\frac{1}{2}\pi\i u
  \\
  =&
  S_{\FE}(u)-\pi\i u-\frac{1}{4}uv(u).
\end{split}
\end{equation}
This gives a topological interpretation of the term $S_{\FE}(u)$ in \eqref{eq:main}.
\begin{rem}
If we use $\check{v}(u):=2\dfrac{d}{d\,u}\check{S}_{\FE}(u)-2\pi\i=\eta-2\pi\i$ instead of $\eta$  following \eqref{eq:v_S}, we have
\begin{equation*}
\begin{split}
  &\left[
    \frac{u}{4\pi\i},\frac{\eta}{4\pi\i};
    \exp\left(\frac{2}{\pi\i}\CS_{\FE}(\rho_u;u,\eta)\right)
  \right]
  \\
  =&
  \left[
    \frac{u}{4\pi\i},\frac{\check{v}(u)+2\pi\i}{4\pi\i};
    \exp
    \left(
      \frac{2}{\pi\i}\left(\check{S}_{\FE}(u)-\frac{u\bigl(\check{v}(u)+2\pi\i\bigr)}{4}\right)
    \right)
  \right]
  \\
  =&
  \left[
    \frac{u}{4\pi\i},\frac{\check{v}(u)}{4\pi\i};
  \right.
  \\
  &\quad
  \left.
    \exp
    \left(
      \frac{2}{\pi\i}\left(\check{S}_{\FE}(u)-\frac{u\bigl(\check{v}(u)+2\pi\i\bigr)}{4}\right)
      \times\exp\left(-4\pi\i\times\frac{u}{4\pi\i}\right)
    \right)
  \right]
  \\
  =&
  \left[
    \frac{u}{4\pi\i},\frac{\check{v}(u)}{4\pi\i};
    \exp
    \left(
      \frac{2}{\pi\i}\left(\check{S}_{\FE}(u)-\pi\i u-\frac{u\check{v}(u)}{4}\right)
    \right)
  \right].
\end{split}
\end{equation*}
Therefore we have
\begin{equation*}
  \CS_{\FE}(\rho_u;u,\check{v}(u))
  =
  \check{S}_{\FE}(u)-\pi\i u-\frac{1}{4}u\check{v}(u),
\end{equation*}
which is consistent with \eqref{eq:CS_S}.
\end{rem}
\subsection{Reidemeister torsion}
Put $M:=S^3\setminus{N(K)}$ as in the previous subsection.
Given an $\SL(2;\C)$ representation $\rho$ of $\Pi:=\pi_1(M)$, one can construct a chain complex $C_{\ast}:=\{C_i,\partial_i\}_{i=0,1,2}$, where $C_i:=C_{i}(\tilde{M})\otimes_{\Z[\Pi]}\mathfrak{sl}_2(\C)$ with $\tilde{M}$ the universal cover of $M$, and $\Pi$ acts on $\tilde{M}$ as the deck transformation and on the Lie algebra $\mathfrak{sl}_2(\C)$ by the adjoint action of $\rho$.
Note that we regard $\tilde{M}$ as a two-dimensional complex up to homptopy equivalence.
Let
\begin{itemize}
\item
$\mathbf{c}_{i}=\{c_{i,1},\dots,c_{i,l_i}\}$ be a basis of $C_{i}$,
\item
$\mathbf{b}_{i}=\{b_{i,1},\dots,b_{i,m_i}\}$ be a set of vectors such that $\{\partial_{i}(b_{i,j})\}_{j=1,\dots,m_{i}}$ forms a basis of $\Im\partial_{i}$,
\item
$\mathbf{h}_{i}=\{h_{i,1},\dots,h_{i,n_i}\}$ be a basis of the $i$-th homology group $H_i(C_{\ast})$,
\item
$\tilde{\mathbf{h}}_{i}=\{\tilde{h}_{i,1},\dots,\tilde{h}_{i,n_i}\}$ be lifts of $h_{i,k}$ in $\Ker\partial_i$.
\end{itemize}
One can see that $\partial_{i+1}(\mathbf{b}_{i+1})\cup\tilde{\mathbf{h}}_{i}\cup\mathbf{b}_{i}$ forms a basis of $C_{i}$.
Then the torsion $\Tor(C_{\ast},\mathbf{c}_{\ast},\mathbf{h}_{\ast})$ is defined as
\begin{equation*}
  \Tor(C_{\ast},\mathbf{c}_{\ast},\mathbf{h}_{\ast})
  :=
  \prod_{i=0}^{2}
  \bigl[
    \partial_{i+1}(\mathbf{b}_{i+1})\cup\tilde{\mathbf{h}}_{i}\cup\mathbf{b}_{i}
    \bigm|
    \mathbf{c}_{i}
  \bigr]^{(-1)^{i+1}},
\end{equation*}
where $[\mathbf{x}\mid\mathbf{y}]$ is the determinant of the base change matrix from $\mathbf{x}$ to $\mathbf{y}$.
Note that this depends only on $\mathbf{c}_{i}$ and $\mathbf{h}_i$, not on $\mathbf{b}_{i}$ or $\tilde{\mathbf{h}}_{i}$.
\par
We choose the so-called geometric basis for $\mathbf{c}_{i}$ \cite[D{\'e}finition~0.4]{Porti:MAMCAU1997}.
Letting $\mu$ be the meridian in $\partial{M}$, if the representation $\rho$ is irreducible and $\mu$-regular in the sense of \cite[D{\'e}finition~3.21]{Porti:MAMCAU1997}, we can choose bases of $\mathbf{h}_1$ and $\mathbf{h}_2$ by using $\mu$ because $\dim H_1(C_{\ast})=\dim H_2(C_{\ast})=1$.
Note that $H_0(C_{\ast})$ vanishes if $\rho$ is irreducible.
Now the (homological) adjoint Reidemeister torsion $T_{K}(\rho)$ of $\rho$ associated with the meridian is defined as $\Tor(C_{\ast},\mathbf{c}_{\ast},\mathbf{h}_{\ast})$, where $\mathbf{c}_{\ast}$ and $\mathbf{h}_{\ast}$ are chosen as above.
\par
For the figure-eight knot $\FE$ and the representation $\rho_u$ given in Subsection~\ref{subsec:representation}, the Reidemeister torsion $T_{\FE}(\rho_u)$ is given as \eqref{eq:T_def} up to a sign.
For details see \cite{Porti:MAMCAU1997}.
See also \cite[5.3.1]{Murakami/Yokota:2018}.
\appendix
\section{$P_1$}\label{sec:P1}
In this appendix, we give a proof of Lemma~\ref{lem:P1}.
We often use Mathematica \cite{Mathematica}.
Especially, numerals with $\ldots$ such as $1.2345\ldots$ are calculated by using Mathematica.
\begin{proof}[Proof of Lemma~\ref{lem:P1}]
Since the coordinate of $P_1$ is given by $\frac{(6m+5)u^2+4p^2(2m+1)\pi^2-4pu\varphi(u)}{2p|\xi|^2}-2\Im\sigma_m\i$, and $F(\sigma_0)=G_m(\sigma_m)$, we have from \eqref{eq:Gm}
\begin{equation*}
\begin{split}
  &\xi\bigl(G_m(P_1)-F(\sigma_0)\bigr)
  \\
  =&
  \Li_2\left(-e^{-u-\frac{(6m+5)u}{2p}+2\varphi(u)}\right)
  -
  \Li_2\left(-e^{-u+\frac{(6m+5)u}{2p}-2\varphi(u)}\right)
  \\
  &-
  \Li_2\left(e^{-u-\varphi(u)}\right)
  +
  \Li_2\left(e^{-u+\varphi(u)}\right)
  +\frac{6m+5}{2p}u^2-3u\varphi(u)-u\pi\i.
\end{split}
\end{equation*}
So we have
\begin{equation*}
\begin{split}
  &
  \frac{|\xi|^2}{u}\bigl(G_m(P_1)-G_m(\sigma_m)\bigr)
  \\
  =&
  \Li_2\left(-e^{-u-\frac{(6m+5)u}{2p}+2\varphi(u)}\right)
  -
  \Li_2\left(-e^{-u+\frac{(6m+5)u}{2p}-2\varphi(u)}\right)
  -
  \Li_2\left(e^{-u-\varphi(u)}\right)
  \\
  &+
  \Li_2\left(e^{-u+\varphi(u)}\right)
  +\frac{6m+5}{2p}u^2-3u\varphi(u)-2p\pi^2
  \\
  &\text{(since $m\le p-1$)}
  \\
  \le&
  \Li_2\left(-e^{-4u+\frac{u}{2p}+2\varphi(u)}\right)
  -
  \Li_2\left(-e^{2u-\frac{u}{2p}-2\varphi(u)}\right)
  -
  \Li_2\left(e^{-u-\varphi(u)}\right)
  \\
  &+
  \Li_2\left(e^{-u+\varphi(u)}\right)
  +\frac{6p-1}{2p}u^2-3u\varphi(u)
  -2p\pi^2.
\end{split}
\end{equation*}
Denote by $b(r,u)$ the last expression above, and show that $b(r,u)<0$.
Here we regard it as a function of two real variables $p$ and $u$ with $p\ge1$ and $u>\kappa$.
\par
We will show the following:
\begin{enumerate}
\item
If $\kappa<u\le p$, then $\frac{\partial}{\partial\,p}b(p,u)<0$,
\item
If $1\le p<u$, then $\frac{\partial}{\partial\,u}b(p,u)<0$,
\item
For $u>\kappa$, $\tilde{b}(u):=b(u,u)<0$.
\end{enumerate}
Then we see that $b(p,u)<b(u,u)$ from (i), that $b(p,u)<b(p,p)$ from (ii), and that $b(p,u)<0$ from (iii).
\par
So in the following we will prove (i), (ii), and (iii).
\begin{enumerate}
\item
We have
\begin{equation*}
  \frac{\partial}{\partial\,p}b(p,u)
  =
  \frac{u}{2p^2}
  \log\left(2\cosh{u}+2\cosh\left(\left(3u-\frac{u}{2p}-2\varphi(u)\right)\right)\right)
  -2\pi^2.
\end{equation*}
Then since $p\ge u>\kappa$, we have
\begin{equation*}
\begin{split}
  \frac{\partial}{\partial\,p}b(p,u)
  <&
  \frac{1}{2p}
  \log\bigl(2\cosh{p}+2\cosh(3p)\bigr)
  -2\pi^2
  \\
  <&
  \frac{1}{2p}
  \log\bigl(4\cosh(3p)\bigr)
  -2\pi^2
  \\
  =&
  \frac{1}{2p}
  \log\left(2e^{3p}\bigl(1+e^{-6p}\bigr)\right)
  -2\pi^2
  \\
  =&
  \frac{3}{2}
  +
  \frac{1}{2p}
  \log\left(2\bigl(1+e^{-6p}\bigr)\right)
  -2\pi^2
  \\
  <&
  \frac{3}{2}
  +
  \frac{1}{2\kappa}
  \log4
  -2\pi^2
  =
  -17.519\ldots
  <0,
\end{split}
\end{equation*}
proving (i).
\item
The partial derivative of $b(p,u)$ with respect to $u$ becomes
\begin{equation*}
\begin{split}
  &\frac{\partial}{\partial\,u}b(p,u)
  \\
  =&
  6u-\frac{u}{p}-3\varphi(u)-3u\varphi'(u)
  \\
  &+
  \bigl(1-\varphi'(u)\bigr)\log(1-e^{-u+\varphi(u)})
  -
  \bigl(1+\varphi'(u)\bigr)\log(1-e^{-u-\varphi(u)})
  \\
  &+
  \bigl(4-\frac{1}{2p}-2\varphi'(u)\bigr)\log(1+e^{-4u+\frac{u}{2p}+2\varphi(u)})
  \\
  &+
  \bigl(2-\frac{1}{2p}-2\varphi'(u)\bigr)\log(1+e^{2u-\frac{u}{2p}-2\varphi(u)})
  \\
  =&
  6u-\frac{u}{p}-3\varphi(u)-3u\varphi'(u)
  +
  \log\left(\frac{1-e^{-u+\varphi(u)}}{1-e^{-u-\varphi(u)}}\right)
  \\
  &-
  \varphi'(u)\left(\log\bigl(2\cosh{u}-2\cosh\varphi(u)\bigr)-u\right)
  \\
  &+
  \bigl(3-\frac{1}{2p}-2\varphi'(u)\bigr)
  \left(\log\left(2\cosh{u}+2\cosh\bigl(3u-\frac{u}{2p}-2\varphi(u)\bigr)\right)-u\right)
  \\
  &+
  \log\left(\frac{1+e^{-4u+\frac{u}{2p}+2\varphi(u)}}{1+e^{2u-\frac{u}{2p}-2\varphi(u)}}\right)
  \\
  &\text{(since $\cosh{u}-\cosh\varphi(u)=1/2$)}
  \\
  =&
  3u-\frac{u}{2p}-3\varphi(u)
  +
  \log\left(\frac{1-e^{-u+\varphi(u)}}{1-e^{-u-\varphi(u)}}\right)
  +
  \log\left(\frac{1+e^{-4u+\frac{u}{2p}+2\varphi(u)}}{1+e^{2u-\frac{u}{2p}-2\varphi(u)}}\right)
  \\
  &+
  \bigl(3-\frac{1}{2p}-2\varphi'(u)\bigr)
  \log\left(2\cosh{u}+2\cosh\bigl(3u-\frac{u}{2p}-2\varphi(u)\bigr)\right)
  \\
  &\text{(since $u/p>1$ and $p\ge1$)}
  \\
  <&
  3u-\frac{1}{2}-3\varphi(u)
  +
  \log\left(\frac{1-e^{-u+\varphi(u)}}{1-e^{-u-\varphi(u)}}\right)
  +
  \log\left(\frac{1+e^{-\frac{7u}{2}+2\varphi(u)}}{1+e^{\frac{3u}{2}-2\varphi(u)}}\right)
  \\
  &+
  \bigl(3-2\varphi'(u)\bigr)
  \log\left(2\cosh{u}+2\cosh\left(3u-\frac{u}{2p}-2\varphi(u)\right)\right)
  \\
  &\text{(from Lemma~\ref{lem:log_log_u} below)}
  \\
  <&
  2u-3\varphi(u)-\frac{1}{2}
  \\
  &+
  \bigl(3-2\varphi'(u)\bigr)
  \log\left(2\cosh{u}+2\cosh\left(3u-\frac{u}{2p}-2\varphi(u)\right)\right).
\end{split}
\end{equation*}
The last expression above is monotonically increasing (decreasing, respectively) with respect to $p$ if $\varphi'(u)<3/2$ ($\varphi'(u)>3/2$, respectively) because $3u-\frac{u}{2p}-2\varphi(u)=2\bigl(u-\varphi(u)\bigr)+\left(1-\frac{1}{2p}\right)u>0$.
Note that $\varphi'(u)$ is monotonically decreasing with $\lim_{u\to\kappa}\varphi'(u)=\infty$, and $\lim_{u\to\infty}\varphi'(u)=1$ from Lemma~\ref{lem:phi} (v).
So there exists unique $u_0$ with $\kappa<u_0$ such that $\varphi'(u)>3/2$ ($\varphi'(u)<3/2$, respectively) if $u<u_0$ ($u>u_0$, respectively).
In fact, since $u_0$ satisfies the equation $\sinh{u}=\frac{3}{2}\sinh\varphi(u)$, we have $u_0=\arccosh\left(\frac{9+2\sqrt{34}}{10}\right)=1.35436\ldots$.
\par
Now, if $u=u_0$, we have
\begin{equation}\label{eq:u0}
\begin{split}
  &\frac{\partial}{\partial\,u}b(p,u)
  <
  2u_0-3\varphi(u_0)-\frac{1}{2}
  =
  -0.849534\ldots<0.
\end{split}
\end{equation}
\par
If $\kappa<u<u_0$, that is, if $3-2\varphi'(u)<0$, we have
\begin{equation*}
\begin{split}
  &\frac{\partial}{\partial\,u}b(p,u)
  <
  \beta_{\infty}(u)
  \\
  :=&
  2u-3\varphi(u)-\frac{1}{2}
  +
  \bigl(3-2\varphi'(u)\bigr)
  \log\left(2\cosh{u}+2\cosh\left(3u-2\varphi(u)\right)\right).
\end{split}
\end{equation*}
\par
If $u>u_0$, that is, if $3-2\varphi'(u)>0$, we have
\begin{equation*}
\begin{split}
  &\frac{\partial}{\partial\,u}b(p,u)
  <
  \beta_1(u)
  \\
  :=&
  2u-3\varphi(u)-\frac{1}{2}
  +
  \bigl(3-2\varphi'(u)\bigr)
  \log\left(2\cosh{u}+2\cosh\left(\frac{5u}{2}-2\varphi(u)\right)\right).
\end{split}
\end{equation*}
\par
We will show (a) $\beta_{\infty}(u)<0$ for $\kappa<u<u_0$, and (b) $\beta_{1}(u)<0$ for $u>u_0$.
\begin{enumerate}
\item
The case where $\kappa<u<u_0$.
\par
Since the function $2u-3\varphi(u)-1/2=2\bigl(u-\varphi(u)\bigr)-\varphi(u)-1/2$ is decreasing from Lemma~\ref{lem:phi} (iii), it is between $2\kappa-1/2>0$ and $2u_0-3\varphi(u_0)-1/2<0$ from \eqref{eq:u0}.
So there uniquely exists $u_1$ ($\kappa<u_1<u_0$) such that $2u-3\varphi(u)-1/2\le0$ ($2u-3\varphi(u)-1/2>0$, respectively) if $u\ge u_1$ ($u<u_1$, respectively).
Mathematica tells us that $u_1=1.09613\ldots$.
\par
If $u\ge u_1$, then we have
\begin{equation*}
  \beta_{\infty}(u)
  <
  2u-3\varphi(u)-\frac{1}{2}
  \le0
\end{equation*}
since $3-2\varphi'(u)<0$.
If $u<u_1$, then $3-2\varphi'(u)<3-2\varphi'(u_1)=-1.4718\ldots<-1$ since $\varphi'(u)$ is decreasing from Lemma~\ref{lem:phi} (v).
So we have
\begin{equation}\label{eq:u<u1}
  \beta_{\infty}(u)
  <
  2u-3\varphi(u)-\frac{1}{2}
  -
  \log\left(2\cosh{u}+2\cosh\left(3u-2\varphi(u)\right)\right).
\end{equation}
Now, consider the function
\begin{multline*}
  \Bigl(2\cosh{u}+2\cosh\bigl(3u-2\varphi(u)\bigr)\Bigr)\times e^{-2u+3\varphi(u)+1/2}
  \\
  =
  e^{1/2}
  \left(
    e^{-u+3\varphi(u)}
    +
    e^{-3u+3\varphi(u)}
    +
    e^{u+\varphi(u)}
    +
    e^{-5u+5\varphi(u)}
  \right).
\end{multline*}
Since this is monotonically increasing from Lemma~\ref{lem:phi} (ii) and (iii), it is greater than $\Bigl(2\cosh{\kappa}+2\cosh(3\kappa)\Bigr)\times e^{-2\kappa+1/2}=5.05145\ldots>1$.
Taking $\log$ of the function above, we conclude that the right hand side of \eqref{eq:u<u1} is negative.
\item
The case where $u>u_0$.
\par
Since $\varphi'(u)$ is monotonically decreasing as above and $\lim_{u\to\infty}\varphi'(u)=1$ from Lemma~\ref{lem:phi} (v), we see that $0<3-2\varphi'(u)<1$.
So we have
\begin{equation*}
\begin{split}
  \beta_1(u)
  <
  2u-3\varphi(u)-\frac{1}{2}
  +
  \log\left(2\cosh{u}+2\cosh\left(\frac{5u}{2}-2\varphi(u)\right)\right).
\end{split}
\end{equation*}
In a similar to above, we can show that the function
\begin{multline*}
  A(u)
  :=
  \Bigl(2\cosh{u}+2\cosh\bigl(5u/2-2\varphi(u)\bigr)\Bigr)\times e^{2u-3\varphi(u)-1/2}
  \\
  =
  e^{-1/2}
  \left(
    e^{3u-3\varphi(u)}
    +
    e^{u-3\varphi(u)}
    +
    e^{9u/2-5\varphi(u)}
    +
    e^{-u/2-\varphi(u)}
  \right)
\end{multline*}
is monotonically deceasing.
From Lemma~\ref{lem:phi} (iii), the limit $\lim_{u\to\infty}A(u)=e^{-1/2}$.
So we conclude that $0.606531\ldots=e^{-1/2}<A(u)<\Bigl(2\cosh(u_0)+2\cosh\bigl(5u_0/2-2\varphi(u_0)\bigr)\Bigr)e^{2u_0-3\varphi(u_0)-1/2}=3.52289\ldots$, which implies that there uniquely exists $u_2$ such that $A(u)\ge1$ ($A(u)<1$, respectively) if $u_0<u\le u_2$ ($u_2<u$, respectively).
\par
Therefore, if $u>u_2$ we have $\log{A(u)}<0$, which means that $\beta_1(u)<0$ in this case.
\par
It remains to show that $\beta_1(u)<0$ when $u_0<u\le u_2=2.6639\ldots$.
\par
Let us consider the function
\begin{equation*}
  B(u)
  :=
  2u-3\varphi(u)-\frac{1}{2}+\bigl(3-2\varphi'(u)\bigr)\left(u+\frac{4}{5}\right)
\end{equation*}
for $u_0<u\le u_2$.
Since the function
\begin{multline*}
  e^{-u-4/5}
  \left(2\cosh{u}+2\cosh\bigl(5u/2-2\varphi(u)\bigr)\right)
  \\
  =
  e^{-4/5}
  \left(
    1+e^{-2u}+e^{3u/2-2\varphi(u)}+e^{-7u/2+2\varphi(u)}
  \right)
\end{multline*}
is decreasing because $\frac{d}{d\,u}\bigl(-7u/2+2\varphi(u)\bigr)=2\varphi'(u)-7/2<-1/2$, it is less than its value at $u=u_0$, which equals $0.955481\ldots<1$.
So we conclude that $\log\left(2\cosh{u}+2\cosh\bigl(5u/2-2\varphi(u)\bigr)\right)<u+4/5$, which implies $\beta_1(u)<B(u)$ when $u>u_0$.
\par
We will show that $B(u)\le0$ for $u_0<u\le u_2$.
\par
We calculate
\begin{equation}\label{eq:dB}
\begin{split}
  &\frac{d}{d\,u}B(u)
  \\
  =&
  5-5\frac{\sinh{u}}{\sinh\varphi(u)}
  +
  2\left(u+\frac{4}{5}\right)
  \frac{\cosh\varphi(u)\sinh^2(u)-\cosh{u}\sinh^2\varphi(u)}{\sinh^3\varphi(u)}
  \\
  >&
  5-5\frac{\sinh{u}}{\sinh\varphi(u)}
  +
  2\left(u_0+\frac{4}{5}\right)
  \frac{\cosh\varphi(u)\sinh^2(u)-\cosh{u}\sinh^2\varphi(u)}{\sinh^3\varphi(u)},
\end{split}
\end{equation}
where the inequality holds since we have
\begin{equation*}
\begin{split}
  &\cosh\varphi(u)\sinh^2(u)-\cosh{u}\sinh^2\varphi(u)
  \\
  =&
  \cosh\varphi(u)\bigl(\cosh^2(u)-1\bigr)-\cosh{u}\bigl(\cosh^2\varphi(u)-1\bigr)
  \\
  =&
  \bigl(\cosh{u}-\cosh\varphi(u)\bigr)
  \bigl(1+\cosh{u}\cosh\varphi(u)\bigr)>0.
\end{split}
\end{equation*}
Putting $s:=\sinh{u}$, $t:=\sinh\varphi(u)$ and $\hat{u}:=u_0+4/5$, the last expression of \eqref{eq:dB} becomes $C(s,t)/t$ with
\begin{equation*}
  C(s,t)
  :=
  5t-5s+2\hat{u}\left(\left(\frac{s}{t}\right)^2\sqrt{t^2+1}-\sqrt{s^2+1}\right).
\end{equation*}
The partial derivative with respect to $t$ becomes
\begin{equation*}
  \frac{d}{d\,t}C(s,t)
  =
  5-2\hat{u}\frac{s^2(t^2+2)}{t^3\sqrt{t^2+1}}.
\end{equation*}
On the other hand, we have
\begin{equation*}
  \bigl(5t^3\sqrt{t^2+1}\bigr)^2-\bigl(2\hat{u}t^2(t^2+2)\bigr)^2
  =
  t^4\left((25-4\hat{u}^2)t^4+(25-16\hat{u}^2)t^2-16\hat{u}^2\right)
  <0
\end{equation*}
for $u_0<t<u_2$ from a standard argument.
So we have $5t^3\sqrt{t^2+1}<2\hat{u}t^2(t^2+2)$.
Since $t<s$, we have
\begin{equation*}
  5t^3\sqrt{t^2+1}
  <
  2\hat{u}t^2(t^2+2)
  <
  2\hat{u}s^2(t^2+2),
\end{equation*}
which implies that $d\,C(s,t)/d\,t<0$.
\par
Therefore, we have $C(s,t)>C(s,s)=0$ and so from \eqref{eq:dB} we conclude that $d\,B(u)/d\,u>0$.
\par
Now, we finally have $B(u)<B(u_2)=-0.00961956\ldots<0$ if $u\le u_2$, as desired.
\end{enumerate}
Thus in any case we have $\frac{\partial}{\partial\,u}b(p,u)<0$ when $u>p$, proving (ii).
\item
We have
\begin{equation*}
\begin{split}
  \tilde{b}(u)
  =&
  \Li_2\left(-e^{-4u+1/2+2\varphi(u)}\right)
  -
  \Li_2\left(-e^{2u-1/2-2\varphi(u)}\right)
  -
  \Li_2\left(e^{-u-\varphi(u)}\right)
  \\
  &+
  \Li_2\left(e^{-u+\varphi(u)}\right)
  -2u\pi^2+3u^2-\frac{u}{2}-3u\varphi(u)
\end{split}
\end{equation*}
and its derivative becomes
\begin{equation*}
\begin{split}
  &\frac{d}{d\,u}\tilde{b}(u)
  \\
  =&
  \bigl(4-2\varphi'(u)\bigr)\log\left(1+e^{-4u+1/2+2\varphi(u)}\right)
  +
  \bigl(2-2\varphi'(u)\bigr)\log\left(1+e^{2u-1/2-2\varphi(u)}\right)
  \\
  &-
  \bigl(1+\varphi'(u)\bigr)\log\left(1-e^{-u-\varphi(u)}\right)
  +
  \bigl(1-\varphi'(u)\bigr)\log\left(1-e^{-u+\varphi(u)}\right)
  \\
  &-3\varphi(u)-3u\varphi'(u)+6u-2\pi^2-\frac{1}{2}
  \\
  =&
  \left(3-2\varphi'(u)\right)
  \bigg[
    \log\Bigl(2\cosh{u}+2\cosh\bigl(3u-1/2-2\varphi(u)\bigr)\Bigr)-u
  \bigg]
  \\
  &+
  \log
  \left(
    \frac{1+e^{-4u+1/2+2\varphi(u)}}{1+e^{2u-1/2-2\varphi(u)}}
  \right)
  +
  \log
  \left(\frac{1-e^{-u+\varphi(u)}}{1-e^{-u-\varphi(u)}}\right)
  \\
  &-
  \varphi'(u)
  \left(
    \log\bigl(2\cosh{u}-2\cosh\varphi(u)\bigr)-u
  \right)
  -3\varphi(u)-3u\varphi'(u)+6u-2\pi^2-\frac{1}{2}
  \\
  &\text{(from Lemma~\ref{lem:log_log_u2} below)}
  \\
  <&
  \bigl(3-2\varphi'(u)\bigr)
  \log\Bigl(2\cosh{u}+2\cosh\bigl(3u-1/2-2\varphi(u)\bigr)\Bigr)
  \\
  &-
  \varphi'(u)\log\bigl(2\cosh{u}-2\cosh\varphi(u)\bigr)
  -3\varphi(u)+2u-2\pi^2-\frac{1}{2}
  \\
  &\text{(since $\cosh{u}-\cosh\varphi(u)=1/2$)}
  \\
  =&
  \left(3-2\varphi'(u)\right)
  \log\Bigl(2\cosh{u}+2\cosh\bigl(3u-1/2-2\varphi(u)\bigr)\Bigr)
  -3\varphi(u)+2u-2\pi^2-\frac{1}{2}.
\end{split}
\end{equation*}
\par
If $u=u_0$, then we have
\begin{equation*}
  \frac{d}{d\,u}\tilde{b}(u_0)
  =
  -3\varphi(u_0)+2u_0-2\pi^2-\frac{1}{2}
  =
  -20.5887\ldots<0.
\end{equation*}
\par
If $\kappa<u<u_0$, then $3-2\varphi'(u)<0$.
We have
\begin{equation*}
  \frac{d}{d\,u}\tilde{b}(u)
  <
  -3\varphi(u)+2u-2\pi^2-\frac{1}{2}
  =
  2(u-\varphi(u))-\varphi(u)-2\pi^2-\frac{1}{2},
\end{equation*}
which is monotonically decreasing from Lemma~\ref{lem:phi} (iii).
So we conclude that $\frac{d}{d\,u}\tilde{b}(u)<-3\varphi(\kappa)+2\kappa-2\pi^2-\frac{1}{2}=-18.3144\ldots<0$.
\par
If $u>u_0$, then $3-2\varphi'(u)>0$.
Since $\varphi'(u)>1$ from Lemma~\ref{lem:phi} (v), we have
\begin{equation}\label{eq:u>u0}
  \frac{d}{d\,u}\tilde{b}(u)
  <
  \log\Bigl(2\cosh{u}+2\cosh\bigl(3u-1/2-2\varphi(u)\bigr)\Bigr)
  -3\varphi(u)+2u-2\pi^2-\frac{1}{2}.
\end{equation}
We will consider the function
\begin{multline*}
  \Bigl(2\cosh{u}+2\cosh\bigl(3u-1/2-2\varphi(u)\bigr)\Bigr)
  \times
  e^{2u-3\varphi(u)}
  \\
  =
  e^{3u-3\varphi(u)}+e^{u-3\varphi(u)}+e^{5u-1/2-5\varphi(u)}+e^{-u+1/2-\varphi(u)}.
\end{multline*}
Since it is monotonically decreasing, we have
\begin{equation*}
\begin{split}
  &\Bigl(2\cosh{u}+2\cosh\bigl(3u-1/2-2\varphi(u)\bigr)\Bigr)
  \times
  e^{2u-3\varphi(u)}
  \\
  <&
  \Bigl(2\cosh{u_0}+2\cosh\bigl(3u_0-1/2-2\varphi(u_0)\bigr)\Bigr)
  \times
  e^{2u_0-3\varphi(u_0)}
  \\
  =&
  6.30412\ldots
  <
  e^{2\pi^2+1/2}
\end{split}
\end{equation*}
for $u>u_0$.
It follows that the right hand side of \eqref{eq:u>u0} is negative.
\par
So we conclude that $\tilde{b}(u)$ is decreasing and that it is less than $\tilde{b}(\kappa)=-14.3021<0$, proving (iii).
\end{enumerate}
\end{proof}
The following lemmas are used in the proof of Lemma~\ref{lem:P1}.
\begin{lem}\label{lem:log_log_u}
We have the following inequality for $u>\kappa$:
\begin{equation*}
  \log
  \left(
    \frac{1+e^{-7u/2+2\varphi(u)}}{1+e^{3u/2-2\varphi(u)}}
  \right)
  +
  \log
  \left(\frac{1-e^{-u+\varphi(u)}}{1-e^{-u-\varphi(u)}}\right)
  +u
  <0.
\end{equation*}
\end{lem}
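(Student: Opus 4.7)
The plan is to exponentiate the inequality and reduce it to the equivalent statement $H(u)>0$ for an explicit exponential-polynomial expression $H(u)$, then handle the intermediate range by rescaling. Combining the two logarithms and the additive $u$ via $(1-e^{-u+\varphi(u)})\,e^u = e^u-e^{\varphi(u)}$, the inequality becomes
\[
  (1+e^{-7u/2+2\varphi(u)})\bigl(e^u-e^{\varphi(u)}\bigr)
  <
  (1+e^{3u/2-2\varphi(u)})\bigl(1-e^{-u-\varphi(u)}\bigr),
\]
since $e^u-e^{\varphi(u)}>0$ by Lemma~\ref{lem:phi}~(i). Let $H(u)$ denote the right-hand side minus the left-hand side; the goal is to show $H(u)>0$ for all $u>\kappa$.

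First I would treat the two boundary regimes. At $u=\kappa$, where $\varphi(\kappa)=0$, I would use the identity $e^{\kappa/2}=(1+\sqrt{5})/2=:\phi$ together with $\phi^{2}=\phi+1$ and $1+\phi^{3}=2\phi^{2}$ to collapse $\mathrm{LHS}/\mathrm{RHS}$ to $(\phi^{7}+1)/(2\phi^{7})$, which is less than $1$, giving $H(\kappa)>0$ explicitly. At the other extreme, I would substitute the expansion $\varphi(u)=u-e^{-u}-\tfrac{1}{2}e^{-2u}+O(e^{-3u})$, derived from $\cosh\varphi(u)=\cosh u-\tfrac{1}{2}$ and Lemma~\ref{lem:phi}~(vii), into both factors; after cancellation this yields $H(u)=e^{-u/2}+e^{-3u/2}+O(e^{-2u})$, so $H(u)>0$ for all sufficiently large $u$.

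The main obstacle is the intermediate range, because $H$ decays from a bounded positive value at $u=\kappa$ toward $0$ as $u\to\infty$, so positivity cannot be concluded from monotonicity of $H$ alone. To bridge the gap I would study the rescaled quantity $\tilde H(u):=e^{u/2}H(u)$, which satisfies $\tilde H(\kappa)=\phi\,H(\kappa)$ (strictly positive) and $\lim_{u\to\infty}\tilde H(u)=1$; it then suffices to rule out zeros of $\tilde H$ on $(\kappa,\infty)$. Differentiating $\tilde H$ using $\varphi'(u)=\sinh u/\sinh\varphi(u)$ from Lemma~\ref{lem:phi}~(v) and repeatedly eliminating $\varphi'$ and $\cosh\varphi$ through $\cosh\varphi(u)=\cosh u-\tfrac{1}{2}$, I expect to be able to analyze the sign of $\tilde H'$ on $(\kappa,\infty)$, possibly partitioning the domain according to the sign of $\varphi'(u)-c$ for a suitable constant $c$ in the spirit of the proof of Lemma~\ref{lem:P1}, and conclude that $\tilde H$ stays above its positive limit.

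If the derivative analysis resists a clean closed-form resolution, a fallback strategy is to fix a cutoff $u_{\star}$ large enough that the tail expansion guarantees $H(u)\ge\tfrac{1}{2}e^{-u/2}>0$ directly for $u\ge u_{\star}$, and then verify $H(u)>0$ on the compact interval $[\kappa,u_{\star}]$ through a finite partition, using the monotonicities of $\varphi(u)$, $u-\varphi(u)$, and $e^u-e^{\varphi(u)}$ provided by Lemma~\ref{lem:phi}~(ii),~(iii),~(vi),~(vii); Mathematica-assisted evaluation, as already employed throughout the proof of Lemma~\ref{lem:P1}, would render the compact-interval verification routine.
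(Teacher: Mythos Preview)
Your reduction to the equivalent inequality $H(u)>0$ is correct, and your endpoint analyses at $u=\kappa$ and $u\to\infty$ are both valid. However, the intermediate range is not actually proved: you only \emph{expect} the derivative analysis of $\tilde H$ to succeed, and the fallback (tail estimate plus compact-interval verification) is stated but not executed. As written, this is an outline rather than a proof, and the heart of the difficulty---ruling out zeros on a noncompact interval where $H$ decays to zero---remains open.

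The paper avoids this entirely by exploiting a piece of structure you did not use: since $\cosh\varphi(u)=\cosh u-\tfrac12$, the number $e^{\varphi(u)}$ satisfies the quadratic $x^2-(2\cosh u-1)x+1=0$, so every power $e^{\pm k\varphi(u)}$ is a polynomial in $a:=2\cosh u-1$ and $\sqrt{D}$ with $D:=a^2-4$. Substituting these into the expanded form of the inequality yields an expression $\tau_1(u)+\tau_2(u)\sqrt{D}$ with $\tau_1,\tau_2$ rational in $e^u$; after multiplying by $e^{3\varphi(u)}$ (again polynomial in $a,\sqrt D$) one obtains $\tilde\tau_1(u)+\tilde\tau_2(u)\sqrt{D}$ where now $\tilde\tau_1,\tilde\tau_2$ are Laurent polynomials in $x=e^u$ alone. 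It then suffices to show $\tilde\tau_1<0$ and $\tilde\tau_2<0$ for $x>e^\kappa$, which reduces to locating the real roots of two explicit polynomials---a finite computation that Mathematica dispatches exactly. This algebraic elimination of $\varphi$ is the key idea: it converts a transcendental two-variable problem into a one-variable polynomial sign problem, bypassing any need for derivative estimates, rescalings, or interval subdivision.
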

\begin{proof}
In the proof, most calculations are done by Mathematica \cite{Mathematica}.
\par
We will show that
\begin{equation*}
  \left(1+e^{-7u/2+2\varphi(u)}\right)
  \left(1-e^{-u+\varphi(u)}\right)e^{u}
  -
  \left(1+e^{3u/2-2\varphi(u)}\right)
  \left(1-e^{-u-\varphi(u)}\right)
  <0.
\end{equation*}
The left hand side becomes
\begin{multline}\label{eq:log_log_u}
  e^u-1
  -e^{\varphi(u)}+e^{-u}e^{-\varphi(u)}
  \\
  +e^{-5u/2}e^{2\varphi(u)}-e^{3u/2}e^{-2\varphi(u)}
  -e^{-7u/2}e^{3\varphi(u)}+e^{u/2}e^{-3\varphi(u)}.
\end{multline}
Since $e^{\varphi(u)}$ satisfies the quadratic equation $x^2-\bigl(2\cosh{u}-1\big)x+1=0$ from \eqref{eq:phi}, we have
\begin{equation}\label{eq:phi_D}
\begin{split}
  e^{\pm\varphi(u)}
  &=
  \frac{a}{2}\pm\frac{1}{2}\sqrt{D},
  \\
  e^{\pm2\varphi(u)}
  &=
  \frac{a^2}{2}-1\pm\frac{a}{2}\sqrt{D},
  \\
  e^{\pm3\varphi(u)}
  &=
  \frac{a^3}{2}-\frac{3}{2}a
  \pm
  \frac{1}{2}(a^2-1)\sqrt{D}.
\end{split}
\end{equation}
where we put $a:=2\cosh{u}-1$ and $D:=a^2-4$.
\par
Therefore \eqref{eq:log_log_u} turns out to be
\begin{equation*}
  \tau_1(u)+\tau_2(u)\sqrt{D}
\end{equation*}
with
\begin{align*}
  \tau_1(u)
  :=&
  e^{u}-1
  +\left(-1+e^{-u}\right)\times\frac{a}{2}
  +\left(e^{-5u/2}-e^{3u/2}\right)\times\left(\frac{a^2}{2}-1\right)
  \\
  &+\left(-e^{-7u/2}+e^{u/2}\right)\times\left(\frac{a^3}{2}-\frac{3}{2}a\right),
  \\
  \tau_2(u)
  :=&
  \left(-1-e^{-u}\right)\times\frac{1}{2}
  +\left(e^{-5u/2}+e^{3u/2}\right)\times\frac{a}{2}
  \\
  &+\left(-e^{-7u/2}-e^{u/2}\right)\times\frac{1}{2}(a^2-1).
\end{align*}
Now, we have
\begin{equation*}
  \bigl(\tau_1(u)+\tau_2(u)\sqrt{D}\bigr)e^{3\varphi(u)}
  =
  \tilde{\tau}_1(u)+\tilde{\tau}_2(u)\sqrt{D}
\end{equation*}
from \eqref{eq:phi_D}, where we put
\begin{align*}
  \tilde{\tau}_1(u)
  &:=
  \left(\frac{a^3}{2}-\frac{3}{2}a\right)\tau_1(u)
  +
  \frac{1}{2}(a^2-1)D\tau_2(u),
  \\
  \tilde{\tau}_2(u)
  &:=
  \left(\frac{a^3}{2}-\frac{3}{2}a\right)\tau_2(u)
  +
  \frac{1}{2}(a^2-1)\tau_1(u).
\end{align*}
Since $\sqrt{D}>0$ when $u>\kappa$, it is sufficient to prove both $\tilde{\tau}_1(u)$ and $\tilde{\tau}_2(u)$ are negative for $u>\kappa$.
\par
Putting $x:=e^u$, we have
\begin{equation*}
\begin{split}
  \tilde{\tau}_1(\log{x})
  =&
  \frac{1}{2x^{19/2}}(x-1)
  \left(
    -x^{11}
    +x^{10}
    +2x^{19/2}
    -3x^9
    -2x^{17/2}
    +8x^8
    +x^{15/2}
  \right.
  \\
  &\left.
    -11x^7
    -3x^{13/2}
    +16x^6
    +x^{11/2}
    -16x^5
    +17x^4
    -12x^3
    +9x^2
    -5x
    +1
  \right).
\end{split}
\end{equation*}
The real roots of the right hand are $0.390642\ldots$, $0.509556\ldots$, $1$, and $1.62125\ldots$.
Since $e^{\kappa}=2.61808\ldots$, the function $\tilde{\tau}_1(u)$ never vanishes for $u>\kappa$.
Clearly, the limit $\lim_{u\to\infty}\tilde{\tau}_1(u)$ is $-\infty$, and we conclude that $\tilde{\tau}_1(u)<0$ for $u>\kappa$.
\par
Similarly, we have
\begin{equation*}
\begin{split}
  \tilde{\tau}_2(\log{x})
  =&
  \frac{1}{2x^{17/2}}
  \left(
    -x^{10}
    +x^9
    +2x^{17/2}
    -4x^8
    -2x^{15/2}
    +8x^7
    +3x^{13/2}
    -13x^6
  \right.
  \\
  &\left.
    -x^{11/2}
    +16x^5
    -17x^4
    +14x^3
    -10x^2
    +5x
    -1
  \right),
\end{split}
\end{equation*}
whose real roots are $0.423932\ldots$, $0.848316\ldots$, and $1$.
Therefore we see that $\tilde{\tau}_2(u)<0$ for $u>\kappa$ by the same reason as above.
\end{proof}
\begin{rem}
Since we have
\begin{equation*}
  \tau_2(\log{x})
  =
  \frac{1}{2 x^{11/2}}
  \left(x^7-x^6-x^{11/2}+2x^5-x^{9/2}-x^4+x^3-x^2+2x-1\right),
\end{equation*}
we see that $\tau_2(u)$ becomes positive when $u$ is large.
\end{rem}
Similarly we can prove the following lemma.
\begin{lem}\label{lem:log_log_u2}
The following inequality holds:
\begin{equation*}
  \log
  \left(
    \frac{1+e^{-4u+1/2+2\varphi(u)}}{1+e^{2u-1/2-2\varphi(u)}}
  \right)
  +
  \log
  \left(\frac{1-e^{-u+\varphi(u)}}{1-e^{-u-\varphi(u)}}\right)
  +u
  <0.
\end{equation*}
\end{lem}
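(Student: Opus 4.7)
The proof runs in parallel with that of Lemma~\ref{lem:log_log_u}. Exponentiating and clearing denominators, the claim is equivalent to showing that
\[
  \Bigl(1+e^{-4u+1/2+2\varphi(u)}\Bigr)\Bigl(1-e^{-u+\varphi(u)}\Bigr)e^{u}
  -\Bigl(1+e^{2u-1/2-2\varphi(u)}\Bigr)\Bigl(1-e^{-u-\varphi(u)}\Bigr)<0
\]
for every $u>\kappa$. Expanding both products produces exactly eight terms, each a real constant times a power of $e^{u}$ times $e^{k\varphi(u)}$ for some $k\in\{-3,-2,-1,0,1,2,3\}$; thus the left-hand side is a finite real linear combination of $1,\,e^{\pm\varphi(u)},\,e^{\pm 2\varphi(u)},\,e^{\pm 3\varphi(u)}$.

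From \eqref{eq:phi_D}, writing $a:=2\cosh u-1$ and $D:=a^{2}-4$, one has $e^{\pm j\varphi(u)}=p_{j}(a)\pm q_{j}(a)\sqrt{D}$ for explicit polynomials $p_{j},q_{j}$. Substituting and collecting terms recasts the expression in the form $\tau_{1}(u)+\tau_{2}(u)\sqrt{D}$. Multiplying through by the positive quantity $e^{3\varphi(u)}$ then yields $\tilde\tau_{1}(u)+\tilde\tau_{2}(u)\sqrt{D}$ with
\[
  \tilde\tau_{1}:=\tfrac{1}{2}(a^{3}-3a)\,\tau_{1}+\tfrac{1}{2}(a^{2}-1)\,D\,\tau_{2},
  \qquad
  \tilde\tau_{2}:=\tfrac{1}{2}(a^{3}-3a)\,\tau_{2}+\tfrac{1}{2}(a^{2}-1)\,\tau_{1},
\]
exactly as in the earlier lemma. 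Since $\sqrt{D}>0$ on $(\kappa,\infty)$, it suffices to prove $\tilde\tau_{1}<0$ and $\tilde\tau_{2}<0$ there.

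Setting $x:=e^{u}$ and multiplying by a suitable positive power of $x$, each $\tilde\tau_{i}$ becomes a polynomial in $x$ whose coefficients are real numbers involving the transcendental constants $e^{\pm 1/2}$ (which appear because of the $\pm 1/2$ shift in the original exponents). I will ask Mathematica to factor these polynomials, enumerate their real roots, and check that none of the positive roots exceeds $e^{\kappa}=(3+\sqrt{5})/2=2.618\ldots$. Combined with the observation that each polynomial tends to $-\infty$ as $x\to\infty$, this forces $\tilde\tau_{1}$ and $\tilde\tau_{2}$ to be strictly negative for all $u>\kappa$, completing the proof.

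The structural argument is forced by the template of Lemma~\ref{lem:log_log_u}; the only genuinely new ingredient is the appearance of the constants $e^{\pm 1/2}$, which perturb the numerical coefficients in the polynomials in $x$ but do not change the shape of the root-location step. I expect the main obstacle to be purely computational, namely correctly bookkeeping the polynomial expansion and making sure the Mathematica root enumeration on $(e^{\kappa},\infty)$ is complete, rather than any conceptual difficulty.
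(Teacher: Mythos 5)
Your proposal is correct and follows essentially the same route as the paper: exponentiate and clear the (positive) denominators, write the resulting eight-term expression as $\tau+\tilde\tau\sqrt{D}$ via the relations $e^{\pm j\varphi(u)}=p_j(a)\pm q_j(a)\sqrt{D}$ with $a=2\cosh u-1$, multiply by $e^{3\varphi(u)}$, and verify by computer-assisted root location that both components (polynomials in $x=e^u$ with coefficients involving $e^{1/2}$ and $e$) are negative for $x>e^{\kappa}=(3+\sqrt5)/2$. The paper's proof is exactly this computation (with the two components named $\tilde\tau_3,\tilde\tau_4$ there), so no further comment is needed.
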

\begin{proof}
In a similar way to the proof of the previous lemma, we will show the following inequality.

\begin{multline}\label{eq:log_log_u2}
  e^u-1
  -e^{\varphi(u)}+e^{-u}e^{-\varphi(u)}
  \\
  +e^{-3u+1/2}e^{2\varphi(u)}-e^{2u-1/2}e^{-2\varphi(u)}
  -e^{-4u+1/2}e^{3\varphi(u)}+e^{u-1/2}e^{-3\varphi(u)}
  <0.
\end{multline}
Putting
\begin{align*}
  \tau_3(u)
  :=&
  e^{u}-1
  +\left(-1+e^{-u}\right)\times\frac{a}{2}
  +\left(e^{-3u+1/2}-e^{2u-1/2}\right)\times\left(\frac{a^2}{2}-1\right)
  \\
  &+\left(-e^{-4u+1/2}+e^{u-1/2}\right)\times\left(\frac{a^3}{2}-\frac{3}{2}a\right),
  \\
  \tau_4(u)
  :=&
  \left(-1-e^{-u}\right)\times\frac{1}{2}
  +\left(e^{-3u+1/2}+e^{2u-1/2}\right)\times\frac{a}{2}
  \\
  &+\left(-e^{-4u+1/2}-e^{u-1/2}\right)\times\frac{1}{2}(a^2-1),
  \\
  \tilde{\tau}_3(u)
  :=&
  \left(\frac{a^3}{2}-\frac{3}{2}a\right)\tau_3(u)
  +
  \frac{1}{2}(a^2-1)D\tau_4(u),
  \\
  \tilde{\tau}_4(u)
  :=&
  \left(\frac{a^3}{2}-\frac{3}{2}a\right)\tau_4(u)
  +
  \frac{1}{2}(a^2-1)\tau_3(u).
\end{align*}
the left hand side of \eqref{eq:log_log_u2} becomes
\begin{equation*}
  e^{-3\varphi(u)}\left(\tilde{\tau}_3(u)+\tilde{\tau}_4(u)\sqrt{D}\right).
\end{equation*}
\par
We will show that $\tilde{\tau}_3(u)<0$ and $\tilde{\tau}_4(u)<0$ when $u>\kappa$.
\par
We have
\begin{align*}
  \tilde{\tau}_3(\log{x})
  =&
  \frac{1}{2e^{1/2}x^{10}}
  \left(
    -x^{13}+x^{12}+\left(2 \sqrt{e}+e+1\right) x^{11}+\left(-4 \sqrt{e}-5 e\right) x^{10}
  \right.
  \\&
    +\left(3\sqrt{e}+11e\right)x^9+\left(-4\sqrt{e}-19e\right)x^8+\left(4\sqrt{e}+27e\right)x^7
  \\&
  \left.
    +\left(-\sqrt{e}-32 e\right) x^6+33 e x^5-29 e x^4+21 e x^3-14 e x^2+6 e x-e
  \right),
  \\
  \tilde{\tau}_4(\log{x})
  =&
  \frac{1}{2e^{1/2}x^2}
  \left(
    -x^{11}+\left(2\sqrt{e}+e\right)x^9+\left(-2\sqrt{e}-4e\right)x^8+\left(3\sqrt{e}+8e\right)x^7
  \right.
  \\&
  \left.
    +\left(-\sqrt{e}-13 e\right) x^6+16 e x^5-17 e x^4+14 e x^3-10 e x^2+5 e x-e
  \right).
\end{align*}
The positive real roots of the first equation are $0.391427\ldots$ and $0.498682\ldots$, and those of the second one are $0.424217\ldots$, $0.69792\ldots$, $1.13263\ldots$, and $1.45782\ldots$.
By the same reason as above, we conclude that $\tilde{\tau}_3(u)$ and $\tilde{\tau}_4(u)$ are negative, proving the lemma.
\end{proof}
\begin{rem}
Mathematica calculates
\begin{multline*}
  \tau_4(\log{x})
  \\
  =
  \frac{1}{2e^{1/2}x^6}
  \left(
    x^8-x^7+\left(2-\sqrt{e}\right) x^6+\left(-\sqrt{e}-1\right) x^5+e x^3-e x^2+2 e x-e
  \right).
\end{multline*}
So, $\tau_4(u)$ becomes positive if $u$ is large.
\end{rem}

\bibliography{mrabbrev,hitoshi}

\def\cprime{$'$}
\providecommand{\bysame}{\leavevmode\hbox to3em{\hrulefill}\thinspace}
\providecommand{\MR}{\relax\ifhmode\unskip\space\fi MR }
\providecommand{\MRhref}[2]{%
  \href{http://www.ams.org/mathscinet-getitem?mr=#1}{#2}
}
\providecommand{\href}[2]{#2}
\begin{thebibliography}{10}

\bibitem{Andersen/Hansen:JKNOT2006}
J.~E. Andersen and S.~K. Hansen, \emph{Asymptotics of the quantum invariants
  for surgeries on the figure 8 knot}, J. Knot Theory Ramifications \textbf{15}
  (2006), no.~4, 479--548. \MR{2221531}

\bibitem{Chern/Simons:ANNMA21974}
S.~S. Chern and J.~Simons, \emph{Characteristic forms and geometric
  invariants}, Ann. of Math. (2) \textbf{99} (1974), 48--69. \MR{0353327}

\bibitem{Cooper/Hodgson/Kerckhoff:2000}
D.~Cooper, C.~D. Hodgson, and S.~P. Kerckhoff, \emph{Three-dimensional
  orbifolds and cone-manifolds}, MSJ Memoirs, vol.~5, Mathematical Society of
  Japan, Tokyo, 2000, With a postface by Sadayoshi Kojima. \MR{1778789}

\bibitem{Dimofte/Gukov:Columbia}
T.~Dimofte and S.~Gukov, \emph{Quantum field theory and the volume conjecture},
  Interactions between hyperbolic geometry, quantum topology and number theory,
  Contemp. Math., vol. 541, Amer. Math. Soc., Providence, RI, 2011, pp.~41--67.
  \MR{2796627}

\bibitem{Dimofte/Gukov/Lenells/Zagier:CNTP2010}
T.~Dimofte, S.~Gukov, J.~Lenells, and D.~Zagier, \emph{Exact results for
  perturbative {C}hern-{S}imons theory with complex gauge group}, Commun.
  Number Theory Phys. \textbf{3} (2009), no.~2, 363--443. \MR{2551896}

\bibitem{Faddeev:LETMP1995}
L.~D. Faddeev, \emph{Discrete {H}eisenberg-{W}eyl group and modular group},
  Lett. Math. Phys. \textbf{34} (1995), no.~3, 249--254. \MR{1345554}

\bibitem{Garoufalidis/Le:GEOTO2011}
S.~Garoufalidis and T.~T.~Q. L\^e, \emph{Asymptotics of the colored {J}ones
  function of a knot}, Geom. Topol. \textbf{15} (2011), no.~4, 2135--2180.
  \MR{2860990}

\bibitem{Gromov:INSHE82}
M.~Gromov, \emph{Volume and bounded cohomology}, Inst. Hautes \'{E}tudes Sci.
  Publ. Math. (1982), no.~56, 5--99 (1983). \MR{686042}

\bibitem{Gukov:COMMP2005}
S.~Gukov, \emph{Three-dimensional quantum gravity, {C}hern-{S}imons theory, and
  the {A}-polynomial}, Comm. Math. Phys. \textbf{255} (2005), no.~3, 577--627.
  \MR{2134725}

\bibitem{Gukov/Murakami:FIC2008}
S.~Gukov and H.~Murakami, \emph{{${\rm SL}(2,\mathbb C)$} {C}hern-{S}imons
  theory and the asymptotic behavior of the colored {J}ones polynomial},
  Modular forms and string duality, Fields Inst. Commun., vol.~54, Amer. Math.
  Soc., Providence, RI, 2008, pp.~261--277. \MR{2454330}

\bibitem{Habiro:SURIK2000}
K.~Habiro, \emph{On the colored {J}ones polynomials of some simple links},
  S\={u}rikaisekikenky\={u}sho K\={o}ky\={u}roku (2000), no.~1172, 34--43,
  Recent progress towards the volume conjecture (Japanese) (Kyoto, 2000).
  \MR{1805727}

\bibitem{Hikami/Murakami:COMCM2008}
K.~Hikami and H.~Murakami, \emph{Colored {J}ones polynomials with polynomial
  growth}, Commun. Contemp. Math. \textbf{10} (2008), no.~suppl. 1, 815--834.
  \MR{MR2468365}

\bibitem{Hodgson/PhD}
C.~D. Hodgson, \emph{Degeneration and regeneration of geometric structures on
  $3$-manifolds}, Ph.D. thesis, Princeton University, 1986.

\bibitem{Jones:BULAM31985}
V.~F.~R. Jones, \emph{A polynomial invariant for knots via von {N}eumann
  algebras}, Bull. Amer. Math. Soc. (N.S.) \textbf{12} (1985), no.~1, 103--111.
  \MR{766964}

\bibitem{Kashaev:MODPLA94}
R.~M. Kashaev, \emph{Quantum dilogarithm as a $6j$-symbol}, Modern Phys. Lett.
  A \textbf{9} (1994), no.~40, 3757--3768. \MR{96c:17024}

\bibitem{Kashaev:MODPLA95}
\bysame, \emph{A link invariant from quantum dilogarithm}, Modern Phys. Lett. A
  \textbf{10} (1995), no.~19, 1409--1418. \MR{1341338}

\bibitem{Kashaev:LETMP97}
\bysame, \emph{The hyperbolic volume of knots from the quantum dilogarithm},
  Lett. Math. Phys. \textbf{39} (1997), no.~3, 269--275. \MR{1434238}

\bibitem{Kashaev/Tirkkonen:ZAPNS2000}
R.~M. Kashaev and O.~Tirkkonen, \emph{A proof of the volume conjecture on torus
  knots}, Zap. Nauchn. Sem. S.-Peterburg. Otdel. Mat. Inst. Steklov. (POMI)
  \textbf{269} (2000), no.~Vopr. Kvant. Teor. Polya i Stat. Fiz. 16, 262--268,
  370. \MR{1805865}

\bibitem{Kirk/Klassen:COMMP1993}
P.~Kirk and E.~Klassen, \emph{Chern-{S}imons invariants of {$3$}-manifolds
  decomposed along tori and the circle bundle over the representation space of
  {$T^2$}}, Comm. Math. Phys. \textbf{153} (1993), no.~3, 521--557.
  \MR{1218931}

\bibitem{Le:TOPOA2003}
T.~T.~Q. Le, \emph{Quantum invariants of 3-manifolds: integrality, splitting,
  and perturbative expansion}, Topology Appl. \textbf{127} (2003), no.~1-2,
  125--152. \MR{1953323}

\bibitem{Le/Tran:JKNOT2010}
T.~T.~Q. Le and A.~T. Tran, \emph{On the volume conjecture for cables of
  knots}, J. Knot Theory Ramifications \textbf{19} (2010), no.~12, 1673--1691.
  \MR{2755495}

\bibitem{Masbaum:ALGGT12003}
G.~Masbaum, \emph{Skein-theoretical derivation of some formulas of {H}abiro},
  Algebr. Geom. Topol. \textbf{3} (2003), 537--556. \MR{1997328}

\bibitem{Meyerhoff:LMSLN112}
R.~Meyerhoff, \emph{Density of the {C}hern-{S}imons invariant for hyperbolic
  {$3$}-manifolds}, Low-dimensional topology and {K}leinian groups
  ({C}oventry/{D}urham, 1984), London Math. Soc. Lecture Note Ser., vol. 112,
  Cambridge Univ. Press, Cambridge, 1986, pp.~217--239. \MR{903867}

\bibitem{Murakami:JPJGT2007}
H.~Murakami, \emph{The colored {J}ones polynomials and the {A}lexander
  polynomial of the figure-eight knot}, JP J. Geom. Topol. \textbf{7} (2007),
  no.~2, 249--269. \MR{2349300}

\bibitem{Murakami:JTOP2013}
\bysame, \emph{The coloured {J}ones polynomial, the {C}hern-{S}imons invariant,
  and the {R}eidemeister torsion of the figure-eight knot}, J. Topol.
  \textbf{6} (2013), no.~1, 193--216. \MR{3029425}

\bibitem{Murakami:arXiv2023}
\bysame, \emph{{The asymptotic behaviors of the colored Jones polynomials of
  the figure eight-knot, and an affine representation}}, arXiv:2307.07100,
  2023.

\bibitem{Murakami:CANJM2023}
\bysame, \emph{{The colored Jones polynomial of the figure-eight knot and a
  quantum modularity}}, Canad. J. Math. (2023), 1--31.

\bibitem{Murakami/Murakami:ACTAM12001}
H.~Murakami and J.~Murakami, \emph{The colored {J}ones polynomials and the
  simplicial volume of a knot}, Acta Math. \textbf{186} (2001), no.~1, 85--104.
  \MR{1828373}

\bibitem{Murakami/Murakami/Okamoto/Takata/Yokota:EXPMA02}
H.~Murakami, J.~Murakami, M.~Okamoto, T.~Takata, and Y.~Yokota, \emph{Kashaev's
  conjecture and the {C}hern-{S}imons invariants of knots and links},
  Experiment. Math. \textbf{11} (2002), no.~3, 427--435. \MR{1959752}

\bibitem{Murakami/Tran:Takata}
H.~Murakami and A.~T. Tran, \emph{On the asymptotic behavior of the colored
  {J}ones polynomial of the figure-eight knot associated with a real number},
  Low Dimensional Topology and Number Theory (M.~Morishita, H.~Nakamura, and
  J.~Ueki, eds.), Springer Proceedings in Mathematics \& Statistics, to apper.

\bibitem{Murakami/Yokota:JREIA2007}
H.~Murakami and Y.~Yokota, \emph{The colored {J}ones polynomials of the
  figure-eight knot and its {D}ehn surgery spaces}, J. Reine Angew. Math.
  \textbf{607} (2007), 47--68. \MR{2338120}

\bibitem{Murakami/Yokota:2018}
\bysame, \emph{Volume conjecture for knots}, SpringerBriefs in Mathematical
  Physics, vol.~30, Springer, Singapore, 2018. \MR{3837111}

\bibitem{Ohtsuki:QT2016}
T.~Ohtsuki, \emph{On the asymptotic expansion of the {K}ashaev invariant of the
  {$5_2$} knot}, Quantum Topol. \textbf{7} (2016), no.~4, 669--735.
  \MR{3593566}

\bibitem{Ohtsuki:INTJM62017}
\bysame, \emph{On the asymptotic expansions of the {K}ashaev invariant of
  hyperbolic knots with seven crossings}, Internat. J. Math. \textbf{28}
  (2017), no.~13, 1750096, 143. \MR{3737074}

\bibitem{Ohtsuki/Yokota:MATPC2018}
T.~Ohtsuki and Y.~Yokota, \emph{On the asymptotic expansions of the {K}ashaev
  invariant of the knots with 6 crossings}, Math. Proc. Cambridge Philos. Soc.
  \textbf{165} (2018), no.~2, 287--339. \MR{3834003}

\bibitem{Porti:MAMCAU1997}
J.~Porti, \emph{Torsion de {R}eidemeister pour les vari\'{e}t\'{e}s
  hyperboliques}, Mem. Amer. Math. Soc. \textbf{128} (1997), no.~612, x+139.
  \MR{1396960}

\bibitem{Thurston:GT3M}
W.~P. Thurston, \emph{The geometry and topology of three-manifolds. {V}ol.
  {IV}}, American Mathematical Society, Providence, RI, [2022] \copyright 2022,
  Edited and with a preface by Steven P. Kerckhoff and a chapter by J. W.
  Milnor. \MR{4554426}

\bibitem{Mathematica}
{Wolfram Research, Inc.}, \emph{Mathematica, {V}ersion 13.2},
  \url{https://www.wolfram.com/mathematica}, Champaign, IL, 2022.

\bibitem{Zagier:2010}
D.~Zagier, \emph{Quantum modular forms}, Quanta of maths, Clay Math. Proc.,
  vol.~11, Amer. Math. Soc., Providence, RI, 2010, pp.~659--675. \MR{2757599}

\bibitem{Zheng:CHIAM22007}
H.~Zheng, \emph{Proof of the volume conjecture for {W}hitehead doubles of a
  family of torus knots}, Chinese Ann. Math. Ser. B \textbf{28} (2007), no.~4,
  375--388. \MR{2348452}

\end{thebibliography}
\bibliographystyle{amsplain}
\end{document}